\pgfplotsset{compat=1.18}
\algrenewcommand\algorithmicindent{1.5em}
\algrenewcommand\alglinenumber[1]{\footnotesize #1.\enspace}
\algnewcommand{\LineComment}[1]{\State \texttt{/* #1 */}}
\def\algbackskip{\hskip-\ALG@thistlm}
\theoremstyle{plain}
\newtheorem{theorem}{Theorem}[section]
\newtheorem{corollary}[theorem]{Corollary}
\newtheorem{lemma}[theorem]{Lemma}
\newtheorem{proposition}[theorem]{Proposition}
\theoremstyle{definition}
\newtheorem{definition}{Definition}
\newtheorem{remark}[theorem]{Remark}
\newtheorem*{remark*}{Remark}
\newcommand{\eg}{{e.g.,}\ }
\newcommand{\ie}{{i.e.,}\ }
\newcommand{\nref}[2]{\hyperref[#1]{#2}} 
\newcommand{\negsp}{\hspace{-0.25em}}
\newcommand{\cadlag}{c\`adl\`ag{}}
\newcommand{\bc}[1]{{\text{\fontsize{5}{5}\selectfont{$($}{\hspace*{-0.3pt}}\fontsize{5}{4}\selectfont$#1$\hspace*{-0.3pt}\fontsize{5}{5}\selectfont{$)$}}}} 
\newcommand{\bk}[1]{{\text{\fontsize{5}{5}\selectfont{$[$}{\hspace*{-0.3pt}}\fontsize{5}{4}\selectfont$#1$\hspace*{-0.3pt}\fontsize{5}{5}\selectfont{$]$}}}} 
\newcommand{\de}{\vcentcolon=}
\newcommand{\ed}{=\vcentcolon}
\newcommand{\dH}{d_{\rm{H}}}
\renewcommand{\inf}{\mathop{\mathrm{inf}\vphantom{\mathrm{sup}}}}
\newcommand{\intd}{{\rm d}}
\newcommand{\dd}[1]{{\textstyle{\frac{\rm{d}}{{\rm{d}}#1}}}}
\newcommand{\Area}{\operatorname{Area}}
\newcommand{\TV}{{\rm{TV}}}
\newcommand{\tmix}{t_{\rm{mix}}}
\newcommand{\Unif}[1]{\operatorname{Unif}#1}
\newcommand{\Var}{\operatorname{Var}}
\newcommand{\iid}{{i.i.d.}\ }
\newcommand{\defbold} [1]{\expandafter\newcommand\csname b#1\endcsname{\mathbf{#1}}}
\newcommand{\defcal} [1]{\expandafter\newcommand\csname c#1\endcsname{\mathcal{#1}}}
\newcommand{\defbb} [1]{\expandafter\newcommand\csname bb#1\endcsname{\mathbb{#1}}}
\newcommand{\defscr} [1]{\expandafter\newcommand\csname s#1\endcsname{\mathscr{#1}}}
\newcommand{\deffrak} [1]{\expandafter\newcommand\csname f#1\endcsname{\mathfrak{#1}}}
\newcommand{\defbracketed} [1]{\expandafter\newcommand\csname bc#1\endcsname{\bc{#1}}}
\newcommand{\deftilde} [1]{\expandafter\newcommand\csname ti#1\endcsname{\tilde{#1}}}
\newcommand{\defoverline} [1]{\expandafter\newcommand\csname ol#1\endcsname{\overline{#1}}}
\newcommand{\defvec} [1]{\expandafter\newcommand\csname vec#1\endcsname{\vec{#1}}}
\def\b1{\mathbf{1}}
\def\bb1{\mathbbm{1}}
\newcommand{\Lc}[1]{L^{\mkern-2mu \bc{#1}}}
\newcommand{\tiLc}[1]{\tilde{L}^{\mkern-2mu \bc{#1}}}
\renewcommand{\vecv}{\vec{v}\mkern 2mu}
\newcommand{\hatn}{\hat{n}}
\renewcommand{\vecw}{\vec{w}\mkern 2mu}
\newcommand{\Wu}{\cW^{\mathfrak{u}}_1}
\newcommand{\bin}{\operatorname{bin}}
\newcommand{\imix}{i_{\rm{mix}}}
\newcommand{\nb}{m}
\newcommand{\nn}{n}
\title{Recovering semipermeable barriers from reflected Brownian motion}
\author{Alexander Van Werde} 
\author{Jaron Sanders}
\address{Eindhoven University of Technology, Department of Mathematics and Computer Science\\
\href{mailto:a.van.werde@tue.nl}{a.van.werde@tue.nl},  \href{mailto:jaron.sanders@tue.nl}{jaron.sanders@tue.nl}}
\begin{document}
\begin{abstract}
    We study the recovery of one-dimensional semipermeable barriers for a stochastic process in a planar domain.
    The considered process acts like Brownian motion when away from the barriers and is reflected upon contact until a sufficient but random amount of interaction has occurred, determined by the permeability, after which it passes through. 
    Given a sequence of samples, we wonder when one can determine the location and shape of the barriers. 

    This paper identifies several different recovery regimes, determined by the available observation period and the time between samples, with qualitatively different behavior.     
    The observation period $T$ dictates if the full barriers or only certain pieces can be recovered, and the sampling rate significantly influences the convergence rate as $T\to \infty$. 
    This rate turns out polynomial for fixed-frequency data, but exponentially fast in a high-frequency regime.

    Further, the environment's impact on the difficulty of the problem is quantified using interpretable parameters in the recovery guarantees, and is found to also be regime-dependent.
    For instance, the curvature of the barriers affects the convergence rate for fixed-frequency data, but becomes irrelevant when $T\to \infty$ with high-frequency data.

    The results are accompanied by explicit algorithms, and we conclude by illustrating the application to real-life data. 
\end{abstract}
\maketitle 
\vspace{-1em}
\section{Introduction}
Obstructions in the environment can shape the movements of stochastic processes, and uncovering this can lead to important scientific insights.
To name a concrete example, barriers to animal mobility, such as roads or natural features, are crucial knowledge for ecological research \cite{beyer2016you,sawyer2013framework}. 
For another, the interaction of molecules with obstacles, revealed by single-particle tracking, has led to numerous advancements in subcellular biology \cite{hofling2013anomalous,paviolo2020nanoscale,albrecht2016nanoscopic} such as the significant discovery that cell membranes are not homogeneous, but rather compartmentalized by barriers which hinder lateral diffusions \cite{kusumi2005paradigm,sadegh2017plasma}.

As a model for scenarios such as these, we study \emph{reflected Brownian motion with semipermeable barriers} in a setting where the process takes values in a planar domain and where the barriers are smooth, closed curves. 
This process acts like Brownian motion when away from the barriers and reflects on contact until a random time allows it to pass through, after which it reflects on the other side until the next random time. 
The barriers hence hinder its movements by temporarily constraining it to a single side; see \Cref{fig: Simulations} and \Cref{def: ReflectedBrownianMotion}.

\begin{figure}[tb]
    \centering
    \begin{subfigure}{.49\textwidth}
        \centering 
        \includegraphics[width=.8\linewidth]{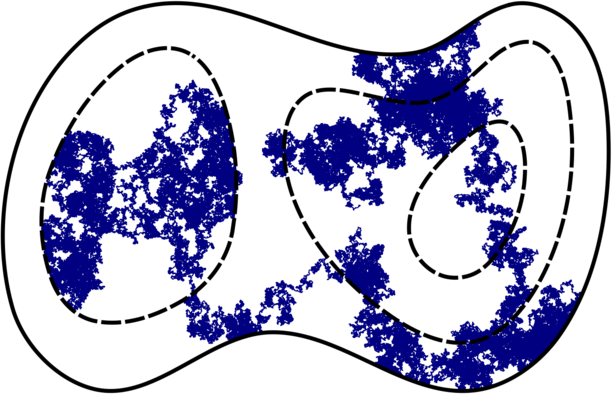}
    \end{subfigure}%
    \hfill 
    \begin{subfigure}{.49\textwidth}
        \centering
        \includegraphics[width=.8\linewidth]{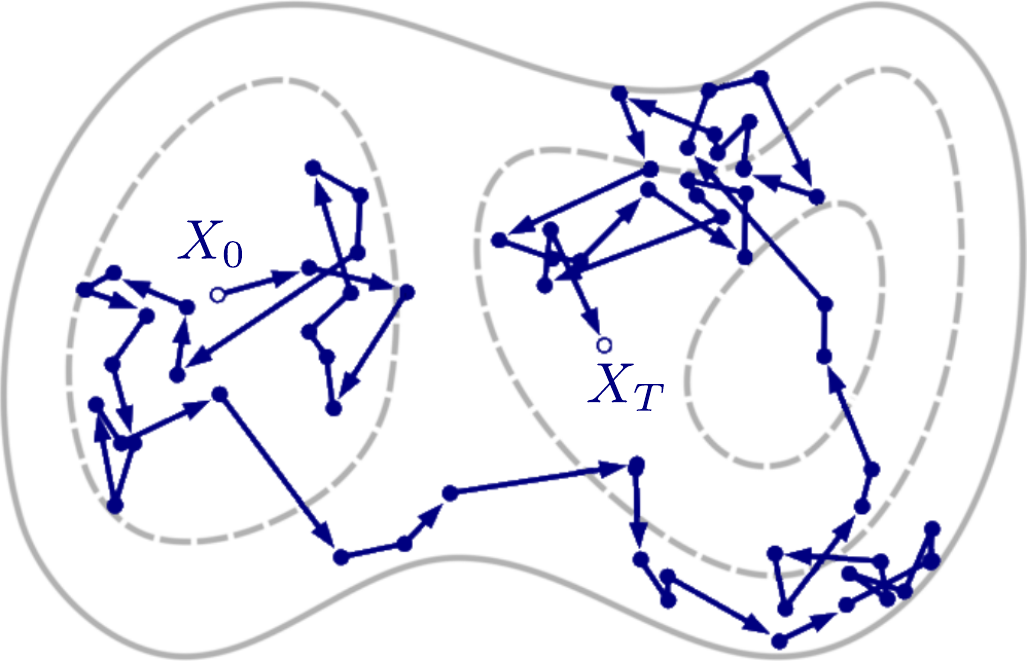}
    \end{subfigure}
    \caption{
    A simulated sample path of a reflected Brownian motion with semipermeable barriers, and the same data observed at a finite sampling rate. Our goal is to recover the underlying barriers given a finite number of samples.  
    }
    \label{fig: Simulations}
\end{figure}

This process was introduced to the mathematical literature relatively recently in 2016 by Lejay \cite{lejay2016snapping} and has since been used to describe various interface phenomena \cite{lejay2018monte,forien2019gene,erhard2021slow,zhao2024voter,slkezak2021diffusion,bressloff2022probabilistic,schumm2023numerical,bressloff2023renewal,mandrekar2016brownian}.  
It is also known as snapping out Brownian motion and should not be confused with skew Brownian motion \cite{harrison1981skew,lejay2006constructions}. 
The difference is that skew Brownian motion has permeable as opposed to semipermeable barriers, meaning that the barriers may be instantly traversed for the latter process, but with a possible directional bias.

The current paper studies the statistical problem which aims to recover the location of the barriers based on a sequence of samples $\{X_{it}: i=0,1,\ldots,\lfloor T/t\rfloor \}$.   
Our goal is to construct estimators and to fundamentally understand what affects the problem's difficulty.
This problem lies at the intersection of two key disciplines of modern statistics: set estimation which studies the statistical recovery of regions and their boundaries based on related observations \cite{brunel2018methods,cuevas2009set}, and statistical inference for stochastic processes \cite{bishwal2007parameter,kutoyants2013statistical}.  

The closest related work is that of Cholaquidis, Fraiman, Lugosi, and Pateiro--L\'opez \cite{cholaquidis2016set} who estimated a domain $D\subseteq \bbR^d$ and its boundary $\partial D$ based on a trajectory of a (classical) reflected Brownian motion; see also \cite{cholaquidis2021level,cholaquidis2023home} for recent extensions with drift or restricted sampling.
However, \cite{cholaquidis2016set} mostly focuses on recovering the domain $D$ itself, viewing the boundary as a corollary, and the methods are not suitable for estimating semipermeable barriers. 
To our knowledge, ours is the first mathematical work focused on recovering barrier locations given samples from a stochastic process.   

Also related to barriers is the estimation of the bias parameter for skew Brownian motion \cite{bardou2010statistical,lejay2019two,lejay2014brownian,lejay2018estimation,barahona2016simulation,bai2022bayesian} and of discontinuous diffusion or drift coefficients \cite{lejay2018statistical,lejay2020maximum,su2015quasi,karr2017point}. 
We however desire the locations, not the permeabilities which would be more analogous.
Estimating permeabilities presumably requires the locations, so our results give a starting point for future work in that direction.      

\subsection{Summary of main results}\label{sec: Summary}
We identify fundamental regimes with different notions of recovery, construct explicit estimators for the barrier locations in \Cref{alg: KernelDiscontinuity,alg: Improve,alg: HighFrequency}, and quantify what factors affect the problem's difficulty in terms of interpretable parameters.

One class of such factors arises from the environment. 
For example, the barriers' geometry matters as recovery grows challenging if these may squiggle arbitrarily wildly.
Indeed, dependence on geometric assumptions is a topic of significant interest in set estimation, and it is generally necessary to rule out pathological examples; see \cite[\S 3.2]{cuevas2009set}.  
We do this by assuming smoothness and quantify the geometry using the curvature.
To completely capture all environmental features so that guarantees depending only on the parameters can be established naturally also requires various other quantities including the barriers' permeabilies to name one; see \Cref{sec: Parameters}. 

Another important class of factors arises from the nature of the available data, determined by the intersample time $t$ and the length of the observation period $T$. 
Indeed, these quantities are typically constrained in applications. 
The observation period could be limited by budgetary considerations or if the problem informs a decision in the near future, and the intersample time by technical limitations of the measurement procedure.

The parameters $T$ and $t$ have a pronounced effect in our results, giving fundamental regimes where qualitatively different guarantees become possible.
The regimes may be summarized as follows; see also \Cref{fig: RecoveryTypes} and \Cref{tab: Recovery}. 

\begin{figure}[tb]
    \centering
    \begin{subfigure}{.49\textwidth}
        \centering
        \textbf{\ \ Partial recovery}
        \par\bigskip
        \includegraphics[width=.8\linewidth]{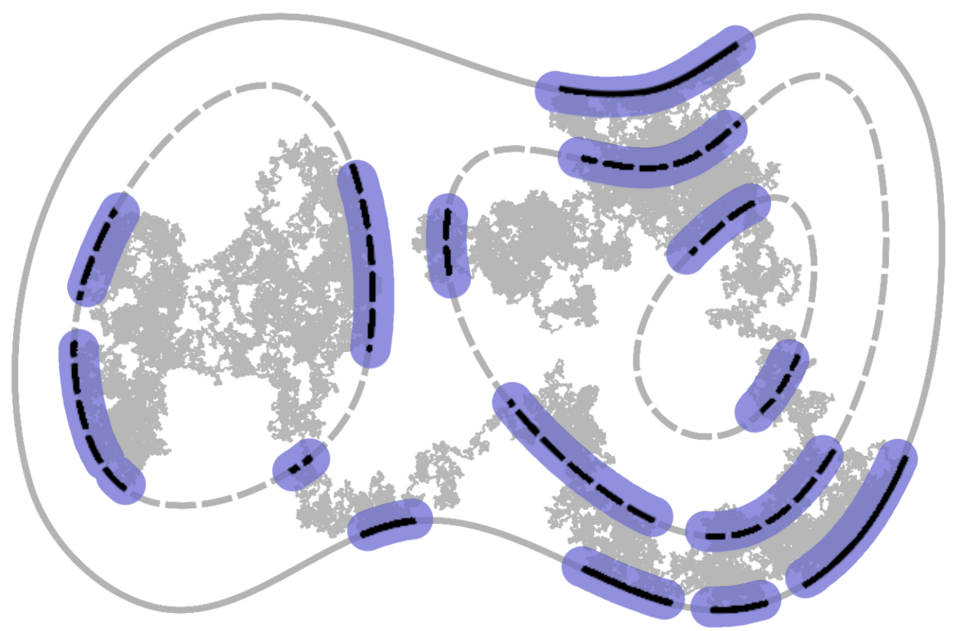}
    \end{subfigure}%
    \hfill 
    \begin{subfigure}{.49\textwidth}
        \centering 
        \textbf{\ \ Complete recovery}
        \par\bigskip
        \includegraphics[width=.8\linewidth]{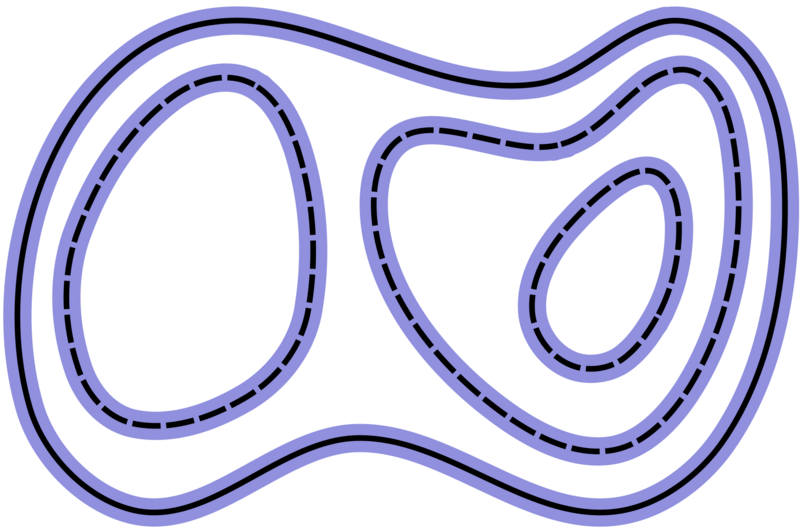}
    \end{subfigure}%
    \caption{Visualization for our recovery notions. 
    Partial recovery aims to recover the parts of the barrier which were hit by the continuous-time process based on discrete-time samples. 
    Complete recovery is more demanding and asks to recover the barriers completely.}
    \label{fig: RecoveryTypes}
\end{figure}
\begin{description}[leftmargin = 1.4em,font=\normalfont\itshape]
    \item[Complete recovery from fixed-frequency data]
    To start, we consider a regime where the observation period $T$ is large and where the intersample time $t$ is fixed and not too large. 
    \Cref{thm: Main_GlobalRecovery} then shows that the barriers can be recovered completely with uniform approximation error.

    When the approximation error is measured with respect to the Hausdorff distance, our convergence rate as $T\to \infty$ has order $T^{-2/3}$ up to a logarithmic factor. 
    The exponent $2/3$ is here expected to be optimal as this is the minimax rate for the related problem of boundary estimation given \iid samples from a planar domain \cite{aamari2023minimax}.      

    The dominant factors affecting the constants occurring in the rate are the curvature of the barriers and the mixing properties of the process.  
    The permeability of the barriers does not appear in the rate. 
    Rather, it determines, together with other factors, what constraint on the intersample time $t$ is required for our performance guarantee. 

    \item[Partial recovery from high-frequency data] 

    If $T$ is fixed, not necessarily large, then the process may not have explored the full domain so complete recovery is impossible. 
    However, there are practical situations where the observation time is limited. 
    Even if complete recovery is too much to ask, one may wonder if some partial information can be recovered. 

    Indeed, \Cref{thm: Main_PathDependentRecovery} shows that those pieces of the barriers which were hit by the continuous process $(X_s)_{s\leq T}$ can be recovered based on the observed discrete samples, and the convergence rate for typical points has order $\sqrt{t}$ up to a logarithmic factor as $t\to 0$.
    This is expected to be optimal as $\sqrt{t}$ is the order of the spacing between samples. 
    
    The word ``typical'' is not redundant: the analysis suggests that there can be points whose recovery is exceptionally difficult because the process spent little time near them. 
    \Cref{cor: Xuniform} implies that these are still recovered, but with a worse approximation error.
    \item[Complete recovery from high-frequency data] One may finally wonder about the setting when $T$ is large \emph{and} $t$ is small. 
    This is studied in \Cref{thm: ExponentialConvergence} for the special case where there is only the outer impermeable barrier, and a striking phenomenon is found. 
    
    While the rate in the fixed-frequency regime was polynomial as $T\to \infty$, an exponentially fast rate of order $\exp(-c\sqrt{T})$ can be achieved if one has high-frequency data.
    Moreover, this rate only depends on the geometry of the domain through its size, the curvature of the barrier being asymptotically irrelevant.   
\end{description}
For practitioners, these regimes can help to calibrate the expectations for what is possible with available data. 
Whether the full barriers or only certain pieces can be recovered depends on the observation period, and the approximation error may depend significantly on the time between samples. 
The effect of the environment is also heavily regime-dependent.
\begin{table}[h]
    \renewcommand{\arraystretch}{1.25}
    \begin{tabular}{|c!{\color{lightgray}\vrule}c!{\color{lightgray}\vrule}c!{\color{lightgray}\vrule}c!{\color{lightgray}\vrule}c|} 
        \hline 
        Result&$t$&$T$&Recovery type&Rate for approximation error
        \\ 
        \hline 
        \Cref{thm: Main_GlobalRecovery}&fixed&large&Complete&$T^{-2/3}\ln(T)^{2/3}$ uniformly as $T\to \infty$
        \\   
        \Cref{thm: Main_PathDependentRecovery}&small&fixed&Partial&$\sqrt{t}\ln(T/t)$ for typical points as $t\to 0$\\ 
        \Cref{thm: ExponentialConvergence} &small&large&Complete&$\exp(-c\sqrt{T})$ uniformly as $T\to \infty$\\ 
        \hline 
    \end{tabular} 
    \caption{Summary of our main results; see \Cref{sec: Results} for rigorous statements which also make the dependence on environmental factors explicit.}
    \label{tab: Recovery}
\end{table}

\subsection{Proof techniques}\label{sec: ProofTech}
The vast majority of previous work on Brownian motion with semipermeable barriers is only in one-dimensional settings \cite{lejay2006constructions,forien2019gene,erhard2021slow,zhao2024voter,slkezak2021diffusion,mandrekar2016brownian,bressloff2023renewal} with a few notable exceptions in work of Lejay \cite{lejay2018monte} and Schumm and Bressloff \cite{bressloff2022probabilistic,schumm2023numerical}. 
A key challenge is hence that our setting is a planar domain. 
Specifically, the main difficulty is that the barriers can be arbitrary smooth closed curves.
If we could pretend as if they were straight lines, then everything would be analytically tractable. 

We considered multiple approaches for this and put effort into simplifying these for transparent proofs.
For instance, one initial idea was to use a local diffeomorphism to rectify the barriers and then apply It\^o calculus, similar to \cite[\S 2.2]{lejay2018statistical}. 
However, while this may work, quantifying environmental dependence grows cumbersome because It\^o calculus gives non-constant drift and diffusion terms making the dynamics only ``nearly'' one-dimensional.
By depending on the arbitrary local diffeomorphism, this introduces irrelevant information to the computation.

Alternatively, one could rectify using a biholomorphism to a half-space via the Riemann mapping theorem and apply conformal invariance.  
This is more elegant as it then suffices to consider only a time change. 
However, the dependence of the biholomorphism on the rectified barrier is not necessarily transparent, so controlling the time change is still delicate. 

The method which we present here is based on coupling techniques instead. 
We first show in \Cref{lem: BarrierLocallyStraight} that the barriers can be locally approximated by a straight line, and subsequently approximate $X_t$ by a (classical) reflected Brownian motion reflecting on this line in \Cref{sec: XtYtApprox}.
Crucially, the error in the coupling of these processes and the timescale on which it is valid can be established transparently.
This allows reducing the proofs of \Cref{thm: Main_GlobalRecovery,thm: Main_PathDependentRecovery} to computations for one-dimensional reflected Brownian motion.
The latter computations are not trivial, but they can be done in a clear manner. 

The proof of \Cref{thm: ExponentialConvergence} is fairly short once one has the correct ideas and does not require the coupling.
Rather, it combines a general-purpose result of Matthews \cite{matthews1988covering} on covering times with a result by Chen and Friedman \cite{chen2011asymptotic} on narrow escape problems. 
This actually implicitly relies on the biholomorphism approach mentioned above, as this is used in \cite{chen2011asymptotic}. 
That this makes the dependence on the curvature of the barrier less explicit does not matter here because it turns out to be asymptotically irrelevant anyways.

\subsection{Application to animal movement data}
We give a case study with animal movement data in \Cref{sec: PotentialApplications}. 
The considered dataset consists of reindeer movements and was made publicly available by Loe et al.\ \cite{loe2016behavioral}.    
The recovery algorithm\footnote{Source code is made available at \url{https://github.com/Alexander-Van-Werde/Brownian-barriers.git}.} here successfully reveals both impermeable and semipermeable natural barriers which are validated using a satellite image. 
Impermeable barriers arise at the coastline and at steep slopes bounding the valley wherein the animals live, while rivers give semipermeable barriers which are occasionally crossed.

This is simply an illustrative example, but it may be noted that having a natural model with a notion of barrier permeability could indeed be useful in ecological research. 
This could, for instance, allow rigorously studying how much different types of roads impede movements.    
Future work about the estimation of the permeability parameter would hence be valuable. 
We refer to \cite{ringbauer2018estimating,beyer2016you,remon2018estimating,paquette2009statistical} and the references therein for methods in current use whose properties have mainly been investigated using simulations so far. 

Let us further note that animal movement data was a key motivation for the work of Cholaquidis, Fraiman, Lugosi, and Pateiro--López \cite{cholaquidis2016set} which we mentioned earlier. 
Specifically, they were concerned with estimating the \emph{home range}, which is the area where the animal does all its normal daily activities.  
Previous analyses had assumed i.i.d.\ data, but that assumption grew unrealistic as technological advances made high-frequency tracking possible, which is what motivated the move to process-based models; see \cite[\S 1.2]{cholaquidis2016set}.

Our theoretical results also shed new light on that problem as the special case of our setting where one only has the outer impermeable barrier. 
In particular, \Cref{thm: ExponentialConvergence,thm: Main_PathDependentRecovery} are highly relevant as they suggest that the problem may qualitatively change as tracking frequency improves, at least if the Brownian model is a sufficient approximation for reality.  
These are novel findings as the rates proved in \cite[\S 4]{cholaquidis2016set} were all polynomial with the same exponent as for \iid data, so without the exponential acceleration, nor did they consider that partial recovery may be possible when complete recovery is impossible.

\subsection{Structure of this paper}
The results are stated rigorously in \Cref{sec: Results_top}. 
General preliminaries for the arguments are presented in 
\Cref{sec: PrelimProofTech}, and the proofs are given in \Cref{sec: ProofsPropsFixedFreq,sec: ProofPathDependent,sec: ProofExponential}. 
We conclude in \Cref{sec: PotentialApplications} with the case study using animal movement data.

Supplementary details are given in the appendices. 
If desired, the algorithms and case study can also be read independently of the rigorous results which are our main focus; see \Cref{sec: PrelimGlobal,sec: AlgPathDependent} as well as \Cref{sec: PotentialApplications}.

\section{Results}\label{sec: Results_top}

\subsection{Notation and definitions}\label{sec: Def_ReflectedBrownianMotion}\label{sec: DomainsAndBarriers}
Fix an open planar set $D_0 \subseteq \bbR^2$ whose closure $D$ is connected, bounded, and has a smooth boundary $\partial D$.
Additionally, consider $m\geq 0$ smooth curves $B_1,\ldots,B_{\nb}\subseteq D_0$ and denote $B_0 \de \partial D$. 
In what follows, the $B_i$ with $i\geq 1$ will be semipermeable barriers and $B_0$ will be an impermeable barrier, keeping the process in $D$.  

To simplify the presentation and proofs, let us adopt some additional assumptions.
First, assume that $B_i \cap B_j = \emptyset$ for every $i\neq j$ so that there are no intersection points which would require separate consideration.  
Second, assume that the $D$ is simply connected so that $B_0$ is a single connected curve.
Finally, assume that the $B_i$ are simple closed curves so that there are no self-intersections or endpoints. 
Then, in particular, the Jordan curve theorem yields that $\bbR^2\setminus B_i$ has precisely two connected components of which only one is bounded.

For every $i\leq {\nb}$ let $\vec{n}_i:B_i \to \bbR^2$ be the unique vector field which is orthogonal to $B_i$, points towards the bounded component, and has unit length.   
We say that $x\in \bbR^2$ is \emph{on the positive side of $B_i$} if $x$ lies in the closure of the bounded component of $\bbR^2 \setminus B_i$. 
Similarly, the \emph{negative side of $B_i$} refers to the closure of the unbounded component. 
Note that $x$ is then both on the positive and negative sides when $x\in B_i$.

The process will be driven by randomness from some auxiliary processes.
To drive the movements when away from all barriers, we consider a $\bbR^2$-valued Wiener process $W_t$. 
Further, fix scalars $\lambda_i^+,\lambda_i^->0$ for every $i\geq 1$, specifying the permeability of the two sides of each barrier.
Then, to regulate the random event where the process switches sides, we consider continuous-time \cadlag\ Markov chains $s_i(t)$ taking values in $\{+1,-1 \}$ with transition rate $\lambda_i^+$ (resp.\ $\lambda_i^-$) from $-1$ to $+1$ (resp.\ from $+1$ to $-1$). 
Finally, let $s_0(t) \de +1$ for all $t\geq 0$. 

Processes satisfying the following requirements exist and are unique given an initial condition; see \Cref{prop: ProcessExistsAndIsUnique}. 
\pagebreak[2]
\begin{definition}\label{def: ReflectedBrownianMotion}
    Let $X_t$ and $\Lc{i}_t$ be continuous stochastic processes which take values in $D$ and $\bbR_{\geq 0}$, respectively.
    Then, $X_t$ is called \emph{a reflected Brownian motion with semipermeable barriers $B_i$ and local times $\Lc{i}_t$} if the following properties hold with probability one: 
    \begin{enumerate}[leftmargin=2.5em, label = (\roman*)]
        \item\label{item: Def_ReflectedBrownianMotion_i} The following stochastic differential equation is satisfied:  
        \begin{align} 
            \intd X_t =  \intd W_t +  \sum_{i=0}^{\nb} s_i(\Lc{i}_t) \vec{n}_i(X_t) \bb1\{X_t \in B_i \}\intd \Lc{i}_t.\nonumber
        \end{align}
        \item\label{item: Def_ReflectedBrownianMotion_ii} For every $i \leq {\nb}$, the process $\Lc{i}_t$ is nondecreasing, satisfies $\Lc{i}_0 =0$, and increases at time $t$ if and only if $X_t \in  B_i$. 
        That is,  
        \begin{align} 
            \Lc{i}_t = \int_0^t \bb1\{X_r\in B_i \}\, \intd \Lc{i}_r.\nonumber 
        \end{align}
        \item\label{item: Def_ReflectedBrownianMotion_iii} For every $i \leq {\nb}$, if $s_i(\Lc{i}_t) = +1$ then $X_t$ is on the positive side of $B_i$. 
        Similarly, if $s_i(\Lc{i}_t) = -1$ then $X_t$ is on the negative side of $B_i$.    
    \end{enumerate}
\end{definition}
\begin{remark}
    Readers who are unfamiliar with the theory of local times may benefit from a more definite description. 
    The local time at the $i$th barrier admits the following expression:  
    \begin{align} 
        \Lc{i}_t  = \lim_{\varepsilon \to 0} \frac{1}{2\varepsilon} \int_0^t \bb1\bigl\{ \exists y \in B_i: \Vert X_r - y \Vert < \varepsilon \bigr\}\, \intd r, \label{eq: LocalTime}
    \end{align}
    almost surely; see \Cref{sec: ExpressionLocal} for a proof. 
    One can hence interpret $\Lc{i}_t$ as measuring how much $X_r$ interacted with barrier $B_i$ by time $t$. 

    The interpretation for the terms $s_i(\Lc{i}_t)$ in \Cref{def: ReflectedBrownianMotion} is correspondingly that these change sign randomly as the process interacts with a barrier, depending on its permeability and the amount of interaction.
    Upon such a sign change, \cref{item: Def_ReflectedBrownianMotion_i,item: Def_ReflectedBrownianMotion_iii} express that $X_t$ passes to the other side of the barrier and that the direction of reflection flips to match this. 
\end{remark}  
\begin{remark}
    Some clarifications are required to avoid ambiguity. 
    First, let it be understood that there is no dependence between the considered auxiliary processes except for that which is necessary for \Cref{def: ReflectedBrownianMotion} to be well-defined. 
    That is, we assume that the $s_i$ are conditionally independent given $(s_i(0))_{i=1}^{\nb}$, and unconditionally independent of $\{W_t:t\geq 0\}$.

    Second, let us note that the law of the process $X_t$ not only depends on the initial condition $X_0$, but also on $s_i(0)$ if $X_0 \in B_i$. 
    For brevity, however, we will typically suppress this from the notation. 
    So, for instance, one should understand that ``for every initial condition $x_0 \in D$'' really means ``for every initial condition $x_0 \in D$ and every $s_i(0) \in \{+1,-1 \}$ with $x_0$ on the positive (resp.\ negative) side of $B_i$ if $s_i(0) = +1$ (resp.\ $s_i(0) = -1$)''.     
    
    Finally, the term ``smooth'' here always means $C^\infty$-smooth.
    It would of course also be interesting to extend our results to settings with very little regularity, but we do not wish to focus on such issues in the current work.
\end{remark}

\subsection{Parameters quantifying the difficulty of the environment}\label{sec: Parameters}
The permeability of the barriers can be quantified by the following parameter: 
\begin{align} 
    \lambda_{\max} &\de \max\bigl\{\lambda_i^*: i\in \{1,\ldots,{\nb} \},\,  *\in \{+,-\} \bigr\}.\label{eq: Def_lambda_minmax} 
\end{align}
If $\lambda_{\max}$ is large, then some barriers have little influence and the process is allowed to cross them quickly. 
Such barriers are naturally more difficult to detect.

The following two parameters are of a geometric nature, related to the difficulty of the configuration of barriers. 
For every $i\leq {\nb} $ let $k_i:B_i \to \bbR_{\geq 0}$ be the \emph{unsigned curvature} of $B_i$.
Denote $\kappa$ for the greatest achieved value: 
\begin{align} 
    \kappa \de \max\bigl\{ k_i(x): i\in \{0,1,\ldots,{\nb} \},\, x \in B_i \bigr\}. \label{eq: Def_kappa}
\end{align} 
A bound on this parameter allows one to avoid scenarios where the curves squiggle wildly. 
It is further useful to have a parameter which ensures some minimal spacing between the barriers. 
For $x\in \bbR^2$ let $\sB(x,r)$ denote the open ball of radius $r>0$ and set 
\begin{align} 
    \rho &\de \sup\bigl\{r \geq 0: \sB(x,r')\cap (\cup_{i=0}^{\nb} B_i)  \text{ is connected }\forall r' \leq r,\, \forall x\in D\bigr\}. \label{eq: Def_rho}
\end{align}
This parameter serves to avoid pathological scenarios where distinct barriers lie extremely close to each other, or scenarios where a single barrier doubles over on itself with a U-bend.

We finally consider two parameters whose necessity arises from the fact that it is impossible to recover a barrier with which the process had no interactions.
Let $\pi$ be the stationary distribution of $X_t$.
That is, the probability measure given by 
$
    \pi(E) = \lim_{t\to \infty} \bbP(X_t \in E)
$ for every measurable $E\subseteq D$.  
Then, with $\Area(\cdot)$ the Lebesgue measure on $\bbR^2$, the following quantifies if there are infrequently visited regions:  
\begin{align} 
    \pi_{\min} \de \inf\bigl\{\pi(E)/\Area(E) :\text{measurable }E\subseteq D \text{ with }\Area(E)>0 \bigr\}.\label{eq: Def_pimin} 
\end{align}
Finally, even if the process visits a region frequently in the limit $t\to \infty$, it could take a long time between visits to the region. 
To quantify this, consider the \emph{mixing time}, 
\begin{align} 
    \tmix \de \inf\bigl\{t\geq 0 :  \lvert \bbP(X_t \in E \mid X_0 = x_0) - \pi(E) \rvert \leq 1/4,\  \forall x_0 \in D,\, E\subseteq D\bigr\}.\label{eq: Def_tmix}
\end{align}
This parameter could be large, for instance, if one of the barriers has low permeability and hence traps the process on the side of $X_0$ for a long time.

\subsection{Results}\label{sec: Results}
\subsubsection{Complete recovery from fixed-frequency data}\label{sec: Results_UniformRecovery}
To start, we consider the regime where the observation period $T$ is large and the time between samples $t$ is fixed.
It will also be assumed that $t$ is not \emph{too} large. 
The latter assumption is required for our proofs because the permeability of the barriers will dilute their effect over time.  

The goal is \emph{complete recovery}, meaning that we desire an estimator $\hat{B}\subseteq \bbR^2$ such that every point of $\cup_{i=0}^{\nb} B_i$ is close to some point in $\hat{B}$ and conversely.
In other words, we quantify the approximation error in terms of the \emph{Hausdorff distance}: 
\begin{align} 
    \dH(A,B) \de \max\Bigl\{\sup_{a\in A}\inf_{b\in B}\Vert a - b \Vert,\ \sup_{b\in B}\inf_{a\in A}\Vert a - b \Vert\Bigr\}\label{eq: Def_dH} 
\end{align} 
where $A, B\subseteq \bbR^2$ are compact sets and $\Vert \cdot \Vert$ is the Euclidean norm.

The following result establishes that it is possible to achieve complete recovery given a sufficiently long sample path.  
Further---and this is the main content really---it is made explicit how the performance depends on the interpretable parameters from \Cref{sec: Parameters}.    
\begin{theorem}\label{thm: Main_GlobalRecovery}
    There exist absolute constants $c_1,c_2,c_3>0$ such that the following holds for every $\eta \in (0,1)$, every $t\leq c_1 \min\{1/\kappa^2, 1/\lambda_{\max}^2,\rho^2\}$, $\varepsilon \leq c_2 \kappa t$, and $T>0$. 

    Suppose that one is given samples $\{X_{jt}: j=0,1,\ldots,\lfloor T/t\rfloor \}$ from a sample path starting in stationarity, \ie with $X_0\sim \pi$.
    Assume that the observation time satisfies 
    \begin{align} 
        T \geq  c_3\frac{\tmix}{ \pi_{\min}} 
        \sqrt{\frac{\kappa}{\varepsilon^3}} \ln\Bigl(\frac{\Area(D)}{\eta}\sqrt{\frac{\kappa}{\varepsilon^3 }} \Bigr). \label{eq: Main_GlobalRecovery_n} 
    \end{align}
    Then, there exists an explicit algorithm whose output $\hat{B}$ satisfies the following guarantee:   
    \begin{align} 
        \bbP\bigl(\dH(\hat{B}, \cup_{i=0}^{\nb} B_i) \leq \varepsilon\bigr) \geq 1 - \eta. \label{eq: Main_GlobalRecovery_dH}
    \end{align}           
\end{theorem}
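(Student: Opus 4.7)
The plan is to combine three ingredients: (i) a kernel-type local test statistic at each point of a spatial grid, distinguishing points near a barrier from interior points; (ii) the coupling outlined in \Cref{sec: ProofTech}, which reduces the analysis of the statistic's mean under $\pi$ to an explicit computation for one-dimensional reflected Brownian motion; and (iii) a Markov-chain concentration argument for the empirical version of the statistic, where the discrete-time Markov structure enters via $\tmix$ and $\pi_{\min}$. The estimator $\hat{B}$ will consist of those grid points at which the empirical statistic exceeds a calibrated threshold, and the Hausdorff guarantee \cref{eq: Main_GlobalRecovery_dH} will follow by proving two directions: every $x \in \cup_i B_i$ sits within $\varepsilon$ of some selected grid point, and every selected grid point lies within $\varepsilon$ of some $B_i$.

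First I would discretize $D$ by an $\varepsilon/2$-net $\{y_k\}$ of cardinality $O(\Area(D)/\varepsilon^2)$ and attach to each $y_k$ a local statistic built from the samples falling in a small disk around $y_k$, with bandwidth $h$ tuned to $\varepsilon$ and $\kappa$. Using the planar-to-straight-line coupling from \Cref{sec: ProofTech}, the stationary mean of this statistic at $y_k$ can be read off from the density of a one-dimensional reflected Brownian motion, and the permeability assumption $\lambda_{\max}$ enters only through how much the snapping out between consecutive samples dilutes this density jump. The restrictions $t \leq c_1\min\{1/\kappa^2,1/\lambda_{\max}^2,\rho^2\}$ ensure both that the straight-line approximation is valid up to scale $h$ and that the permeability has not yet blurred the barrier signature over a single intersample interval, while the restriction $\varepsilon \leq c_2\kappa t$ aligns the resolution with the scale on which the coupling produces a usable expected gap $\Delta = \Delta(\varepsilon,\kappa,\lambda_{\max})$ between ``within $\varepsilon$ of a barrier'' and ``distance $\geq 2\varepsilon$ from every barrier''.

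Next I would pass from the population to the empirical statistic. Because the samples $\{X_{jt}\}$ form a Markov chain which mixes in $\tmix$ steps and whose stationary measure has density at least $\pi_{\min}$, a Bernstein-type inequality for geometrically ergodic chains (applied block-wise over blocks of length $\tmix$) gives, at a fixed $y_k$, a deviation of order $\sqrt{\tmix \log(1/\delta)/(N \pi_{\min} h^2)}$ with $N = \lfloor T/t\rfloor$. A union bound over the $O(\Area(D)/\varepsilon^2)$ grid points contributes a logarithmic factor $\log(\Area(D)/(\eta\varepsilon^2))$, and requiring this deviation to be smaller than $\Delta/2$ --- with $\Delta$ scaling like a power of $\varepsilon$ and $\kappa$ dictated by the one-dimensional calculation --- yields precisely the sample-size condition \cref{eq: Main_GlobalRecovery_n}, including the $\sqrt{\kappa/\varepsilon^3}$ factor characteristic of the $T^{-2/3}$ Hausdorff rate for planar boundary estimation. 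On the event that all $y_k$ satisfy the desired deviation bound, the threshold rule correctly classifies each grid point and both directions of the Hausdorff bound follow from the net's density together with the continuity (via curvature) of the barriers.

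I expect the main obstacle to be the quantitative coordination of Steps~(i)--(iii): the bandwidth $h$, the expected gap $\Delta$, and the Markov-concentration noise each depend on $\varepsilon,\kappa,\lambda_{\max}$ in distinct ways, and only a careful balance produces the exponent $3/2$ in $\sqrt{\kappa/\varepsilon^3}$ together with the displayed dependence on $\tmix,\pi_{\min}$ rather than some polynomial inflation. A secondary delicate point is quantifying the coupling error on the time scale $t$; here the interplay between the upper bound on $t$ (needed for small coupling error) and the lower bound $\varepsilon \leq c_2\kappa t$ (needed for a non-negligible gap $\Delta$) must be used sharply, since a mismatched choice would either destroy the signal or give a rate worse than $T^{-2/3}$.
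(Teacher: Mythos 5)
Your broad strategy — a grid, a local statistic whose mean jumps across barriers, the straight-line coupling to compute that mean, Markov-chain concentration, and a union bound over the grid — matches the architecture of the paper's argument. However, there is a genuine gap in the core of the proposal that prevents it from producing the stated rate.

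The problem is the shape of the local neighborhoods. You propose a statistic built from samples falling in a disk of bandwidth $h$ around each grid point. To classify points at resolution $\varepsilon$ you need $h\lesssim \varepsilon$ (otherwise a barrier passing at distance $h/2\gg\varepsilon$ from $y_k$ still triggers the detector, ruining the Hausdorff guarantee). But then the region has area $\sim h^2\lesssim \varepsilon^2$, and the Markov concentration you describe gives a deviation that is small only once $T\gtrsim \tmix/(\pi_{\min}\varepsilon^2)$ up to logarithms. That is the $T^{-1/2}$ Hausdorff rate, not the $T^{-2/3}$ rate encoded by $\sqrt{\kappa/\varepsilon^3}$ in \cref{eq: Main_GlobalRecovery_n}. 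You cannot fix this by simply retuning $h$ in terms of $\kappa$: an isotropic region cannot simultaneously be large enough to collect many samples and thin enough to localize the barrier to within $\varepsilon$.

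What the paper actually does is a two-stage (bootstrap) procedure. A first pass with axis-aligned $\epsilon\times\epsilon$ boxes and a transition-kernel discontinuity test (\Cref{alg: KernelDiscontinuity}, \Cref{prop: Main_GlobalRecovery_Algorithm}) only achieves $\varepsilon\sim T^{-1/2}$, exactly for the reason above. A second algorithm (\Cref{alg: Improve}, \Cref{prop: Main_GlobalRecovery_Improvement}) then uses this rough estimate $\fB$ to estimate the barrier \emph{direction} locally, and replaces disks by thin rectangles of dimensions $\varepsilon\times\sqrt{\varepsilon/\kappa}$ aligned parallel to the barrier, so that the test region has area of order $\sqrt{\varepsilon^3/\kappa}$ while its width perpendicular to the barrier is still of order $\varepsilon$. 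Plugging that area into the same concentration bound (\Cref{lem: Concentration_hatPS}) is precisely what produces the $\sqrt{\kappa/\varepsilon^3}$ factor; see \Cref{rem: Performance}. Each application of the refinement halves the error, and $O(\ln(1/(\varepsilon\kappa)))$ iterations reduce the initial coarse estimate down to $\varepsilon$. Your proposal, as written, is a sound proof of the coarse statement but does not contain the mechanism that delivers \cref{eq: Main_GlobalRecovery_n}; you would need to add an explicit direction-estimation step and an anisotropic test region to close the gap.
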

The proof is given in \Cref{sec: ProofsPropsFixedFreq} as is the algorithm. 
Rewriting \eqref{eq: Main_GlobalRecovery_n}, the error $\varepsilon$ decays at a rate of order $T^{-2/3} \ln(T)^{2/3}$ as $T\to \infty$. 
The exponent $2/3$ on $T^{-2/3}$ is optimal for recovering the boundary of a $C^2$-smooth set given \iid samples \cite{aamari2023minimax}, and we expect that it will also be optimal here as only bounds on the curvature are assumed.
It could be interesting future research to rigorously prove such an optimality result. 
Another interesting question is whether algorithms which also assume bounds on the barriers' higher-order derivatives may achieve better rates of convergence.

\begin{remark}\label{rem: effective_obs}
    It often occurs in practice that one is given a collection of sequences, say $\{X_{jt}^{\bk{i}}:j\leq \lfloor T_i/t \rfloor \}$ with $i=1,\ldots,r$. 
    Assuming that the paths and their initial conditions are independent, the same algorithms and guarantees then remain applicable with $T$ replaced by the \emph{effective observation period} $ \sum_{i=1}^r T_i$.       
\end{remark}

\subsubsection{Partial recovery from high-frequency data}\label{sec: Results_PathDependentRecovery}
If $T$ is fixed, potentially smaller than the mixing time of the process, then it can not be guaranteed that the process interacted with all parts of the barrier.
Consequently, complete recovery may be impossible.

We here show that the next best thing is possible: one can recover those pieces which were hit by the continuous-time process $(X_s)_{s\leq T}$ using only the discrete samples. 
More precisely, let  
\begin{align} 
    \cX_i \de B_i\cap \{X_t: t\in [0,T]  \}. \label{eq:SafeSun}
\end{align} 
The following result then shows that ``typical'' points in $\cup_{i=0}^{\nb} \cX_i$ can be recovered up to an approximation error which is small for high-frequency data: 
\begin{theorem}\label{thm: Main_PathDependentRecovery}
    For every $\eta \in (0,1)$ there exist $c_1,c_2,c_3>0$ depending only on $\eta$ such that for every $T>0$ and initial condition $x_0\in D$ the following holds.  
     
    Suppose that one is given samples $\{X_{jt}: j = 0,1,\ldots,\lfloor T/t \rfloor\}$ with $t>0$ sufficiently small to satisfy $t \leq c_1 T$ and 
    $ 
    \ln(T/t)^4  t \leq c_2\min\{T, 1/\kappa^2, 1/\lambda_{\max}^2,\rho^2 \}.
    $ 
    Then, there exists an explicit algorithm whose output $\hat{\cX}$ satisfies the following guarantees: 
    \begin{description}
        \item[(1) Nonoccurrence of false positives] 
        Every point in $\hat{\cX}$ is close to a barrier with high probability. 
        More precisely,    
        \begin{align} 
            \bbP\bigl(\inf\{\Vert p - y \Vert: y\in \cup_{i=0}^{\nb} B_i\} \leq c_3 \ln(T/t) \sqrt{t}\, \text{ for all }\, p\in \hat{\cX} \bigr)\geq 1- \eta.\label{eq: HighFrequency_FalsePositives} 
        \end{align}   
        \item[(2) Typical points are recovered] Let $\tau$ be a stopping time for the filtration generated by $X_t$, $\Lc{i}_t$, and $s_i(\Lc{i}_t)$.
        Then, if $\tau$ is such that $X_\tau \in \cup_{i=0}^{\nb} B_i$ almost surely, 
        \begin{align} 
            \bbP\bigl(\inf\{\Vert X_\tau - p \Vert: p\in \hat{\cX} \} \leq c_3 \ln(T/t)\sqrt{t}\text{ or }\tau >T \bigr) \geq 1 - \eta. \label{eq: HighFrequency_RandomPoint}
        \end{align}    
    \end{description}
\end{theorem}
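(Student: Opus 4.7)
The strategy is to reduce both conclusions to one-dimensional computations via the coupling apparatus outlined in Section~\ref{sec: ProofTech}. Near any point of $\cup_{i=0}^{m} B_i$, Lemma~\ref{lem: BarrierLocallyStraight} lets one approximate the barrier by its tangent line, and on the corresponding neighbourhood the analysis of Section~\ref{sec: XtYtApprox} couples $X_t$ with a classical reflected Brownian motion $Y_t$ against that tangent. The hypothesis $\ln(T/t)^4 t \leq c_2 \min\{T, 1/\kappa^2, 1/\lambda_{\max}^2, \rho^2\}$ is precisely tuned so that this coupling holds uniformly---with error smaller than $\sqrt{t}\ln(T/t)$---on every time window of length $\lesssim \ln(T/t)^2 t$, while enough such windows still cover $[0,T]$. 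Once this reduction is performed, the analysis of barrier proximity becomes a question about one-dimensional reflected Brownian motion, for which sharp maximal-inequality bounds are available and can be combined with a union bound over the $\lfloor T/t\rfloor + 1$ sampling intervals.

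For the recovery of typical points (2), fix the stopping time $\tau$ and work on the event $\{\tau \leq T\}$. Letting $j^\star = \lfloor \tau/t\rfloor$, so that $|\tau - j^\star t| \leq t$, a Brownian maximal inequality transferred through the tangent-line coupling yields
\begin{align*}
\sup_{s \in [j^\star t,\, (j^\star+1)t]} \|X_s - X_{j^\star t}\| \leq C\sqrt{t\ln(T/t)}
\end{align*}
with probability at least $1 - \eta t/(2T)$ for each fixed index; a union bound over all $O(T/t)$ intervals then bounds every such supremum simultaneously. In particular $\|X_\tau - X_{j^\star t}\| = O(\sqrt{t}\ln(T/t))$. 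The remaining algorithmic claim is that $X_{j^\star t}$ lies within $O(\sqrt{t}\ln(T/t))$ of some point of $\hat{\mathcal{X}}$. This follows from the design of Algorithm~\ref{alg: HighFrequency}: a sample $X_{jt}$ is retained whenever its local neighbourhood exhibits a statistical signature of barrier interaction, and the one-dimensional coupling ensures that samples within $O(\sqrt{t}\ln(T/t))$ of an actually-hit barrier meet this criterion with probability at least $1 - \eta t/(2T)$, the reflected one-dimensional process spending a non-negligible fraction of each such window within $\sqrt{t}$ of its barrier.

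For the absence of false positives (1), we conversely show that every point of $\hat{\mathcal{X}}$ lies within $O(\sqrt{t}\ln(T/t))$ of some $B_i$. The argument conditions on $\{\mathrm{dist}(X_{jt}, \cup_i B_i) > c_3 \sqrt{t}\ln(T/t)\}$ and uses the local coupling with a free planar Brownian motion on the window $[jt - c\ln(T/t)^2 t,\, jt + c\ln(T/t)^2 t]$: the maximal Brownian excursion on this window is itself $O(\sqrt{t}\ln(T/t))$, which stays below the distance-to-barrier, so no effective barrier interaction takes place. The algorithm's flagging criterion is calibrated to reject such free-motion behaviour except on an event of probability $\eta t/(2T)$, after which a union bound closes the argument. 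The main obstacle throughout is uniform control: the coupling accuracy, the Brownian maximal bounds, and the false-positive rate of the local test all have to be sustained simultaneously over the $O(T/t)$ windows, even as the process accumulates local time against barriers of potentially low permeability. The logarithmic factors $\ln(T/t)$ in the final guarantees absorb precisely this union bound while preserving the optimal $\sqrt{t}$ scale dictated by Brownian increments over the intersample interval.
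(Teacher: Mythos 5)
Your general architecture---local tangent-line approximation via Lemma~\ref{lem: BarrierLocallyStraight}, coupling with a one-dimensional reflected process on time windows of length $\approx \ln(T/t)^2 t$, then a union bound over windows---is in the right spirit and matches the outline in Section~\ref{sec: ProofTech}. But there is a substantive gap in the core of the argument for \eqref{eq: HighFrequency_RandomPoint}, and your false-positive argument replaces the paper's mechanism with one that doesn't quite work.

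The central difficulty the paper confronts is \emph{not} controlling $\|X_s - X_{j^\star t}\|$ by a maximal inequality. Algorithm~\ref{alg: HighFrequency} does not flag a sample because it ``exhibits a statistical signature of barrier interaction''; it flags a grid point $p_{j,k}$ only if a rectangular strip $R_+(p_{j,k},\vec v_n)$ is visited at least $\mathfrak{n}_0$ times \emph{and} the empirical rate $M/N$ of transitions into the opposite half-space $H_-(p_{j,k},\vec v_n)$ falls below the threshold $\mathfrak{s}$. The delicate step is therefore to guarantee $N(p_{j,k},\vec v_n)\geq \mathfrak{n}_0 = \Theta(\ln(T/t))$ for some grid point near $X_\tau$. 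You wave at this with ``the reflected one-dimensional process spending a non-negligible fraction of each such window within $\sqrt t$ of its barrier,'' but this is precisely the non-trivial content: the paper proves it through Lemmas~\ref{lem: NewManyHits} and~\ref{lem: NplusLarge}, where the latter decomposes the one-dimensional reflected process into excursions from $0$ to $2$, observes that each excursion has a fixed positive chance of registering a hit, and must then control the Lévy-distributed (heavy-tailed) excursion lengths. A maximal inequality for the one-dimensional process does not deliver this count, and Remark~\ref{rem: tau_heavy} is an explicit warning that the heavy tails are the bottleneck. Your claim that a single-index failure probability of $\eta t/(2T)$ can be achieved, to be unioned over $O(T/t)$ indices, is also not what the concentration gives: the bound from Corollary~\ref{cor: NumTransitions} is $\exp(-c\mathfrak{n}_0)=(t/T)^{c}$ for an absolute $c>0$ that one cannot push past $1$, which is why Lemma~\ref{lem: UnionNumTransitions} runs the union bound only over rectangular regions that are actually visited, rather than over all $O(T/t)$ times or all grid cells.

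For the false-positive claim \eqref{eq: HighFrequency_FalsePositives}, your argument that ``no effective barrier interaction takes place, so the flagging criterion rejects free-motion behaviour'' misidentifies the mechanism. The algorithm does not reject a point because it looks like free Brownian motion; it rejects because transitions from $R_+$ to $H_-$ occur at a \emph{positive} rate $q$ (Lemma~\ref{lem: NumberOfTransitions} and eq.~\eqref{eq:TallSax}), so the ratio $M/N$ stays above the threshold with high probability conditional on $N\geq\mathfrak{n}_0$. Conditioning on a pointwise distance-to-barrier event and arguing via coupling with free Brownian motion would not establish a lower bound on the transition rate, which is what actually kills false positives. The conclusion is that the proposal is missing the two load-bearing pieces---the transition-rate test and the excursion-based hit-count lower bound---and replaces them with maximal-inequality reasoning that does not yield the quantities the algorithm actually thresholds.
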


Let us emphasize that the algorithm does not require prior knowledge of the stopping time $\tau$. 
As a concrete example, one can take $\tau \de \inf\{t\geq 0: X_t \in \cup_{i=0}^{\nb} B_i \}$. 
Then, the result shows that the first hit of a barrier is recovered with high probability. 
Similarly, taking $\tau \de \inf\{t \geq 0: s_i(\Lc{i}_t) \neq s_i(0) \text{ for some }i\leq {\nb} \}$ shows that the first location where $X_t$ switches sides for some barrier is recovered with high probability.

The combination of \eqref{eq: HighFrequency_FalsePositives} and \eqref{eq: HighFrequency_RandomPoint} can be interpreted as stating that the approximation error has order $\ln(T/t)\sqrt{t}$ for most points, but there can be exceptional points in $\cup_{i=0}^{\nb} \cX_i$ which are particularly difficult to recover. 
Those points are still fairly close to $\hat{\cX}$ but may suffer from a worse approximation error:  
\begin{corollary}\label{cor: Xuniform}
    For every $\varepsilon >0$ there exist $c_1,c_2 >0$ depending on $\varepsilon, \eta$, $\min\{1/\kappa, 1/\lambda_{\max},\rho\}$, and $T$ such that for $t\leq c_1$ and $\ln(T/t)^4 t \leq c_2$ the estimator of \Cref{thm: Main_PathDependentRecovery} satisfies  
        \begin{align} 
            \bbP\bigl(\inf\{\Vert x - p \Vert: p\in \hat{\cX} \} \leq \varepsilon\, \text{ for all }\, x \in \cup_{i=0}^{\nb} \cX_i \bigr) \geq 1 - \eta.  \label{eq: HighFrequency_Uniform}
        \end{align} 
\end{corollary}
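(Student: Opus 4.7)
The plan is to reduce \Cref{cor: Xuniform} to \Cref{thm: Main_PathDependentRecovery} by applying the latter to a finite family of stopping times indexed by a fine net of points on $\cup_{i=0}^{\nb}B_i$ and combining the resulting guarantees via a union bound. This upgrades the in-probability bound for a single stopping time to a uniform statement over the path-visited barrier set $\cup_{i=0}^{\nb}\cX_i$, at the cost of enlarging the error from $c_3\ln(T/t)\sqrt{t}$ to any prescribed $\varepsilon>0$ and inflating the constants by a factor reflecting the size of the net.

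First, since the $B_i$ are finitely many compact smooth curves, I would fix an $(\varepsilon/4)$-net $\{y_1,\ldots,y_N\}\subseteq \cup_{i=0}^{\nb}B_i$ of cardinality $N=N(\varepsilon,\rho,\kappa,\ldots)$ depending only on $\varepsilon$ and the environment. For each $k$, introduce
\[
\tau_k \;\de\; \inf\bigl\{t\geq 0:\, X_t\in\cup_{i=0}^{\nb}B_i \text{ and } \Vert X_t-y_k\Vert\leq \varepsilon/4\bigr\},
\]
which is a stopping time for the filtration generated by $X_t$, $\Lc{i}_t$, and $s_i(\Lc{i}_t)$ and which satisfies $X_{\tau_k}\in\cup_{i=0}^{\nb}B_i$ on $\{\tau_k\leq T\}$; the case $\tau_k=\infty$ is harmless, because on $\{\tau_k>T\}$ the conclusion of \Cref{thm: Main_PathDependentRecovery} is vacuous.

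Next, I would apply \Cref{thm: Main_PathDependentRecovery} to each $\tau_k$ with the confidence parameter $\eta$ replaced by $\eta/N$. This yields constants $c_1',c_2',c_3'>0$ depending on $\eta$, $\varepsilon$, and the environment (through $N$), such that under the hypotheses $t\leq c_1'T$ and $\ln(T/t)^4 t\leq c_2'\min\{T,1/\kappa^2,1/\lambda_{\max}^2,\rho^2\}$, the event
\[
A_k \;\de\; \{\tau_k>T\}\cup\bigl\{\exists\, p_k\in\hat{\cX}:\, \Vert X_{\tau_k}-p_k\Vert\leq c_3'\ln(T/t)\sqrt{t}\bigr\}
\]
has probability at least $1-\eta/N$. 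Shrinking $t$ further so that $c_3'\ln(T/t)\sqrt{t}\leq \varepsilon/2$ and taking a union bound over $k=1,\ldots,N$ yields a single event $\Omega_0$ with $\bbP(\Omega_0)\geq 1-\eta$ on which every $A_k$ holds with $\varepsilon/2$ in place of $c_3'\ln(T/t)\sqrt{t}$.

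Finally, on $\Omega_0$ pick any $x\in\cup_{i=0}^{\nb}\cX_i$. Writing $x=X_s$ with $s\leq T$ and $x\in B_i$ for some $i$, and choosing $y_k$ with $\Vert x-y_k\Vert\leq\varepsilon/4$, one sees $\tau_k\leq s\leq T$, so there is $p_k\in\hat{\cX}$ with $\Vert X_{\tau_k}-p_k\Vert\leq\varepsilon/2$. The triangle inequality then gives
\[
\Vert x-p_k\Vert\;\leq\;\Vert x-y_k\Vert + \Vert y_k-X_{\tau_k}\Vert + \Vert X_{\tau_k}-p_k\Vert\;\leq\;\tfrac{\varepsilon}{4}+\tfrac{\varepsilon}{4}+\tfrac{\varepsilon}{2}=\varepsilon,
\]
as required. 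The main obstacle is purely bookkeeping: all the substantive probabilistic work sits inside \Cref{thm: Main_PathDependentRecovery}, and one only needs to verify that $N$, and therefore the constants obtained by invoking the theorem at level $\eta/N$ together with the threshold $c_3'\ln(T/t)\sqrt{t}\leq\varepsilon/2$, remain controlled in terms of $\varepsilon$, $\eta$, $T$, and $\min\{1/\kappa,1/\lambda_{\max},\rho\}$, matching the dependence asserted in the statement.
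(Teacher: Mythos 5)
Your argument is correct in spirit but has a genuine gap in the dependence bookkeeping that you flagged but did not resolve, and in fact cannot be resolved in the form you propose. The cardinality $N$ of an $(\varepsilon/4)$-net of $\cup_{i=0}^{\nb}B_i$ scales with the total arc length of the barriers, and that arc length is \emph{not} controlled by $\min\{1/\kappa,1/\lambda_{\max},\rho\}$: for fixed curvature bound and separation parameter one can make the outer boundary $B_0$ (say, a thin elongated domain with rounded corners) arbitrarily long. Consequently, the constants $c_1,c_2$ you would extract from invoking \Cref{thm: Main_PathDependentRecovery} at level $\eta/N$ would depend on a geometric quantity outside the allowed list $\varepsilon,\eta,\min\{1/\kappa,1/\lambda_{\max},\rho\},T$, so the proof establishes a strictly weaker statement than \Cref{cor: Xuniform}. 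There is also a minor issue that your $\tau_k$ may be infinite, in which case the hypothesis $X_{\tau_k}\in\cup_i B_i$ almost surely is ill-posed; this is fixable (e.g.\ by replacing $\tau_k$ with $\tau_k\wedge\inf\{t\geq T+1:X_t\in\cup_i B_i\}$) but should be addressed.

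The paper's proof avoids the spatial net entirely by covering \emph{time} rather than space. \Cref{lem: M} produces an integer $M$, depending only on the allowed parameters, such that the trajectory moves less than $\varepsilon/4$ over each interval $[jT/M,(j+1)T/M]$ with high probability; the stopping times used are then $\cT_j\de\inf\{t\geq jT/M:X_t\in\cup_i B_i\}$, of which there are $M$, and the union bound is over $j=0,\dots,M-1$. Any visited barrier point $x=X_{t_x}$ lies in the same time bin as $\cT_{j_x}$, so $\Vert x-X_{\cT_{j_x}}\Vert\leq\varepsilon/2$ by the modulus-of-continuity event rather than by a distance condition built into the stopping time. The payoff is that the number of stopping times is controlled by the dynamics (via \Cref{cor: TauBound}) rather than by the barriers' total length, which is exactly what makes the claimed dependence of $c_1,c_2$ achievable. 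If you want to keep a spatial-net argument, you would need an additional estimate bounding the portion of the barriers visited during $[0,T]$ in terms of the allowed parameters; that is extra work the paper sidesteps.
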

The proofs are given in \Cref{sec: ProofPathDependent} and come with an explicit algorithm. 
The nature of the difficult points is there also made explicit: they occur when the process only spends a small amount of time in the neighborhood. 
It would be interesting future research to study these points in more detail as this would be an essential ingredient to determine an optimal rate of convergence for $\dH(\hat{\cX}, \cup_{i=0}^{\nb} \cX_i)$.

\subsubsection{Complete recovery from high-frequency data}\label{sec: Results_UniformHighFreq}
Finally, we consider a regime where $T$ is large and $t$ is small.
Specifically, we study the behavior when one first takes the limit $t\to 0$ and subsequently lets $T$ grow.  
For proof-technical reasons, we focus on the case $m=0$ where there is only the outer impermeable barrier which bounds the domain.

By \eqref{eq: HighFrequency_FalsePositives} and \eqref{eq: HighFrequency_Uniform}, one can recover $\cX_0$ with an arbitrarily small approximation error and without false positives if the sampling rate is sufficiently high. 
Hence, it remains to determine the rate of convergence of $\cX_0$ to $B_0$ when $T$ tends to infinity:
\begin{theorem}\label{thm: ExponentialConvergence}
    Suppose that $m=0$ and fix some initial condition $x_0 \in D$. 
    Then, for every $\eta \in (0,1)$ there exists a positive constant $c>0$ depending only on $\eta$ such that   
    \begin{align} 
        \liminf_{T\to \infty}\bbP\Bigl( \dH(\cX_0,B_0) \leq \exp(-\sqrt{c T/\Area(D)  } )\Bigr) \geq 1-\eta.\label{eq:KindMom}
    \end{align}
\end{theorem}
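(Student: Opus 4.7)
The plan is to reduce the theorem to a cover-time estimate for an $\varepsilon$-net on $B_0$, and to combine Matthews' covering-time inequality with the Chen--Friedman narrow-escape asymptotic. Fix $\eta \in (0,1)$ and, for $\varepsilon>0$ to be chosen, select an $\varepsilon/2$-net $\{p_1,\ldots,p_n\}\subseteq B_0$ with $n \leq C_{B_0}/\varepsilon$ points, which is possible since $B_0$ is a fixed smooth closed curve. Define the target arcs $A_j \de B_0 \cap \overline{\sB}(p_j, \varepsilon/2)$, so that $\cup_j A_j = B_0$. Because $\cX_0 \subseteq B_0$ automatically gives $\sup_{p\in \cX_0} d(p,B_0)=0$, the event $\{\dH(\cX_0, B_0)\leq \varepsilon\}$ is implied by the event that each $A_j$ is visited by $X_t$ during $[0,T]$.

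Let $\tau_j \de \inf\{t\geq 0 : X_t \in A_j\}$ and $C \de \max_{j\leq n} \tau_j$. Matthews' covering-time inequality \cite{matthews1988covering}, in its continuous-time formulation, yields
\begin{align}
    \bbE^{x_0}[C] \leq \bigl(1 + \ln n\bigr)\sup_{y\in D,\, j\leq n}\bbE^{y}[\tau_j].\nonumber
\end{align}
The expectation $\bbE^{y}[\tau_j]$ is precisely a narrow-escape quantity for reflected Brownian motion in $D$, and the asymptotic result of Chen and Friedman \cite{chen2011asymptotic} states that, for any smooth simply connected planar domain,
\begin{align}
    \sup_{y\in D}\bbE^{y}[\tau_j] = \frac{\Area(D)}{\pi}\ln(1/\varepsilon)\bigl(1+o(1)\bigr)\qquad \text{as } \varepsilon\to 0, \nonumber
\end{align}
uniformly in $j$. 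Combining with $\ln n = \ln(1/\varepsilon)+O(1)$ then gives $\bbE^{x_0}[C] \leq \frac{\Area(D)}{\pi}\ln^2(1/\varepsilon)\bigl(1+o(1)\bigr)$.

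To promote this expected-value bound to a high-probability statement, fix a block length $L \de \frac{4\Area(D)}{\pi}\ln^2(1/\varepsilon)$, which for $\varepsilon$ small enough exceeds $2\sup_{y}\bbE^{y}[C]$. Split $[0,T]$ into $k\de \lfloor T/L\rfloor$ consecutive blocks and apply Markov's inequality together with the strong Markov property at the end of each block: the probability that the cover is not completed during any single block, starting from an arbitrary position, is at most $1/2$. Hence $\bbP^{x_0}(C>T) \leq 2^{-k}$, which is at most $\eta$ once $k \geq \lceil\log_2(1/\eta)\rceil$. Solving $L\cdot \lceil\log_2(1/\eta)\rceil \leq T$ for $\varepsilon$ produces $\varepsilon \geq \exp\bigl(-\sqrt{c_* T/\Area(D)}\bigr)$ with $c_* = \pi\bigl(1-o(1)\bigr)/(4\lceil\log_2(1/\eta)\rceil)$. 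Choosing any $c < \pi/(4\lceil\log_2(1/\eta)\rceil)$ then gives the desired bound for $T$ large enough, which establishes \eqref{eq:KindMom} by taking the $\liminf$ as $T\to\infty$.

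The main obstacle is the uniformity of the Chen--Friedman asymptotic: their classical expansion carries a regular $O(1)$ correction depending on both the starting point $y$ and the target position $p_j$, and we need this correction to be $o\bigl(\ln(1/\varepsilon)\bigr)$ uniformly as $p_j$ ranges over the compact curve $B_0$ and $y$ ranges over $D$. Since the Neumann Green's function of $D$ varies continuously in the target location, this should follow from their conformal-mapping argument by a compactness-and-maximum-principle estimate, but writing it out with sufficient care is the main technical burden. A secondary routine check is justifying the continuous-time form of Matthews' inequality for our reflected diffusion, which can be handled either by discretizing at a fine enough timescale or by appealing to standard extensions of Matthews' original combinatorial argument.
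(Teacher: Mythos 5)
Your proposal follows essentially the same route as the paper: reduce to covering $B_0$ by small arcs, bound the covering time via Matthews \cite{matthews1988covering}, and feed in the Chen--Friedman narrow-escape asymptotic \cite{chen2011asymptotic}. The only structural difference is the last step converting the expectation bound to a tail bound: the paper defines $\cT(\varepsilon)\de\inf\{T:\dH(\cX_0,B_0)\leq\varepsilon\}$ and applies a single Markov's inequality, writing $\bbP(\dH(\cX_0,B_0)>\varepsilon_T)=\bbP(\cT(\varepsilon_T)>T)\leq\bbE[\cT(\varepsilon_T)]/T$, which with $\varepsilon_T=\exp(-\sqrt{cT/\Area(D)})$ becomes a constant $\leq\eta$ for $c$ small; your geometric block argument gives a quantitatively stronger (exponential in $T/L$) tail bound but is more than is needed for the stated $\liminf$.

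The uniformity issue you flag at the end is exactly the point requiring care, and the paper resolves it in two pieces rather than by a direct maximum-principle estimate. Uniformity in the target arc $A$ is obtained because the conformal map in \cite{chen2011asymptotic} extends $C^\infty$-smoothly to $\partial D$ by the Kellogg--Warschawski theorem \cite[Theorem 3.6]{pommerenke2013boundary}, so the error terms are uniform on $B_0$. The dependence on the starting point is handled not by taking a direct supremum inside the Chen--Friedman expansion, but by invoking the exponential convergence of the reflected heat kernel to the uniform density \cite{burdzy2006traps} together with \cite[Theorem A]{aldous1997mixing}, which supplies a stopping time $\tau$ with $\sup_{x_0}\bbE[\tau\mid X_0=x_0]<\infty$ and $X_\tau\sim\Unif(D)$; then $\sup_{x_0}\bbE^{x_0}[\inf\{t:X_t\in A\}]\leq\sup_{x_0}\bbE[\tau\mid X_0=x_0]+\bbE[\inf\{t:X_t\in A\}\mid X_0\sim\Unif(D)]$, and the first (bounded) term is negligible relative to $\ln(1/\varepsilon)$. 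This sidesteps the compactness-and-maximum-principle argument you gesture at, so if you want a complete proof you should adopt (or recover) this decomposition. Your other stated concern about the continuous-time form of Matthews' inequality is a non-issue: the original result in \cite{matthews1988covering} is already formulated for continuous-time Brownian motion on spheres, and only requires the strong Markov property, which the paper establishes in \Cref{prop: X_Markov}.
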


We observe a pronounced qualitative difference between the fixed-frequency regime and the high frequency regime: the polynomial rate of convergence in \Cref{thm: Main_GlobalRecovery} is replaced by an exponential rate in \Cref{thm: ExponentialConvergence}.
Further, convergence rate in the high-frequency regime only depends on the size of the domain whereas in the fixed-frequency regime it also depends on the curvature of the barrier among other things.

While not covered by the current analysis, we expect that the qualitative conclusion of \Cref{thm: ExponentialConvergence} of an exponential rate remains valid in the general case $m>0$. 
Such a generalization would be relevant future research as the quantitative formulation would clarify how exactly the environment enters the problem.
For instance, one possible guess is that $1/\Area(D)$ may have to be replaced by a quantity involving the stationary distribution.

\section{General preliminaries}\label{sec: PrelimProofTech}  
\Cref{sec: GeneralPurposePrelim} shows that there indeed exists a unique process satisfying \Cref{def: ReflectedBrownianMotion}. 
\Cref{sec: Local} provides the coupling technique mentioned in \Cref{sec: ProofTech}.

\subsection{Well-definedness}
\label{sec: GeneralPurposePrelim} 
Consider a $D$-valued random variable $X_0$ which is on the positive (resp.\ negative) side of $B_i$ if $s_i(0) = +1$ (resp.\ $s_i(0) = -1$). 
Assume that $(X_0, s_1,\ldots,s_{\nb})$ is independent of $W$ and that $X_0$ is conditionally independent of $(s_i)_{i=1}^{\nb}$ given $(s_i(0))_{i=1}^{\nb}$.   
  
It will be convenient to ensure that null sets are measurable. 
Hence, let it be understood that the $\sigma$-algebra generated by a family of random variables $\{Y_i:i\in \cI \}$ refers to the $\bbP$-completion\footnote{Formally, if $(\Omega, \sF, \bbP)$ is the implicit probability space where upon the processes $\{W_t:t\geq 0 \}$ and $\{s_i(t):t\geq 0\}$ are defined, then the \emph{$\bbP$-completion} of a $\sigma$-algebra $\sG \subseteq \sF$ is here defined to be the smallest $\sigma$-algebra containing all sets in $\sG$ and all $\sF$-null sets.} of the least $\sigma$-algebra with respect to which all $Y_i$ are measurable.  
\begin{proposition}\label{prop: ProcessExistsAndIsUnique} 
    Let $\cG_t$ denote the $\sigma$-algebra generated by the random variables $X_0$, $W_r$ with $0 \leq r \leq t$, and $s_i(r)$ with $0\leq r < \infty$ and $0\leq i \leq {\nb}$.
    Then, there exist $\cG_t$-adapted processes $X_t$ and $\Lc{i}_t$ satisfying \Cref{def: ReflectedBrownianMotion} with initial condition $X_0$. 
    
    Moreover, pathwise uniqueness holds: if $(\tiX_t, \tiLc{0}_{t},\ldots,\tiLc{{\nb}}_t)$ is another $\cG_t$-adapted processes with $\tiX_0 =X_0$ which satisfies \Cref{def: ReflectedBrownianMotion} with respect to the same $W_t$ and $s_i$, then $\tiX_t = X_t$ and $\tiLc{i}_t = \Lc{i}_t $ for every $t\geq 0$, almost surely.      
\end{proposition}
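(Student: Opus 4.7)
The plan is to construct $(X_t, \Lc{0}_t, \ldots, \Lc{m}_t)$ recursively between the jump times of the auxiliary chains $s_i$, reducing the problem on each piece to a classical reflected Brownian motion in a fixed smooth domain. Let $0 < \sigma^i_1 < \sigma^i_2 < \cdots$ denote the jump times of $s_i(\cdot)$ in its local-time argument. Since each $s_i$ is a continuous-time Markov chain with bounded rates $\lambda_i^\pm$, these are almost surely finite with no accumulation, and the gaps $\sigma^i_{k+1} - \sigma^i_k$ are independent exponentials. For any sign pattern $\vec{s} \in \{+1,-1\}^m$ let $D_{\vec{s}}$ denote the admissible connected component of $D \cap \bigcap_{i=1}^m \{\text{side of } B_i \text{ selected by } s_i\}$. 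Because the $B_i$ are pairwise disjoint smooth simple closed curves and $B_0 = \partial D$ is smooth, each $D_{\vec{s}}$ is a bounded domain whose boundary is a finite disjoint union of smooth simple closed curves.

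On each such domain, the classical Skorokhod reflection theory (Tanaka; Lions--Sznitman) provides a pathwise unique strong solution to the SDE $\intd Y_t = \intd W_t + \vec{n}(Y_t)\, \intd L_t$ for inward unit-normal reflection, together with continuous, nondecreasing boundary local times at each boundary component. Starting from $\vec{s}_0 \de (s_i(0))_{i=1}^m$, this produces a candidate $X_t$ with local times $\Lc{i}_t$ in $D_{\vec{s}_0}$ driven by $W$. Define $T_1 \de \inf\{t \geq 0 : \Lc{i}_t \geq \sigma^i_1 \text{ for some } i \geq 1\}$. At $T_1$ the almost-surely unique triggering index $j$ satisfies $X_{T_1} \in B_j$; the chain $s_j$ flips sign at its local-time argument $\Lc{j}_{T_1}$, so condition (iii) of the definition forces $X_t$ to continue on the opposite side of $B_j$. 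Restart the construction from $X_{T_1}$ in $D_{\vec{s}_1}$, where $\vec{s}_1$ is obtained from $\vec{s}_0$ by flipping the $j$-th coordinate, driven by the shifted Wiener increments $W_{T_1+\cdot} - W_{T_1}$ and the shifted chains $s_i(\Lc{i}_{T_1} + \cdot)$. Iterating defines $(X_t, \Lc{i}_t)$ on each interval $[T_k, T_{k+1}]$, and all three items of \Cref{def: ReflectedBrownianMotion} hold by construction; condition (ii) in particular holds because classical reflection local time only grows on the boundary.

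To extend the construction to all of $[0,\infty)$ it suffices to show $T_k \to \infty$ almost surely. Each increment $\Lc{i}_{T_{k+1}} - \Lc{i}_{T_k}$ consumes a strictly positive, independent exponential quantum of local time, while the local time of classical reflected Brownian motion in each $D_{\vec{s}}$ is almost surely locally finite; hence only finitely many switches occur on any bounded real-time interval. Pathwise uniqueness follows by induction on $k$: given that two $\cG_t$-adapted solutions agree on $[0, T_k]$, condition (iii) forces both to reflect from the same side of every $B_i$ on $[T_k, T_{k+1}]$, so classical pathwise uniqueness in the smooth domain $D_{\vec{s}_k}$ forces agreement there. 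Adaptedness to $\cG_t$ is immediate from the piecewise construction. The main technical nuisance is the bookkeeping for the subdomains $D_{\vec{s}}$: an interior $B_i$ can make $D_{\vec{s}}$ multiply connected, so its boundary is a finite disjoint union of smooth closed curves rather than a single one, but the multi-component version of Lions--Sznitman applies verbatim.
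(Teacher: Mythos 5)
Your construction matches the paper's: both decompose the trajectory into intervals between sign-flips, reduce to classical reflected Brownian motion on the admissible component of $D\setminus(\cup_i B_i)$ on each piece, and establish uniqueness by induction using the classical pathwise uniqueness of Skorokhod reflection in a smooth (possibly multiply connected) domain.

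The one step that needs tightening is your argument that the switch times $T_k$ do not accumulate. As written---``each increment consumes an independent exponential quantum of local time, while the local time of classical reflected Brownian motion is locally finite, hence only finitely many switches''---this is circular: the local-time increments over the successive sub-intervals belong to \emph{different} classical reflected Brownian motions, and you cannot conclude that their sum is finite on $[0,T]$ without already knowing that the glued process is defined on all of $[0,T]$ with finitely many pieces, which is precisely what you are trying to prove. Concretely, if $T_{k+1}-T_k$ were of order $1/k^2$, the per-interval local-time increments could be of order $1/k$, giving infinite accumulated local time even with $T_\infty=\sup_k T_k$ finite, so ``locally finite local time of each piece'' does not force finiteness of the total. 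The paper instead shows, via the estimates underlying \Cref{cor: TauBound} (which rest only on Wiener-process and exponential-distribution bounds and thus do not presuppose existence or the Markov property), that each increment $\tau_n - \tau_{n-1}$ exceeds a fixed $t>0$ with conditional probability at least $1/2$, whence $\sum_n (\tau_n - \tau_{n-1})$ diverges almost surely. Everything else in your proposal agrees with the paper's proof.
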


Let $X_t$ and $\Lc{i}_t$ be the unique $\cG_t$-adapted processes satisfying \Cref{def: ReflectedBrownianMotion} with initial condition $X_0$.
The stochastic differential equation in \Cref{def: ReflectedBrownianMotion} then suggests that $X_t$ and $\Lc{i}_t$ only depend on $s_j$ through $s_j(r)$ for $ r\leq \Lc{j}_t$, and that $X_t$ is Markovian. 
Indeed: 
\begin{proposition}\label{prop: Fadapted}
    For any $t\geq 0$ and $\ell_0,\ell_1,\ldots,\ell_{\nb} \geq 0$, let $\cF_{t,\ell_0,\ldots,\ell_{\nb}}$ denote the $\sigma$-algebra generated by the random variables $X_0$, $W_r$ with $0\leq r\leq t$, and $s_i(z_i)$ with $0\leq z_i \leq \ell_i $.  
    Then, the event $\{\Lc{i}_t \leq \ell_i,\, \forall i\leq {\nb}\}$ is in $\cF_{t,\ell_0,\ldots,\ell_{\nb}}$.   
    
    Further, let $\cF_t$ be the $\sigma$-algebra of events $E$ with $E \cap \{\Lc{i}_t \leq \ell_i,\, \forall i\leq {\nb}\} \in \cF_{t,\ell_0,\ldots,\ell_{\nb}}$ for every $\ell_0,\ell_1,\ldots,\ell_{\nb} \geq 0$. 
    Then, the processes $X_t$, $\Lc{i}_t$, and $s_i(\Lc{i}_t)$ are $\cF_t$-adapted.  
\end{proposition}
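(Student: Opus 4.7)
The plan is to reduce the measurability claim to \Cref{prop: ProcessExistsAndIsUnique} via a \emph{truncation} of the sign processes at their local-time arguments. The intuition is that by time $t$ the dynamics only consult each $s_i$ at arguments $z\in [0,\Lc{i}_t]$, so on the event $\{\Lc{i}_t \leq \ell_i \text{ for all }i\}$ replacing $s_i$ by any cadlag process agreeing with $s_i$ on $[0,\ell_i]$ should leave $(X,\Lc{i})$ unchanged up to time $t$.

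Concretely, I would introduce auxiliary sign processes $\hat{s}_i(z)\de s_i(z\wedge \ell_i)$, which are cadlag and measurable with respect to the $\sigma$-algebra generated by $\{s_i(r): 0\leq r\leq \ell_i\}$. I would then construct the reflected Brownian motion $(\hat{X}_t,\hat{L}^{(i)}_t)$ satisfying the analog of \Cref{def: ReflectedBrownianMotion} with $\hat{s}_i$ in place of $s_i$; the existence-and-uniqueness argument of \Cref{prop: ProcessExistsAndIsUnique} applies verbatim since it only uses that the sign processes are cadlag $\{\pm 1\}$-valued. Its uniqueness half then shows that $(\hat{X}_t,\hat{L}^{(i)}_t)$ is adapted to the $\sigma$-algebra generated by $X_0$, $\{W_r:r\leq t\}$, and $\{s_i(z):z\leq \ell_i\}$—which is precisely $\cF_{t,\ell_0,\ldots,\ell_{\nb}}$.

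The crux is to show that on $A_t\de \{\hat{L}^{(i)}_t \leq \ell_i\text{ for all }i\}$ the pair $(\hat{X},\hat{L}^{(i)})$ agrees with $(X,\Lc{i})$ on $[0,t]$, and in particular that $A_t = \{\Lc{i}_t \leq \ell_i\text{ for all }i\}$. For this I would introduce the stopping times $\hat{\sigma}\de\inf\{s:\hat{L}^{(i)}_s>\ell_i\text{ for some }i\}$ and $\sigma\de\inf\{s:\Lc{i}_s>\ell_i\text{ for some }i\}$ and observe that on $[0,\hat{\sigma})$ one has $\hat{s}_i(\hat{L}^{(i)}_s) = s_i(\hat{L}^{(i)}_s)$, so the truncated process also solves the \emph{original} equations of \Cref{def: ReflectedBrownianMotion} up to time $\hat{\sigma}$. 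A localized version of the pathwise uniqueness in \Cref{prop: ProcessExistsAndIsUnique}---applied to the two solutions $(X,\Lc{i})$ and $(\hat{X},\hat{L}^{(i)})$ stopped at $\sigma\wedge\hat{\sigma}$---then yields equality on $[0,t]$ throughout $A_t$, and forces $A_t = \{t<\sigma\}$.

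Given this identification, the conclusion is quick. The event $A_t$ belongs to $\cF_{t,\ell_0,\ldots,\ell_{\nb}}$ because $(\hat{X}_t,\hat{L}^{(i)}_t)$ does, proving the first assertion. For adaptedness one writes, for any Borel $E\subseteq D$, the identity $\{X_t\in E\}\cap \{\Lc{i}_t\leq \ell_i\text{ for all }i\} = \{\hat{X}_t\in E\}\cap A_t$ and likewise for $\Lc{i}_t$, so that $X_t$ and $\Lc{i}_t$ are $\cF_t$-measurable by definition of $\cF_t$. For $s_i(\Lc{i}_t)$, on $\{\Lc{i}_t\leq \ell_i\}$ this random variable equals $s_i(\Lc{i}_t\wedge \ell_i)$, which is measurable as a composition of $\Lc{i}_t$ with the restriction $s_i\rvert_{[0,\ell_i]}$. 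The main obstacle I anticipate is making the localized pathwise uniqueness fully rigorous: one must verify that the uniqueness proof of \Cref{prop: ProcessExistsAndIsUnique} is genuinely local, so that it yields agreement of two solutions whose driving data coincide only up to the random local-time threshold $\sigma\wedge \hat{\sigma}$, which is a stopping time of a nonstandard type (defined through hitting of local-time levels rather than spatial sets).
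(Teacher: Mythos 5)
Your approach is genuinely different from the paper's, so let me compare them. The paper proves the proposition by direct inspection of the gluing construction used in the existence proof for \Cref{prop: ProcessExistsAndIsUnique}: one traces through \eqref{eq:RoyalBox} and \eqref{eq:OccultFish} and observes that the definitions of $X_t$, $\Lc{i}_t$, and $\tau_n$ only consult $W$ up to time $t$ and $s_i$ at arguments in $[0,\Lc{i}_t]$, from which the claimed measurability follows. Your argument instead uses \Cref{prop: ProcessExistsAndIsUnique} as a black box: the truncated sign processes $\hat s_i(z)=s_i(z\wedge \ell_i)$ generate exactly the sub-$\sigma$-algebra $\cF_{t,\ell_0,\ldots,\ell_{\nb}}$ of $\cG_t$, and the corresponding solution $(\hat X,\hat L^{(i)})$ is then automatically $\cF_{t,\ell_0,\ldots,\ell_{\nb}}$-adapted by the proposition's adaptedness claim. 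This is a cleaner reduction and avoids re-examining the gluing construction, at the cost of needing an identification step that the paper does not.

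You correctly flag the weak point: the identification of $(X,\Lc{i})$ with $(\hat X, \hat L^{(i)})$ on $A_t$ requires a pathwise uniqueness statement that is localized to the random time $\sigma\wedge\hat\sigma$, whereas \Cref{prop: ProcessExistsAndIsUnique} only gives uniqueness of solutions on all of $[0,\infty)$ driven by \emph{the same} $W$ and $s_i$. The routine way to fill this in, which avoids formulating a separate stopped uniqueness theorem, is an \emph{extension} argument: at time $\hat\sigma$ one has $\hat s_i(\hat L^{(i)}_{\hat\sigma})=s_i(\hat L^{(i)}_{\hat\sigma})$, so $\hat X_{\hat\sigma}$ lies on the side of each $B_i$ dictated by $s_i(\hat L^{(i)}_{\hat\sigma})$; one then extends $(\hat X,\hat L^{(i)})$ past $\hat\sigma$ using the existence construction driven by the original $s_i$, obtaining a full solution to which the global pathwise uniqueness of \Cref{prop: ProcessExistsAndIsUnique} applies. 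This forces $(\hat X_s,\hat L^{(i)}_s)=(X_s,\Lc{i}_s)$ for $s\leq\hat\sigma$, hence $\sigma=\hat\sigma$ and $A_t=\{\Lc{i}_t\leq\ell_i,\forall i\}$. One should also note that $\sigma$ and $\hat\sigma$ are indeed $\cG_t$-stopping times (using the right-continuity of the completed filtration), which justifies the stopping. With these details supplied, your argument is correct; it is also worth remarking that the existence/uniqueness machinery of \Cref{prop: ProcessExistsAndIsUnique} does apply with $\hat s_i$ in place of $s_i$, since that proof only uses that the sign processes are \cadlag{} $\{\pm 1\}$-valued and never invokes their Markov property.
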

\begin{proposition}\label{prop: X_Markov}
    The process $X_t$ satisfies the weak Markov property. 
    That is, for any measurable $E \subseteq D$ and fixed $0 < u \leq t$, almost surely  
    \begin{align} 
        \bbP(X_t \in E \mid \cF_u) = \bbP(X_t \in E \mid X_u).  
    \end{align}
    Moreover, the process $\sX_t \de (X_t, (s_i(\Lc{i}_t)_{i=0}^{\nb}))$ satisfies the strong Markov property. 
    That is, for every $\cF_t$-stopping time $\tau$, every fixed $u \geq 0$, and every measurable $E \subseteq D \times \{-1,+1 \}^{{\nb}+1}$, almost surely
    \begin{align} 
        \bbP(\sX_{\tau + u} \in E \mid \cF_\tau) = \bbP(\sX_{\tau + u} \in E \mid \sX_\tau) 
    \end{align} 
    where $\cF_{\tau}$ is the $\sigma$-algebra consisting of events $\cE$ with $\cE \cap \{\tau \leq t \}\in \cF_t$ for all $t\geq 0$. 
\end{proposition}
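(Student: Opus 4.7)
The plan is to exploit pathwise uniqueness from \Cref{prop: ProcessExistsAndIsUnique} together with a time-shift of the driving inputs. For fixed $u > 0$, introduce the shifted noise $W^{\bc{u}}_t \de W_{u+t} - W_u$ and $s_i^{\bc{u}}(\ell) \de s_i(\Lc{i}_u + \ell)$. Inspecting \Cref{def: ReflectedBrownianMotion} shows that the shifted trajectory $(X_{u+t}, (\Lc{i}_{u+t} - \Lc{i}_u)_{i=0}^{\nb})_{t\geq 0}$ satisfies the same defining system with initial condition $(X_u, (s_i(\Lc{i}_u))_{i=0}^{\nb})$ and driving inputs $W^{\bc{u}}$ and $(s_i^{\bc{u}})_{i=0}^{\nb}$. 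Pathwise uniqueness then yields that this shifted trajectory is a measurable functional $\Phi$ of the initial data and the shifted noise.

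The core step is to verify that, conditional on $\sX_u = (X_u, (s_i(\Lc{i}_u))_{i=0}^{\nb})$, the shifted noise $(W^{\bc{u}}, (s_i^{\bc{u}})_{i=0}^{\nb})$ is independent of $\cF_u$. For $W^{\bc{u}}$, this reduces to the standard Markov property of Brownian motion at the deterministic time $u$, combined with the a priori independence of $W$ from $(X_0, (s_i)_i)$ postulated in \Cref{sec: GeneralPurposePrelim}. For each sign chain one uses that $\Lc{i}_u$ is $\cF_u$-measurable: slicing on the value of $\Lc{i}_u$ via the two-parameter filtration $\cF_{t,\ell_0,\ldots,\ell_{\nb}}$ from \Cref{prop: Fadapted}, the Markov property of each continuous-time chain $s_i$ gives that $s_i^{\bc{u}}$ is independent of $\sigma(s_i(r): r\leq \Lc{i}_u)$ given $s_i(\Lc{i}_u)$, while its independence from $X_0$ and from $(W_r)_{r\leq u}$ is automatic. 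Applying $\Phi$ then yields $\bbP(\sX_{u+h} \in E \mid \cF_u) = \bbP(\sX_{u+h} \in E \mid \sX_u)$ for every $h \geq 0$, which is already the fixed-time Markov property for the augmented process.

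To pass from this to the weak Markov property for $X_t$, it remains to show that $\bbP(X_u \in B_i) = 0$ for every fixed $u > 0$, since on the complement of $\{X_u \in \cup_i B_i\}$ condition \ref{item: Def_ReflectedBrownianMotion_iii} of \Cref{def: ReflectedBrownianMotion} uniquely recovers $(s_i(\Lc{i}_u))_i$ from $X_u$. This follows from a Fubini argument together with the existence of a Lebesgue density for $X_u$ at each positive time, which in turn is a consequence of the local comparison with planar Brownian motion carried out in \Cref{sec: Local} (each $B_i$ has vanishing planar Lebesgue measure). The strong Markov property for $\sX_t$ is then deduced from the fixed-time Markov property by the standard dyadic approximation $\tau_n \de 2^{-n}\lceil 2^n \tau\rceil \downarrow \tau$: each $\tau_n$ takes only countably many values, so decomposing along its level sets reduces the claim to the deterministic case, and one passes to the limit using right-continuity of $\sX_t$, which holds because $X$ is continuous and $s_i \circ \Lc{i}$ is \cadlag{} as the composition of a \cadlag{} function with a continuous nondecreasing one.

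The main obstacle throughout is the careful bookkeeping for the sign chains at the random time $\Lc{i}_u$; this is precisely what the two-parameter filtration of \Cref{prop: Fadapted} is designed to handle, and it is also the reason the strong Markov property must be formulated for the augmented state $\sX_t$ rather than for $X_t$ alone. A secondary technicality is the density statement used to show $\bbP(X_u \in B_i) = 0$; should a self-contained proof be preferred, one can avoid invoking \Cref{sec: Local} by instead noting that the occupation measure $\intd r \mapsto \bb1\{X_r \in B_i\}\intd r$ is supported on a set of $r$-values of Lebesgue measure zero (since the local time $\Lc{i}_r$ grows only on $\{X_r \in B_i\}$ and is continuous and singular with respect to Lebesgue measure), and then extracting the pointwise statement by Fubini and right-continuity in $u$.
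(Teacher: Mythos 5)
Your high-level plan matches the paper's: pathwise uniqueness, a time shift of the driving inputs, and the observation that the shifted noise is again a Wiener process and a collection of sign chains with the same law. The paper's actual proof (\Cref{lem: StrongMarkov}) proceeds by exactly this reduction. However, two steps in your sketch are thinner than they can afford to be.

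First, the phrase ``slicing on the value of $\Lc{i}_u$ via the two-parameter filtration'' is doing almost all the work in your argument, and it is not a one-line step. Since $\Lc{i}_u$ is a nondegenerate random variable even for deterministic $u$, what you actually need is a strong-Markov-at-a-random-time statement for the sign chain $s_i$, where ``random time'' is $\Lc{i}_u$ rather than a stopping time in the usual sense for $s_i$'s own filtration. The paper handles this by proving a standalone \emph{multivariate} strong Markov theorem (\Cref{lem: GeneralizedStrongMarkov}) for the joint process $M(t_1,\ldots,t_d) = (s_0(t_1),\ldots, s_m(t_{d-1}), W_{t_d})$, with $(\Lc{0}_\tau,\ldots,\Lc{\nb}_\tau,\tau)$ as a multivariate stopping time. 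The two-step proof there --- countable range first, then dyadic discretization plus the Feller property to pass to the limit --- is precisely the content your ``slicing'' gestures at. Relatedly, your plan to deduce the strong Markov property by approximating $\tau$ alone with $\tau_n = 2^{-n}\lceil 2^n\tau\rceil$ does not fully resolve the issue: even when $\tau_n = t$ is deterministic, the local times $\Lc{i}_t$ remain random, so the dyadic approximation must be carried out jointly over $(\Lc{0}_\tau,\ldots,\Lc{\nb}_\tau,\tau)$. In the limit passage you invoke only ``right-continuity of $\sX_t$,'' but the dominating step also requires continuity in the initial condition (the Feller property), which the paper checks explicitly and you omit.

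Second, your self-contained alternative for $\bbP(X_u\in B_i)=0$ has a genuine gap. The occupation-time-plus-Fubini argument gives $\bbP(X_r\in B_i)=0$ for Lebesgue-a.e.\ $r$, but this does not extend to \emph{every} fixed $u>0$ by ``right-continuity in $u$'': the map $u\mapsto \bbP(X_u\in B_i)$ is not right-continuous merely because $u \mapsto X_u$ is continuous, since $\bb1_{B_i}$ is discontinuous. The paper instead conditions on $\sX_{u-\varepsilon}$ and applies \Cref{lem: DistantBoundary}, which gives a quantitative lower bound on the distance to the barriers that is uniform in the initial condition and hence yields the statement for every $u>0$; note the paper's explicit remark that this does not create a circularity. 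Your first variant (via existence of a Lebesgue density for $X_u$) would also suffice in principle, but that density statement is not established in the paper and would itself require a separate argument.
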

\begin{remark}
    The strong Markov property is not satisfied by $X_t$ itself when ${\nb}\geq 1$.
    Indeed, consider the stopping time $\tau\de \inf\{t\geq 0: X_t \in B_i\}$ for some $i\geq 1$.
    Then, one can not infer from $X_\tau$ on what side of the barrier $X_{\tau + \varepsilon}$ should live.  
\end{remark}
The proofs of \Cref{prop: ProcessExistsAndIsUnique,prop: Fadapted,prop: X_Markov} are deferred to \Cref{apx: Proofs_GeneralPurposePrelim}.

\subsection{Local approximation technique for the barriers and process}\label{sec: Local}
We show in \Cref{sec: LocalApproxBarrier} that the barriers can locally be approximated by straight lines. 
Subsequently, we approximate the processes $X_t$ and $\Lc{i}_t$ in \Cref{sec: XtYtApprox,sec: LtApprox}, respectively.   
\subsubsection{Local approximation of the barriers}\label{sec: LocalApproxBarrier}
Consider a parameter $\delta >0$, and introduce
\begin{align} 
    r_\delta \de \delta\min\{1/\kappa, 1/\lambda_{\max},\rho \}. \label{eq: Def_R_eps}
\end{align} 
Recall from \eqref{eq: Def_kappa} that $\kappa$ is an upper bound on the curvature of the barriers. 
The relevance of $\delta\kappa^{-1}$ is hence that this is the scale at which the barriers are well-approximated by straight lines. 
The following elementary result, visualized in \Cref{fig: ZoomedStraight}, makes this precise. 

\begin{lemma}\label{lem: BarrierLocallyStraight}
    Suppose that $\delta < 1/2$. 
    Then, for every $x_0 \in \bbR^2$, there is at most one barrier $B_i$ which intersects $\sB(x_0,r_{\delta})$. 
    Moreover, if such a $B_i$ exists, then there exists a unit vector $ \hatn \in \bbR^2$ depending on $x_0$ and $B_i$ such that the following properties hold:
    \begin{enumerate}[leftmargin = 2em, label = (\arabic*)]
        \item\label{item:GhostlyQuip} There exists some $\mathfrak{c}\in \bbR$ such that $\lvert \langle  y, \hatn\rangle - \mathfrak{c} \rvert <  4\delta r_\delta$ for every $y\in B_i\cap \sB(x_0, r_\delta)$.    
        \item\label{item:IdleBat} One has $\Vert \vec{n}_i(y)-  \hatn \Vert < 2\delta$ for every $y\in B_i \cap \sB(x_0, r_\delta)$.
        \item\label{item:JollyDragon} Every point $z \in \sB(x_0, r_\delta)$ on the positive side of $B_i$ satisfies $ \langle  z, \hatn\rangle > \mathfrak{c} -  4\delta r_\delta$. 
        Similarly, every $z\in \sB(x_0, r_\delta)$ the negative side of $B_i$ satisfies $ \langle  z, \hatn\rangle < \mathfrak{c}  +  4\delta r_\delta$.    
    \end{enumerate}  
\end{lemma}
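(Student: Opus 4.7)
The plan is to combine the hypothesis $r_\delta \leq \delta \rho$ built into the definition of $r_\delta$ with the curvature bound $\kappa$ to locally approximate the unique relevant barrier by its tangent line at a carefully chosen base point. The uniqueness of the intersecting barrier is essentially immediate: if two distinct barriers $B_i$ and $B_j$ both met $\sB(x_0, r_\delta)$, then $\sB(x_0, r_\delta) \cap (\cup_k B_k)$ would decompose into the disjoint nonempty pieces $B_i \cap \sB(x_0, r_\delta)$ and $B_j \cap \sB(x_0, r_\delta)$, contradicting the connectedness property defining $\rho$ (recall $B_i \cap B_j = \emptyset$).

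For the remaining claims, assume $B_i$ is the unique intersecting barrier, let $y_0 \in B_i$ attain the minimum of $\Vert \cdot - x_0\Vert$ on $B_i$, and set $\hatn \de \vec{n}_i(y_0)$ together with $\mathfrak{c} \de \langle y_0, \hatn\rangle$. Because $y_0$ minimizes distance to $x_0$, the vector $y_0 - x_0$ is a scalar multiple of $\hatn$, so the unit tangent $T$ to $B_i$ at $y_0$ is orthogonal both to $\hatn$ and to $y_0 - x_0$. The connectedness property from $r_\delta \leq \rho$ forces $B_i \cap \sB(x_0, r_\delta)$ to be a single smooth arc through $y_0$, which I would parametrize by arc length as $\gamma(s)$ with $\gamma(0) = y_0$ and $\gamma'(0) = T$.

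The next step is to exploit $|\gamma''| \leq \kappa$ via Taylor expansion to derive
\[
\langle \gamma(s) - y_0, T\rangle \geq s - \kappa s^2/2 \qquad \text{and} \qquad |\langle \gamma(s) - y_0, \hatn\rangle| \leq \kappa s^2/2.
\]
Since $T \perp (y_0 - x_0)$, the projection $\langle \gamma(s) - y_0, T\rangle$ coincides with $\langle \gamma(s) - x_0, T\rangle$ and thus has absolute value strictly less than $r_\delta$. The hypothesis $\delta < 1/2$ forces $r_\delta < 1/(2\kappa)$, which together with the previous inequality rules out arc lengths $|s| > 1/\kappa$ and yields the crucial bound $|s| < 2 r_\delta$. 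Property \ref{item:IdleBat} then follows by integrating $|(\vec{n}_i \circ \gamma)'(u)| = k_i(\gamma(u)) \leq \kappa$ to obtain $\Vert \vec{n}_i(\gamma(s)) - \hatn\Vert \leq \kappa|s| < 2\delta$, and property \ref{item:GhostlyQuip} follows at once from the Taylor bound in the $\hatn$-direction, giving $|\langle y - y_0, \hatn\rangle| < 2\delta r_\delta < 4\delta r_\delta$.

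For property \ref{item:JollyDragon}, the idea is that \ref{item:GhostlyQuip} already confines $B_i \cap \sB(x_0, r_\delta)$ to the thin slab $\{x: |\langle x, \hatn\rangle - \mathfrak{c}| < 2\delta r_\delta\}$, so the two sets
\[
P_\pm \de \sB(x_0, r_\delta) \cap \{x: \pm(\langle x, \hatn\rangle - \mathfrak{c}) > 4\delta r_\delta\}
\]
are each convex (hence connected) and disjoint from $B_i$. Therefore each $P_\pm$ lies in a single component of $\bbR^2 \setminus B_i$, and one identifies $P_+$ with the positive side by noting that $\hatn$ points into the bounded component by definition, so a point of the form $y_0 + \varepsilon \hatn$ for suitably small $\varepsilon>0$ is on the positive side and provides a representative point. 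I expect the main subtlety to lie precisely here: one must verify that such a representative can always be chosen inside $\sB(x_0, r_\delta) \cap P_+$, or alternatively that the relevant $P_\pm$ is empty so the claim is vacuous. This requires handling the worst-case geometry where $y_0$ sits close to $\partial \sB(x_0, r_\delta)$ on the negative side of the slab, in which case an elementary computation with the explicit distance $|y_0 - x_0|$ shows that $P_+$ degenerates correspondingly.
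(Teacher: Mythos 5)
Your uniqueness argument and your treatment of items (1) and (2) coincide with the paper's: same choice of base point $y_0$ minimizing distance to $x_0$, same $\hatn\de\vec{n}_i(y_0)$, same arc-length parametrization together with the Frenet--Serret bound on $\phi''$, same observation that minimality forces $\phi'(0)\perp(y_0-x_0)$ and that $\delta<1/2$ caps the parameter range at $|s|<2r_\delta$. Your Taylor-expansion variant even gives the slightly sharper $2\delta r_\delta$ in item (1), which is harmless.

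For item (3) you take a genuinely different route, and it is the one place where the proposal has a real gap. The paper projects an arbitrary $z\in\sB(x_0,r_\delta)$ with $\langle z,\hatn\rangle\leq \mathfrak{c}-4\delta r_\delta$ onto the normal line through $y_0$, i.e.\ it sets $\hat z \de y_0+\langle z-y_0,\hatn\rangle\hatn = x_0+\langle z-x_0,\hatn\rangle\hatn$. The second expression immediately gives $\Vert\hat z - x_0\Vert\leq\Vert z-x_0\Vert<r_\delta$, so the projected representative automatically stays in the ball; no degeneracy analysis is needed. Moreover $\langle\hat z,\hatn\rangle=\langle z,\hatn\rangle$, so the segment from $z$ to $\hat z$ lives in a level set of $\langle\cdot,\hatn\rangle$ that item (1) keeps strictly off $B_i$, and convexity of the ball keeps the segment inside $\sB(x_0,r_\delta)$; hence $z$ and $\hat z$ lie on the same side of $B_i$. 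Finally the side of $\hat z=y_0+\alpha\hatn$ with $\alpha\leq -4\delta r_\delta<0$ is read off by a short preparatory argument showing that $y_0$ is the \emph{only} point of $B_i\cap\sB(x_0,r_\delta)$ of the form $y_0+\alpha\hatn$ (this follows from $\langle\phi'(0),\phi(s)-\phi(0)\rangle>0$ for $0<s<s_+$ and the analogous bound for $s<0$), combined with the fact that $\vec n_i$ points to the positive side. Your $P_\pm$-plus-representative scheme can be pushed through, and your sketched nonemptiness-vs-degeneracy case analysis is in fact correct if carried out carefully, but it requires exactly the kind of worst-case geometry computation you flag as the "main subtlety" and it also silently uses the preparatory fact above (to show the ray $\{y_0+t\hatn:t>0\}$ never re-enters $B_i$ inside the ball). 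The paper's projection argument removes both obstacles at once, so you should adopt it rather than patch the representative construction.
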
  
\begin{figure}[t]
    \includegraphics[width = 0.75\textwidth]{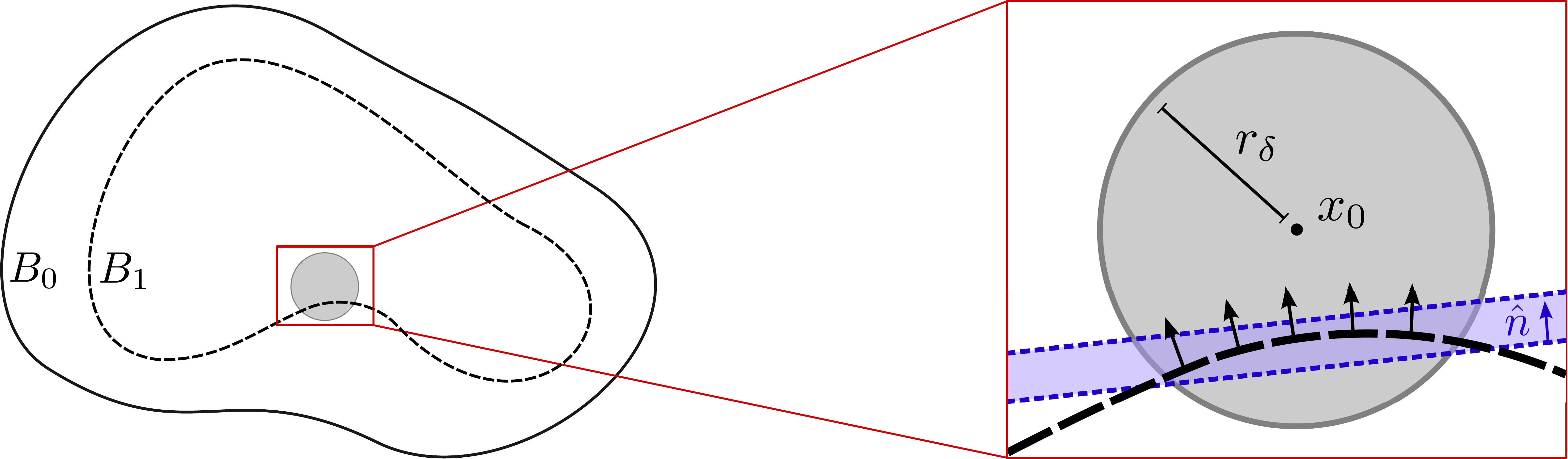}
    \caption{Visualization of \Cref{lem: BarrierLocallyStraight}.
    Locally in $\sB(x_0,r_{\delta})$, the barrier is contained in a strip of points $y$ with $\langle y, \hatn \rangle \approx \mathfrak{c}$, and the normal vectors $\vec{n}_i$ are well-approximated by $\hatn$. 
    }
    \label{fig: ZoomedStraight}
\end{figure}
This lemma follows from the Frenet--Serret formulas \cite[p.70--74]{dineen2014multivariate} which control the rate of change of the tangent and normal to $B_i$ in terms of curvature; see \Cref{apx: ProofBarrierLocallyStraight}.
\begin{remark}\label{rem: LocalStraight}
   \Cref{lem: BarrierLocallyStraight} would remain true if $r_\delta$ was replaced by $\min\{\delta /\kappa, \rho \}$. 
   However, $\lambda_{\max}$ will be relevant for some of the forthcoming argument following \Cref{cor: TauBound} 
   The current formulation serves to minimize notational burden, as $1/\lambda_{\max}$ will become a relevant scale of distance from \Cref{cor: TauBound} onwards.      
\end{remark}

\subsubsection{Coupling with a process reflecting on a straight line}\label{sec: XtYtApprox}
Let $\mathfrak{c}\in \bbR$ and $ \hatn \in \bbR^2$ be as in \Cref{lem: BarrierLocallyStraight} and define a straight line by   
\begin{align} 
    A^+ &\de \{y \in \bbR^2: \langle y, s_i(0) \hatn\rangle = s_i(0)\mathfrak{c} + 4\delta r_\delta \},\label{eq:SmartOrb}  
\end{align}
Then, we will compare $X_t$ with a process $Y^+_t$ which reflects on $A^+$ instead of $B_i$.

We have to specify an initial condition.   
In the ideal case, we take $Y_0^+ = x_0$. 
However, this is not always possible because we need the initial condition to lie on the side of $A^+$ towards which $s_i(0) \hatn$ points. 
This motivates the following definition:  
\begin{align} 
    y_0^+ &\de 
    \begin{cases}
        x_0 + (s_i(0)\mathfrak{c} + 4\delta r_\delta  - \langle  x_0, s_i(0) \hatn\rangle) s_i(0) \hatn& \text{ if }\langle x_0,  s_i(0) \hatn \rangle \leq s_i(0) \mathfrak{c} + 4\delta r_\delta ,\label{eq:RoyalLobster}\\ 
        x_0 &\text{ otherwise}.  
    \end{cases}
\end{align}
In other words, when $x_0$ lies on the side of $A^+$ towards which $s_i(0) \hatn$ points we take $y_0^{+} = x_0$, and otherwise we project onto $A^+$. 

Let $Y_t^+$ be a reflected Brownian motion with reflection on $A^+$, local time $K_t^+$, and initial condition $y_0^+$. 
That is, $Y^+_0 =y_0^+$ and the following stochastic differential equation holds: 
\begin{align} 
    \intd Y_t^+ = \intd W_t +  s_i(0) \hatn\, \intd K_t^+ \label{eq:MadRug}
\end{align} 
subject to the usual characterizing conditions\footnote{That is, $Y_t^+$ is continuous with values in $\{y\in \bbR^2:\langle y ,s_i(0) \hatn \rangle \geq s_i(0)\mathfrak{c} + 4\delta r_\delta  \}$, and $K_t^+$ is a continuous and nondecreasing process with $K_0^+ = 0$ which only increases when $Y_t^+ \in A^+$.\label{note: UsualConditions}}.
Crucially, it should here be understood that the $W_t$ in \eqref{eq:MadRug} is the same Wiener process as drives $X_t$ in \Cref{def: ReflectedBrownianMotion}; see \Cref{fig: ZoomedBM}. 

The following results show that $Y_t^+$ provides a good approximation to $X_t$ when $t$ is sufficiently small. 
More precisely, consider the stopping time defined by 
\begin{align} 
    \tau \de \inf\bigl\{t\geq 0: X_t \not\in \sB(x_0, r_\delta)\ \text{ or }\ s_i(\Lc{i}_t) \neq s_i(0) \text{ for some }i \leq {\nb}\bigr\}.\label{eq: Def_tau} 
\end{align}
Then, a good approximation will hold when $t\leq \tau$:
\begin{lemma}\label{lem: X_UpperLower_Y}
    Suppose that $\delta < 1/2$ and consider some $x_0\in D$ and $i\leq {\nb}$ with $\sB(x_0,r_\delta) \cap B_i \neq \emptyset$. 
    Then, for every $t\geq 0$,  
    \begin{align} 
        \langle Y_{\min\{t,\tau \}}^+  , s_i(0) \hatn \rangle - 8\delta r_\delta \leq \langle X_{\min\{t,\tau \}}, s_i(0) \hatn \rangle \leq \langle Y_{\min\{t,\tau \}}^+  , s_i(0) \hatn \rangle.\label{eq:SafeMouse}  
    \end{align} 
\end{lemma}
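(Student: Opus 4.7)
The plan is to project both processes onto $s_i(0)\hatn$, obtain a one-dimensional sandwich, and exploit the common driving Brownian motion. Set $u_t \de \langle X_t, s_i(0)\hatn\rangle$ and $v_t \de \langle Y_t^+, s_i(0)\hatn\rangle$, so that the lemma becomes $-8\delta r_\delta \leq u_t - v_t \leq 0$ on $[0,\tau]$. On this interval, \Cref{lem: BarrierLocallyStraight} guarantees that $B_i$ is the only barrier intersecting $\sB(x_0, r_\delta)$, and \eqref{eq: Def_tau} forces $s_i(\Lc{i}_t) = s_i(0)$, so pairing the SDE from \Cref{def: ReflectedBrownianMotion} with \eqref{eq:MadRug} and projecting cancels the common Brownian term. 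Writing $Z_t \de u_t - v_t$, this yields
\begin{align}
Z_t = Z_0 + \int_0^t \langle \vec{n}_i(X_r), \hatn\rangle\, \bb1\{X_r \in B_i\}\, \intd\Lc{i}_r - K^+_t, \nonumber
\end{align}
a continuous process of bounded variation whose $\intd\Lc{i}$-coefficient is strictly positive by property \ref{item:IdleBat} of \Cref{lem: BarrierLocallyStraight}.

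Next I would handle $Z_0$ via the case split in \eqref{eq:RoyalLobster}: either $y_0^+ = x_0$ and $Z_0 = 0$, or $y_0^+$ is the orthogonal projection of $x_0$ onto $A^+$, so $\langle y_0^+, s_i(0)\hatn\rangle = s_i(0)\mathfrak{c} + 4\delta r_\delta$ and property \ref{item:JollyDragon} of \Cref{lem: BarrierLocallyStraight} applied to $x_0$ on the $s_i(0)$-side of $B_i$ pins $Z_0 \in (-8\delta r_\delta, 0]$. The sandwich itself then follows from two sign-tracking arguments. For the upper bound, suppose for contradiction that $Z_t > 0$ for some $t \leq \tau$ and let $s \de \sup\{r \leq t: Z_r \leq 0\}$, so that $Z_s = 0$ and $Z_r > 0$ on $(s,t]$. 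Whenever $X_r \in B_i$, property \ref{item:GhostlyQuip} of \Cref{lem: BarrierLocallyStraight} combined with $v_r \geq s_i(0)\mathfrak{c} + 4\delta r_\delta$ forces $u_r \leq v_r$, hence $Z_r \leq 0$; therefore $X_r \notin B_i$ throughout $(s,t]$, the $\intd\Lc{i}$-integral vanishes, and $Z_t = Z_s - (K_t^+ - K_s^+) \leq 0$, a contradiction.

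The lower bound is symmetric: suppose $Z_t < -8\delta r_\delta$ for some $t \leq \tau$ and let $s \de \sup\{r \leq t: Z_r \geq -8\delta r_\delta\}$, so that $Z_s = -8\delta r_\delta$ and $Z_r < -8\delta r_\delta$ on $(s,t]$. Property \ref{item:JollyDragon} of \Cref{lem: BarrierLocallyStraight} gives $u_r > s_i(0)\mathfrak{c} - 4\delta r_\delta$ throughout $(s,t]$, which combined with $Z_r < -8\delta r_\delta$ forces $v_r > s_i(0)\mathfrak{c} + 4\delta r_\delta$, so $K^+$ is constant on $(s,t]$. Since the $\intd\Lc{i}$-integral is nonnegative, $Z_t \geq Z_s = -8\delta r_\delta$, a contradiction. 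The delicate step is writing down the evolution of $Z_t$ on $[0,\tau]$ correctly, i.e.\ verifying that no barrier other than $B_i$ contributes and that $s_i(\Lc{i}_t)$ really is frozen at $s_i(0)$; once that identity is secured, the two extremum arguments are elementary consequences of the localization items of \Cref{lem: BarrierLocallyStraight}.
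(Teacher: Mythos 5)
Your proof is correct and takes essentially the same approach as the paper: both project $X - Y^+$ onto $s_i(0)\hatn$, exploit the common Wiener driver so that $Z_t$ is of bounded variation, and use items \ref{item:GhostlyQuip}--\ref{item:JollyDragon} of \Cref{lem: BarrierLocallyStraight} to constrain where the $\Lc{i}$ and $K^+$ contributions can act. Your explicit last-crossing contradiction is simply a rigorous rendering of the paper's monotonicity assertion that the projected difference can only increase when it is negative and only decrease when it is above $-8\delta r_\delta$.
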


\begin{figure}[t]
    \centering
    \begin{subfigure}{.5\textwidth}
      \centering
      \includegraphics[width=.63\linewidth]{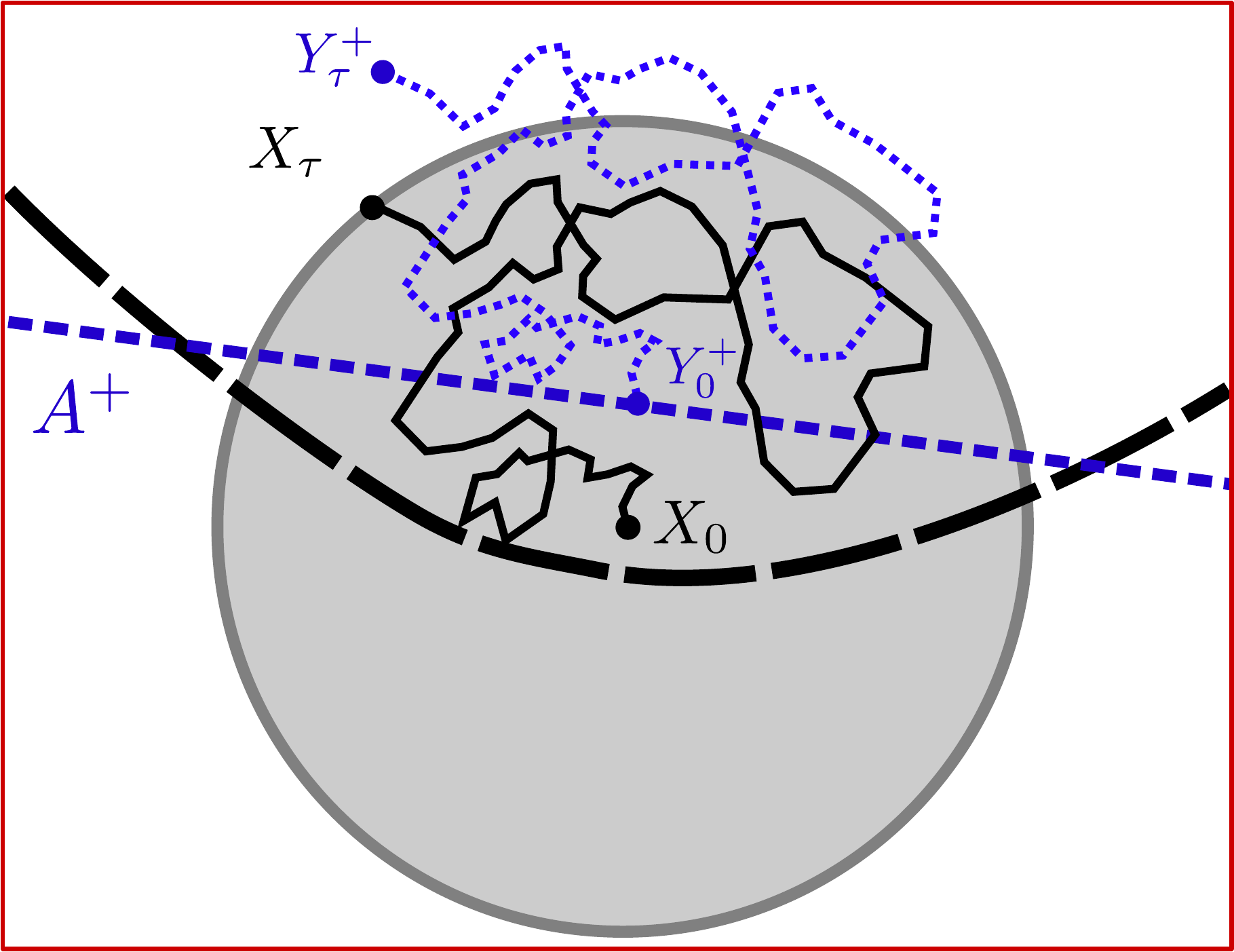}
    \end{subfigure}%
    \begin{subfigure}{.5\textwidth}
      \centering
      \includegraphics[width=.63\linewidth]{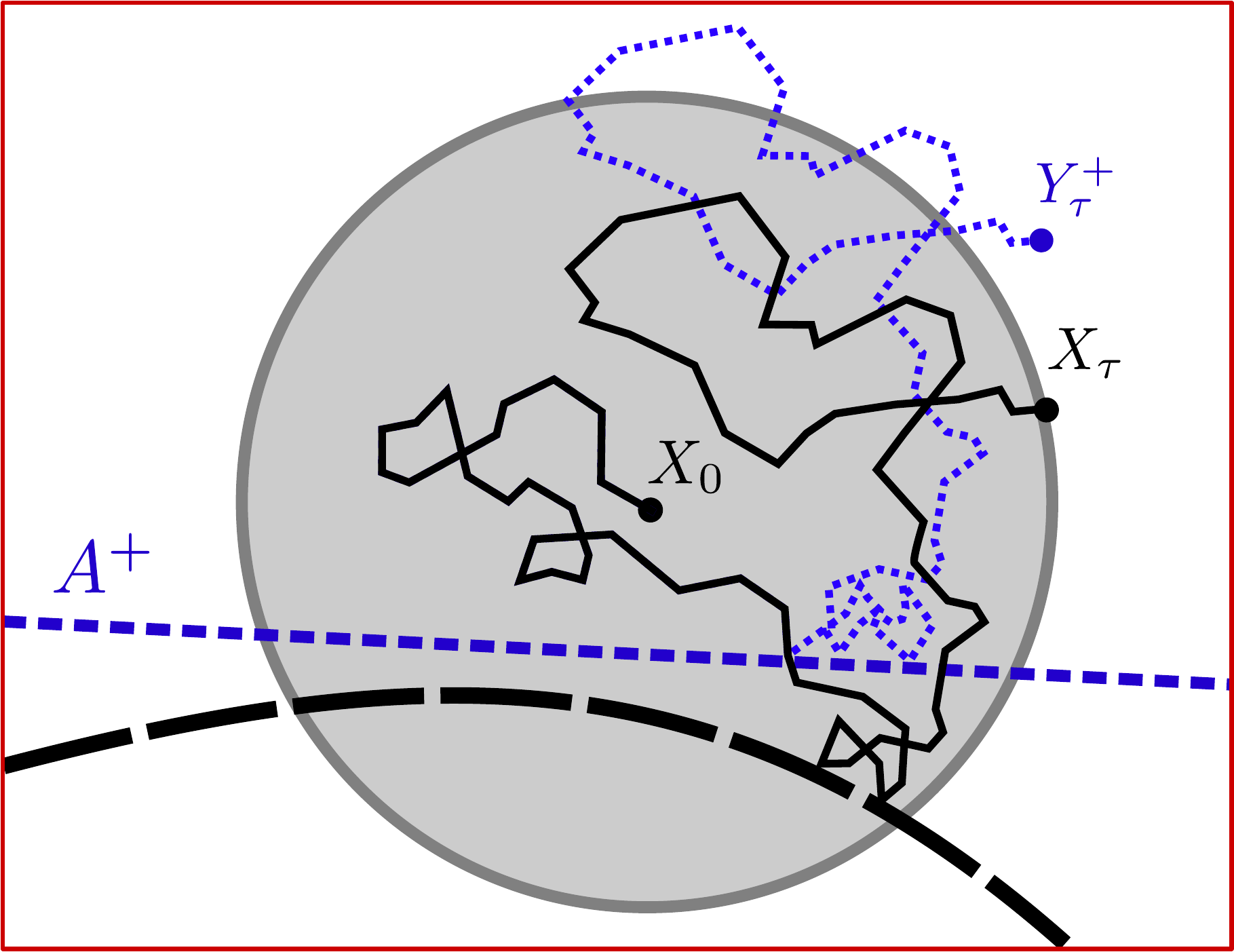}
    \end{subfigure}
    \caption{Visualization of the process $Y_t^+$ from \Cref{sec: XtYtApprox}.
    The figures on the left and right correspond to the first and second case of \eqref{eq:RoyalLobster}, respectively.
    }
    \label{fig: ZoomedBM}
\end{figure}
\begin{proof}
    Recall that $X_t$ and $Y_t^+$ are driven by the same Wiener process and recall the stochastic differential equations from \Cref{def: ReflectedBrownianMotion} and \eqref{eq:MadRug}.
    Then, using that $\Lc{j}_{t}=0$ for $t\leq \tau$ and $j\neq i$ since $\sB(x_0,r_\delta)$ does not intersect $B_j$, it holds for every $t\geq 0$ that 
    \begin{align} 
        X_{\min\{t,\tau \}}{}&{} - Y_{\min\{t,\tau \}}^+ \label{eq:SmartNut} \\ 
        &= x_0 - y_0^+ + {\textstyle \int_{0}^{\min\{t,\tau \}}} s_i(0) \vec{n}_i(X_r)\bb1\{X_r \in B_i \} \intd \Lc{i}_r - s_i(0) \hatn K_{\min\{t,\tau \}}^+.\nonumber    
    \end{align} 
    In particular, the function $f(t)\de \langle X_{\min\{t,\tau \}} - Y_{\min\{t,\tau \}}^+, s_i(0) \hatn \rangle$ can only increase at times when $X_{\min\{t,\tau \}} \in B_i$ and $t<\tau$.
    Using \cref{item:GhostlyQuip} from \Cref{lem: BarrierLocallyStraight} and using that $Y^+$ takes values on the side of $A^+$ towards which $s_i(0)\hat{n}$ points, it holds at such times that   
    \begin{align} 
        \langle X_{\min\{t,\tau \}},  s_i(0) \hatn \rangle < s_i(0)\mathfrak{c} + 4\delta r_\delta \ \text{ and }\ \langle Y_{\min\{t,\tau \}}^+,  s_i(0) \hatn \rangle\geq s_i(0)\mathfrak{c} + 4\delta r_\delta.
    \end{align}
    Hence, the function $f(t)$ can only increase when $f(t)<0$. 
    Since $f(0)\leq 0$ by definition of $y_0^+$ it follows that $f(t)\leq 0$ for all $t$. 
    The latter yields the upper bound in \eqref{eq:SafeMouse}.    

    We next consider the lower bound.  
    Note that $\langle \vec{n}_i(x), \hat{n} \rangle >0$ for every $x\in B_i \cap \sB(x_0,r_\delta)$ by \cref{item:IdleBat} from \Cref{lem: BarrierLocallyStraight}. 
    Hence, considering \eqref{eq:SmartNut}, the function $f(t)$ can only decrease at times when $Y_t^+ \in A^+$ and $t< \tau$. 
    By \eqref{eq:SmartOrb}, we have $Y_t^+ \in A^+$ if and only if $\langle Y_t^+, s_i(0) \hatn \rangle = s_i(0)\mathfrak{c} + 4\delta r_\delta$.
    Further, for every $t< \tau$ we have that $X_{t}$ lies in $\sB(x_0, r_\delta)$ on the side of $B_i$ towards which $s_i(0) \hatn$ points.
    Hence, also using \cref{item:JollyDragon} from \Cref{lem: BarrierLocallyStraight}, 
    \begin{align} 
        \langle X_{\min\{t,\tau \}}, s_i(0) \hatn \rangle > s_i(0)\mathfrak{c} - 4\delta r_\delta.\label{eq:GhostlyIce}
    \end{align}
    It follows that $f(t)$ can only decrease when $f(t) > -8\delta r_\delta$. 
    Combining this with the fact that $f(0) \leq -8\delta r_\delta$ by definition of $y_0^+$ yields $f(t)\leq -8\delta r_\delta$ for all $t$, proving the lower bound.  
\end{proof}

\begin{lemma}\label{lem: X_horizontal_Y}
    Assume that $\delta < 1/2$ and consider some $x_0\in D$ and $i\leq {\nb}$ with $\sB(x_0,r_\delta) \cap B_i \neq \emptyset$. 
    Let $\hatn^{\perp}\in \bbR^2$ be a unit vector orthogonal to $ \hatn$. 
    Then, for every $t\geq 0$,
    \begin{align} 
        \bigl\lvert \langle X_{\min\{t,\tau \}}   , \hatn^{\perp} \rangle  \rvert - \langle Y_{\min\{t,\tau \}}^+   , \hatn^{\perp} \rangle  \bigr\rvert\leq 2\delta \Lc{i}_{\min\{t,\tau \}} .\label{eq:HauntedMom}
    \end{align} 
\end{lemma}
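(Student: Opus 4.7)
The plan is to build directly on the identity that was derived in the proof of \Cref{lem: X_UpperLower_Y}, but take the inner product with $\hatn^\perp$ instead of with $s_i(0)\hatn$. Concretely, starting from
\begin{align}
    X_{\min\{t,\tau \}} - Y_{\min\{t,\tau \}}^+ = x_0 - y_0^+ + \int_{0}^{\min\{t,\tau \}} s_i(0) \vec{n}_i(X_r)\bb1\{X_r \in B_i \}\,\intd \Lc{i}_r - s_i(0) \hatn\, K_{\min\{t,\tau \}}^+,\nonumber
\end{align}
I would pair both sides with $\hatn^\perp$. Two of the three terms on the right disappear: by inspecting the definition \eqref{eq:RoyalLobster}, the vector $x_0 - y_0^+$ is a scalar multiple of $\hatn$, so $\langle x_0 - y_0^+, \hatn^\perp\rangle = 0$; and the reflection contribution for $Y^+$ is a scalar multiple of $\hatn$, so $\langle s_i(0) \hatn\, K_{\min\{t,\tau \}}^+, \hatn^\perp \rangle = 0$.

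What remains is
\begin{align}
    \langle X_{\min\{t,\tau \}} - Y_{\min\{t,\tau \}}^+, \hatn^\perp \rangle = s_i(0)\int_{0}^{\min\{t,\tau \}} \langle \vec{n}_i(X_r), \hatn^\perp \rangle \bb1\{X_r \in B_i \}\,\intd \Lc{i}_r,\nonumber
\end{align}
and the integrand is only nonzero when $X_r \in B_i$. By definition of $\tau$, for $r \leq \tau$ we have $X_r\in \sB(x_0,r_\delta)$, so the relevant values of $\vec{n}_i(X_r)$ fall under the scope of \cref{item:IdleBat} of \Cref{lem: BarrierLocallyStraight}. That item gives $\Vert \vec{n}_i(X_r) - \hatn\Vert < 2\delta$, and since $\hatn \perp \hatn^\perp$ and $\hatn^\perp$ is a unit vector, this implies $\lvert \langle \vec{n}_i(X_r), \hatn^\perp\rangle \rvert = \lvert \langle \vec{n}_i(X_r)-\hatn, \hatn^\perp \rangle\rvert < 2\delta$.

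Substituting into the integral and using that $\Lc{i}_r$ is nondecreasing yields
\begin{align}
    \bigl\lvert \langle X_{\min\{t,\tau \}} - Y_{\min\{t,\tau \}}^+, \hatn^\perp \rangle \bigr\rvert \leq 2\delta\, \Lc{i}_{\min\{t,\tau \}},\nonumber
\end{align}
which is the desired inequality. The argument is essentially routine once one notices the key geometric fact that the reflection terms are aligned with $\hatn$ so that only the $O(\delta)$ deviation of $\vec{n}_i$ from $\hatn$ contributes in the $\hatn^\perp$-direction; the minor obstacle is simply to confirm that the initial-condition term also projects to zero, which follows from the construction of $y_0^+$ in \eqref{eq:RoyalLobster}.
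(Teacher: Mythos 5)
Your proof is correct and follows the same route as the paper: take the inner product of \eqref{eq:SmartNut} with $\hatn^\perp$, observe that both the initial-condition term $x_0-y_0^+$ and the reflection term $s_i(0)\hatn K^+$ are parallel to $\hatn$ and hence vanish, and bound the surviving integral via \cref{item:IdleBat} of \Cref{lem: BarrierLocallyStraight}. You have merely written out in full the steps the paper leaves implicit.
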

\begin{proof}
    Recall from \eqref{eq:RoyalLobster} that $x_0 - y_0^+$ is a multiple of $ \hatn$. 
    Taking the inner product with $\hatn^{\perp}$ of \eqref{eq:SmartNut} and using that \cref{item:IdleBat} in \Cref{lem: BarrierLocallyStraight} implies that $\lvert \langle \vec{n}_i(x), \hatn^{\perp} \rangle \rvert \leq 2\delta$ for every $x\in B_i\cap \sB(x_0, r_\delta)$ then yields \eqref{eq:HauntedMom}. 
\end{proof}

\subsubsection{Local approximation for \texorpdfstring{$\Lc{i}_t$}{Lit} and consequences for the stopping time \texorpdfstring{$\tau$}{tau}}\label{sec: LtApprox}
To make effective use of \Cref{lem: X_UpperLower_Y,lem: X_horizontal_Y} it would be desirable to know that $t \leq \tau$ with high probability.
To this end, we start by studying $\Lc{i}_t$: 
\begin{lemma}\label{lem: TwoSidedLBound}
    Suppose that $\delta < 1/2$ and consider some $x_0\in D$ and $i\leq {\nb}$ with $\sB(x_0,r_\delta) \cap B_i \neq \emptyset$. 
    Then, with $ \hatn$ as in \Cref{lem: BarrierLocallyStraight}, for all $t\geq 0$ 
    \begin{align} 
        \Lc{i}_{\min\{t,\tau \}} &\leq (1-2\delta)^{-1}  \bigl\langle  X_{\min\{t,\tau \}} - X_0 - W_{\min\{t,\tau \}} ,s_i(0) \hatn\bigr\rangle,\label{eq:BraveLobster}  \\  
        \Lc{i}_{\min\{t,\tau \}} &\geq (1+2\delta)^{-1} \bigl\langle   X_{\min\{t,\tau \}} - X_0 - W_{\min\{t,\tau \}} ,s_i(0)  \hatn\bigr\rangle. \label{eq:OldGoose}
    \end{align}
    In particular, with $K_t^+$ as in \Cref{sec: XtYtApprox},  
    \begin{align} 
        (1+2\delta)^{-1}(K_{\min\{t,\tau \}}^+ - 8\delta r_\delta)   \leq \Lc{i}_{\min\{t,\tau \}} \leq (1-2\delta)^{-1}(K_{\min\{t,\tau \}}^+ + 8\delta r_\delta). \label{eq:VividFish}
    \end{align}
\end{lemma}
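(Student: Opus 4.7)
The plan is to work pathwise on $[0, \tau]$, where the SDE of \Cref{def: ReflectedBrownianMotion} collapses to a scalar identity. By \Cref{lem: BarrierLocallyStraight} the ball $\sB(x_0, r_\delta)$ meets only $B_i$, so the indicator $\bb1\{X_r \in B_j\}$ vanishes for $r \le \tau$ and $j \ne i$; the definition \eqref{eq: Def_tau} of $\tau$ further keeps $s_i(\Lc{i}_r)$ equal to $s_i(0)$ throughout. Hence \cref{item: Def_ReflectedBrownianMotion_i} reduces on $[0,\tau]$ to
\[
X_{\min\{t,\tau\}} - X_0 - W_{\min\{t,\tau\}} = s_i(0) \int_0^{\min\{t,\tau\}} \vec{n}_i(X_r)\, \bb1\{X_r \in B_i\}\, \intd \Lc{i}_r.
\]

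To obtain \eqref{eq:BraveLobster} and \eqref{eq:OldGoose}, I would take the inner product of this identity with $s_i(0)\hatn$. Because $\vec{n}_i(y)$ and $\hatn$ are unit vectors, \cref{item:IdleBat} of \Cref{lem: BarrierLocallyStraight} yields
\[
\langle \vec{n}_i(y), \hatn\rangle = 1 - \tfrac12\|\vec{n}_i(y)-\hatn\|^2 \in [1-2\delta^2, 1] \subseteq [1-2\delta, 1]
\]
for every $y \in B_i \cap \sB(x_0, r_\delta)$. Integrating against $\intd \Lc{i}_r$ therefore sandwiches $\langle X_{\min\{t,\tau\}} - X_0 - W_{\min\{t,\tau\}}, s_i(0)\hatn\rangle$ between $(1-2\delta)\Lc{i}_{\min\{t,\tau\}}$ and $\Lc{i}_{\min\{t,\tau\}}$. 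The lower inequality, rearranged, is exactly \eqref{eq:BraveLobster}, and the upper inequality together with $\Lc{i}_{\min\{t,\tau\}} \ge 0$ gives \eqref{eq:OldGoose} (dividing by $1+2\delta \ge 1$ when the inner product is nonnegative, and using $\Lc{i}_{\min\{t,\tau\}} \ge 0$ directly when it is negative).

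For \eqref{eq:VividFish}, the analogous identity for $Y_t^+$ follows immediately from the SDE \eqref{eq:MadRug}:
\[
\bigl\langle Y_{\min\{t,\tau\}}^+ - y_0^+ - W_{\min\{t,\tau\}}, s_i(0)\hatn\bigr\rangle = K_{\min\{t,\tau\}}^+.
\]
Subtracting this from the $X$-identity rewrites $\langle X_{\min\{t,\tau\}} - X_0 - W_{\min\{t,\tau\}}, s_i(0)\hatn\rangle$ as $K_{\min\{t,\tau\}}^+ + \langle X_{\min\{t,\tau\}} - Y_{\min\{t,\tau\}}^+, s_i(0)\hatn\rangle + \langle y_0^+ - X_0, s_i(0)\hatn\rangle$. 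The first correction lies in $[-8\delta r_\delta, 0]$ by \Cref{lem: X_UpperLower_Y}. For the second, a short case analysis starting from \eqref{eq:RoyalLobster} and using \cref{item:JollyDragon} of \Cref{lem: BarrierLocallyStraight} (which gives $\langle X_0, s_i(0)\hatn\rangle \ge s_i(0)\mathfrak{c} - 4\delta r_\delta$) places it in $[0, 8\delta r_\delta]$. Plugging these into the two-sided bound from the previous paragraph produces $(1-2\delta)\Lc{i}_{\min\{t,\tau\}} \le K_{\min\{t,\tau\}}^+ + 8\delta r_\delta$ and $\Lc{i}_{\min\{t,\tau\}} \ge K_{\min\{t,\tau\}}^+ - 8\delta r_\delta$, from which \eqref{eq:VividFish} follows; for the lower inequality, one uses $\Lc{i}_{\min\{t,\tau\}} \ge 0$ when $K_{\min\{t,\tau\}}^+ < 8\delta r_\delta$, and otherwise divides by $1+2\delta \ge 1$.

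The main obstacle is purely bookkeeping: the bound $\langle \vec{n}_i(y), \hatn\rangle \in [1-2\delta, 1]$ is asymmetric, so one has to track carefully which direction each inequality points through the successive substitutions, and handle the edge cases where $K_{\min\{t,\tau\}}^+$ is too small for the $(1+2\delta)^{-1}$ factor to be informative. No deeper probabilistic input is needed beyond the already-proved coupling of \Cref{lem: X_UpperLower_Y}.
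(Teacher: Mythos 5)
Your proof is correct and follows essentially the same route as the paper: reduce the SDE to the single-barrier case on $[0,\tau]$, take the inner product with $s_i(0)\hatn$, and use the near-alignment of $\vec{n}_i$ with $\hatn$ to sandwich the local time; then compare with $K^+$ via \Cref{lem: X_UpperLower_Y}. Two minor differences: you exploit that $\vec{n}_i(y)$ and $\hatn$ are unit vectors to get the sharper interval $\langle \vec{n}_i(y),\hatn\rangle \in [1-2\delta^2,1]$, whereas the paper just applies Cauchy--Schwarz to obtain $[1-2\delta,1+2\delta]$ (which is all that the stated bounds need); and for \eqref{eq:VividFish} you split the discrepancy into the two corrections $\langle X - Y^+,s_i(0)\hatn\rangle$ and $\langle y_0^+ - X_0,s_i(0)\hatn\rangle$ rather than invoking \eqref{eq:SafeMouse} at times $0$ and $t$ in one step, but the bookkeeping is equivalent. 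Your case analysis for \eqref{eq:OldGoose} is in fact unnecessary: since $\delta < 1/2$, the sandwich gives $\langle X_{\min\{t,\tau\}} - X_0 - W_{\min\{t,\tau\}},s_i(0)\hatn\rangle \geq (1-2\delta)\Lc{i}_{\min\{t,\tau\}} \geq 0$, so the inner product is automatically nonnegative.
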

\begin{proof}
    By \Cref{lem: BarrierLocallyStraight}, $B_i$ is the only barrier which intersects $\sB(x_0,r_\delta)$ and \eqref{eq: Def_tau} implies that $s_i(\Lc{i}_t) = s_i(0)$ for all $t < \tau$. 
    Hence, by the stochastic differential equation in \Cref{def: ReflectedBrownianMotion}, 
    \begin{align} 
        X_{\min\{t,\tau \}} - X_0 = W_{\min\{t,\tau \}} + \int_{0}^{\min\{t,\tau \}}  s_i(0) \vec{n}_i(X_s) \bb1\{X_s \in B_i \}\intd \Lc{i}_s. \label{eq:LuckyInk}   
    \end{align} 
    \Cref{lem: BarrierLocallyStraight} yields that $\Vert  \hatn - \vec{n}_i(y) \Vert \leq 2\delta$ for every $y\in B_i \cap \sB(x_0, r_\delta)$. 
    Since $ \hatn$ is a unit vector, it follows that $\langle \vec{n}_i(y), \hatn \rangle\in [1- 2\delta, 1 + 2\delta]$ for every $y\in B_i \cap \sB(x_0, r_\delta)$. 
    The bounds in \eqref{eq:BraveLobster} and \eqref{eq:OldGoose} now follow by rearranging \eqref{eq:LuckyInk}, taking the inner product with $s_i(0) \hatn$, and using \cref{item: Def_ReflectedBrownianMotion_ii} from \Cref{def: ReflectedBrownianMotion}.  

    Further, using \eqref{eq:SafeMouse} at times $0$ and $t$, 
    \begin{align} 
        \langle X_{\min\{t,\tau\}} - X_0   ,s_i(0)\hatn \rangle  \leq \langle Y^+_{\min\{t,\tau\}} - y_0^+   ,s_i(0)\hatn \rangle   +  8\delta r_\delta.  
    \end{align}
    Note that $\langle Y_{\min\{t,\tau\}}^+ - y_0^+ - W_{\min \{t, \tau \}},s_i(0)\hatn \rangle = K_{\min\{t,\tau \}}^+$ by \eqref{eq:MadRug} to conclude the upper bound in \eqref{eq:VividFish}. 
    The lower bound proceeds similarly.   
\end{proof} 

\begin{lemma}\label{lem: LtauUpper}
    Suppose that $\delta \leq 1/4$ and consider some $x_0\in D$ and $i\leq {\nb}$ with $\sB(x_0,r_\delta) \cap B_i \neq \emptyset$. 
    Then, for every $t\geq 0$, 
    \begin{align} 
        K_{\min\{t,\tau\}}^+  &\leq \sup\{\langle- W_s, s_i(0)\hatn \rangle: s\leq t \}, \label{eq:ZombieBall}\\ 
        \Lc{i}_{\min\{t,\tau\}} &\leq 2\sup\{\langle- W_s, s_i(0)\hatn \rangle: s\leq t \} + 16\delta r_\delta,  \label{eq:RedPop}\\     
        \sup_{s\leq \min\{t,\tau\}}\Vert X_s - X_0  \Vert &\leq 3\sup_{s\leq t}\Vert W_s \Vert + 16\delta r_\delta.\label{eq:ZombieBat}
    \end{align}
\end{lemma}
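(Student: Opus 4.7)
The plan is to treat the three inequalities in turn, starting from a Skorokhod-type representation of $K^+_t$ and then assembling the previously established bounds.

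For \eqref{eq:ZombieBall}, I will project the stochastic differential equation \eqref{eq:MadRug} for $Y^+_t$ onto the direction $s_i(0)\hatn$ to obtain a one-dimensional reflected Brownian motion on the half-line $\{z: z \geq s_i(0)\mathfrak{c}+ 4 \delta r_\delta\}$. Concretely, writing $Z_t \de \langle Y^+_t , s_i(0)\hatn\rangle$ and $B_t \de \langle W_t , s_i(0)\hatn\rangle$, the process $Z$ solves $Z_t = \langle y^+_0, s_i(0)\hatn\rangle + B_t + K^+_t$ with $Z_t \geq s_i(0)\mathfrak{c}+4\delta r_\delta$ and $K^+$ nondecreasing and supported on $\{t: Z_t = s_i(0)\mathfrak{c} + 4\delta r_\delta\}$. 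Skorokhod's lemma then gives the closed-form $K^+_t = \sup_{s\leq t}\bigl( s_i(0)\mathfrak{c}+4\delta r_\delta - \langle y^+_0, s_i(0)\hatn\rangle - B_s\bigr)_+$. By the very definition of $y^+_0$ in \eqref{eq:RoyalLobster} one has $\langle y^+_0, s_i(0)\hatn\rangle \geq s_i(0)\mathfrak{c} + 4\delta r_\delta$, so the first two terms inside the positive part are nonpositive and we obtain $K^+_t \leq \sup_{s\leq t}(-B_s)_+ = \sup_{s\leq t}\langle -W_s, s_i(0)\hatn\rangle$, where the last equality uses $W_0=0$. Since $K^+$ is nondecreasing, $K^+_{\min\{t,\tau\}}\leq K^+_t$, and \eqref{eq:ZombieBall} follows.

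For \eqref{eq:RedPop}, I will combine \eqref{eq:ZombieBall} with the upper bound in \eqref{eq:VividFish} from \Cref{lem: TwoSidedLBound}, namely $\Lc{i}_{\min\{t,\tau\}} \leq (1-2\delta)^{-1}(K^+_{\min\{t,\tau\}} + 8\delta r_\delta)$. Using the hypothesis $\delta \leq 1/4$, one has $(1-2\delta)^{-1}\leq 2$, and substituting \eqref{eq:ZombieBall} yields exactly $\Lc{i}_{\min\{t,\tau\}} \leq 2\sup_{s\leq t}\langle -W_s, s_i(0)\hatn\rangle + 16\delta r_\delta$.

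Finally, for \eqref{eq:ZombieBat}, I will return to \Cref{def: ReflectedBrownianMotion}: for $s\leq \tau$, the only barrier that can contribute to the drift term is $B_i$ (since $\sB(x_0,r_\delta)$ meets no other $B_j$ by \Cref{lem: BarrierLocallyStraight}) and all of the $s_j(\Lc{j}_r)$ remain constant. Hence, as in \eqref{eq:LuckyInk}, $X_{\min\{t,\tau\}} - X_0 = W_{\min\{t,\tau\}} + \int_0^{\min\{t,\tau\}} s_i(0)\vec{n}_i(X_r)\bb1\{X_r\in B_i\}\intd \Lc{i}_r$. Taking norms, using that $\vec{n}_i$ is a unit vector and that $\Lc{i}$ is nondecreasing with total variation $\Lc{i}_{\min\{t,\tau\}}$, this gives the pointwise bound $\Vert X_s - X_0 \Vert \leq \Vert W_s \Vert + \Lc{i}_{\min\{s,\tau\}}$ for every $s\leq \min\{t,\tau\}$. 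Taking suprema over $s\leq \min\{t,\tau\}$, applying \eqref{eq:RedPop}, and bounding $\langle -W_s, s_i(0)\hatn\rangle \leq \Vert W_s \Vert$ via Cauchy--Schwarz yields \eqref{eq:ZombieBat}. The argument has no real obstacle; the only delicate step is invoking Skorokhod's lemma in the correct direction and verifying that the definition of $y^+_0$ eliminates the boundary-overshoot term, which is precisely what \eqref{eq:RoyalLobster} was set up to ensure.
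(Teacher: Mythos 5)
Your proof is correct and follows the same route as the paper: it derives the explicit Skorokhod representation for $K^+_t$ (the paper cites this as \cite[Proposition 1]{anderson1976small}), observes that the definition of $y_0^+$ in \eqref{eq:RoyalLobster} makes the initial-offset term nonpositive to get \eqref{eq:ZombieBall}, then combines with \eqref{eq:VividFish} and $\delta \leq 1/4$ for \eqref{eq:RedPop}, and finally uses the norm bound from \eqref{eq:LuckyInk} together with Cauchy--Schwarz for \eqref{eq:ZombieBat}. No substantive differences.
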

\begin{proof}
    Since $Y_t^+$ reflects on a straight line, it is a classical that the local time admits an explicit formula; see \eg \cite[Proposition 1]{anderson1976small}.
    Specifically, the unique process satisfying \eqref{eq:MadRug} and the characterization of \cref{note: UsualConditions} on page \pageref{note: UsualConditions} is  
    \begin{align} 
        K_t^+ = \max\bigl\{0 ,\sup\{s_i(0)\mathfrak{c}+4\delta r_\delta  - \langle y_0^+ + W_s  , s_i(0)\hatn \rangle: s\leq t\} \bigr\}. \label{eq:OldJar}
    \end{align} 
    In particular, since $\langle y_0^+, s_i(0)\hatn \rangle \geq  s_i(0)\mathfrak{c} + 4\delta r_\delta $ we have \eqref{eq:ZombieBall}.
    It was here also used that the right-hand side of \eqref{eq:ZombieBall} is a nondecreasing function to simplify by letting $s$ range up to $t$ instead of $\min\{t,\tau \}$.   
    Now, \eqref{eq:VividFish} with the assumption that $\delta \leq 1/4$ implies \eqref{eq:RedPop}.
    Further, since $\Vert X_{\min\{t,\tau\}} - X_0 \Vert \leq  \Vert W_{\min\{t,\tau \}} \Vert + \Lc{i}_{\min\{t,\tau \}}$ by \eqref{eq:LuckyInk}, we then also have \eqref{eq:ZombieBat}.  
\end{proof}

\begin{corollary}\label{cor: TauBound}
    There exist absolute constants $\delta_0, C >0$ such that the following holds. 
    Suppose that $\delta \leq \delta_0$. 
    Then, for every $x_0\in D$ and $t\geq 0$, with $\tau$ as in \eqref{eq: Def_tau},  
    \begin{align} 
        \bbP(\tau \leq t) \leq \delta +  C(t/r_\delta^2). \label{eq:CozyXray}     
    \end{align}
\end{corollary}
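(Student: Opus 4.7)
The plan is to decompose $\{\tau \leq t\}$ according to which clause in \eqref{eq: Def_tau} triggers first. Introducing the spatial exit time $\tau_X \de \inf\{u \geq 0: X_u \notin \sB(x_0,r_\delta)\}$ and the first sign-flip time $\tau_S \de \inf\{u \geq 0: s_j(\Lc{j}_u) \neq s_j(0) \text{ for some } j\leq \nb\}$, we have $\tau = \tau_X \wedge \tau_S$ and therefore
\begin{align}
\bbP(\tau\leq t) \leq \bbP(\tau_X \leq \tau \wedge t) + \bbP(\tau_S \leq \tau\wedge t).\nonumber
\end{align}
Throughout I would assume $X_0 = x_0$ (the only case in which the statement is nontrivial). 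For the first term, on $\{\tau_X \leq \tau \wedge t\}$ the bound \eqref{eq:ZombieBat} is valid up to $\tau_X$ and gives $r_\delta \leq 3\sup_{s\leq t}\Vert W_s\Vert + 16\delta r_\delta$, which forces $\sup_{s\leq t}\Vert W_s\Vert \geq r_\delta/6$ once $\delta_0 \leq 1/32$; Doob's maximal inequality applied to the submartingale $\Vert W_s\Vert^2$, together with $\bbE[\Vert W_t\Vert^2] = 2t$, then contributes $O(t/r_\delta^2)$.

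For the sign-flip term, \Cref{lem: BarrierLocallyStraight} guarantees that at most one barrier $B_{i^*}$ intersects $\sB(x_0,r_\delta)$; if none does the term vanishes. Otherwise, since $s_{i^*}$ is independent of $W$ and does not change before $\tau_S$, I would couple with a frozen process $\tilde{X}$ obtained by clamping $s_{i^*}$ at $s_{i^*}(0)$ and driven by the same $W$. Writing $\tilde{L}$ and $\tilde{\tau}_X$ for the associated frozen local time at $B_{i^*}$ and the frozen ball-exit time (both $W$-measurable), and $T^* \sim \operatorname{Exp}(\lambda_{i^*}^{s_{i^*}(0)})$ for the first flip time of $s_{i^*}$ (independent of $W$), the two processes agree before any flip, so one obtains the identity $\{\tau_S \leq \tau \wedge t\} = \{\tilde{L}_{\tilde{\tau}_X \wedge t} \geq T^*\}$. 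A threshold argument at level $\ell \de \delta/\lambda_{\max}$ then gives $\bbP(T^* \leq \ell) \leq 1 - \exp(-\lambda_{\max}\ell) \leq \delta$, while the analogue of \eqref{eq:RedPop} for the frozen process yields $\tilde{L}_{\tilde{\tau}_X \wedge t} \leq 2\sup_{s\leq t}\Vert W_s\Vert + 16\delta r_\delta$. Using $r_\delta \leq \delta/\lambda_{\max}$, one has $16\delta r_\delta \leq \ell/2$ for $\delta_0$ small enough, so the residual event is contained in $\{\sup_{s\leq t}\Vert W_s\Vert \geq \ell/4\}$; Doob then contributes $O(\lambda_{\max}^2 t/\delta^2) \leq O(t/r_\delta^2)$.

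The main obstacle is the interdependence between $\tau$, $T^*$, and the local time: because $\tau$ itself depends on whether and when $s_{i^*}$ flips, the quantity $\Lc{i^*}_{\tau\wedge t}$ is not $W$-measurable, and a naive conditioning on $T^*$ fails. The frozen coupling is precisely what resolves this, replacing $\Lc{i^*}_{\tau \wedge t}$ with the $W$-measurable $\tilde{L}_{\tilde{\tau}_X \wedge t}$, neatly separating the two sources of randomness and enabling a direct comparison with the independent exponential $T^*$.
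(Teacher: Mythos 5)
Your proof is correct, but it routes through a genuinely different decomposition than the paper's. You split according to which clause of $\tau$ fires first (ball exit $\tau_X$ vs.\ sign flip $\tau_S$) and then build an explicit frozen coupling to handle the flip term, whereas the paper splits on whether $\sup_{s\leq t}\Vert W_s\Vert \geq r_\delta/6$ and lets the pathwise inequalities \eqref{eq:ZombieBat} and \eqref{eq:RedPop} do the rest: on the complementary small-$W$ event, $\Vert X_s - X_0\Vert < r_\delta$ for $s\leq\min\{t,\tau\}$, so the exit clause cannot trigger, and $\Lc{i}_{\min\{t,\tau\}}\leq (5/6)r_\delta$, so $\{\tau\leq t\}$ is deterministically contained in $\{E\leq (5/6)r_\delta\}$, which has probability $\leq\lambda_{\max}r_\delta\leq\delta$ regardless of any independence structure. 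This dissolves what you identify as the main obstacle: there is no need to decouple the randomness, because \eqref{eq:RedPop} already bounds the stopped local time by a $W$-measurable quantity, turning the problem into an inclusion of events rather than a conditioning argument. What your frozen coupling buys is a transparent identity, $\{\tau_S\leq\tau\wedge t\}=\{\tilde{L}_{\tilde\tau_X\wedge t}\geq T^*\}$, which cleanly separates the $W$- and $s_{i^*}$-randomness and would be useful if finer information about the flip term were needed; what the paper's route buys is brevity, needing no auxiliary process. Your threshold $\ell=\delta/\lambda_{\max}$ and the paper's $(5/6)r_\delta$ are interchangeable since $r_\delta\leq\delta/\lambda_{\max}$, and Doob's inequality on $\Vert W_s\Vert^2$ is just a repackaging of the paper's scaling-plus-Markov estimate, so the constants agree up to absolute factors.
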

\begin{proof}
    This result is stated for \emph{arbitrary} $x_0\in D$. 
    If $\sB(x_0,r_\delta) \cap (\cup_{i=0}^{\nb} B_i) = \emptyset$, then $\tau \leq t$ if and only if $\sup_{s\leq t} \Vert W_t \Vert \geq r_\delta$ since $\Lc{i}_t$ can only increase when $X_t \in B_i$; recall \eqref{eq: Def_tau} and \Cref{def: ReflectedBrownianMotion}. 
    Then, 
    $ 
        \bbP(\tau \leq t) \leq \bbP(\sup_{s\leq t}\Vert W_s \Vert  \geq r_\delta )
    $
    from which \eqref{eq:CozyXray} follows readily, and one does not require the first term; see also \eqref{eq:GhostlyHotel} below.
    
    Now assume that $\sB(x_0,r_\delta) \cap B_i \neq \emptyset$ for some $i\leq {\nb}$. 
    Then, by the law of total probability, 
    \begin{align} 
        \bbP\Bigl(\tau \leq t\Bigr) &\leq \bbP\Bigl(\sup_{s\leq t}\Vert W_s \Vert  \geq r_\delta /6  \Bigr) +  \bbP\Bigl(\tau \leq t \text{ and }  \sup_{s\leq t}\Vert W_s \Vert  < r_\delta /6  \Bigr)\label{eq:CalmBus}. 
    \end{align}
    Here, using the scaling principle together with Markov's inequality\footnote{Sharper bounds are also available, of course, but this elementary estimate suffices for our purposes.},  
    \begin{align} 
        \bbP\Bigl(\sup_{s\leq t}\Vert W_s \Vert  \geq r_\delta /6   \Bigr) = \bbP\Bigl(\sup_{s\leq 1}\Vert W_s \Vert^2  \geq \bigl(r_\delta /(6\sqrt{t} )\bigr)^2   \Bigr) \leq C (t/r_{\delta}^2)\label{eq:GhostlyHotel}
    \end{align}
    for some sufficiently large absolute constant $C>0$. 

    Regarding the second term in \eqref{eq:CalmBus}, the definition \eqref{eq: Def_tau} yields that $\tau \leq t$ if and only if $\sup_{s \leq \min\{t,\tau \}}\Vert X_s  - X_0 \Vert \geq r_{\delta}$ or $E \leq \Lc{i}_{\min\{t,\tau \}}$ with $E \de \inf\{\ell >0: s_i(\ell) \neq s_i(0) \}$. 
    To bound $\Vert X_s  - X_0 \Vert$, we can here use \eqref{eq:RedPop} from \Cref{lem: LtauUpper} and take  $\delta_0$ sufficiently small so that $r_\delta/2 + 16\delta r_\delta < r_\delta$. 
    Then, using \eqref{eq:ZombieBat} and some direct calculations to bound $\Lc{i}_{\min\{t,\tau \}}$,
    \begin{align} 
        \bbP\Bigl(\tau \leq t \text{ and }  \sup_{s\leq t}\Vert W_s \Vert  < r_\delta /6  \Bigr) \leq  \bbP\Bigl(E \leq  \frac{5}{6} r_\delta \Bigr). 
    \end{align}
    If $i = 0$, then it was defined that $s_i(t) = 1$ for all $t\geq 0$ and hence $\bbP(E \leq r_\delta) = 0$ because $E = \infty$. 
    Now consider $i\geq 1$. 
    Then, $s_i$ is a continuous-time Markov chain on $\{-1,1 \}$ with transition rates $\lambda_i^+$ and $\lambda_i^-$. 
    Hence, $E$ is exponentially distributed with mean $ 1/\lambda_i^+$ or $1/\lambda_i^-$, depending on $s_i(0)$. 
    In particular,  
    \begin{align} 
        \bbP\Bigl(E \leq \frac{5}{6}r_\delta\Bigr) \leq \bbP\Bigl(E \leq r_\delta\Bigr) \leq 1 - \exp\bigl(-\lambda_{\max} r_\delta  \bigr) \leq \lambda_{\max} r_\delta \leq \delta  \label{eq:VagueXray} 
    \end{align} 
    where we used \eqref{eq: Def_lambda_minmax} and \eqref{eq: Def_R_eps}. 
    Combine  \eqref{eq:CalmBus}--\eqref{eq:VagueXray} to conclude. 
\end{proof}  

The quality of the coupling is now immediate.
Indeed, combining \Cref{lem: X_UpperLower_Y,lem: X_horizontal_Y} with \eqref{eq:RedPop} from \Cref{lem: LtauUpper}, the distance between $X_{\min\{t,\tau \}}$ and $Y_{\min\{t,\tau \}}^+$ is at most $C\delta(r_{\delta} +\sup_{s\leq t} \Vert W_s \Vert)$ for some $C>0$. 
We may here assume that $\tau > t$ and $\sup_{s\leq t} \Vert W_s \Vert < r_\delta$ with high probability; recall \Cref{cor: TauBound} and \eqref{eq:GhostlyHotel}. 
This gives the following:
\begin{corollary}\label{cor: X_uniform_Y}
    There exist absolute constants $\delta_0, C_1, C_2 >0$ such that the following holds. 
    Suppose that $\delta \leq \delta_0$ and consider some $x_0\in D$ and $i\leq {\nb}$ with $\sB(x_0,r_\delta) \cap B_i \neq \emptyset$.
    Then, for every $t\geq 0$, with $Y^+_t$ as in \Cref{sec: XtYtApprox},
    \begin{align} 
        \bbP\Bigl( \sup_{s \leq t}\Vert X_s - Y_s^+ \Vert > C_1\delta r_\delta \Bigr) \leq \delta + C_2 \bigl(t/r_\delta^2\bigr).\label{eq:SafePizza} 
    \end{align}
    Moreover, for every $x_0 \in D$ with $\sB(x_0, r_\delta) \cap (\cup_{i=0}^{\nb} B_i)  =  \emptyset$, 
    \begin{align} 
        \bbP\Bigl( \sup_{s \leq t}\Vert X_s - (x_0 + W_s) \Vert > 0 \Bigr) \leq C_2 \bigl(t/r_\delta^2\bigr). \label{eq:BlueCity}
    \end{align} 
\end{corollary}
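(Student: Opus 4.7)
The plan is to combine the deterministic coordinate-wise bounds from Lemmas~\ref{lem: X_UpperLower_Y} and~\ref{lem: X_horizontal_Y} with the tail estimate of Corollary~\ref{cor: TauBound} to obtain \eqref{eq:SafePizza}, and to handle \eqref{eq:BlueCity} via a short argument based on pathwise uniqueness.

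For \eqref{eq:SafePizza}, the approach is to decompose $X_s - Y_s^+$ along the orthogonal directions $\hatn$ and $\hatn^\perp$. The $\hatn$-component is controlled directly by \eqref{eq:SafeMouse}, which yields a deterministic bound of $8\delta r_\delta$ on the stochastic interval $[0,\tau]$. For the $\hatn^\perp$-component, \eqref{eq:HauntedMom} gives a pathwise bound of order $\delta \Lc{i}_{\min\{s,\tau\}}$, and I would feed in the explicit local time estimate \eqref{eq:RedPop} from Lemma~\ref{lem: LtauUpper} to reduce it to $2\delta\sup_{r\leq t}\Vert W_r\Vert + 32\delta^2 r_\delta$. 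I would then restrict to the event
$
\cE \de \{\tau > t\} \cap \{\sup_{s\leq t}\Vert W_s\Vert \leq r_\delta/6\},
$
on which both components are $O(\delta r_\delta)$ uniformly in $s \leq t$, giving $\sup_{s\leq t}\Vert X_s - Y_s^+\Vert \leq C_1 \delta r_\delta$ for a suitable absolute constant $C_1$ (shrinking $\delta_0$ if needed so that the quadratic $32\delta^2$ term is absorbed). The complement $\cE^c$ is handled by Corollary~\ref{cor: TauBound} for $\{\tau \leq t\}$ and by the Brownian tail bound \eqref{eq:GhostlyHotel} for the Wiener supremum, together producing $\bbP(\cE^c) \leq \delta + C_2(t/r_\delta^2)$.

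The claim \eqref{eq:BlueCity} is more elementary. If $\sB(x_0,r_\delta)$ avoids every barrier, then all local time terms in Definition~\ref{def: ReflectedBrownianMotion}\ref{item: Def_ReflectedBrownianMotion_i} vanish as long as $X_s$ remains in this ball, since by \ref{item: Def_ReflectedBrownianMotion_ii} the $\Lc{i}_s$ can only grow when $X_s\in B_i$. Consequently, the process $\hat{X}_s \de x_0 + W_s$ solves the defining SDE up to the first exit of $W$ from $\sB(0,r_\delta)$, and pathwise uniqueness (Proposition~\ref{prop: ProcessExistsAndIsUnique}) forces $X_s = x_0 + W_s$ on $\{\sup_{r\leq s}\Vert W_r\Vert < r_\delta\}$. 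Hence the failure event is contained in $\{\sup_{s\leq t}\Vert W_s\Vert \geq r_\delta\}$, whose probability is bounded by Brownian scaling and Markov's inequality exactly as in \eqref{eq:GhostlyHotel}. The main obstacle I anticipate is purely bookkeeping: combining the deterministic bounds on the two coordinates into a norm bound with an explicit, absolute constant $C_1$, while making sure the event $\cE$ is compatible with the hypotheses of Lemmas~\ref{lem: X_UpperLower_Y}--\ref{lem: LtauUpper}. This is routine and introduces no essential difficulty.
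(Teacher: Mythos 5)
Your argument is correct and coincides with the paper's (implicit) proof: the paper introduces the corollary with exactly the same chain — combine Lemmas~\ref{lem: X_UpperLower_Y} and~\ref{lem: X_horizontal_Y} with \eqref{eq:RedPop} to get $\Vert X_{\min\{s,\tau\}} - Y^+_{\min\{s,\tau\}}\Vert \lesssim \delta(r_\delta + \sup_{r\le t}\Vert W_r\Vert)$, then pass to the good event $\{\tau > t\}\cap\{\sup\Vert W\Vert < r_\delta\}$ controlled by Corollary~\ref{cor: TauBound} and \eqref{eq:GhostlyHotel}. The only cosmetic difference is your treatment of \eqref{eq:BlueCity} via pathwise uniqueness, where the paper's proof of Corollary~\ref{cor: TauBound} instead observes directly that all $\Lc{i}_s = 0$ for $s<\tau$ so the SDE in Definition~\ref{def: ReflectedBrownianMotion} reduces to $\intd X_s = \intd W_s$; both routes are fine. (Minor arithmetic: feeding \eqref{eq:RedPop} into \eqref{eq:HauntedMom} gives $4\delta\sup\Vert W\Vert + 32\delta^2 r_\delta$, not $2\delta\sup\Vert W\Vert + 32\delta^2 r_\delta$, but this does not affect the conclusion.)
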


\begin{remark}\label{rem: ExactCenter}
    That $X_t$ starts from the exact center of the ball $\sB(x_0, r_\delta)$ is merely a notational convenience and is not essential for \eqref{eq:SafePizza}. 
    That is, if one instead considers a process starting from some $\tix_0$ on the same side as $x_0$ and with $\Vert x_0 - \tix_0 \Vert \leq r_\delta/2$, then the proof remains valid up to modifications to the absolute constants due to the fact that a stronger bound on $\sup_{s\leq t} \Vert W_s \Vert$ is then be required in \eqref{eq:CalmBus}.
\end{remark}

\section{Proof of \texorpdfstring{\Cref{thm: Main_GlobalRecovery}}{Theorem}}\label{sec: ProofsPropsFixedFreq}

Algorithms for the fixed-frequency regime are given in \Cref{sec: PrelimGlobal} and the proofs of consistency are given in \Cref{sec: ProofContinuity,sec: Proof_Discontinuity,sec: ProofGlobalConclude}.  
Let us note that, while our theoretical performance guarantees are only when $t\leq c \min\{1/\kappa^2, 1/\lambda_{\max}^2, \rho^2 \}$ for some small $c>0$, these algorithms are explicitly designed to also cope when the time between samples is large.
It would be interesting future work to also quantify the performance in such regimes. 
\subsection{Algorithms}\label{sec: PrelimGlobal}
The key concept captured by reflected Brownian motion with semipermeable barriers is that $X_t$ typically stays on the same side of barriers as $X_0$.
This suggests a discontinuity for the transition kernel $\bbP(X_t \in \cdot \mid X_0 = x_0)$ when $x_0$ crosses a barrier. 
Our estimation procedures will exploit this discontinuity.

For a notion of continuity, we will rely on the \emph{Wasserstein distance with truncation level $\mathfrak{u}>0$}.
That is, the metric for probability distributions on $\bbR^2$ defined by   
\begin{align} 
    \Wu (P,Q) \de \inf_{X \sim P, Y \sim Q} \bbE\bigl[d_{\mathfrak{u}}(X,Y)\bigr] \quad \text{ with } \quad d_{\mathfrak{u}}(x,y) \de \min\{\Vert x-y \Vert , \mathfrak{u}\}\label{eq: Def_W1u}    
\end{align}
where the infimum runs over couplings with marginal distributions $P$ and $Q$. 
The truncation is here mainly a technical convenience: it avoids worries regarding outliers in the observed data which could, in principle, lead to false positives. 

We further require an empirical estimator for the transition kernel. 
To this end, suppose that we are given a subset $S \subseteq \bbR^2$, set $\cI_S \de \{0 \leq i \leq \lfloor T/t\rfloor -1 : X_{it}\in S \}$, and define 
\begin{align} 
    \hat{P}_S \de   
    \begin{cases}
                \text{the zero measure }& \text{ if }\cI_{S} = \emptyset\\
                (\# \cI_{S})^{-1} \sum_{i\in \# \cI_{S}} \delta_{X_{(i+1)t}} & \text{ else.}
    \end{cases}
    \label{eq: Def_hatP}
\end{align}
Having the case $\cI_S = \emptyset$ in mind, let us also extend $\Wu$ by defining that $\Wu(P,o) \de \mathfrak{u}$ and $\Wu(o,o) \de 0$ when $o$ is the zero measure and $P$ is a probability measure. 

\Cref{alg: KernelDiscontinuity} discretizes the domain into small boxes and detects a discontinuity by comparing the associated transition kernels.
This estimation procedure is consistent: 
\begin{proposition}\label{prop: Main_GlobalRecovery_Algorithm}
    There exist absolute constants $c_1,\ldots,c_5 >0$ such that the following holds for every $\eta \in (0,1)$, $t\leq c_1 \min\{1/\kappa^2, 1/\lambda_{\max}^2 ,\rho^2\}$, $\varepsilon \leq c_2 \sqrt{t}$, and $T>0$. 

    Assume that $X_0\sim\pi$ starts in stationarity and that the observation time satisfies 
    \begin{align} 
        T \geq  c_3\frac{\tmix}{ \varepsilon^2 \pi_{\min}} \ln\Bigl(\frac{1}{\eta}\frac{\Area(D)}{\varepsilon^2} \Bigr)\label{eq:HighUser} 
    \end{align}
    Then, the output of \Cref{alg: KernelDiscontinuity} with sensitivity threshold $\mathfrak{s} \de c_4\sqrt{t}$, discretization scale $\epsilon \de \varepsilon/(3\sqrt{2})$, and truncation level $\mathfrak{u} \de c_5 \sqrt{t}$ satisfies $\bbP(\dH(\hat{B}, \cup_{i=0}^{\nb} B_i) \leq \varepsilon) \geq 1-\eta$. 
\end{proposition}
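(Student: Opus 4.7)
The strategy is to exploit the discontinuity of the transition kernel $x_0\mapsto P_{t,x_0}(\cdot) \de \bbP(X_t\in \cdot\mid X_0=x_0)$ across a barrier. Concretely, I would first prove that this kernel is Lipschitz (in $W_1^{\mathfrak{u}}$) away from barriers but experiences a jump of order $\sqrt{t}$ across them, then show empirical kernels concentrate uniformly, and finally read off the Hausdorff guarantee from the discretization scale $\epsilon=\varepsilon/(3\sqrt 2)$.

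\textbf{Step 1 (Two-sided behavior of the true kernel).} For the \emph{continuity} half, let $x_0,x_0'$ lie in a single box $S$ that avoids the $r_\delta$-tube around $\cup_i B_i$ for a fixed small $\delta$. Using the synchronous coupling in \Cref{cor: X_uniform_Y} (in particular \eqref{eq:BlueCity}), the two processes driven by the same $W_t$ agree with $x_0+W_t$ and $x_0'+W_t$ up to time $\tau$, so $\Vert X_t-X_t'\Vert\leq \Vert x_0-x_0'\Vert\leq \sqrt{2}\epsilon$ with probability $1-O(t/r_\delta^2)$. Choosing $\mathfrak{u}=c_5\sqrt{t}$ gives $W_1^{\mathfrak{u}}(P_{t,x_0},P_{t,x_0'})\leq \sqrt{2}\epsilon + O(\sqrt{t}\cdot t/r_\delta^2)\ll \mathfrak{s}$. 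For the \emph{discontinuity} half, take $x_0,x_0'\in \sB(x^\ast,r_\delta)$ on opposite sides of a common barrier $B_i$. Couple each to the corresponding straight-barrier process $Y^{+}_t,Y^{-}_t$ reflecting on $A^{\pm}$ via \Cref{lem: X_UpperLower_Y}; the one-dimensional marginal of $\langle Y^\pm_t,s_i(0)\hat n\rangle$ is an explicit Brownian motion reflected at a line, whose law on the two sides is a non-negative (resp.\ non-positive) translation of $\sqrt{t}$-Gaussian. A short Kantorovich duality argument applied to the test function $\psi(y)=\min\{\mathfrak{u},\max\{0,\langle y,s_i(0)\hat n\rangle-\mathfrak{c}\}\}$ then yields $W_1^{\mathfrak{u}}(\cL(Y^+_t),\cL(Y^-_t))\geq c\sqrt{t}$ with an absolute $c>0$. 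Transferring back through the coupling costs at most $O(\delta r_\delta)+O(\sqrt{t}\cdot t/r_\delta^2)$, which is $\ll\sqrt{t}$ under the hypothesis $t\lesssim\min\{1/\kappa^2,1/\lambda_{\max}^2,\rho^2\}$.

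\textbf{Step 2 (Empirical concentration).} Discretize $D$ into $N=O(\Area(D)/\varepsilon^2)$ boxes $S$ of side $\epsilon$. By the mixing bound \eqref{eq: Def_tmix}, iterate $X_{jt}$ is within $1/4$ of $\pi$ in total variation after each batch of $\lceil \tmix/t\rceil$ steps, so via a standard blocking/chaining argument the number of visits $\#\cI_S$ satisfies $\#\cI_S\gtrsim (T/\tmix)\pi(S)$ with probability at least $1-\eta/N$ provided the right-hand side exceeds $C\log(N/\eta)$. Because $\pi(S)\geq \pi_{\min}\epsilon^2\gtrsim \pi_{\min}\varepsilon^2$, this is precisely the hypothesis \eqref{eq:HighUser}. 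Conditioning on the visit times and using the weak Markov property from \Cref{prop: X_Markov}, the samples $\{X_{(i+1)t}\}_{i\in\cI_S}$ are (essentially) iid draws from $\bar P_S\de \Area(S)^{-1}\int_S P_{t,x_0}\intd x_0$; since all support is in a ball of radius $O(\sqrt{t})=O(\mathfrak{u})$ by the Wiener-process tail bound \eqref{eq:GhostlyHotel}, a Dudley/entropy bound gives $W_1^{\mathfrak{u}}(\hat P_S,\bar P_S)\leq C\mathfrak{u}\,(\#\cI_S)^{-1/2}\log(\#\cI_S)\ll\sqrt{t}$ uniformly over the $N$ boxes by a union bound.

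\textbf{Step 3 (Classification and Hausdorff bound).} Combining Steps 1 and 2, with the thresholds chosen in the statement, with probability $\geq 1-\eta$ simultaneously for all pairs of adjacent boxes $S_1,S_2$: if the pair straddles some $B_i$, then $W_1^{\mathfrak{u}}(\hat P_{S_1},\hat P_{S_2})>\mathfrak{s}$; otherwise $W_1^{\mathfrak{u}}(\hat P_{S_1},\hat P_{S_2})<\mathfrak{s}$. Declaring $\hat B$ as the union of such straddling boxes, every point of $\hat B$ is within $\sqrt{2}\epsilon=\varepsilon/3$ of $\cup_i B_i$, and conversely every barrier point lies in some straddled pair by the geometric assumptions encoded by $\rho$ and $\kappa$. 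Hence $\dH(\hat B,\cup_i B_i)\leq \varepsilon$.

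\textbf{Main obstacle.} The delicate step is the lower bound $W_1^{\mathfrak{u}}(P_{t,x_0},P_{t,x_0'})\geq c\sqrt{t}$ across a barrier: naive coupling gives only something decaying with the permeability, and one must transport the explicit one-dimensional half-line computation through the planar approximation in \Cref{sec: Local} without losing the $\sqrt{t}$ scale. Choosing the Kantorovich test function to see only the component normal to $\hat n$ (where the one-dimensional comparison is tight) is what makes this work, but the bookkeeping of the error terms from \Cref{lem: X_UpperLower_Y,cor: TauBound} needs to be tracked carefully against $\lambda_{\max}$.
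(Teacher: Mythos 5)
The overall architecture — detect a $\Theta(\sqrt t)$ discontinuity of the one-step kernel across barriers and threshold at $\mathfrak{s}\asymp\sqrt t$ — matches the paper's, and your duality route to the lower bound in Step 1 is a legitimate alternative to what the paper does. The paper (\Cref{cor: Discontinuity}) instead argues directly on couplings: combine \Cref{lem: SameSideOfBarrier} (both processes stay on their initial sides w.h.p.) with \Cref{lem: DistantBoundary} ($X_t$ is distance $\geq c\sqrt t$ from every barrier w.h.p.); on the intersection of these events any coupling has $\Vert X_t-\tilde X_t\Vert\geq c\sqrt t$, giving the $W_1^{\mathfrak u}$ lower bound with no duality or projection onto $\hat n$. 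Your test-function approach is workable but requires the error budget $\mathfrak{u}\cdot t/r_\delta^2\ll\sqrt t$, which only holds if $\delta$ is calibrated to make $r_\delta$ a \emph{large} multiple of $\sqrt t$ (not $r_\delta\sim\sqrt t$ as you suggest); the paper's coupling argument sidesteps this bookkeeping.

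The genuine gap is in Step 2. The claim that, after conditioning on the visit times $\cI_S$, the samples $\{X_{(i+1)t}\}_{i\in\cI_S}$ are ``essentially iid draws from $\bar P_S$'' is false. Successive returns to $S$ are strongly correlated — the increments between visits are not independent and the conditional law of $X_{(i+1)t}$ depends on the exact location of $X_{it}\in S$, which is itself coupled to past and future visit times. A Dudley/entropy bound for iid empirical measures does not apply, and no simple conditioning repairs this. The paper handles the dependence by a genuine Markov-chain concentration inequality (\Cref{lem: Paulin}, invoking \cite{paulin2015concentration}) applied to the indicator counts $N(E)$, combined with a binning trick (\Cref{lem: WuBin_approxerror}–\ref{lem: ConcentrationBinned}) to convert TV-concentration of a finitely supported estimate into a $W_1^{\mathfrak u}$ bound. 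Relatedly, your normalizing measure for $\bar P_S$ is wrong: under stationarity the empirical kernel concentrates to the $\pi$-weighted average $P_S=\pi(S)^{-1}\int_S P_{t,x_0}\,\intd\pi(x_0)$, not the uniform average $\Area(S)^{-1}\int_S P_{t,x_0}\,\intd x_0$. These differ, and the discrepancy matters because $\pi$ need not be close to uniform near a low-permeability barrier.

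Finally, Step 3 is missing the geometric ingredient: you assert that every barrier point lies in a straddled pair of boxes ``by the geometric assumptions encoded by $\rho$ and $\kappa$,'' but this is exactly where one must prove (as in \Cref{lem: BoxPosBoxNeg}) that for any box $\cS(j,k)$ meeting $B_i$ there exist index shifts $\lvert h_j\rvert,\lvert h_k\rvert\leq 2$ giving boxes lying \emph{entirely} on the positive and negative sides of $B_i$ and \emph{missing all barriers}; otherwise the triangle-inequality argument that forces the threshold to trigger does not go through.
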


{\small
    \begin{algorithm} [ht]
        \caption{
        }
        \label{alg: KernelDiscontinuity}
    \begin{flushleft}
        \textbf{INPUT:} Observed data sequence $\{X_{it}: i=0,\ldots,\lfloor T/t \rfloor \}$.\\
        \hphantom{\textbf{INPUT:}} Sensitivity threshold $\mathfrak{s}>0$, discretization scale $\epsilon>0$, and truncation level $\mathfrak{u} >0$.
    \end{flushleft}
    \begin{minipage}{0.72\textwidth}
    \begin{flushleft} 
        \textbf{OUTPUT:} Subset $\hat{B}\subseteq \bbR^2$ approximating $\cup_{i=0}^{\nb} B_i$.   
    \end{flushleft}
        \begin{algorithmic}[1]
            \State $\hat{B} \gets \emptyset$.
            \ForAll {integers $j,k \in \bbZ$}
            \ForAll {integers $-2 \leq h_j,h_k \leq 2$} 
                \State $\cS(j+ h_j,k + h_k)$ 
                \Statex \hfill  $\gets [(j+h_j)\epsilon, (j+ h_j + 1) \epsilon]\times [(k+h_k)\epsilon, (k+h_k+1) \epsilon]$ 
            \EndFor 
            \If {$\Wu (\hat{P}_{\cS(j,k)}, \hat{P}_{\cS(j+h_j , k + h_k)}) \geq \mathfrak{s}$ for some $\lvert h_j \rvert, \lvert h_k \rvert \leq 2$}
                \State $\hat{B} \gets \hat{B} \cup \cS(j,k)$
            \EndIf
            \EndFor  
        \end{algorithmic}
    \end{minipage}%
    \hfill 
    \begin{minipage}{0.24\textwidth}
        \centering 
        \includegraphics[width = 0.95\textwidth]{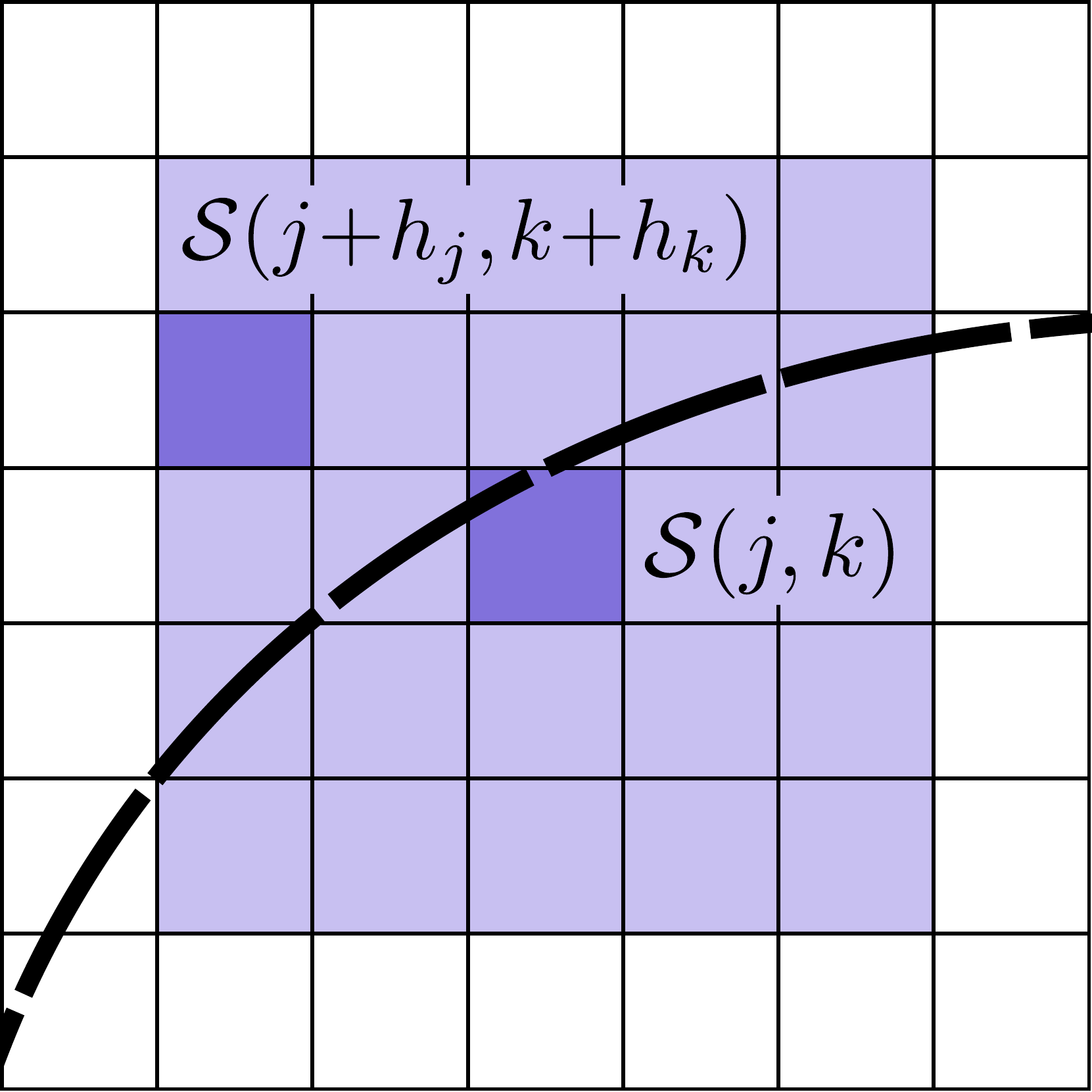}
    \end{minipage}
    \hfill 
    \end{algorithm}
}
\pagebreak[3]
The performance of \Cref{alg: KernelDiscontinuity} is satisfactory if one merely desires a rough estimate with $\varepsilon$ fixed.
Indeed, \eqref{eq:HighUser} then only asks that $T$ is large relative to $\tmix/\pi_{\min}$, and a condition of this type is necessary; recall \Cref{sec: Parameters}. 
However, if a precise estimate with $\varepsilon \to 0$ is desired, then the condition in \eqref{eq:HighUser} that $T$ should be of order $\geq \ln(1/\varepsilon)/\varepsilon^{2}$ is suboptimal.
To achieve the rate $\ln(1/\varepsilon)/\varepsilon^{3/2}$ claimed in \Cref{thm: Main_GlobalRecovery} we next iteratively refine the estimate.  

\Cref{alg: Improve} uses an initial estimate of the barriers to estimate their direction (line \ref{ln: If}) and subsequently looks for discontinuities in the transition kernel along parallel rectangular regions whose direction approximately matches that of the barrier (lines \ref{ln: ForAll} to \ref{ln: B'Up}).
Each application of the latter algorithm improves the error by a constant factor:
\begin{proposition}\label{prop: Main_GlobalRecovery_Improvement}
    There exist absolute constants $c_1,\ldots, c_7>0$ such that the following holds for every $\eta \in (0,1)$, $t\leq c_1 \min\{1/\kappa^2, 1/\lambda_{\max}^2,\rho^2\}$, $\varepsilon \leq c_2 \kappa t$, and $T>0$. 
    Assume that $X_0 \sim \pi$, assume that $T$ satisfies \eqref{eq: Main_GlobalRecovery_n} with respect to $c_3$, and define 
    \begin{align} 
        \mathfrak{s} \de c_4\sqrt{t},\ \ \ell \de c_5\sqrt{\varepsilon /\kappa},\ \ \epsilon \de \kappa\ell^2,\ \text{ and }\ \mathfrak{u} = c_6\sqrt{t}.\label{eq:UnripeNose}
    \end{align}

    Then, there exists an event $\cE$ with $\bbP(\cE) \geq 1 -\eta$ such that the following holds whenever $\cE$ occurs: for every $\sE >0$ satisfying $\varepsilon \leq \sE \leq  c_7\ell$ and every $\fB \subseteq \bbR^2$ with $\dH(\fB, \cup_{i=0}^{\nb}B_i) \leq \sE$ the output $\fB'$ of \Cref{alg: Improve} with the foregoing parameters satisfies 
    $
        \dH(\fB', \cup_{i=0}^{\nb} B_i) \leq \sE/2. 
    $  
\end{proposition}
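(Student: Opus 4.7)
The plan is to reduce the analysis to a dichotomy for the truncated Wasserstein distance $\Wu(P_S,P_{S'})$ between one-step transition kernels $P_S(\cdot)\de \bbP(X_t\in \cdot\mid X_0\sim\pi,\,X_0\in S)$ for adjacent rectangles $S,S'$ swept by \Cref{alg: Improve}. Given a point $p$ of the input $\fB$, the algorithm first extracts a local direction $\vecv(p)$ by computing a principal axis of $\fB\cap \sB(p,\ell)$; because $\sE\leq c_7\ell$ and $\dH(\fB,\cup_{i=0}^{\nb} B_i)\leq \sE$, \Cref{lem: BarrierLocallyStraight} implies that $\vecv(p)$ lies within angle $O(\sE/\ell)\ll 1$ of the tangent of the nearest barrier. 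The subsequent sweep uses thin rectangles $S$ of length $\ell$ oriented along $\vecv(p)$ and width $\epsilon=\kappa\ell^2\sim\varepsilon$; by \Cref{lem: BarrierLocallyStraight} the nearest barrier differs from a straight chord spanning $S$ by at most $O(\kappa\ell^2)=O(\varepsilon)$.

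Under the constraints $t\leq c_1\min\{1/\kappa^2,1/\lambda_{\max}^2,\rho^2\}$ and $\varepsilon\leq c_2\kappa t$ one has $\ell,\epsilon\lesssim \sqrt{t}$, so the step size $\sqrt t$ dominates the rectangle geometry. I would then establish the following dichotomy by invoking \Cref{cor: X_uniform_Y} with $\delta\sim\sqrt{\kappa\varepsilon}+\lambda_{\max}\sqrt t$. If $S$ and $S'$ lie on the same side of every $B_i$, then for each $x_0\in S\cup S'$ the process $X_t$ is within $O(\delta r_\delta)$ of a common straight-line reflected Brownian motion, giving $\Wu(P_S,P_{S'})\leq \tfrac{1}{3}\mathfrak{s}$. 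If instead a barrier $B_i$ separates $S$ and $S'$, the coupling combined with the exponential-waiting-time estimate \eqref{eq:VagueXray} shows that with probability at least $3/4$ the sign $s_i(\Lc{i}_\cdot)$ does not flip on $[0,t]$, and a one-dimensional computation for the normal component of the straight-line reflected motion then produces $\Wu(P_S,P_{S'})\geq \tfrac{2}{3}\mathfrak{s}$. The choice $\mathfrak{u}=c_6\sqrt t\gg\epsilon$ is crucial here: a smaller truncation would erase the $\Omega(\sqrt t)$ displacement gap between the two sides.

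It remains to transfer the dichotomy from $P_S$ to $\hat P_S$. Under stationarity $\pi(S)\geq \pi_{\min}\Area(S)\sim\pi_{\min}\kappa\ell^3\sim \pi_{\min}\varepsilon^{3/2}/\sqrt{\kappa}$, so the expected number of visits to $S$ scales like $T\pi(S)$. Subsampling the chain at multiples of $\tmix$ to obtain nearly independent samples and applying a two-dimensional truncated-Wasserstein empirical concentration bound gives
\begin{equation*}
    \Wu(\hat P_S,P_S) \lesssim \mathfrak{u}\sqrt{\tfrac{\tmix \ln(\Area(D)/(\eta \ell\epsilon))}{T\pi(S)}}
\end{equation*}
simultaneously over the $O(\Area(D)/(\ell\epsilon))$ rectangles, on an event $\cE$ of probability at least $1-\eta$. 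Imposing this upper bound $\leq \tfrac{1}{3}\mathfrak{s}=\tfrac{c_4}{3}\sqrt t$ and using $\pi(S)\sim\pi_{\min}\varepsilon^{3/2}/\sqrt\kappa$ reproduces condition \eqref{eq: Main_GlobalRecovery_n}. On $\cE$, the triangle inequality combined with the dichotomy forces \Cref{alg: Improve} to retain a rectangle if and only if it is crossed by or adjacent to a barrier, and since each such rectangle has width $\epsilon=O(\varepsilon)\leq \sE/2$ this yields $\dH(\fB',\cup_{i=0}^{\nb} B_i)\leq \sE/2$.

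The principal obstacle is the crossing lower bound in the dichotomy: one must exhibit an $\Omega(\sqrt t)$ gap in $\Wu(P_S,P_{S'})$ uniformly over starting points, permeabilities, and admissible barrier geometries. The condition $\varepsilon\leq c_2\kappa t$ enters precisely here, since it guarantees that $\epsilon\ll\sqrt{t}$ and hence that the one-dimensional normal-displacement distributions of $X_t$ on the two sides of the barrier are resolved above both the truncation scale $\mathfrak{u}$ and the geometric error $O(\sqrt{\kappa\varepsilon}\,r_\delta)$ inherent in the coupling of \Cref{sec: Local}.
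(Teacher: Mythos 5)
Your high-level strategy—a truncated-Wasserstein dichotomy between adjacent rectangles, concentration of the empirical transition kernel with a union bound over rectangles, and local linear approximation via \Cref{lem: BarrierLocallyStraight}—matches the paper's. However, two issues stand out. First, you misdescribe \Cref{alg: Improve}: it does not compute a principal axis of $\fB\cap\sB(p,\ell)$ anywhere. It grid-searches over all directions $\vec{w}_{\nn}$ and over all lattice points $p_{j,k}$, and the only ``direction selection'' is the filter on line~\ref{ln: If}, which retains a pair $(p_{j,k},\vec{w}_{\nn})$ iff both endpoints $p_{j,k}\pm\ell\vec{w}_{\nn}$ lie within $\sE+2\epsilon$ of $\fB$. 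Your statement that the resulting directions are within angle $O(\sE/\ell)$ of the barrier's tangent is correct in spirit, but a proof of the stated proposition must argue about the actual filter, not a PCA step.

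Second, you identify the ``principal obstacle'' as the crossing-case lower bound for $\Wu(P_S,P_{S'})$, but the paper already supplies that gap via \Cref{cor: Discontinuity} (shared infrastructure, also used in \Cref{prop: Main_GlobalRecovery_Algorithm}), and what you gloss over is the genuinely new geometric content: \Cref{lem: Rectangle_Close} and \Cref{lem: Rectangle_Distant}. You need to verify (a) that for every barrier point $y$ there exists a $p_{j,k}$ and a direction $\vec{w}_{\nn}$ passing the line-\ref{ln: If} filter with rectangles $\cR(j,k,{\nn},h_{\pm})$ strictly on opposite sides of $B_i$ and disjoint from all barriers, and (b) that for every $p_{j,k}$ at distance $>\sE/2$ from all barriers and every filtered $\vec{w}_{\nn}$, the union $\cup_{h=-2}^{2}\cR(j,k,{\nn},h)$ avoids all barriers. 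Claim (b) is where the constraints $\epsilon=\kappa\ell^2$, $c_2\epsilon\leq\sE\leq c_3\ell$ interact delicately with the curvature bound: an angle bound alone does not rule out a curved barrier nicking one of the $h=\pm2$ rectangles. This is a page-long computation using the $r_\delta'$-variant of \Cref{lem: BarrierLocallyStraight}, and asserting it by reference to the angle bound is a real gap in the proposal. Your concluding sentence also conflates rectangle width with Hausdorff error; the $\leq\sE/2$ bound in one direction comes from the filter/geometry excluding distant $p_{j,k}$, and in the other direction from the lattice resolution $\epsilon/\sqrt{2}\leq\varepsilon/2\leq\sE/2$, not from rectangle width.
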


{\small 
    \begin{algorithm} [ht]
        \caption{
        }
        \label{alg: Improve}
    \begin{minipage}{0.7\textwidth}
        \begin{flushleft}
            \textbf{INPUT:} Observed data sequence $\{X_{it}: i=0,\ldots,\lfloor T/t\rfloor \}$.\\
            \hphantom{\textbf{INPUT:}} Parameters $\mathfrak{s},\epsilon,\mathfrak{u}, \ell, \sE >0$ and initial estimate $\fB\subseteq \bbR^2$.\\ 
            \textbf{OUTPUT:} Improved estimate $\fB' \subseteq \bbR^2$ for $\cup_{i=0}^{\nb} B_i$.   
        \end{flushleft}
            \begin{algorithmic}[1]
                \State $\fB' \gets \emptyset$. \label{ln: B'}
                \ForAll {integers $j,k \in \bbZ$ and $0 \leq \nn \leq \lfloor 2\pi \ell / \epsilon \rfloor$}  
                \State $p_{j,k} \gets (j\epsilon, k \epsilon)$
                \State $\vec{w}_{\nn} \gets (\cos({\nn}\epsilon/\ell), \sin({\nn}\epsilon/\ell))$,\ $\vec{w}_{\nn}^{\perp} \gets (-\sin({\nn}\epsilon/\ell), \cos({\nn}\epsilon/\ell))$
                \If {$\max_{\pm \in \{+,-\}}\inf_{ y\in  \fB}\Vert p_{j,k} \pm \ell \vec{w}_{\nn} - y \Vert \leq \sE + 2\epsilon$} \label{ln: If}
                \ForAll {integers $-2 \leq h \leq 2$} \label{ln: ForAll}
                    \State $\cR(j,k,{\nn},h)\gets \{x \in \bbR^2:\lvert  \langle x- p_{j,k}, \vec{w}_{\nn}^{\perp}  \rangle -h\epsilon\rvert\leq  \epsilon/2,$ 
                    \Statex \hfill $\lvert \langle x- p_{j,k},\ \vec{w}_{\nn}  \rangle \rvert \leq \ell/10\}$
                \EndFor 
                \If {$\Wu (\hat{P}_{\cR(j,k,{\nn},0)}, \hat{P}_{\cR(j,k,{\nn},h)}) \geq \mathfrak{s}$ for some $\lvert h \rvert \leq 2$}
                    \State $\fB' \gets \fB' \cup \{p_{j,k}\}$ \label{ln: B'Up}
                \EndIf  
                \EndIf
                \EndFor  
            \end{algorithmic}
    \end{minipage}
    \hfill 
    \begin{minipage}{0.25\textwidth}
        \centering 
        \includegraphics[width = 0.95\textwidth]{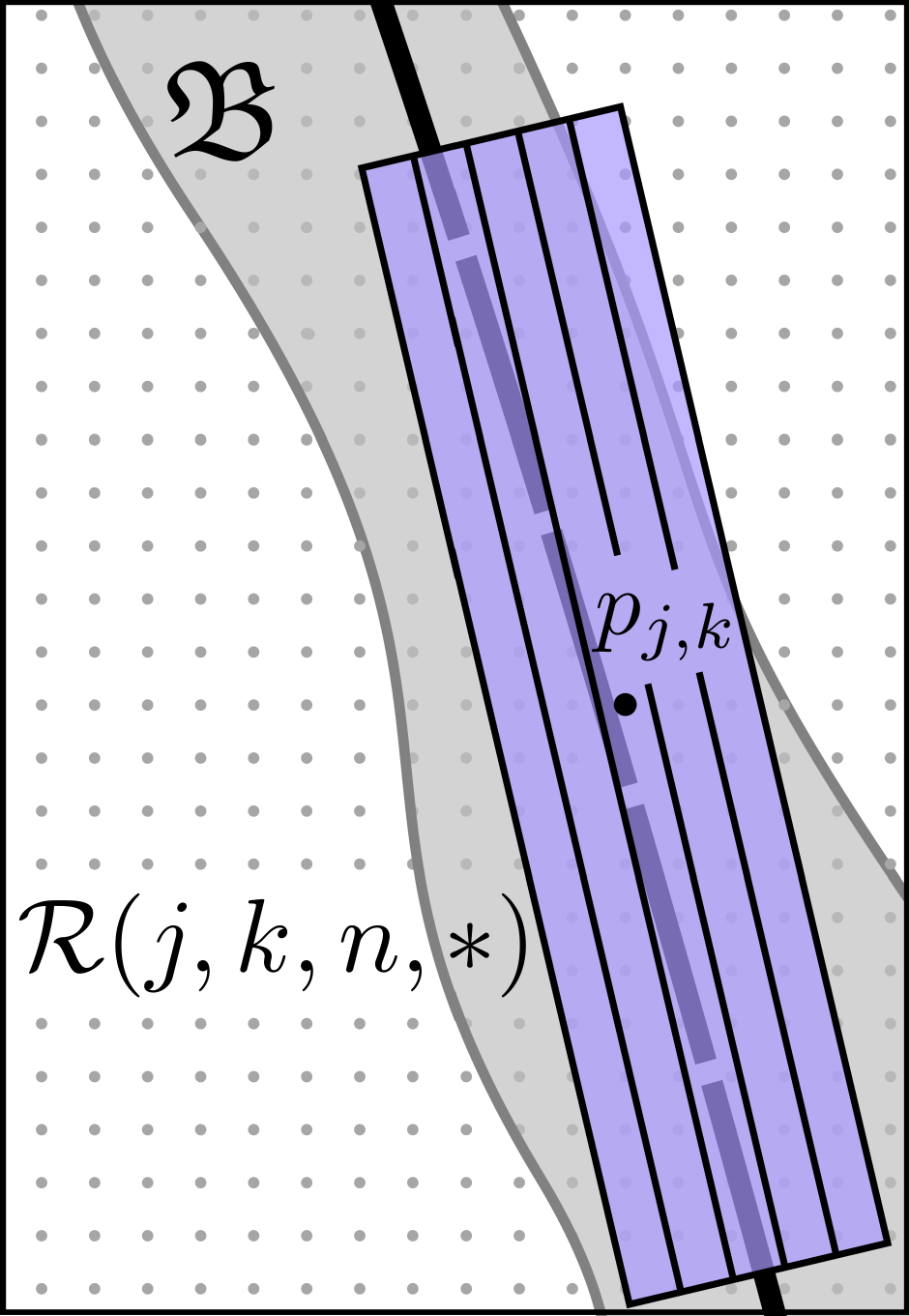}
    \end{minipage}
    \end{algorithm}
}
\Cref{thm: Main_GlobalRecovery} then follows by using \Cref{alg: KernelDiscontinuity} to produce an initial estimate and subsequently refining with \Cref{alg: Improve} until $\sE/2 \leq \varepsilon$. 
In fact, it suffices to apply \Cref{alg: Improve} $\lceil C\ln(1/\varepsilon\kappa)\rceil$-many times for some $C>0$; see \Cref{apx: Proof_Main_GlobalRecovery} for detailed computations. 
It remains to prove \Cref{prop: Main_GlobalRecovery_Algorithm,prop: Main_GlobalRecovery_Improvement}. 
This is done in \Cref{sec: ProofGlobalConclude}. 

\begin{remark}\label{rem: Performance}
    The improved performance of \Cref{alg: Improve} is ultimately because the convergence rate of $\hat{P}_S$ depends on the size of the set $S$; see \Cref{lem: Concentration_hatPS}.  
    A square box with all points at distance $\leq \varepsilon$ from a barrier has to have an area of order $\leq \varepsilon^2$, while a rectangular region can have area of order $\varepsilon \times \sqrt{\varepsilon/\kappa}  = \sqrt{\varepsilon^3/\kappa}$ if it is oriented parallel to the barrier.
\end{remark}
\begin{remark}
    Computation of Wasserstein distance can be accomplished in practice by reducing to a linear program \cite[\S 2.3]{peyre2019computational}.
\end{remark}
\begin{remark}
    An alternative algorithm which can work for arbitrarily large intersample spacing $t$ is to search for discontinuities in an empirical estimator for the stationary distribution.  
    However, performance will then only be good if the stationary distribution has a significant discontinuity on the semipermeable barriers. 
    This will not always be the case.  
\end{remark}
\subsection{Transition kernel behaves continuously on fixed side of the barriers}\label{sec: ProofContinuity}
To avoid false positives where a barrier is detected though there is none, we show that the transition kernel behaves continuously as a function of the initial condition so long as the latter does not cross any barrier. 

Note that the following estimate remains valid when the initial condition is arbitrarily close to a barrier. 
This is important to ensure that there is no limit to the resolution which \Cref{alg: KernelDiscontinuity,alg: Improve} can achieve. 
\begin{proposition}\label{prop: Continuity}
    For every $\zeta > 0$ there exist constants $c_1,c_2>0$ depending only on $\zeta$ such that for every $t\leq c_1 \min\{1/\kappa^2, 1/\lambda_{\max}^2,\rho^2 \}$, every $x_0, \tix_0 \in D$ which are on the same side of each barrier and satisfy $\Vert x_0 - \tix_0 \Vert  \leq c_2\sqrt{t}$, and every truncation level $\mathfrak{u}>0$,    
    \begin{align} 
        \Wu \Bigl(\bbP\bigl(X_t \in \cdot \mid X_0 = x_0\bigr), \bbP\bigl(X_t \in \cdot \mid X_0 = \tix_0\bigr)\Bigr)  \leq \zeta \bigr(\sqrt{t} + \mathfrak{u}\bigl). \label{eq:LoudYarn}
    \end{align}
\end{proposition}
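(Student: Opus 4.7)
The plan is to use a synchronous coupling together with the local approximation machinery of \Cref{sec: Local}. On a common probability space, I realize both $X_t$ with $X_0 = x_0$ and $\tiX_t$ with $\tiX_0 = \tix_0$ driven by the same Wiener process $W_t$ and the same sign processes $s_i(\cdot)$. This is legitimate because $x_0$ and $\tix_0$ lie on the same side of every $B_i$, so both processes may share the initial signs $s_i(0)$. Then
\begin{align*}
\Wu\bigl(\bbP(X_t\in\cdot\mid X_0=x_0),\bbP(X_t\in\cdot\mid X_0=\tix_0)\bigr) \leq \bbE\bigl[\min\{\Vert X_t - \tiX_t\Vert,\mathfrak{u}\}\bigr],
\end{align*}
reducing the problem to controlling the pathwise distance under this coupling.

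With $\delta \in (0,\delta_0)$ to be chosen and $c_2 \leq 1/2$ so that $\tix_0 \in \sB(x_0,r_\delta/2)$, \Cref{rem: ExactCenter} allows applying \Cref{cor: X_uniform_Y} to both processes on the ball $\sB(x_0,r_\delta)$. If this ball meets no barrier, then \eqref{eq:BlueCity} applied to both processes gives $\Vert X_t - \tiX_t\Vert = \Vert x_0 - \tix_0\Vert$ outside an event of probability $\leq 2C_2 t/r_\delta^2$. Otherwise \Cref{lem: BarrierLocallyStraight} identifies a unique $B_i$ together with a common half-plane $A^+$, and \Cref{cor: X_uniform_Y} couples $X_t$ (resp.\ $\tiX_t$) with a reflected Brownian motion $Y_t^+$ (resp.\ $\tiY_t^+$) on $A^+$ driven by the same $W_t$, with coupling error $C_1\delta r_\delta$ and failure probability $\leq \delta + C_2 t/r_\delta^2$.

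The key contraction comes from the Skorokhod formula \eqref{eq:OldJar}: since $c\mapsto \max\{0,\sup_{s\leq t}a(s)+c\}$ is $1$-Lipschitz, the components along $s_i(0)\hatn$ satisfy $\lvert\langle Y_t^+ - \tiY_t^+, s_i(0)\hatn\rangle\rvert \leq \lvert\langle y_0^+ - \tiy_0^+, s_i(0)\hatn\rangle\rvert$, while the tangential components differ only by the constant $\langle y_0^+ - \tiy_0^+, \hatn^{\perp}\rangle$. Hence $\Vert Y_t^+ - \tiY_t^+\Vert \leq \Vert y_0^+ - \tiy_0^+\Vert$, and the projection formula \eqref{eq:RoyalLobster} together with the fact that both initial points are projected onto the same line yields $\Vert y_0^+ - \tiy_0^+\Vert \leq \Vert x_0 - \tix_0\Vert + 8\delta r_\delta$. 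The triangle inequality then gives
\begin{align*}
\Vert X_t - \tiX_t\Vert \leq \Vert x_0 - \tix_0\Vert + (8 + 2C_1)\delta r_\delta
\end{align*}
outside an event of probability $\leq 2\delta + 2C_2 t/r_\delta^2$.

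Substituting into the Wasserstein bound produces
\begin{align*}
\Wu \leq c_2\sqrt{t} + (8+2C_1)\delta r_\delta + \mathfrak{u}\bigl(2\delta + 2C_2 t/r_\delta^2\bigr).
\end{align*}
I then set $c_2 = \zeta/4$ and $\delta = \min\{\zeta/8,\ c\sqrt{\zeta\sqrt{t}/\min\{1/\kappa, 1/\lambda_{\max},\rho\}}\}$ for a small absolute $c$, and shrink $c_1$ (depending on $\zeta$) so that $t/r_\delta^2 \leq \zeta/(8C_2)$. Each of the four terms is then $\leq \zeta\sqrt{t}/4$ or $\zeta\mathfrak{u}/4$, giving \eqref{eq:LoudYarn}. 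The main obstacle is this parameter juggling: $\delta$ must shrink with $t$ so that the deterministic coupling error $\delta r_\delta$ remains $o(\sqrt{t})$, yet must stay large enough that $t/r_\delta^2 \sim t/(\delta^2 \min\{1/\kappa,1/\lambda_{\max},\rho\}^2)$ remains a small fraction of $\zeta$. Reconciling these two requirements is what forces the admissible range of $t$ (i.e.\ the constant $c_1$) to depend on $\zeta$.
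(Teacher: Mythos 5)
Your proposal is correct and follows essentially the same route as the paper: a synchronous coupling driven by the same $W_t$ and $s_i$, reduction via the local straightening of \Cref{sec: Local} and \Cref{cor: X_uniform_Y} (with \Cref{rem: ExactCenter}) to two one-dimensional reflected Brownian motions on a common line $A^+$, the Skorokhod contraction $\Vert Y_t^+-\tiY_t^+\Vert\leq\Vert y_0^+-\tiy_0^+\Vert$, and a final choice of $\delta$ and $c_1,c_2$ depending on $\zeta$. The only differences are cosmetic — you bound the truncated expectation directly rather than through the intermediate event $\{\Vert X_t-\tiX_t\Vert>\zeta\sqrt t\}$, you parameterize $\delta$ as $\sqrt{\zeta\sqrt t/\min\{1/\kappa,1/\lambda_{\max},\rho\}}$ rather than the paper's $c_3\sqrt t/\min\{1/\kappa,1/\lambda_{\max},\rho\}$ with $c_3$ large, and you phrase the contraction via Lipschitzness of the explicit Skorokhod map instead of the paper's argument that $|f(t)|$ is nonincreasing — none of which change the substance.
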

\begin{proof}
    Consider processes $X_t$ and $\tiX_t$ satisfying \Cref{def: ReflectedBrownianMotion} with $X_0 = x_0$ and $\tiX_0 = \tix_0$ and driven by the same $W_t$ and $s_i$; see \Cref{fig: BX_Btilde} for a visualization.
    Then, recalling the definition of truncated Wasserstein distance from \eqref{eq: Def_W1u} and using this coupling, it suffices to show that  
    \begin{align} 
        \bbP(\Vert X_t - \tiX_t \Vert  > \zeta \sqrt{t}) \leq \zeta.\label{eq:WittyYarn} 
    \end{align} 
    To show this, we rely on the tools from \Cref{sec: Local} with 
    $
        \delta \de c_3\sqrt{t}/\min\{1/\kappa, 1/\lambda_{\max} ,\rho \}  
    $ where $c_3$ is a large $\zeta$-dependent constant whose value will be fixed further on.

    We start with the case where $\sB(x_0, r_\delta/2)\cap B_i \neq \emptyset$ for some $i\leq {\nb}$. 
    Then, in particular, $\sB(x_0,r_\delta)\cap B_i \neq \emptyset$. 
    Let $\delta_0, C_1, C_2$ be the constants from \Cref{cor: X_uniform_Y}. 
    The assumption on $t$ yields that $\delta \leq \delta_0$ if $c_1$ is sufficiently small to ensure that $c_3\sqrt{c_1} \leq \delta_0$.     
    Hence, 
    \begin{align} 
        \bbP(\Vert X_t - Y_t^+ \Vert > C_1\delta r_\delta ) \leq \delta + C_2 (t/r_\delta^2) \quad \text{ if }c_3\sqrt{c_1} \leq \delta_0.\label{eq:EmptyHotel} 
    \end{align} 
    Recall the definition of $r_\delta$ from \eqref{eq: Def_R_eps}, the definition of $\delta$ after \eqref{eq:WittyYarn}, and the assumption regarding $\Vert x_0 - \tilde{x}_0 \Vert$.
    It follows that $\Vert x_0 - \tix_0 \Vert \leq r_\delta/2$ whenever $c_2 < c_3/2$ is sufficiently small. 
    Hence, by \Cref{cor: X_uniform_Y} and \Cref{rem: ExactCenter}, possibly modifying $\delta_0, C_1,$ and $C_2$,    
    \begin{align} 
        \bbP(\Vert \tiX_t - \tiY_t^+ \Vert > C_1\delta r_\delta ) \leq \delta + C_2 (t/r_\delta^2) \quad \text{ if }\ c_3\sqrt{c_1} \leq \delta_0\ \text{ and }\ c_2 < c_3/2 \label{eq:GreenLog} 
    \end{align} 
    where $\tiY_t^+$ reflects on the same line $A^+$ as $Y_t^+$, but now with initial condition $\tiy_0^+$ defined as in \eqref{eq:RoyalLobster} with $x_0$ replaced by $\tix_0$. 
    (\Cref{rem: ExactCenter} is relevant as we want to keep the same reflection line as for $Y_t^+$, also when $\tilde{x}_0 \neq x_0$.)

    By \eqref{eq:MadRug} and the analogous stochastic differential equation for $\tiY_t^+$, 
    \begin{align} 
        Y_t^+ - \tiY_t^+ = y_0^+ - \tiy_0^+ + s_i(0)\hatn (K_t^ + -  \tiK_t^+).  \label{eq:CozyNose} 
    \end{align} 
    Since $\tiK_t^+$ is a nondecreasing process, the function $f(t) \de \langle Y_t^+ - \tiY_t^+, s_i(0)\hatn \rangle$ can only increase when $K_t^+$ does so.
    The latter occurs if and only if $Y_t^+ \in A^+$ in which case $f(t)\leq 0$ since $\tiY_t^+$ is defined to live on the side of $A^+$ towards which $s_i(0)\hatn$ points. 
    Similarly, $f(t)$ can only decrease if $f(t) \geq 0$.
    It follows that $\lvert f(t) \rvert$ is a nonincreasing function, and hence 
    \begin{align} 
        \lvert \langle Y_t^+ - \tiY_t^+, \hatn \rangle  \rvert \leq \lvert \langle y_0^+ - \tiy_0^+, \hatn \rangle  \rvert. 
    \end{align}
    Let $\hatn^{\perp}$ be a unit vector which is orthogonal to $\hatn$. 
    Then, \eqref{eq:CozyNose} yields that $\langle Y_t^+ - \tiY_t^+, \hatn^{\perp} \rangle = \langle y_0^+ - \tiy_0^+ , \hatn^{\perp}\rangle$ for all $t \geq 0$. 
    Now, using Pythagoras' theorem and the triangle inequality, 
    \begin{align} 
        \Vert Y_t^+ - \tiY_t^+ \Vert \leq \Vert y_0^+ - \tiy_0^+ \Vert \leq \Vert y_0^+ - x_0 \Vert + \Vert \tiy_0^+ - \tix_0 \Vert + \Vert x_0 - \tix_0 \Vert.\label{eq:RipeAnt} 
    \end{align}
    Using \cref{item:JollyDragon} from \Cref{lem: BarrierLocallyStraight} in the definition \eqref{eq:RoyalLobster} of $y_0^+$, it holds that $\Vert y_0^+ - x_0 \Vert \leq 8\delta r_\delta$. 
    Similarly, also using that $\tilde{x}_0$ is on the same side of the barrier as $x_0$ and inside $\sB(x_0,r_{\delta})$, we have $\Vert \tiy_0^+ - \tix_0 \Vert \leq 8\delta r_\delta$.   
    Further, recall that $\Vert x_0 -\tix_0 \Vert \leq c_2 \sqrt{t}$ by assumption.

    Hence, if the complementary events for \eqref{eq:EmptyHotel} and \eqref{eq:GreenLog} occur, then 
    \begin{align} 
        \Vert X_t - \tiX_t \Vert &\leq \Vert X_t - Y_t^+ \Vert + \Vert \tiX_t - \tiY_t^+ \Vert + \Vert Y_t - \tiY_t^+ \Vert \leq 2(C_1 + 8)\delta r_\delta + c_2 \sqrt{t}.  \label{eq:ElegantCrow}
    \end{align}
    Recall the definition of $\delta$ from the paragraph after \eqref{eq:WittyYarn} and recall the assumed upper bound on $t$. 
    It follows that $\delta \leq c_3\sqrt{c}_1$ and $r_\delta = c_3\sqrt{t}$. 
    Hence, combining \eqref{eq:ElegantCrow} with the probabilities from \eqref{eq:EmptyHotel} and \eqref{eq:GreenLog}, 
    \begin{align} 
        \bbP\bigl(\Vert X_t - \tiX_t \Vert > \bigl(2(C_1+8)c_3^2{}&{}\sqrt{c}_1 + c_2\bigr)\sqrt{t} \bigr)\label{eq:DarkGoose}\\ 
        &\leq 2c_3\sqrt{c}_1 + 2C_2/c_3^2   \text{ if } c_3\sqrt{c_1} \leq \delta_0 \text{ and }c_2 < c_3/2.\nonumber 
    \end{align}  
    This implies \eqref{eq:WittyYarn} if one takes $c_3$ sufficiently large to ensure that $2C_2/c_3^2 \leq \zeta/2$ and subsequently takes $c_1$ and $c_2$ sufficiently small to ensure that the conditions in \eqref{eq:DarkGoose} are satisfied, and that $2c_3\sqrt{c}_1< \zeta/2$ and $2(C_1+8)c_3^2\sqrt{c}_1 + c_2\leq \zeta$.  

    It remains to consider the case where $\sB(x_0, r_\delta/2 )\cap B_i = \emptyset$ for all $i\leq {\nb}$. 
    If $\sB(\tix_0, r_\delta/2 )\cap B_i \neq \emptyset$, then the preceding proof applies.
    Assume that $\sB(\tix_0, r_\delta/2 )\cap (\cup_{i=0}^{\nb} B_i)= \emptyset$.
    Then, \eqref{eq:BlueCity} in \Cref{cor: X_uniform_Y} yields that $X_t = x_0 + W_t$ and $\tiX_t = \tix_0 + W_t$ with high probability.
    Hence, \eqref{eq:WittyYarn} follows similarly to the foregoing.  
    This concludes the proof. 
\end{proof}

\begin{figure}[t]
    \includegraphics[width = 0.85\textwidth]{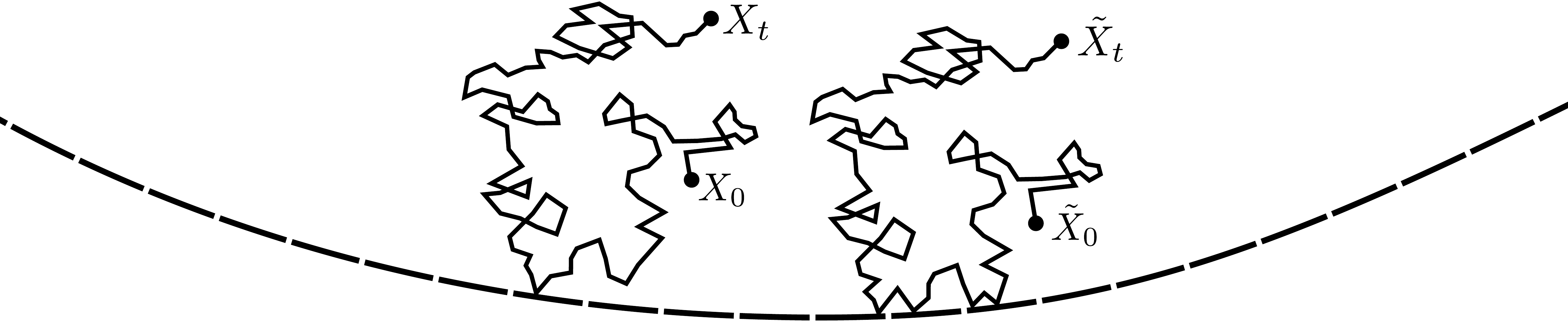}
    \caption{
        Visualization for the coupling used in the proof of \Cref{prop: Continuity}. 
    }
    \label{fig: BX_Btilde}
\end{figure}

\subsection{Transition kernel has a discontinuity when crossing barriers}\label{sec: Proof_Discontinuity}
Recall that the intuitive reason why we expect a discontinuity is that $X_t$ typically stays on the same side of all barriers as $X_0$ when $t$ is sufficiently small. 
The following result makes this rigorous: 
\begin{lemma}\label{lem: SameSideOfBarrier}
    For every $\eta \in (0,1)$ there exists a constant $c >0$ depending only on $\eta$ such that for every $t \leq c \min\{1/\kappa^{2}, 1/\lambda_{\max}^2,\rho^2\}$ and $x_0\in D$,
    \begin{align} 
        &\bbP(s_i(\Lc{i}_t) = s_i(0), \forall i\leq {\nb} \mid X_0 = x_0) \geq 1 -\eta. \label{eq:ValidJinx} 
    \end{align}
\end{lemma}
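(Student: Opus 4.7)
The plan is to derive the statement as a direct corollary of \Cref{cor: TauBound}. Recall that the stopping time $\tau$ defined in \eqref{eq: Def_tau} is the first time at which either $X_t$ exits $\sB(x_0,r_\delta)$ \emph{or} some $s_i(\Lc{i}_t)$ flips. By the infimum definition, the event $\{\tau > t\}$ is contained in $\{s_i(\Lc{i}_s) = s_i(0),\, \forall s \leq t,\, \forall i\leq m\}$, which in particular implies $s_i(\Lc{i}_t) = s_i(0)$ for every $i$. Hence it suffices to bound $\bbP(\tau \leq t)$.

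To do so, I would apply \Cref{cor: TauBound} directly, which yields
\begin{align}
    \bbP(\tau \leq t) \leq \delta + C(t/r_\delta^2)\nonumber
\end{align}
whenever $\delta \leq \delta_0$, where $\delta_0, C>0$ are the absolute constants provided by the corollary and $r_\delta = \delta\min\{1/\kappa, 1/\lambda_{\max},\rho\}$. To make the right-hand side at most $\eta$, I would first fix $\delta \de \min\{\eta/2,\delta_0\}$, which bounds the first term by $\eta/2$. Then, writing $M \de \min\{1/\kappa^2, 1/\lambda_{\max}^2,\rho^2\}$ so that $r_\delta^2 = \delta^2 M$, the second term satisfies $C(t/r_\delta^2) \leq \eta/2$ provided $t \leq \delta^2 \eta M/(2C)$. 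Taking $c \de \delta^2 \eta/(2C)$ yields the claim, and this $c$ depends only on $\eta$ since $\delta_0$ and $C$ are absolute constants.

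I do not anticipate any substantial obstacle: the entire content of \Cref{lem: SameSideOfBarrier} is essentially a reinterpretation of \Cref{cor: TauBound}, with the only mildly delicate point being the observation that a sign change at time $t$ is explicitly included in the definition of $\tau$ and therefore cannot occur on the event $\{\tau > t\}$. The proof is genuinely short — two or three lines once the inclusion $\{\tau>t\}\subseteq\{s_i(\Lc{i}_t)=s_i(0),\forall i\}$ is noted — and the dependence on $\eta$ is explicit through the choices above.
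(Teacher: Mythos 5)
Your proposal is correct and matches the paper's own proof essentially verbatim: both reduce the claim to $\bbP(\tau \le t)$ via the observation that $\{\tau > t\}$ implies $s_i(\Lc{i}_t) = s_i(0)$ for all $i$, invoke \Cref{cor: TauBound}, set $\delta = \min\{\delta_0, \eta/2\}$, and choose $c$ small enough that $Cc/\delta^2 \le \eta/2$. No gap.
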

\begin{proof}
    This is immediate from \Cref{cor: TauBound} since $s_i(\Lc{i}_t) = s_i(0)$ when $\tau >t$; recall \eqref{eq: Def_tau}. 
    More specifically, let $\delta_0,C>0$ be as in \Cref{cor: TauBound} and set $\delta \de \min\{\delta_0, \eta/2\}$. 
    Then, the right-hand side of \eqref{eq:CozyXray} is $\leq \eta$ if $c$ is sufficiently small to ensure that $C(c/\delta^2) \leq \eta/2$. 
\end{proof}

\Cref{lem: SameSideOfBarrier} is highly suggestive of a discontinuity upon $x_0$ crossing $B_i$ but does not yet formally imply one. 
In principle, it is still possible that most probability mass is located on the barrier or very close to it.
The following result shows that this does not occur:
\begin{lemma}\label{lem: DistantBoundary}
    For every $\eta \in (0,1)$ there exist constants $c_1,c_2 >0$ depending only on $\eta$ such that for every $t\leq c_1 \min\{ 1/\kappa^2, 1/\lambda_{\max}^2,\rho^2 \}$ and $x_0\in D$,
    \begin{align} 
        \bbP(\forall y\in \cup_{i=0}^{\nb} B_i: \Vert X_t - y \Vert \geq c_2\sqrt{t} \mid X_0 = x_0) \geq 1 - \eta. 
    \end{align}
\end{lemma}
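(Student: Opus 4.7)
The plan is to apply the coupling machinery of \Cref{sec: Local}, with the key subtlety that the parameter $\delta$ from \Cref{lem: BarrierLocallyStraight} is not fixed but chosen to depend on $t$, so that the approximation scale $r_\delta$ sits at the same order as the natural diffusive scale $\sqrt{t}$. Concretely, I would set $\delta \de c_3\sqrt{t}/\min\{1/\kappa, 1/\lambda_{\max}, \rho\}$ for a large $\eta$-dependent constant $c_3$; this makes $r_\delta = c_3\sqrt{t}$, and the hypothesis $t \leq c_1\min\{1/\kappa^2, 1/\lambda_{\max}^2, \rho^2\}$ with $c_1$ small ensures $\delta$ is itself arbitrarily small. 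By \Cref{lem: BarrierLocallyStraight}, at most one barrier intersects $\sB(x_0, r_\delta)$, so one may split into two cases.

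In Case A, no barrier meets $\sB(x_0, r_\delta)$. Then \eqref{eq:BlueCity} yields $X_s = x_0 + W_s$ for $s \leq t$ with probability $\geq 1 - C_2/c_3^2$, and a standard Gaussian tail bound provides $C' = C'(\eta)$ with $\bbP(\sup_{s\leq t}\Vert W_s\Vert > C'\sqrt{t}) \leq \eta/4$. Choosing $c_3 \geq c_2 + C'$ gives $\Vert X_t - y\Vert \geq r_\delta - C'\sqrt{t} \geq c_2 \sqrt{t}$ uniformly in barrier points $y$.

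In Case B, let $B_i$ be the unique barrier meeting $\sB(x_0, r_\delta)$ and apply the coupling of \Cref{sec: XtYtApprox} to obtain $Y_t^+$ reflecting on $A^+$. By \Cref{cor: TauBound} and \eqref{eq:ZombieBat}, both $\tau > t$ and $X_t \in \sB(x_0, r_\delta/2)$ hold with probability $\geq 1 - \eta/3$ once the constants are appropriately set, so only $y \in B_i \cap \sB(x_0, r_\delta)$ need be considered. For such $y$, \cref{item:GhostlyQuip} gives $\lvert \langle y, \hatn\rangle - \mathfrak{c}\rvert < 4\delta r_\delta$, and assuming $\Vert X_t - y\Vert \leq c_2\sqrt{t}$, projection onto $s_i(0)\hatn$ combined with \eqref{eq:SafeMouse} forces
\begin{align}
s_i(0)\langle Y_t^+, \hatn\rangle \leq s_i(0)\langle X_t, \hatn\rangle + 8\delta r_\delta \leq s_i(0)\langle y, \hatn\rangle + c_2\sqrt{t} + 8\delta r_\delta \leq s_i(0)\mathfrak{c} + 12\delta r_\delta + c_2\sqrt{t}.\nonumber
\end{align}
Hence the quantity $Z_t \de s_i(0)\langle Y_t^+, \hatn\rangle - s_i(0)\mathfrak{c} - 4\delta r_\delta \geq 0$ satisfies $Z_t \leq 8\delta r_\delta + c_2\sqrt{t}$.

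The proof is completed by a one-dimensional probabilistic bound. By construction $Z_t$ is a reflected Brownian motion on $[0,\infty)$ driven by the one-dimensional standard BM $\langle W_t, s_i(0)\hatn\rangle$ and starting from $Z_0 \geq 0$. Tanaka's formula shows $Z_t$ has the same law as $\lvert Z_0 + N(0,t)\rvert$, so $\bbP(Z_t \leq z)$ equals the mass of an interval of length $2z$ under an $N(0,t)$ density, giving $\bbP(Z_t \leq z) \leq 2z/\sqrt{2\pi t}$ uniformly in $Z_0$. Applied with $z = 8\delta r_\delta + c_2\sqrt{t} = (8 c_3^2 \sqrt{t}/\min + c_2)\sqrt{t}$, this yields a bound of the form $(16 c_3^2\sqrt{c_1} + 2c_2)/\sqrt{2\pi}$; choosing $c_2$ small depending on $\eta$ and then $c_1$ small depending on $\eta$ and $c_3$ makes this $\leq \eta/3$. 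A union bound over the failure events ($\tau \leq t$, large $\sup\Vert W_s\Vert$, small $Z_t$) then gives the claim. The main obstacle is the balancing of constants: letting $\delta$ depend on $t$ is essential, because with a fixed $\delta$ the coupling error $\delta r_\delta$ can vastly exceed $\sqrt{t}$ in the regime $t \ll \min\{1/\kappa^2, 1/\lambda_{\max}^2, \rho^2\}$, rendering the straight-line approximation too crude to detect closeness at scale $\sqrt{t}$.
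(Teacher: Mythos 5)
Your proposal is correct and follows essentially the same route as the paper's proof in \Cref{apx: ProofDistantBoundary}: the same $t$-dependent choice $\delta = c_3\sqrt{t}/\min\{1/\kappa, 1/\lambda_{\max},\rho\}$ (so $r_\delta = c_3\sqrt{t}$), the same split on whether $\sB(x_0,r_\delta)$ meets a barrier, the same reduction via the coupling of \Cref{sec: XtYtApprox} to a one-dimensional reflected Brownian motion, and the same bounded-Gaussian-density estimate to conclude. The only difference is cosmetic: you invoke \eqref{eq:SafeMouse} directly to compare $X_t$ and $Y_t^+$ componentwise, whereas the paper goes through \Cref{cor: X_uniform_Y} for a full Euclidean bound, but both lead to the same final estimate.
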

Similarly to \Cref{prop: Continuity}, this again follows from the approximation technique of \Cref{sec: Local}. 
Indeed, the process $Y_t^+$ can easily be shown to be at distance of order $\sqrt{t}$ from all barriers since its law is analytically tractable; see \Cref{apx: ProofDistantBoundary} for detailed computations.

We now get discontinuity when the initial condition crosses a barrier. 
Further, this discontinuity persists if the initial conditions are allowed to be random: 
\begin{corollary}\label{cor: Discontinuity}
    There exist absolute constants $c_1,c_2,c_3>0$ such that for every $t\leq c_1 \min\{1/\kappa^2, 1/\lambda_{\max}^2,\rho^2 \}$ and every truncation level $\mathfrak{u} \geq c_2\sqrt{t}$,  
    \begin{align} 
        \Wu \bigl(\bbP(X_t \in \cdot), \bbP(\tiX_t \in \cdot ) \bigr) \geq c_3\sqrt{t}   
    \end{align} 
    where $X_t$ and $\tiX_t$ are processes satisfying \Cref{def: ReflectedBrownianMotion} for which there is a barrier $B_i$ with $X_0$ on the positive side and $\tiX_0$ on the negative side almost surely.  
\end{corollary}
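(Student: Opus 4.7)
The plan is to combine \Cref{lem: SameSideOfBarrier} and \Cref{lem: DistantBoundary} to localize each process on its own side of $B_i$, at a definite distance from all barriers, and then exploit the Jordan curve structure to force a mutual separation of order $\sqrt{t}$. No new coupling or delicate calculation is needed; all the heavy lifting has already been done.

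More concretely, I would first apply \Cref{lem: SameSideOfBarrier} with $\eta = 1/8$ to obtain a constant $c_{1,a}>0$ so that, for $t \leq c_{1,a}\min\{1/\kappa^2,1/\lambda_{\max}^2,\rho^2\}$ and any initial condition $x_0$, one has $s_j(\Lc{j}_t)=s_j(0)$ for all $j\leq \nb$ with probability at least $7/8$. Since $X_0$ is on the positive side of $B_i$ almost surely, we have $s_i(0) = +1$, and by \cref{item: Def_ReflectedBrownianMotion_iii} of \Cref{def: ReflectedBrownianMotion} the process $X_t$ is then on the positive side of $B_i$ on this event. Second, I would apply \Cref{lem: DistantBoundary} with $\eta=1/8$ to produce constants $c_{1,b},c_{2,b}>0$ giving $\bbP(\inf_{y\in \cup_j B_j}\Vert X_t - y\Vert \geq c_{2,b}\sqrt{t}) \geq 7/8$. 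Taking the intersection of these two events and averaging over $X_0$ yields an event $\cG_X$ with $\bbP(\cG_X)\geq 3/4$ on which $X_t$ lies on the positive side of $B_i$ at distance at least $c_{2,b}\sqrt{t}$ from every barrier. The analogous event $\cG_{\tiX}$ satisfies $\bbP(\cG_{\tiX}) \geq 3/4$, using that $\tiX_0$ on the negative side forces $s_i(0)=-1$ for that process.

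Now fix any coupling of $X_t$ and $\tiX_t$ on a common probability space. Since $\cG_X$ depends only on the marginal of $X_t$ and $\cG_{\tiX}$ only on the marginal of $\tiX_t$, a union bound gives $\bbP(\cG_X \cap \cG_{\tiX}) \geq 1/2$. On this event, $X_t$ is strictly inside the bounded component of $\bbR^2\setminus B_i$ while $\tiX_t$ is strictly inside the unbounded component, so by the Jordan curve theorem the straight-line segment $[X_t,\tiX_t]$ must intersect $B_i$ at some point $y$. Since $y\in B_i$ lies on that segment,
\begin{align}
    \Vert X_t - \tiX_t \Vert = \Vert X_t - y \Vert + \Vert y - \tiX_t \Vert \geq 2c_{2,b}\sqrt{t}. \nonumber
\end{align}
Setting $c_2 \de 2c_{2,b}$, the hypothesis $\mathfrak{u} \geq c_2\sqrt{t}$ then forces $d_{\mathfrak{u}}(X_t,\tiX_t) \geq 2c_{2,b}\sqrt{t}$ on $\cG_X\cap\cG_{\tiX}$, whence
\begin{align}
    \bbE\bigl[d_{\mathfrak{u}}(X_t,\tiX_t)\bigr] \geq 2c_{2,b}\sqrt{t}\cdot \bbP(\cG_X\cap \cG_{\tiX}) \geq c_{2,b}\sqrt{t}. \nonumber
\end{align}
Taking the infimum over couplings (recall \eqref{eq: Def_W1u}) yields \Wu$\geq c_{2,b}\sqrt{t}$ with $c_1 \de \min\{c_{1,a},c_{1,b}\}$ and $c_3 \de c_{2,b}$.

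There is no real obstacle here: the only geometric point to check is that two points lying strictly on opposite sides of $B_i$, each at distance at least $c_{2,b}\sqrt{t}$ from $B_i$, are automatically at mutual distance at least $2c_{2,b}\sqrt{t}$, which is immediate from the Jordan curve theorem once one observes that the intersection of the connecting segment with $B_i$ splits the segment into two pieces whose lengths are both bounded below by the respective distances to $B_i$. The slightly annoying bookkeeping is only in tracking the three constants from the two lemmas alongside the choice of $c_2$; choosing both failure probabilities equal to $1/8$ keeps this tidy.
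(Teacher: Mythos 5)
Your proof is correct and follows essentially the same route as the paper: combine \Cref{lem: SameSideOfBarrier} and \Cref{lem: DistantBoundary} via a union bound, use the Jordan curve structure to deduce a deterministic separation of order $\sqrt{t}$ on the good event, and take expectation over an arbitrary coupling. The only cosmetic difference is that the paper applies \Cref{lem: DistantBoundary} to $X_t$ alone (which already gives $\Vert X_t - \tiX_t \Vert \geq c_2\sqrt{t}$ since the connecting segment must hit $B_i$) whereas you apply it symmetrically to both processes, gaining an irrelevant factor of $2$ at the cost of a slightly worse event probability.
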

\begin{proof}
    Consider an arbitrary coupling $(X_t , \tiX_t)$. 
    Then, by \Cref{lem: SameSideOfBarrier} with $\eta = 1/4$ and the union bound, if $c_1$ is sufficiently small,   
    \begin{align} 
        \bbP(X_t \text{ and } \tiX_t \text{ are on different sides of }B_i) \geq 1/2.\label{eq:ZenForce}   
    \end{align} 
    By \Cref{lem: DistantBoundary} with $\eta = 1/4$, there exists $c_2>0$ such that when $c_1$ is sufficiently small, 
    \begin{align} 
        \bbP(X_t \text{ is at distance } \geq c_2\sqrt{t}\text{ from }B_i ) \geq 1 - 1/4.\label{eq:PoliteOwl}  
    \end{align} 
    The combination of the events described in \eqref{eq:ZenForce} and \eqref{eq:PoliteOwl} implies that $\Vert X_t - \tiX_t \Vert \geq c_2\sqrt{t}$. 
    Hence, by the law of total expectation, we have with $c_3 \de c_2/4$ that for every $\mathfrak{u} \geq c_2\sqrt{t}$,  
    \begin{align} 
        \bbE[\min\{\Vert X_t - \tiX_t \Vert, \mathfrak{u} \} ] \geq c_2\sqrt{t}\bbP(\Vert X_t - \tiX_t \Vert \geq c_2\sqrt{t}) \geq  c_3\sqrt{t}. 
    \end{align}
    Recall the definition of truncated Wasserstein distance from \eqref{eq: Def_W1u} and use that the coupling $(X_t, \tiX_t)$ is arbitrary to conclude the proof. 
\end{proof}

\subsection{Proof of \texorpdfstring{\Cref{prop: Main_GlobalRecovery_Algorithm,prop: Main_GlobalRecovery_Improvement}}{Propositions}}\label{sec: ProofGlobalConclude}
The behavior established for the transition kernel in \Cref{prop: Continuity} and \Cref{cor: Discontinuity} persists in the empirical estimator from \eqref{eq: Def_hatP}: 
\begin{lemma}\label{lem: Concentration_hatPS}
    For every $\alpha, \beta >0$  there exist constants $c_1, c_2, c_3 >0$ depending only on $\alpha$ and $\beta$ such that the following holds for every $t\leq c_1\min\{ 1/\kappa^2, 1/\lambda_{\max}^2,\rho^2 \}$ and $\mathfrak{u} \leq \alpha \sqrt{t}$.

    Assume that $X_0\sim \pi$ starts in stationarity.
    Then, for every $S\subseteq D$ with $\Area(S)>0$ and diameter $\sup_{a,b\in S}\Vert a-b \Vert \leq \sqrt{t}$, denoting $P_{S} \de \bbP(X_t\in \cdot \mid X_0 \in S)$,     
    \begin{align} 
        \bbP\bigl(\Wu(\hat{P}_S, P_S) \leq \beta \sqrt{t}\bigr) \geq 1 - c_2 \exp\bigl(-c_3 \pi_{\min} \Area(S) T/\tmix \rceil \bigr).
    \end{align} 
\end{lemma}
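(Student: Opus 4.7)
The proof will rely on two concentration steps, both based on Bernstein-type bounds for additive functionals of Markov chains with mixing time $\tmix$ (such as those of Paulin or Lezaud). Let $Y_i \de X_{it}$ denote the sampled chain, whose discrete-time mixing time is of order $\max\{1,\tmix/t\}$. First I would bound the number of visits $|\cI_S|$ from below: since $X_0\sim\pi$ the stationary frequency of visits to $S$ is $\pi(S)\geq\pi_{\min}\Area(S)$, and a Markov chain Chernoff bound applied to $|\cI_S| = \sum_{i=0}^{\lfloor T/t\rfloor-1}\bb1_S(Y_i)$ gives
\begin{align*}
    \bbP\bigl(|\cI_S|\leq \tfrac{1}{2}(T/t)\pi(S)\bigr)\leq C\exp\bigl(-c\, T\pi_{\min}\Area(S)/\tmix\bigr),
\end{align*}
which already has the desired exponent.

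Next I would dualize via Kantorovich--Rubinstein, writing $\Wu(\hat{P}_S, P_S) = \sup_{f\in\cF}[\hat{P}_S(f)-P_S(f)]$ with $\cF$ the set of $1$-Lipschitz functions of oscillation $\leq\mathfrak{u}$ normalized by $f(x_\star)=0$ at some fixed $x_\star\in S$. The coupling of \Cref{sec: Local}, together with \Cref{lem: LtauUpper,cor: TauBound}, yields that for $R$ large depending on $(\alpha,\beta)$ and for $c_1$ small,
\begin{align*}
    \bbP\bigl(\Vert X_{(i+1)t}-X_{it}\Vert>R\sqrt{t}\mid X_{it}=x\bigr) \leq \epsilon_0(\alpha,\beta)
\end{align*}
uniformly in $x\in S$, so both $P_S$ and (with high probability) $\hat{P}_S$ place all but an $\epsilon_0$-fraction of their mass in the ball $B \de \sB(x_\star, (R+1)\sqrt{t})$. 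Since $\Vert f\Vert_\infty\leq\mathfrak{u}\leq\alpha\sqrt{t}$, the contribution of $B^c$ to $\hat{P}_S(f)-P_S(f)$ is at most $2\alpha\sqrt{t}\,\epsilon_0$, which I make $\leq\beta\sqrt{t}/4$ by fixing $\epsilon_0$ small enough. On $B$ the restricted class is $1$-Lipschitz with values in an interval of length $\alpha\sqrt{t}$ on a set of diameter $O(R\sqrt{t})$; after rescaling by $1/\sqrt{t}$ it admits a sup-norm $(\beta/4)$-net of cardinality $N_\varepsilon = N_\varepsilon(\alpha,\beta,R)$ independent of $t$.

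Finally, for each fixed $f$ in the net I would apply the Markov chain Bernstein inequality to the centered functional $g_f(x_0,x_1) \de \bb1_S(x_0)[f(x_1)-P_S(f)]$, which has stationary mean zero, $\Vert g_f\Vert_\infty\leq\alpha\sqrt{t}$, and stationary variance bounded by $\alpha^2 t\,\pi(S)$. This yields
\begin{align*}
    \bbP\Bigl(\bigl\lvert{\textstyle\sum_i} g_f(Y_i,Y_{i+1})\bigr\rvert\geq c'\beta\sqrt{t}\pi(S)(T/t)\Bigr)\leq 2\exp\bigl(-c''(\beta/\alpha)^2 T\pi(S)/\tmix\bigr),
\end{align*}
and dividing by the lower bound on $|\cI_S|$ from the first step converts this into $|\hat{P}_S(f)-P_S(f)|\leq\beta\sqrt{t}/4$ per test function. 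Union-bounding over the $N_\varepsilon$-net concludes the proof, since $N_\varepsilon$ is a constant absorbed into $c_2$. The main obstacle I anticipate is calibrating the Bernstein bound so that the variance factor $\alpha^2 t\pi(S)$ combines with the $(T/t)\pi(S)^2$ from dividing by $|\cI_S|^2$ to leave exactly the exponent $T\pi(S)/\tmix$, which requires the Bernstein (rather than Hoeffding) form together with the sharp variance estimate; the auxiliary tail truncation via the coupling of \Cref{sec: Local} is the other delicate ingredient, since one must transfer the local-in-time bounds of \Cref{lem: LtauUpper} into a one-step displacement bound uniform in the starting point.
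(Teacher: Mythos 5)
Your proposal is correct, and it takes a genuinely different route from the paper's proof. The paper's argument introduces a binning operation $\bin_{\mathfrak{d}}[\cdot]$ over a grid of mesh $\mathfrak{d} = \gamma\sqrt{t}$, bounds $\Wu(Q,\bin_{\mathfrak{d}}[Q])\leq\sqrt{2}\mathfrak{d}$ trivially, and then establishes concentration of $\bin_{\mathfrak{d}}[\hat P_S]$ around $\bin_{\mathfrak{d}}[P_S]$ in \emph{total variation} using the Markovian Bernstein inequality of Paulin applied to $\bb1\{X_{it}\in S, X_{(i+1)t}\in E\}$ for each bin $E$; the reduction to finitely many bins is handled by showing (via \Cref{cor: TauBound}) that $P_S$-mass concentrates on $O(1)$ many bins around $S$, after which the $\Wu\leq\mathfrak{u}\Vert\cdot\Vert_{\TV}$ inequality converts the TV bound to a Wasserstein bound. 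You instead invoke Kantorovich--Rubinstein duality for the cost $d_{\mathfrak{u}}$, truncate the dual Lipschitz class to a ball of radius $O(\sqrt{t})$ using the same tail control from \Cref{sec: Local}, build a sup-norm $\epsilon$-net of cardinality independent of $t$ after rescaling, and apply Bernstein separately to each centered functional $g_f$ in the net. Both routes need the same two ingredients — the Markovian Bernstein inequality (with the variance, not just the sup-norm, driving the bound, as you correctly flag) and a single-step displacement estimate of order $\sqrt{t}$ — and both reduce to a finite union bound whose cardinality is absorbed into $c_2$. Your version is slightly more direct in that it avoids the detour through total variation, at the cost of requiring a Lipschitz extension step to pass from the net on the restricted ball $B$ to genuine test functions in $\cF$; the paper's version is slightly more elementary in that the finite family over which one union-bounds consists of concrete rectangular bins rather than abstract Lipschitz functions. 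Either way, the exponent $T\pi(S)/\tmix$ emerges the same way, and your closing observation about why Hoeffding (ignoring the $\pi(S)$ variance factor) would not suffice is exactly the calibration that the paper's \Cref{lem: Paulin} makes precise.
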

This follows by using a Markovian Bernstein inequality from \cite{paulin2015concentration} to show that the number of transitions from $S$ to a fixed small target region concentrates, and subsequently taking a union bound over a large but fixed number of target regions; see \Cref{apx: Concentration}.

\Cref{prop: Main_GlobalRecovery_Algorithm} regarding the performance of \Cref{alg: KernelDiscontinuity} now follows readily. 
First, using \Cref{lem: Concentration_hatPS} with the lower bound on $T$ from \eqref{eq:HighUser}, it may be ensured that $\hat{P}_{\cS(j,k)} \approx P_{\cS(j,k)}$ for all boxes $\cS(j,k)\subseteq D$.  
If $\cS(j,k)$ is at a sufficient distance from all barriers so that $\cS(j+h_j,k+h_k)$ is on the same side for every $\lvert h_j \rvert, \lvert h_k \rvert \leq 2$, then \Cref{prop: Continuity} implies that continuity holds for the estimated transition kernel and \Cref{alg: KernelDiscontinuity} will not add $\cS(j,k)$ to $\hat{B}$. 
Conversely, \Cref{cor: Discontinuity} yields a discontinuity when $\cS(j,k)$ is sufficiently close to some barrier $B_i$, in which case \Cref{alg: KernelDiscontinuity} adds $\cS(j,k)$ to $\hat{B}$. 
Detailed computations making this outline rigorous are given in \Cref{apx: ProofGlobalRecovery}.  

The proof idea for \Cref{prop: Main_GlobalRecovery_Improvement} is similar, although more technical in execution.
Fix some small $\beta >0$ and let $\cE$ be the event where concentration occurs: 
\begin{align} 
    \cE \de \{\omega: \Wu(\hat{P}_{\cR(j,k,{\nn},h)}, P_{\cR(j,k,{\nn},h)}) \leq \beta \sqrt{t},\ \ \forall j,k,{\nn},h \text{ with }  \cR(j,k,{\nn},h) \subseteq D\}.\label{eq:ZippyImp} 
\end{align} 
\Cref{lem: Concentration_hatPS} with the union bound then ensures that $\cE$ holds with high probability, so it remains to show that \Cref{alg: Improve} improves the error by a constant factor whenever $\cE$ occurs.

The idea is again to rely on \Cref{prop: Continuity} and \Cref{cor: Discontinuity}. 
In order for those results to be applicable, however, it has to be shown that the orientation of the barriers is estimated sufficiently precisely in \Cref{alg: Improve} to ensure that the rectangular regions $\{\cR(j,k,{\nn},h): \lvert h \rvert  \leq 2\}$ all remain on a fixed side when $p_{j,k}$ is at distance $>\sE/2$ from the barriers, or that they change sides when $p_{j,k}$ is close to a barrier.
The latter follows by using the assumption that $\dH(\fB,\cup_{i=0}^{\nb} B_i)\leq \sE$ together with the fact that the curvature of the barriers is bounded.  
Specifically, this follows from the variant of \Cref{lem: BarrierLocallyStraight} outlined in \Cref{rem: LocalStraight}.  
Details for the proof of \Cref{prop: Main_GlobalRecovery_Improvement} are given in \Cref{apx: Proof_Main_GlobalRecovery_Improvement}.

\section{Proof of \texorpdfstring{\Cref{thm: Main_PathDependentRecovery}}{Theorem} and \texorpdfstring{\Cref{cor: Xuniform}}{Corollary}}\label{sec: ProofPathDependent}
An algorithm for the high-frequency regime is given in \Cref{sec: AlgPathDependent} and the proofs of consistency are given in \Cref{sec: ConcTransition,sec: Main_PathDependentRecovery_Algorithm}.

\subsection{Algorithm}\label{sec: AlgPathDependent}
Consider a point $p\in \bbR^2$ and a unit vector $\vecv\in \bbR^2$.  
Then, with $\vec{v}^{\perp}$ a unit vector orthogonal to $\vecv$ and $\ell >0$ a parameter to be chosen later, we define a parallel rectangular strip and half-space by 
\begin{align} 
    R_+(p,\vecv) &\de \bigl\{q\in \bbR^2:\langle q-p, \vecv  \rangle\in [\sqrt{t}, 2\sqrt{t}],\, \langle q-p, \vec{v}^{\perp}  \rangle \in [-\ell, \ell] \bigr\},\label{eq: Def_Splus}\\
    H_{-}(p,\vecv) &\de \bigl\{q\in \bbR^2:\langle q-p, \vecv  \rangle < -\sqrt{t} \bigr\}. \label{eq: Def_Smin} 
\end{align}
It is intuitive that $X_t$ will then only rarely transition from $R_+(p,\vecv)$ to $H_{-}(p,\vecv)$ if a barrier separates these sets.
Correspondingly, \Cref{alg: HighFrequency} employs a grid search over $p$ and $\vecv$ and detects a barrier if $R_+(p,\vecv)$ was visited many times but there were few transitions to $H_{-}(p,\vecv)$. 
This algorithm has the desired properties:
\begin{proposition}\label{prop: Main_PathDependentRecovery_Algorithm}
    Adopt the notation of \Cref{thm: Main_PathDependentRecovery}. 
    Then there are constants $c_4,\ldots,c_7$ depending only on $\eta$ such that the performance guarantees in \Cref{thm: Main_PathDependentRecovery} and \Cref{cor: Xuniform} are satisfied if $\hat{\cX}$ is the output of \Cref{alg: HighFrequency} with parameters 
    \begin{align} 
        \mathfrak{s} \de c_4\ \ \epsilon \de c_5 \sqrt{t}, \ \ \ell \de c_6\ln(T/t)\sqrt{t} \ \text{ and }\  \mathfrak{n}_0 \de c_{7}\ln(T/t). 
    \end{align}
\end{proposition}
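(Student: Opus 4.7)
The plan is to reduce the algorithmic guarantees to concentration of two empirical counts per grid triple $(p_{j,k}, \vec{w}_{\nn})$. Define $N(p,\vec{w})$ as the number of indices $i$ with $X_{it} \in R_+(p,\vec{w})$, and $M(p,\vec{w})$ as the number of such indices that moreover satisfy $X_{(i+1)t} \in H_-(p,\vec{w})$. Algorithm~3 will add $p$ to $\hat{\cX}$ iff $N \geq \mathfrak{n}_0$ and $M \leq \mathfrak{s}$ for some $\vec{w}_{\nn}$ on the angular grid. Since there are polynomially-in-$(T/t)$ many relevant triples, every high-probability statement below will combine a per-triple Markov-chain Bernstein bound of \cite{paulin2015concentration}, as used in \Cref{lem: Concentration_hatPS}, with a global union bound.

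For the absence of false positives \eqref{eq: HighFrequency_FalsePositives}, I would fix a grid point $p$ at distance $\geq 3\sqrt{t}$ from $\cup_{i=0}^{\nb} B_i$; then for every direction on the angular grid, $R_+(p,\vec{w}_{\nn})$ together with its $O(\sqrt{t})$-corridor to $H_-$ lies in a ball free of barriers. \Cref{cor: X_uniform_Y} applied with such a ball shows that in this region $X_t$ agrees with standard Brownian motion over one sampling step with high probability, and a single Brownian step of scale $\sqrt{t}$ crosses the $O(\sqrt{t})$-wide gap between $R_+$ and $H_-$ with Gaussian probability bounded below by an absolute constant $c_* > 0$ uniformly in the starting point $q \in R_+$. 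A Bernstein bound then gives $M(p,\vec{w}_{\nn}) \geq (c_*/2)\,N(p,\vec{w}_{\nn})$ with failure probability $\exp(-\Omega(\mathfrak{n}_0))$, so choosing $c_4 < c_*/2$ and $c_7$ sufficiently large rules out $\{N \geq \mathfrak{n}_0,\, M \leq \mathfrak{s}\}$ per triple within the budget of a union bound.

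For the recovery of typical points \eqref{eq: HighFrequency_RandomPoint}, I would apply the strong Markov property of \Cref{prop: X_Markov} at $\tau$ and invoke \Cref{cor: X_uniform_Y} on the window $[\tau, \tau + c_7 t\ln(T/t)]$ with $r_\delta$ of order $\ell$ and $\delta$ of order $1/\ln(T/t)$, both admissible under the hypothesis $\ln(T/t)^4 t \leq c_2 \min\{1/\kappa^2, 1/\lambda_{\max}^2, \rho^2\}$. Choose the grid point $p$ within distance $\epsilon$ of $X_\tau$ and the direction $\vec{w}_{\nn}$ within angular error $\epsilon/\ell$ of $\vec{n}_i(X_\tau)$; up to the coupling error $O(\delta r_\delta)=O(\sqrt{t})$, the strip $R_+(p,\vec{w}_{\nn})$ then coincides with the normal slab of $B_i$ at distance $[\sqrt{t}, 2\sqrt{t}]$. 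A reflecting Brownian motion occupies this slab at each sample time with probability $\Omega(1)$, so $\bbE[N(p,\vec{w}_{\nn})] \gtrsim \mathfrak{n}_0$ and Bernstein concentration forces $N \geq \mathfrak{n}_0$. Under the same coupling, the one-sided reflecting process never crosses into $H_-$, so $M(p,\vec{w}_{\nn})$ is bounded above by the number of sign-flips of $s_i$ over time $\mathfrak{n}_0 t$, whose expectation is $O(\lambda_{\max} t \ln(T/t)) \ll 1$ by hypothesis. Hence $M \leq \mathfrak{s}$ and $p$ is added to $\hat{\cX}$, so that $\inf_{q \in \hat{\cX}} \|X_\tau - q\| \leq \epsilon$.

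The main obstacle is handling stopping times within $c_7 t\ln(T/t)$ of the horizon $T$, where the forward window above runs past $T$. I would address this with an analogous backward-in-time argument on the window $[\tau - c_7 t\ln(T/t), \tau]$: by continuity $X_s$ then lies within $O(\ln(T/t)^{1/2}\sqrt{t})$ of $X_\tau$ on one specific side of $B_i$, so \Cref{cor: X_uniform_Y} applied on a ball around $X_\tau$ again provides a straight-line coupling and the same Bernstein argument yields $N \geq \mathfrak{n}_0$ and $M \leq \mathfrak{s}$, with the algorithm's search over both orientations $\pm \vec{w}_{\nn}$ covering whichever side is relevant. The uniform statement in \Cref{cor: Xuniform} then follows from \eqref{eq: HighFrequency_RandomPoint} by meshing each $B_i$ at spacing $\varepsilon/4$, applying the typical-point bound to the first-hitting time $\tau_y = \inf\{s : X_s \in \sB(y, \varepsilon/4)\}$ of each mesh point $y$, and a union bound over the $O(1/\varepsilon)$ mesh points whose multiplicative cost is absorbed into the $(\varepsilon, T)$-dependent constants of the corollary.
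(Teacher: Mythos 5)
Your high-level decomposition (false-positive control via a gap-crossing event, true-positive control via coupling to a straight-line reflecting process, an edge argument near $T$, and a meshed union bound for the corollary) matches the paper's architecture, but three of the load-bearing steps do not work as stated.

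First, you repeatedly invoke the Markov-chain Bernstein inequality of \cite{paulin2015concentration} ``as used in \Cref{lem: Concentration_hatPS}.'' That inequality requires the chain to be started in stationarity and only gives nontrivial bounds once $T$ is a large multiple of $\tmix$. The whole point of the high-frequency regime in \Cref{thm: Main_PathDependentRecovery} is that $T$ is fixed, possibly far below $\tmix$, and the initial condition is an arbitrary $x_0$. So those bounds are simply unavailable. The paper uses \Cref{lem: DriftExponential}, which is a bare Hoeffding argument under the much weaker hypothesis $\bbP(Y_n=1 \mid Y_1,\ldots,Y_{n-1}) \geq q$; this is applied to the per-visit transition indicators (via strong Markovianity at the successive visit times to $R_+$) and does not need mixing.

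Second, in the true-positive argument you assert that the reflecting process started on the barrier ``occupies this slab at each sample time with probability $\Omega(1)$'' so that a Bernstein bound forces $N \geq \mathfrak{n}_0$. This is quantitatively wrong: after $j$ steps the reflected process is typically at distance $\Theta(\sqrt{jt})$ from the barrier, so $\bbP(|\cB_j| \in [1+\zeta,2-\zeta])$ decays like $1/\sqrt{j}$ and only the sum $\sum_{j\leq J} j^{-1/2} \asymp \sqrt{J}$ matches $\mathfrak{n}_0$. Worse, these indicators are strongly dependent across $j$ and the relevant hitting times are L\'evy-distributed, so ``Bernstein concentration'' does not give $N \geq \mathfrak{n}_0$ for free; the paper handles this via the excursion decomposition in \Cref{lem: NplusLarge}, which extracts $\asymp \sqrt{J}$ conditionally independent Bernoulli trials (one per passage of $\cB$ from $0$ to $2$) and then applies \Cref{lem: DriftExponential}, precisely because the naive concentration you gesture at fails. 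See also \Cref{rem: tau_heavy}.

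Third, your spatial mesh for \Cref{cor: Xuniform} sets $\tau_y = \inf\{s: X_s \in \sB(y,\varepsilon/4)\}$ for mesh points $y \in B_i$, but this stopping time does not satisfy $X_{\tau_y} \in \cup_{i} B_i$ almost surely, which is the hypothesis under which \eqref{eq: HighFrequency_RandomPoint} applies; the process can enter the ball without touching the barrier. You could repair this by hitting the arc $B_i\cap \sB(y,\varepsilon/4)$, but the cleaner route (and the one the paper takes) is a time mesh: split $[0,T]$ into $M$ pieces on which the oscillation of $X$ is at most $\varepsilon/2$ with high probability, set $\tau_j = \inf\{s\geq jT/M: X_s\in\cup_i B_i\}$, and note $X_{\tau_j}\in\cup_i B_i$ by construction. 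Your edge-of-horizon idea (a backward window near $T$) is also replaced in the paper by the simpler observation (\Cref{lem: HittingEnd}) that $\tau \in [T-\Delta, T]$ is unlikely at all, since with high probability $X_s$ stays off the barriers on that interval; this avoids having to reason about the process backward in time.
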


{
\small 
\begin{algorithm} [ht]
    \caption{}
    \label{alg: HighFrequency}
\begin{minipage}{0.6\textwidth}
    \begin{flushleft}
    \textbf{INPUT:} Observed data sequence $\{X_{it}: i=0,\ldots, \lfloor T/t \rfloor\}$.\\
    \hphantom{\textbf{INPUT:}}  Parameters $\mathfrak{s}, \epsilon, \ell, \mathfrak{n}_0>0$.\\ 
    \textbf{OUTPUT:} Subset $\hat{\cX}\subseteq \bbR^2$ approximating $\cup_{i=0}^{\nb} \cX_i$.   
\end{flushleft}
    \begin{algorithmic}[1]
        \State $\hat{\cX} \gets \emptyset$.
        \ForAll {integers $j,k \in \bbZ$ and $0 \leq n \leq \lfloor 2\pi \ell /\epsilon \rfloor$}
        \State $p_{j,k} \gets (j\epsilon, k \epsilon)$;\quad  $\vec{v}_n \gets (\cos(n\epsilon / \ell), \sin(n\epsilon / \ell))$  
        \State $N_{j,k,n} \gets \#\{0 \leq i \leq \lfloor T/t \rfloor - 1:X_{it} \in R_+(p_{j,k},\vec{v}_n) \}$ 
        \EndFor 
    \end{algorithmic}
\end{minipage}
\hfill
\begin{minipage}{0.36\textwidth}
    \centering 
    \includegraphics[width = 1\textwidth]{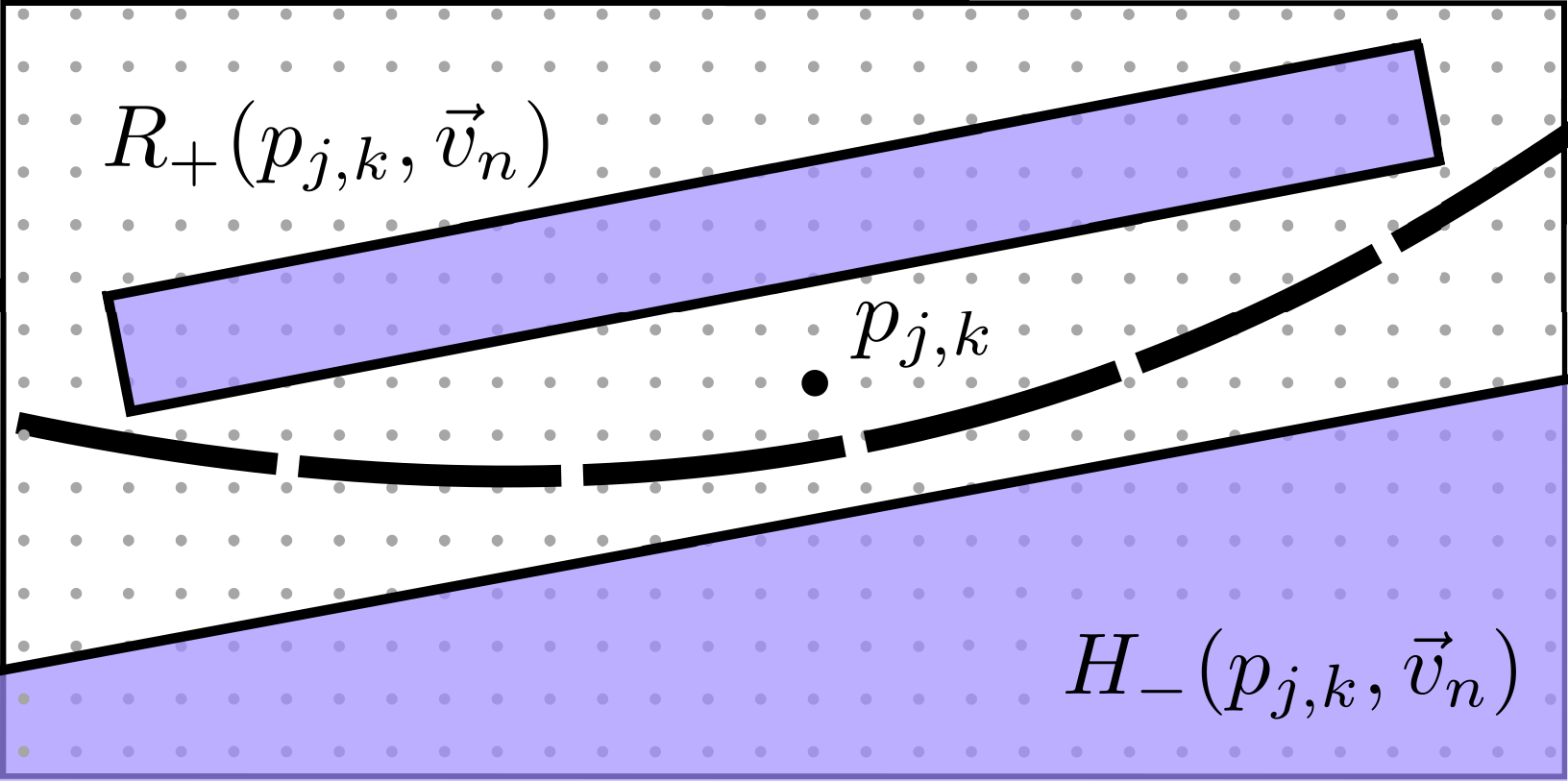}
\end{minipage}
    \begin{algorithmic}[1]
        \setcounter{ALG@line}{4}
        \Indent 
        \State $M_{j,k,n} \gets \#\{0\leq i \leq \lfloor T/t \rfloor - 1:X_{it} \in R_+(p_{j,k},\vec{v}_n),\, X_{(i+1)t} \in H_{-}(p_{j,k},\vec{v}_n)  \}$ 
        \If {$N_{j,k,n}\geq \mathfrak{n}_0$ and $M_{j,k,n}/ N_{j,k,n} < \mathfrak{s}$}
        \State $\hat{\cX} \gets \hat{\cX}\cup \{p_{j,k} \}$ 
        \EndIf    
        \EndIndent
    \end{algorithmic}
\end{algorithm}
}

\begin{remark}
    \Cref{alg: HighFrequency} with appropriate parameters can also be shown to be consistent in the fixed-frequency setting of \Cref{sec: PrelimGlobal}, but will be more sensitive to the sampling rate. 
    Specifically, it is intuitive that  \Cref{alg: KernelDiscontinuity,alg: Improve} should typically continue working when $t\gg \min\{1/\kappa^2, 1/\lambda_{\max}^2,\rho^2\}$, although a greater observation period may be needed to compensate for the discontinuity in the transition kernel becoming less pronounced. 

    On the other hand, \Cref{alg: HighFrequency} will generally stop working altogether when $t$ is large, no matter how great the observation period, as the linear approximation of the barriers fails at the spatial scale which is then necessary.  
\end{remark}
\subsection{Concentration for the number of transitions and number of visits}\label{sec: ConcTransition}
Random fluctuations can cause deviations from the expected behavior in the transition or visit counts used in \Cref{alg: HighFrequency}.
This could potentially result in false positives or negatives. 

The following result will be used to control such fluctuations despite the fact that the considered time frame does not necessarily exceed the mixing time of the Markov chain. 
The proof amounts to an application of Hoeffding's inequality and is deferred to \Cref{apx: ProofDriftExponential}.   
\begin{lemma}\label{lem: DriftExponential}
    Let $Y_1,Y_2,\ldots$ be a sequence of $\{0,1 \}$-valued random variables, not necessarily independent or identically distributed, such that 
    $
        \bbP(Y_n = 1 \mid Y_1,\ldots,Y_{n-1}) \geq q$ 
    for some $q>0$ and every $n\geq 1$. 
    Then, for every $\eta \in (0,q)$ and $n_0 \geq 1$,  
    \begin{align} 
        \bbP\Bigl(\sum_{i=1}^{n}Y_i \geq n(q-\eta)  \  \text{ for all }\  n \geq n_0  \Bigr) \geq 1-2\eta^{-2} \exp\Bigl(-2\eta^2 n_0 \Bigr).  \label{eq:BlueOtter} 
    \end{align}    
\end{lemma}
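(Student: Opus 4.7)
The plan is to combine a one-sided Azuma--Hoeffding inequality for the centered increments of $\sum Y_i$ with a union bound over $n\geq n_0$. Let $\cF_n \de \sigma(Y_1,\ldots,Y_n)$ and write $p_i \de \bbE[Y_i\mid\cF_{i-1}]$, which by hypothesis is at least $q$ almost surely and, since $Y_i\in\{0,1\}$, lies in $[q,1]$. Then $D_i \de Y_i - p_i$ is a martingale difference sequence with conditional range $D_i \in [-p_i,\, 1-p_i]$ of width $1$.

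Applying the bounded-range form of Azuma--Hoeffding to $\sum_{i=1}^n D_i$ yields, for every $n\geq 1$ and $t>0$,
\begin{align}
\bbP\Bigl(\sum_{i=1}^n Y_i \leq \sum_{i=1}^n p_i - t\Bigr) \leq \exp\bigl(-2t^2/n\bigr).\nonumber
\end{align}
Using the lower bound $\sum_{i=1}^n p_i\geq nq$ and choosing $t=n\eta$ then gives the per-$n$ tail $\bbP(\sum_{i=1}^n Y_i \leq n(q-\eta))\leq \exp(-2n\eta^2)$. A union bound now controls the complement of the event in \eqref{eq:BlueOtter} by the geometric sum
\begin{align}
\sum_{n=n_0}^{\infty}\exp(-2n\eta^2) = \frac{\exp(-2n_0\eta^2)}{1-\exp(-2\eta^2)}.\nonumber
\end{align}

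To collapse this into the target constant $2\eta^{-2}$, it suffices to show $1-\exp(-2\eta^2)\geq \eta^2/2$. Since necessarily $q\leq 1$ (otherwise the hypothesis would force $Y_i>1$), the argument $2\eta^2$ lies in $(0,2)$, and on this interval the elementary bound $1-e^{-x}\geq x/4$ follows from concavity of $x\mapsto 1-e^{-x}$ together with the value at $x=2$. This delivers \eqref{eq:BlueOtter}.

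The argument is essentially routine, and no serious obstacle is anticipated. The only points requiring minor care are (i) invoking the martingale form of Hoeffding's inequality, since the $Y_i$ are neither independent nor identically distributed, to obtain the constant $2$ in the exponent $\exp(-2t^2/n)$, and (ii) dominating the geometric tail by $2\eta^{-2}$ times its leading term via a concavity estimate on $1-e^{-x}$ that is valid precisely because $\eta<q\leq 1$.
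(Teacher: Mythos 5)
Your proof is correct, and it reaches \eqref{eq:BlueOtter} by a genuinely different (though equally standard) route than the paper. You apply Azuma--Hoeffding directly to the martingale differences $D_i = Y_i - \bbE[Y_i\mid \cF_{i-1}]$, using the bounded-range form to keep the sharp constant $2$ in the exponent; the paper instead constructs an explicit coupling $Z_n\leq Y_n$ with $Z_1,Z_2,\ldots$ i.i.d.\ Bernoulli$(q)$ and then invokes the classical i.i.d.\ Hoeffding inequality for $\sum_i Z_i$. Your approach avoids building any auxiliary randomness and is somewhat more self-contained in that it only uses properties of the given filtration; the paper's coupling is more elementary in the sense that it reduces to a statement about i.i.d.\ coin flips, but it needs a little care to verify that $Z_n$ really is Bernoulli$(q)$ independent of the past. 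Both approaches converge on the same geometric series, and your concavity estimate $1-e^{-x}\geq x/4$ on $(0,2]$ delivers the same constant $2\eta^{-2}$ as the paper's equivalent bound $e^{2\eta^2}/(e^{2\eta^2}-1)\leq 2\eta^{-2}$; your remark that $\eta<q\leq 1$ justifies the restriction $x=2\eta^2\leq 2$ is exactly the point the paper also relies on implicitly.
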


\subsubsection{Transitions from \texorpdfstring{$R_+(p,\vecv)$}{R(p,v)} to \texorpdfstring{$H_{-}(p,\vecv)$}{H(p,v)}} 
The following result shows that the number of transitions behaves ``as expected'' if $R_+(p,\vecv)$ was visited sufficiently many times. 
This explains the role of the parameter $\mathfrak{n}_0$ in \Cref{alg: HighFrequency}: it reduces false positives by pruning locations which were not visited enough and could hence suffer from large fluctuations.   

\begin{corollary}\label{cor: NumTransitions}
    There exist absolute constants $c_1,\ldots,c_7 >0$ such that the following holds for every $\sqrt{t}\leq \ell \leq c_1 (t \min\{1/\kappa^2, 1/\lambda_{\max}^2,\rho^2\})^{1/4}$, $n_0\geq 1$, and initial condition $x_0\in D$.
    
    Given some $T>0$, a point $p\in \bbR^2$, and a unit vector $\vecv \in \bbR^2$, denote 
    \begin{align} 
        N(p,\vecv) &\de \#\{0 \leq i \leq \lfloor T/t \rfloor-1: X_{it} \in R_+(p,\vecv)  \} \label{eq:VividImp},\\  
        M(p,\vecv) &\de \#\{0 \leq i \leq \lfloor T/t \rfloor - 1: X_{it} \in R_+(p,\vecv) , X_{(i+1)t} \in H_{-}(p,\vecv) \}.\label{eq:ValidOrb} 
    \end{align}
    Then, if $p$ satisfies $\Vert p - y \Vert\geq c_2 \ell$ for every $y \in \cup_{i=0}^{\nb} B_i$,
    \begin{align} 
        \bbP\bigl(M(p,\vecv) / N(p,\vecv) < c_3 \text{ and }  N(p,\vecv) \geq n_0  \bigr) \leq c_4\exp(- c_5 n_0).\label{eq:TallSax}  
    \end{align}
    On the other hand, if $\Vert p - y \Vert \leq c_6 \sqrt{t}$ and $\min_{\pm \in \{+,- \}}\Vert \vecv \pm  \vec{n}_i(y) \Vert \leq c_7\sqrt{t}/\ell$ for some $y \in B_i$,
    \begin{align} 
        \bbP\bigl(M(p,\vecv) / N(p,\vecv) \geq c_3 \text{ and }  N(p,\vecv) \geq n_0  \bigr) \leq c_4\exp(- c_5 n_0).  \label{eq:ZippyCity}
    \end{align}
\end{corollary}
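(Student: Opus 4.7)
The plan is to reduce both inequalities to a single concentration argument based on \Cref{lem: DriftExponential}. Let $k_1 < k_2 < \cdots$ enumerate the indices $i$ with $X_{it} \in R_+(p,\vecv)$ (each $k_j$ is an $\cF_t$-stopping time), set $Y_j \de \mathbf{1}\{X_{(k_j+1)t} \in H_-(p,\vecv)\}$, and extend the sequence by independent Bernoullis beyond the last visit if necessary. By the strong Markov property of \Cref{prop: X_Markov} applied at time $k_j t$, the conditional law of $Y_j$ given $\cF_{k_j t}$ depends only on $\sX_{k_j t}$, so $\bbP(Y_j = 1 \mid Y_1,\ldots,Y_{j-1})$ can be controlled uniformly once the one-step transition probability is controlled uniformly over all feasible $\sX_{k_j t}$ with $X_{k_j t} \in R_+(p,\vecv)$. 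On the event $\{N(p,\vecv) \geq n_0\}$ we have $M(p,\vecv) = \sum_{j=1}^{N(p,\vecv)} Y_j$, so both \eqref{eq:TallSax} and \eqref{eq:ZippyCity} will follow from \Cref{lem: DriftExponential} once the required one-step estimates are in hand.

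For \eqref{eq:TallSax}, note that $R_+(p,\vecv)$ has diameter of order $\ell$, so the hypothesis $\Vert p - y \Vert \geq c_2 \ell$ for every $y \in \cup_i B_i$ combined with $X_{k_j t} \in R_+(p,\vecv)$ keeps $X_{k_j t}$ at distance of order $\ell$ from every barrier. Taking $\delta$ a small fixed absolute constant makes $r_\delta$ a fixed multiple of $\min\{1/\kappa, 1/\lambda_{\max}, \rho\}$, which by the range constraint on $\ell$ is comparable to or larger than $\ell$; choosing $c_2$ large enough then guarantees $\sB(X_{k_j t}, r_\delta) \cap (\cup_i B_i) = \emptyset$. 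Invoking \eqref{eq:BlueCity} from \Cref{cor: X_uniform_Y} therefore yields $X_{(k_j+1)t} = X_{k_j t} + W_t$ outside an event of probability $O(t/r_\delta^2)$, an absolute constant strictly less than one. On this good event, $\langle X_{(k_j+1)t} - p, \vecv \rangle$ is Gaussian with mean in $[\sqrt t, 2\sqrt t]$ and variance $t$, so it falls below $-\sqrt t$ with probability at least $\Phi(-3) > 0$. This gives a uniform lower bound $\bbP(Y_j = 1 \mid \cF_{k_j t}) \geq q$ for some absolute constant $q > 0$, and \Cref{lem: DriftExponential} applied with $\eta$ chosen so that $c_3 \de q - \eta > 0$ delivers \eqref{eq:TallSax}.

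For \eqref{eq:ZippyCity}, the proximity assumptions let us couple $X$ on $[k_j t, (k_j+1)t]$ with a straight-line reflecting Brownian motion $Y^+$ built from the barrier piece of $B_i$ passing near $p$, as constructed in \Cref{sec: XtYtApprox} starting from $X_{k_j t}$. Choose $\delta$ proportional to $t^{1/4}\min\{1/\kappa, 1/\lambda_{\max}, \rho\}^{-1/2}$; the hypothesis $\ell \leq c_1(t\min\{1/\kappa^2, 1/\lambda_{\max}^2, \rho^2\})^{1/4}$ is precisely what guarantees $\delta \leq \delta_0$, and under this choice the two terms $C_1 \delta r_\delta$ and $C_2 t/r_\delta^2$ in \eqref{eq:SafePizza} are bounded by a small multiple of $\sqrt t$ and of $1$, respectively. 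Since $Y^+$ never leaves the positive side of the shifted line $A^+$, since $p$ lies within $c_6\sqrt t$ of the barrier, and since $\vecv$ is within $c_7\sqrt t/\ell$ of $\pm \hatn$ where $\hatn$ is the direction of \Cref{lem: BarrierLocallyStraight}, the coupling forces $\langle X_{(k_j+1)t} - p, \vecv \rangle \geq -\tfrac{1}{2}\sqrt t$ on the good coupling event; in particular $X_{(k_j+1)t} \notin H_-(p,\vecv)$. This yields a uniform upper bound $\bbP(Y_j = 1 \mid \cF_{k_j t}) \leq q'$ with $q' < c_3$ after shrinking constants, and applying \Cref{lem: DriftExponential} to the complementary indicators $1 - Y_j$ with threshold $1 - q'$ gives \eqref{eq:ZippyCity}.

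The main delicate step is the two-sided parameter tradeoff underlying Case 2: the spatial coupling error $\delta r_\delta = \delta^2 \min\{1/\kappa, 1/\lambda_{\max}, \rho\}$ must be small compared to $\sqrt t$ so that $X$ stays on the correct side of the barrier at the scale probed by $H_-$, while the coupling failure probability $t/r_\delta^2 = t\,\delta^{-2}\min\{1/\kappa, 1/\lambda_{\max}, \rho\}^{-2}$ must simultaneously be kept small. These constraints push $\delta$ in opposite directions, and balancing them forces $\delta$ to scale as $t^{1/4}$, which in turn is exactly why the upper bound on $\ell$ in the statement takes the quartic-root form.
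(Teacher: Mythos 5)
Your overall architecture matches the paper's: enumerate visits $k_1 < k_2 < \cdots$, define $Y_j = \mathbf{1}\{X_{(k_j+1)t} \in H_-(p,\vecv)\}$, control $\bbP(Y_j = 1 \mid \cF_{k_j t})$ uniformly via the strong Markov property, then invoke \Cref{lem: DriftExponential}. The paper packages the one-step bound as a separate Lemma (\Cref{lem: NumberOfTransitions}), but that is organizational. The substantive problem is your choice of $\delta$.

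For \eqref{eq:TallSax}, you take $\delta$ to be a fixed absolute constant, which makes $r_\delta = \delta \min\{1/\kappa, 1/\lambda_{\max}, \rho\}$. You correctly note $r_\delta$ is then ``comparable to or larger than $\ell$'', but this is exactly the wrong direction: to invoke \eqref{eq:BlueCity} you need $\sB(X_{k_j t}, r_\delta)$ to miss all barriers, and $X_{k_j t} \in R_+(p,\vecv)$ combined with $\Vert p - y\Vert \geq c_2\ell$ only puts $X_{k_j t}$ at distance $\geq (c_2 - 3)\ell$ from the barriers. Since $\min\{1/\kappa, 1/\lambda_{\max}, \rho\}/\ell$ has \emph{no} upper bound under the stated hypotheses (take $t \to 0$ with $\ell = \sqrt{t}$ and the geometry fixed, so that $\min\{1/\kappa, 1/\lambda_{\max}, \rho\}/\ell \to \infty$), no absolute constant $c_2$ can force $(c_2-3)\ell \geq r_\delta$. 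The choice of $\delta$ must scale so that $r_\delta$ is of order $\sqrt{t}$ (or at most a bounded multiple of $\ell$), as in the paper, which takes $\delta$ of order $\sqrt{t}/\min\{1/\kappa, 1/\lambda_{\max}, \rho\}$.

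A related concern arises in your \eqref{eq:ZippyCity} argument with $\delta \propto t^{1/4}\min\{1/\kappa, 1/\lambda_{\max}, \rho\}^{-1/2}$, giving $r_\delta = t^{1/4}\min\{1/\kappa, 1/\lambda_{\max}, \rho\}^{1/2} \geq \ell/c_1$ (again unbounded relative to $\ell$). You need $\langle X_{(k_j+1)t} - p, \vecv\rangle \geq -\sqrt{t}/2$, but passing from $\hatn$ to $\vecv$ incurs an error of order $\Vert X_{(k_j+1)t} - p\Vert\cdot\Vert \vecv - s_i(0)\hatn\Vert$. The naive bound $\Vert X_{(k_j+1)t} - p\Vert \leq r_\delta + 3\ell$ then produces a term of order $r_\delta c_7\sqrt{t}/\ell$, which is unbounded relative to $\sqrt{t}$. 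This \emph{can} be repaired by instead using the tighter bound $\Vert X_{(k_j+1)t} - X_{k_j t}\Vert = O(\sqrt{t})$ from \eqref{eq:ZombieBat} on the high-probability event where $\sup_{s\leq t}\Vert W_s\Vert$ is controlled, but you do not do this, and with the paper's choice $r_\delta = C\ell$ the issue simply does not arise. Your concluding remark that the two constraints ``force $\delta$ to scale as $t^{1/4}$'' is also not quite right: $\delta$ may scale as $\ell/\min\{1/\kappa, 1/\lambda_{\max}, \rho\}$, of which $t^{1/4}$ is only the value at the upper endpoint of the $\ell$-range.
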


The proof is given in \Cref{apx: ProofNumTransition} and relies on an application of \Cref{lem: DriftExponential} with $Y_n$ the random variable indicating if the $n$th visit to $R_+(p,\vecv)$ was followed by a transition to $H_{-}(p,\vecv)$. 
Such transitions are unlikely in the setting of \eqref{eq:ZippyCity} where the sets are separated by a barrier, but not unlikely in the setting of \eqref{eq:TallSax} where there is no barrier in the way.

\subsubsection{Many visits to \texorpdfstring{$R_+(p,\vecv)$}{S+(p,v)}}\label{sec: ManyVisits}
\Cref{cor: NumTransitions} will suffice to avoid false positives where some $p_{j,k}$ far away from all barriers is added to $\hat{\cX}$ by \Cref{alg: HighFrequency}.
However, an algorithm which does not result in false positives is only useful if it also returns some true positives where points on the barrier are recovered. 
To this end, it has to be shown that the count $N(p,\vecv)$ from \eqref{eq:VividImp} is large with high probability. 

Again, the tools from \Cref{sec: Local} can be used to approximate the barrier with a straight line.
This leads to the following preliminary reduction; see \Cref{apx: ReductionToBrownianCount} for the proof.
\begin{lemma}\label{lem: NewManyHits}
    For every $\zeta <1/2$ and $\eta \in (0,1)$ there exist  $c_1,\ldots, c_5>0$ such that the following holds. 
    Consider $t,\ell >0$, $x_0 \in B_i$, $p\in \bbR^2$, and a unit vector $\vecv \in \bbR^2$ with 
    \begin{align} 
        \sqrt{t} \leq \ell &\leq c_1 (t \min\{1/\kappa^2, 1/\lambda_{\max}^2,\rho^2 \})^{1/4}, \qquad &\ell \leq c_2 \min\{ 1/\kappa, 1/\lambda_{\max},\rho \}, \label{eq:NewMoistKnot}\\ 
        \Vert x_0 - p \Vert &\leq c_3\sqrt{t},  &\Vert \vecv - s_i(0) \vec{n}_i(x_0) \Vert \leq c_4 \sqrt{t}/\ell. \nonumber    
    \end{align}  
    Then, for every integer $J\geq 1$ with $J\leq c_5 \ell^2/t$, every $n_0 \geq 0$, and every $0 \leq \gamma < 1$,  
    \begin{align} 
        \bbP(\#\{0 \leq j \leq J:{}&{} X_{(\gamma +j)t} \in R_+(p,\vecv)   \} \geq n_0 \mid X_0 = x_0 )  \label{eq:NewQuickArm}  
        \geq \bbP(N_+ \geq n_0) - \eta  
    \end{align}
    where, with $(\cB_t)_{t\geq 0}$ a one-dimensional Brownian motion, 
    \begin{align} 
        N_+ \de \# \{0 \leq j \leq J:  \lvert \cB_{\gamma +j} \rvert  \in [1 + \zeta, 2-\zeta] \}.\label{eq:NewNormalLid} 
    \end{align} 
\end{lemma}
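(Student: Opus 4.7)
The plan is to use the coupling from \Cref{sec: Local} to reduce the visit-count problem for $X_t$ to the analogous question for the one-dimensional reflected Brownian motion obtained by projecting $Y_t^+$ onto the normal direction, and then to invoke the L\'evy identity which expresses the latter as $|\cB_t|$.

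First I would choose $\delta \de \ell/\min\{1/\kappa, 1/\lambda_{\max}, \rho\}$ so that $r_\delta = \ell$; the hypotheses force $\delta \leq c_2$ as small as desired while $\delta r_\delta = \ell^2\max\{\kappa,\lambda_{\max},1/\rho\}\leq c_1^2\sqrt{t}$. Invoke \Cref{sec: XtYtApprox} to obtain $Y_t^+$ reflecting on the straight line $A^+$ with normal $\hatn$ centered at $x_0 \in B_i$; \Cref{lem: BarrierLocallyStraight} then gives $\Vert \hatn - s_i(0)\vec{n}_i(x_0)\Vert \leq 2\delta$. The hypotheses on $p$ and $\vecv$ together with the triangle inequality show that any $q\in\bbR^2$ with $\langle q - x_0, s_i(0)\hatn\rangle \in [(1+\zeta/2)\sqrt{t},(2-\zeta/2)\sqrt{t}]$ and $|\langle q-x_0,\hatn^{\perp}\rangle|\leq \ell/2$ automatically lies in $R_+(p,\vecv)$; the worst-case contribution of the $\vecv$-error is $\ell\cdot(c_4\sqrt{t}/\ell)=c_4\sqrt{t}$, which is why the hypothesis couples $\sqrt{t}$ and $\ell$ in this way.

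Next, with $\tau$ as in \eqref{eq: Def_tau}, I would work on $\cG \de \{\tau>Jt\}\cap\{\sup_{s\leq Jt}|\langle W_s,\hatn^{\perp}\rangle|\leq \ell/2\}$. Since $r_\delta=\ell$ and $Jt\leq c_5\ell^2$, \Cref{cor: TauBound} gives $\bbP(\tau\leq Jt)\leq \delta+Cc_5\leq \eta/2$ after shrinking $c_2$ and $c_5$, while the reflection principle bounds the probability of the second complementary event by $O(\sqrt{c_5})\leq \eta/2$. On $\cG$, \Cref{lem: X_UpperLower_Y,lem: X_horizontal_Y} give coupling errors $|\langle X_s-Y_s^+,s_i(0)\hatn\rangle|\leq 8\delta r_\delta \leq 8c_1^2\sqrt{t}$ and $|\langle X_s-Y_s^+,\hatn^{\perp}\rangle|\leq 2\delta \Lc{i}_s$; using \eqref{eq:RedPop} together with the bound $\sup\Vert W\Vert=O(\ell)$ that defines $\cG$ yields $\Lc{i}_s=O(\ell)$ and hence $2\delta\Lc{i}_s=O(c_1^2\sqrt{t})$, so both errors fit inside a $\zeta\sqrt{t}/10$ budget once $c_1$ is small.

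Finally, $Z_t \de \langle Y_t^+-y_0^+, s_i(0)\hatn\rangle$ is by \eqref{eq:OldJar} a reflected Brownian motion on $[0,\infty)$ driven by $\langle W_t, s_i(0)\hatn\rangle$ and started at $0$, so L\'evy's identity combined with Brownian scaling gives $(Z_{st}/\sqrt{t})_{s\geq 0}\stackrel{d}{=}(|\cB_s|)_{s\geq 0}$. On $\cG$, the event $|\cB_{\gamma+j}|\in[1+\zeta,2-\zeta]$ transfers through the coupling errors to $\langle X_{(\gamma+j)t}-x_0,s_i(0)\hatn\rangle\in [(1+\zeta/2)\sqrt{t},(2-\zeta/2)\sqrt{t}]$, which combined with the tangential bound inherited from $\cG$ and the geometric reduction gives $X_{(\gamma+j)t}\in R_+(p,\vecv)$; summing over $j$ and using $\bbP(\cG^c)\leq \eta$ yields \eqref{eq:NewQuickArm}. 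The main obstacle is the choice of $\delta$: the natural scale $\delta\sim\sqrt{t}/\min\{\ldots\}$ used in \Cref{sec: ProofsPropsFixedFreq} would give $r_\delta\sim\sqrt{t}\ll \ell$, so the coupling ball would be exited long before $Jt\leq c_5\ell^2$; scaling $\delta$ up to $\ell/\min\{\ldots\}$ cures this, but threatens the coupling error $\delta r_\delta=\ell^2/\min\{\ldots\}$ with exceeding $\sqrt{t}$, so the hypothesis $\ell\leq c_1(t\min\{\ldots\})^{1/4}$ is designed precisely to force $\ell^2/\min\{\ldots\}\leq c_1^2\sqrt{t}$ and reconcile both requirements.
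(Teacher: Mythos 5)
Your overall strategy matches the paper's own proof almost exactly: choose $\delta$ of order $\ell/\min\{1/\kappa,1/\lambda_{\max},\rho\}$ so that $r_\delta$ is of order $\ell$ (hence the coupling ball survives for $Jt\leq c_5\ell^2$), establish a geometric inclusion of a $\hatn$-aligned slab into $R_+(p,\vecv)$, run the coupling on the high-probability event where $\tau>Jt$ and the Wiener process stays controlled, and finish with L\'evy's identity plus Brownian scaling. The paper organizes this into two helper lemmas (\Cref{lem: Splus_Shat,lem: Nplus}) and a short L\'evy step, but the content is the same.

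There is one inconsistency you should fix. You take $\delta=\ell/m$ (with $m=\min\{1/\kappa,1/\lambda_{\max},\rho\}$), so $r_\delta=\ell$, and define $\cG=\{\tau>Jt\}\cap\{\sup_{s\leq Jt}|\langle W_s,\hatn^\perp\rangle|\leq\ell/2\}$ — a constraint only on the \emph{tangential} Wiener coordinate. A few lines later, to bound the tangential coupling error $2\delta\Lc{i}_s$ via \eqref{eq:RedPop}, you invoke ``the bound $\sup\Vert W\Vert=O(\ell)$ that defines $\cG$''. But \eqref{eq:RedPop} controls $\Lc{i}_s$ in terms of $\sup_{r\leq s}\langle -W_r, s_i(0)\hatn\rangle$, which is the \emph{normal} Wiener coordinate; your $\cG$ does not constrain it. As written, the bound on $\Lc{i}_s$ is not established and the tangential coupling error is uncontrolled. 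The simplest fixes are either (i) take $\delta=\ell/(2m)$ so that $r_\delta=\ell/2$, as the paper does; then $\{\tau>Jt\}$ alone yields $\Vert X_s-x_0\Vert\leq\ell/2$, so the tangential budget in the geometric reduction becomes $\ell/2+c_3\sqrt{t}\leq\ell$ with no extra event and no need for $\Lc{i}$ in the tangential direction at all; or (ii) keep $r_\delta=\ell$ but replace your extra event with $\{\sup_{s\leq Jt}\Vert W_s\Vert\leq\ell/C\}$ (all coordinates), which both forces the tangential drift to be small and, via \eqref{eq:ZombieBall}--\eqref{eq:RedPop}, bounds $\Lc{i}_s=O(\ell)$; its complement still has probability $O(Jt/\ell^2)\leq O(c_5)$ by the argument of \eqref{eq:GhostlyHotel}. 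With either fix the argument closes.
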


The relevance of the parameter $\gamma$ in \eqref{eq:NewQuickArm} for the proof is of a technical nature, arising in the arguments for \eqref{eq: HighFrequency_RandomPoint}. 
In particular, the case $\gamma \neq 0$ is used to deal with the fact that the stopping time does not necessarily take values in $t \bbZ$ so that we may incur a non-integer multiple of $t$ as a time shift after using strong Markovianity; see \eqref{eq:VividBox} in \Cref{apx: Proof_Main_PathDependentRecovery_Algorithm}. 
 
It remains to show that the random variable $N_+$ from \eqref{eq:NewNormalLid} is typically large.
This is the content of the following result, and may be the most interesting part of the proof of \Cref{thm: Main_PathDependentRecovery} as the arguments give insight on the key difficulties for partial recovery; see also \Cref{rem: tau_heavy}.  
\begin{lemma}\label{lem: NplusLarge}
    For every $\zeta < 1/2$ there exist $c_1,c_2,c_3 >0$ such that the following holds for every $\eta\in (0,1)$, $J \geq c_1\ln(c_2/\eta)^2/\eta^2$, and $0\leq \gamma < 1$.
    With $N_+$ as in \eqref{eq:NewNormalLid},
    \begin{align} 
        \bbP(N_+  \geq  c_3 \eta \sqrt{J}) \geq 1 - \eta.    \label{eq:ZenSquid}
    \end{align}
\end{lemma}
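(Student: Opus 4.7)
The plan is to exploit Lévy's identity for the Brownian local time at the origin, $L_J \de L^0_J(\cB)$, which gives $L_J \overset{d}{=} \max_{s\in[0,J]}\cB_s \overset{d}{=} \sqrt{J}\,|\cN(0,1)|$. Since the half-Gaussian density at the origin equals $\sqrt{2/\pi}$, this immediately yields the anti-concentration
\[
\bbP\bigl(L_J \leq \mu\sqrt{J}\bigr) \leq \mu\sqrt{2/\pi}\quad\text{for every }\mu>0,
\]
so choosing $\mu$ proportional to $\eta$ gives $L_J \geq c\eta\sqrt{J}$ with probability at least $1-\eta/2$. The bulk of the argument is to establish, with probability $\geq 1-\eta/2$, the comparison
\[
N_+ \geq c_\zeta L_J - C_\zeta\sqrt{L_J \ln(1/\eta)}
\]
for constants $c_\zeta, C_\zeta>0$ depending only on $\zeta$. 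This comparison, combined with the anti-concentration, then directly gives the lemma after intersecting the good events.

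To prove the comparison I would first link $L_J$ to the continuous-time occupation of the band $I \de \int_\gamma^{\gamma+J}\bb1\{|\cB_s|\in[1+\zeta,2-\zeta]\}\,\intd s$. By the occupation density formula, $I = \int_{1+\zeta}^{2-\zeta}\bigl(L^y_{\gamma+J}-L^y_\gamma + L^{-y}_{\gamma+J}-L^{-y}_\gamma\bigr)\,\intd y$, and Trotter's continuity theorem together with the standard Hölder moduli for Brownian local times in their spatial variable give $\inf_{|y|\leq 2}L^y_J \geq L_J/2$ off an event of probability $\leq \eta/8$ for $J$ sufficiently large, so $I\geq (1-2\zeta)L_J$ on this event. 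I would then decompose $X_j \de \bb1\{|\cB_{\gamma+j}|\in[1+\zeta,2-\zeta]\}$ into its $\cF_{\gamma+j-1}$-conditional expectation plus a martingale difference, estimate $\sum_j \bbE[X_j\mid \cF_{\gamma+j-1}]$ against $I$ by a Riemann-sum comparison, and apply a sharp Freedman-type martingale concentration (or a variant of \Cref{lem: DriftExponential}) whose predictable quadratic variation is $O(L_J)$ because $X_j$ has nonzero conditional expectation only when $\cB_{\gamma+j-1}$ lies within $O(1)$ of the band. The net outcome is the displayed comparison.

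Combining the two displays, with probability $\geq 1-\eta$ one has both $N_+ \geq c_\zeta L_J - C_\zeta\sqrt{L_J\ln(1/\eta)}$ and $L_J \geq c\eta\sqrt{J}$. The hypothesis $J\geq c_1\ln(c_2/\eta)^2/\eta^2$ is precisely what is required for $\sqrt{L_J\ln(1/\eta)} \leq (c_\zeta/(2C_\zeta)) L_J$ on this event, so $N_+ \geq (c_\zeta/2) L_J \geq c_3\eta\sqrt{J}$. The principal obstacle is obtaining the additive error of optimal order $\sqrt{L_J\ln(1/\eta)}$ in the martingale concentration: a naive Azuma--Hoeffding applied to $J$ bounded differences would yield only $\sqrt{J\ln(1/\eta)}$, which is too weak and would force a much stronger lower bound on $J$. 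One must really exploit that the conditional variances sum to $O(L_J)\ll J$, mirroring the fact that only integer times near the band contribute in a meaningful way.
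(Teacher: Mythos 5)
Your plan is a genuinely different route from the paper. The paper decomposes the trajectory of $\cB$ into excursion blocks between levels $0$ and $2$, shows that each completed block contributes an integer time to the band with probability $\geq q(\zeta)$, establishes that at least $i_0 \asymp \eta\sqrt{J}$ blocks are completed by time $J$ via the reflection principle (using that $\tau_{i_0}$ is distributed as a hitting time of level $2(2i_0-1)$), and concentrates the indicator sum with \Cref{lem: DriftExponential}. You instead run the argument through the local time $L_J$, the occupation density formula, and a Freedman-type martingale concentration of the discrete count around the continuous occupation. Both routes exploit the same underlying fact (up-crossing counts, $L_J$, and $\max_{t\leq J}\cB_t$ are all of order $\sqrt{J}$ with the same distribution), but the excursion decomposition in the paper sidesteps local-time continuity entirely.

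There is a genuine gap in your local-time step, and it is precisely where you write ``Trotter's continuity theorem together with the standard H\"older moduli \ldots\ give $\inf_{|y|\leq 2}L^y_J \geq L_J/2$ off an event of probability $\leq\eta/8$ for $J$ sufficiently large.'' The modulus of continuity for Brownian local time in its spatial variable controls increments at scale $h$ by $C\sqrt{h\ln(1/h)}\cdot\sqrt{L^*_J}$ where $L^*_J = \sup_y L^y_J$; since $\int L^y_J\,\intd y = J$ and $\cB$ has range $O(\sqrt{J})$, one has $L^*_J\gtrsim \sqrt{J}$ deterministically, so for $|y|$ of order $1$ this gives an additive error of order $J^{1/4}$ that does \emph{not} shrink with $L_J$. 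On the marginal event $L_J \approx c\eta\sqrt{J}$ that your anti-concentration step selects, the requirement $J^{1/4}\lesssim L_J/2$ becomes $J\gtrsim \eta^{-4}$, which is strictly stronger than the lemma's hypothesis $J\gtrsim \ln(1/\eta)^2/\eta^2$. The H\"older modulus is the wrong tool here because it is anchored at $L^*_J\sim\sqrt{J}$, not at $L_J$. To get the correct error scale of order $\sqrt{L_J}$ you would instead invoke the first Ray--Knight theorem: on $\{L_J\geq a\}$ (so $\tau^0_a\leq J$), use $L^y_J\geq L^y_{\tau^0_a}$, and $(L^y_{\tau^0_a})_{y\geq 0}$ is a BESQ($0$) process from $a$, whose fluctuations at level $a$ over a unit window are $O(\sqrt{a})$ with Gaussian-type tails, giving $\bbP(\inf_{|y|\leq 2}L^y_{\tau^0_a} < a/2)\leq e^{-ca}$. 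Taking $a=c\eta\sqrt{J}\gtrsim\ln(1/\eta)$ then recovers the stated hypothesis. The remainder of your sketch (the Riemann-sum comparison and the Freedman bound with predictable quadratic variation $O(L_J)$) is plausible but also needs care; in particular the Riemann-sum error is only $O(\sqrt{L_J})$ rather than $O(\sqrt{J})$ because only integer times whose neighbourhood visits a bounded window contribute, and that count is itself controlled by $L_J$ via the occupation formula --- an observation you would need to make explicit, since the naive bound is $O(\sqrt{J})\gg L_J$ when $\eta$ is small.
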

\begin{proof}
    The idea in the following argument is to exploit that there is a nonzero probability for a contribution to the count $N_+$ each time that the process $\cB_t$ moves from $0$ to $2$. 
    This reduces us to studying the number of times $\cB_t$ moves from $0$ to $2$. 
 
    More precisely, define a sequence of stopping times $0 \ed \sigma_1 < \tau_1 < \sigma_2 < \tau_2 < \cdots$ as well as $\{0,1 \}$-valued random variables $Y_1,Y_2,\ldots$ by  
    \begin{align} 
        \tau_i \de {}&{}\inf\{t >\sigma_i:  \cB_t  = 2\},\qquad \sigma_{i+1} \de \inf\{t>\tau_i: \cB_t = 0 \},\label{eq:EmptySwan}\\  
        &Y_i \de \bb1 \{\exists j \in \bbZ \cap [\sigma_i, \tau_i - \gamma]: \lvert \cB_{\gamma + j} \rvert \in [1 + \zeta, 2-\zeta] \}.\label{eq:SmartPen} 
    \end{align}
    Let it here be understood that $[\sigma_i, \tau_i - \gamma] = \emptyset$ if $\tau_i -\gamma < \sigma_i$. 
    At any rate, if $i_0\geq 1$ satisfies $\tau_{i_0} \leq J$ then the intervals $[\sigma_i, \tau_i - \gamma]$ with $i\leq i_0$ are disjoint subsets of $[0,J]$. 
    Then, comparing \eqref{eq:NewNormalLid} and \eqref{eq:SmartPen}, we have $N_+ \geq \sum_{i=1}^{i_0}Y_i$. 
    Hence, it holds for every $i_0 \geq 1$ that  
    \begin{align} 
        \bbP\Bigl(N_+ \geq c_3 \eta \sqrt{J} \Bigr) \geq \bbP\Bigl(\tau_{i_0} \leq J\Bigr) - \bbP\Bigl(\sum_{i=1}^{i_0} Y_i < c_3 \eta \sqrt{J} \Bigr).\label{eq:PinkKite} 
    \end{align}
    We next estimate the two terms on the right-hand side of \eqref{eq:PinkKite}, starting with the first one. 
    
    One can represent $\tau_{i_0}$ as a sum of $2i_0 - 1$ independent copies of $\tau_1$ as follows:  
    \begin{align} 
        \tau_{i_0} = \tau_1 + (\sigma_2 -\tau_1 )+ (\tau_2 - \sigma_2) + \cdots + (\sigma_{i_0}- \tau_{i_0-1}) + (\tau_{i_0} - \sigma_{i_0}).\label{eq:IcySun}
    \end{align}
    On the other hand, the hitting time $\inf\{t>0: \cB_t = 2(2i_0 - 1) \}$ also admits a natural decomposition as a sum of $2i_0-1$ independent copies of $\tau_1$. 
    Consequently, these random variables have the same distribution. 
    In particular,  
    \begin{align} 
        \bbP\bigl(\tau_{i_0} \leq J\bigr) &= \bbP\bigl(\inf\{t>0: \cB_t = 2(2i_0 - 1)  \} \leq J \bigr).\label{eq:UneasyKey}  
    \end{align}
    Hence, using the scaling principle as well as the reflection principle, 
    \begin{align} 
        \bbP\bigl(\tau_{i_0} \leq J\bigr) = \bbP(\sup\{\cB_t: t\leq J \} \geq 2(2i_0 - 1)) = 1 - \bbP(\lvert G \rvert < 2(2i_0 - 1)/\sqrt{J} )
    \end{align}
    with $G$ a standard Gaussian random variable. 
    In particular, since the density of the Gaussian distribution is bounded, there exists an absolute constant $c_4>0$ such that 
    \begin{align} 
        \bbP(\tau_{i_0} \leq J)\geq 1 - \eta/2\ \  \text{ if }\ \ i_0  \leq c_4 \eta \sqrt{J}.\label{eq:GladDew} 
    \end{align}
    From here on, let $i_0 \de \lfloor c_4 \eta \sqrt{J}\rfloor$.  

    Note that the $Y_i$ depend on $\cB_t$ through non-overlapping periods of time due to the substraction of $\gamma$ in the time interval in \eqref{eq:SmartPen}.
    The strong Markovianity of one-dimensional Brownian motion now implies that there exists $q>0$ depending only on $\zeta$ with   
    \begin{align} 
        \bbP(Y_i = 1 \mid Y_1,\ldots,Y_{i-1}) \geq \inf_{s\geq 0} \bbP(Y_i = 1 \mid \sigma_i = s) \geq q.
    \end{align}
    The value $q$ can be taken to be independent of $\gamma$ since the worst case would be $\gamma = 1$. 
    \Cref{lem: DriftExponential} now yields $c_5,c_6>0$ depending only on $\zeta$ such that  
    \begin{align} 
        \bbP\Bigl(\sum_{i=1}^{i_0}Y_i < (q/2) i_0\Bigr) \leq c_5 \exp\Bigl(-c_6 i_0 \Bigr) < \eta/2\ \ \text{ if }\ \ i_0 \geq \ln(2c_5/\eta)/c_6.\label{eq:CleverOtter}      
    \end{align}
    Recall that $i_0 = \lfloor c_4 \eta \sqrt{J}\rfloor$ and $J \geq c_1 \ln(c_2/\eta)^2 /\eta^2$. 
    Taking $c_1,c_2>0$ sufficiently large, it can hence be ensured that the condition in \eqref{eq:CleverOtter} is satisfied.
    Then, taking $c_3$ sufficiently small in \eqref{eq:ZenSquid}, the combination of \eqref{eq:PinkKite} with \eqref{eq:GladDew} and \eqref{eq:CleverOtter} concludes the proof.     
\end{proof}

\begin{remark}\label{rem: tau_heavy}
    The random variable $\tau_{i_0}$ in \eqref{eq:IcySun} is L\'evy-distributed. 
    In particular, it is heavy-tailed suggesting that optimal rates for $\dH(\hat{\cX}, \cup_{i=0}^k \cX_i)$ may be significantly slower than for typical points. 
    The latter involves many points simultaneously, giving heavy tails many opportunities to realize.   
\end{remark}
\subsection{Proof of \texorpdfstring{\Cref{prop: Main_PathDependentRecovery_Algorithm}}{Proposition}}\label{sec: Main_PathDependentRecovery_Algorithm}
The key ingredients are now in place. 
We here give an outline of the key ideas and defer the detailed calculations to \Cref{apx: Proof_Main_PathDependentRecovery_Algorithm}.
Recall that \Cref{prop: Main_PathDependentRecovery_Algorithm} refers to \Cref{thm: Main_PathDependentRecovery} and \Cref{cor: Xuniform}. 
Hence, it has to be shown why the performance guarantees in \eqref{eq: HighFrequency_FalsePositives}--\eqref{eq: HighFrequency_Uniform} hold.

First, using \eqref{eq:TallSax} from \Cref{cor: NumTransitions} with $n_0 = C\ln(T/t)$ for some large $C>0$ together with a union bound over all $j,k,n$ for which $R_+(p_{j,k},\vec{v}_n)$ was visited at least once, it can be established that \Cref{alg: HighFrequency} does not add any $p_{j,k}$ which is distant from all barriers to $\hat{\cX}$ with high probability. 
This yields \eqref{eq: HighFrequency_FalsePositives} regarding the nonoccurrence of false positives. 
The reason to run the union bound over those rectangular regions which were visited at least once, instead of simply all $R_+(p_{j,k},\vec{v}_n)\subseteq D$, is that this allows for an estimate which does not degrade when $\Area(D)$ is large. 
This is more efficient as $T$ may be small.   

Conversely, if $p_{j,k}$ is close to a barrier, then taking $\vec{v}_n$ to be an approximation of the normal vector to the barrier ensures that \eqref{eq:ZippyCity} from \Cref{cor: NumTransitions} is applicable so that \Cref{alg: HighFrequency} adds $p_{j,k}$ to $\hat{\cX}$ if $N(p_{j,k}, \vec{v}_n) \geq \mathfrak{n}_0$.
Hence, \eqref{eq: HighFrequency_RandomPoint} follows if we show that $X_{\tau}$ is close to some $p_{j,k}$ with at least logarithmically many visits to $R_+(p_{j,k},\vec{v}_n)$.
The latter follows by using the strong Markovianity together with the results in \Cref{sec: ManyVisits} if we take $J$ of order $\ln(T/t)^2$. 
The condition that $J$ is of order $\leq \ell^2/t$ in \Cref{lem: NewManyHits} then requires that we take $\ell$ of order $\ln(T/t)\sqrt{t}$.    

Finally, the uniform recovery of $\cup_{i=0}^k\cX_i$ in \eqref{eq: HighFrequency_Uniform} is a relatively straightforward consequence of \eqref{eq: HighFrequency_RandomPoint}. 
Specifically, considering some large integer $M\geq 1$, it can be ensured that 
\begin{align} 
    \sup\{\Vert X_t - X_{jT/M} \Vert : t \in [jT/M, (j+1)T/M]\} \leq \varepsilon/2\ \text{ for every }\ j=0,\ldots,M - 1\label{eq:LuckyMouse}
\end{align}
with probability greater than $1- \eta/2$. 
In particular, it then holds with   
$
    \tau_j \de \inf\{t \geq jT/M: X_t \in \cup_{i=0}^{\nb} B_i \}
$
that $\Vert X_{\tau_j} - X_{t} \Vert \leq \varepsilon/2$ for every $t\in [jT/M, (j+1)T/M]$ with $X_t \in \cup_{i=0}^{\nb} B_i$.
We can use \eqref{eq: HighFrequency_RandomPoint} to ensure that $X_{\tau_j}$ is at distance $\leq \varepsilon/2$ from $\hat{\cX}$ with probability $\geq 1 - \eta/2M$. 
The desired result then follows from the triangle inequality and a union bound over all $j\leq M-1$.      

\section{Proof of \texorpdfstring{\Cref{thm: ExponentialConvergence}}{Theorem}}\label{sec: ProofExponential}
Throughout this section, we adopt the assumptions of \Cref{thm: ExponentialConvergence}. 
In particular, we assume that $m=0$. 
Then, \Cref{def: ReflectedBrownianMotion} reduces to the definition of reflected Brownian motion (without semipermeable barriers) in $D$.
Recall from \eqref{eq:SafeSun} that $\cX_0 = B_0 \cap \{X_t:t\in [0,T] \}$. 
\begin{lemma}\label{lem: CoverTime}
    For every $\varepsilon>0$ define a random time $
        \cT(\varepsilon) \de \inf\{T\geq 0: \dH(\cX_0, B_0) \leq \varepsilon \}$.  
    Then,  
    $
        \limsup_{\varepsilon \to 0}\bbE[\cT(\varepsilon)/ \ln(\varepsilon)^2 ] \leq  2\Area(D)/\pi.  
    $
\end{lemma}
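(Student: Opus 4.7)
The plan is to reduce $\cT(\varepsilon)$ to a cover-time problem on the boundary $B_0$ and then combine Matthews' cover-time inequality with the narrow-escape asymptotics of Chen and Friedman.

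First I would fix $\delta \de \varepsilon/2$ and choose a $\delta$-net $\{y_1,\ldots,y_N\}\subseteq B_0$ of the boundary with $N \leq C_0/\delta$, where $C_0$ depends only on the arclength of $B_0$. For each $i$, define the single-target hitting time
\begin{align*}
    \tau_i \de \inf\{t\geq 0 : X_t \in B_0,\ \Vert X_t - y_i \Vert \leq \delta \}.
\end{align*}
Since $\cX_0\subseteq B_0$, one side of the Hausdorff distance is automatically zero, so $\dH(\cX_0,B_0)=\sup_{y\in B_0}\inf_{x\in \cX_0}\Vert x-y\Vert$; combining the $\delta$-net property with the triangle inequality yields $\cT(\varepsilon)\leq \max_{i\leq N}\tau_i$. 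It hence suffices to bound $\bbE[\max_i \tau_i]$, which is the expected time to cover $B_0$ at resolution $\delta$.

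For the cover time I would invoke Matthews' inequality \cite{matthews1988covering}, which for a strong Markov process (applicable via Proposition~\ref{prop: X_Markov} with $m=0$) gives
\begin{align*}
    \bbE_{x_0}\bigl[\max_{i\leq N}\tau_i\bigr] \leq \Bigl(\sum_{k=1}^{N}\tfrac{1}{k}\Bigr) \max_{i,j\leq N} \bbE_{y_i}[\tau_j] + O\bigl(\max_{i}\bbE_{x_0}[\tau_i]\bigr).
\end{align*}
Because our driving noise $W_t$ is standard Brownian motion (so $X_t$ has generator $\tfrac{1}{2}\Delta$), the single-target hitting time $\bbE_{y_i}[\tau_j]$ coincides with the mean first-passage time for reflected Brownian motion in $D$ with a small absorbing arc of length $\asymp \delta$ around $y_j$. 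The narrow-escape asymptotics of Chen and Friedman \cite{chen2011asymptotic} then give
\begin{align*}
    \max_{i,j\leq N}\bbE_{y_i}[\tau_j] \leq (1+o(1))\,\tfrac{2\Area(D)}{\pi}\,\ln(1/\delta) \qquad \text{as } \delta \to 0,
\end{align*}
uniformly in $i,j$, since the leading constant depends only on $\Area(D)$. Combining with $\sum_{k=1}^N 1/k = \ln N + O(1) = \ln(1/\delta)+O(1)$ and $\ln(\varepsilon)^2 = \ln(1/\varepsilon)^2 \sim \ln(1/\delta)^2$ yields
\begin{align*}
    \bbE[\cT(\varepsilon)]/\ln(\varepsilon)^2 \leq (1+o(1))\,\tfrac{2\Area(D)}{\pi},
\end{align*}
from which the claim follows upon taking $\limsup_{\varepsilon \to 0}$.

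The main obstacle will be justifying that the two black-box ingredients apply cleanly in our setting. Matthews' bound is usually stated for discrete-time irreducible chains on a finite state space; for our continuous-time diffusion one must either discretize at a sufficiently fine time mesh and pass to the limit, or appeal to a continuous-time analogue, in both cases relying on the strong Markov property to restart at each successful hit. For Chen--Friedman, the asymptotic is given for smooth planar domains with an absorbing window at a fixed boundary point, and one needs uniformity of the leading constant over the position $y_j \in B_0$ and over starting points in $D$; both should follow from the conformal-mapping structure of the proof in \cite{chen2011asymptotic} combined with the smoothness and bounded curvature of $B_0$.
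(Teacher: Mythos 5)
Your proposal takes essentially the same route as the paper: reduce $\cT(\varepsilon)$ to a cover-time problem for a $\asymp\varepsilon$-discretization of $B_0$, apply Matthews' inequality, and control the single-target mean hitting times via the Chen--Friedman narrow-escape asymptotics, with the two $\ln(1/\varepsilon)$ factors (from the harmonic sum and from the narrow-escape rate, respectively) multiplying to give $\ln(1/\varepsilon)^2$.

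One point you flag but do not resolve correctly: the Chen--Friedman asymptotic is stated with $X_0\sim\Unif(D)$, and you suggest that uniformity over the starting point $x_0\in D$ "should follow from the conformal-mapping structure." That argument does give uniformity over the \emph{location} of the arc on $B_0$, but it does not by itself control the dependence on $x_0$. The paper instead handles the starting point separately: it uses the exponential decay of the transition density of reflected Brownian motion on a smooth domain together with the Aldous--Lov\'asz--Winkler result to produce a stopping time $\tau$ with $\sup_{x_0}\bbE[\tau\mid X_0=x_0]<\infty$ and $X_\tau\sim\Unif(D)$, then splits $\bbE_{x_0}[\text{hit}] \leq \bbE_{x_0}[\tau] + \bbE_{\Unif(D)}[\text{hit}]$ so that the first term is $O(1)$ and hence negligible relative to $\ln(1/\varepsilon)$. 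With that fix (or some equivalent mixing/coupling argument), your plan matches the paper's proof.
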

\begin{proof}
    Let $\{A_i^{\bc{\varepsilon}}:1 \leq i \leq N(\varepsilon)\}$ be a cover for $B_0$ of minimal cardinality consisting of arcs of length $\varepsilon$.
    Here, $N(\varepsilon)$ denotes the number of arcs in the cover. 
    Then, 
    $
    \cT(\varepsilon) \leq \max_{i\leq N(\varepsilon)}\inf\{t\geq 0: X_t \in  A_{i}^{\bc{\varepsilon}} \}$.  
    Hence, by \cite[Eq.(2.2) \& (2.7)]{matthews1988covering}, 
    \begin{align} 
        \bbE[\cT(\varepsilon)] \leq \mu_+^{\bc{\varepsilon}} \sum_{i=1}^{N(\varepsilon)}\frac{1}{i}\ \text{ where }\ \mu_+^{\bc{\varepsilon}} \leq \max_{i \leq N(\varepsilon)}\sup_{y\in D}\bbE[\inf\{t\geq 0: X_t \in  A_{i}^{\bc{\varepsilon}} \}\mid X_0 = y].\label{eq:HauntedZap}  
    \end{align} 

    The time $\inf\{t\geq 0: X_t \in A_i^{\bc{\varepsilon}} \}$ has been studied in the context of narrow escape problems in the physics literature \cite{holcman2004escape} and also in the mathematical literature \cite{chen2011asymptotic}. 
    In particular, with $A \subseteq B_0$ an arc of length $\varepsilon$, it is shown in \cite[Eq.\ (1.2) and/or Theorem 5.2]{chen2011asymptotic} that  
    \begin{align} 
        \lim_{\varepsilon\to 0} \frac{1}{\ln(1/\varepsilon)}\bbE[\inf\{t\geq 0: X_t \in A \} \mid X_0 \sim \Unif(D)] =  \frac{2}{\pi}\Area(D).\label{eq:MadLog} 
    \end{align}   
    Moreover, the convergence in \eqref{eq:MadLog} is uniform\footnote{
    Specifically, this follows from the proof of \cite[Theorem 5.2]{chen2011asymptotic} as the conformal map $X$ introduced preceding (5.11) in \cite{chen2011asymptotic} extends $C^\infty$ smoothly to the boundary by the Kellog--Warschawski theorem \cite[Theorem 3.6]{pommerenke2013boundary}.
    This implies that all error terms in Section 5.2 of \cite{chen2011asymptotic} are uniform on the domain. 
    } in the choice of $A$. 

    To replace the initial condition by a deterministic one, we can exploit that the transition density of reflected Brownian motion on a smooth domain converges to the uniform distribution at an exponential rate (see \eg \cite[Eq.\ (1.4)]{burdzy2006traps}). 
    By applying \cite[Theorem A]{aldous1997mixing} to the discrete-time process $(X_{it})_{i=1}^\infty$ for some arbitrary fixed $t>0$, it then follows that there exists a stopping time $\tau$ with $\sup_{x_0 \in D}\bbE[\tau\mid X_0 = x_0] < \infty$ such that $X_{\tau}\sim \Unif(D)$. 
    Hence,  
    \begin{align} 
        \sup_{x_0 \in D}{}&{} \bbE[\inf\{t\geq 0: X_t \in A \} \mid X_0 = x_0]\label{eq:PaleCamel} \\ 
        &\leq  \sup_{x_0 \in D}\bbE[\tau \mid X_0 = x_0] + \bbE[\inf\{t\geq 0: X_t \in A^{\bc{\varepsilon}} \} \mid X_0 \sim \Unif(D)].\nonumber  
    \end{align}   
    Since the first term is bounded, it is negligible relative to the second term in the limit $\varepsilon\to 0$. 
    Hence, combining \eqref{eq:MadLog}--\eqref{eq:PaleCamel}, we have that 
    $
        \limsup_{\varepsilon\to 0} \mu_+^{\bc{\varepsilon}}/\ln(1/\varepsilon) \leq 2\Area(D)/\pi. 
    $

    Recall that the cover $\{A_i^{\bc{\varepsilon}}: i=1,\ldots,N(\varepsilon) \}$ was chosen to have minimal cardinality. 
    It follows that $\lim_{\varepsilon\to 0}\varepsilon N(\varepsilon) = C$ with $C>0$ the arc length of $B_0$.
    In particular, $\lim_{\varepsilon\to 0} \ln(1/\varepsilon)/ \ln(N(\varepsilon)) = 1$. 
    Consequently, since $\lim_{n\to \infty}\ln(n)^{-1}\sum_{i=1}^n 1/i = 1$,
    \begin{align} 
        \lim_{\varepsilon\to 0}\frac{1}{\ln(1/\varepsilon)}\sum_{i=1}^{N(\varepsilon)}\frac{1}{i}= 1.\label{eq:DryFace} 
    \end{align} 
    Combine \eqref{eq:DryFace} with the preceding estimate on $\mu_+^{\bc{\varepsilon}}$ and \eqref{eq:HauntedZap} to conclude the proof. 
\end{proof}

The desired result is now essentially immediate by rewriting the definition of $\cT(\varepsilon)$. 
Further, we also get an explicit value for the constant $c$ in \Cref{thm: ExponentialConvergence}:   
\begin{proof}[Proof of \texorpdfstring{\Cref{thm: ExponentialConvergence}}{Theorem}]
    By Markov's inequality, it holds for any $\varepsilon>0$ that 
    \begin{align} 
        \bbP\bigl(\dH(\cX_0, B_0)>  \varepsilon \bigr) = \bbP\bigl(\cT(\varepsilon) > T\bigr) \leq \bbE[\cT(\varepsilon)]/T.\label{eq:AlertForce} 
    \end{align}  
    Let $\varepsilon_T \de \exp(-\sqrt{cT/\Area(D)})$. 
    Then, for any fixed $\delta >0$ it follows from \Cref{lem: CoverTime} that for $T$ sufficiently large, or equivalently $\varepsilon_T$ sufficiently small, 
    \begin{align} 
        \bbE[\cT(\varepsilon_T)] \leq (2+\delta)\Area(D)\ln(\varepsilon_T)^2/\pi = c(2+\delta)T /\pi.\label{eq:IvoryKey} 
    \end{align}  
    Combine \eqref{eq:AlertForce} with \eqref{eq:IvoryKey} and take $c \leq \eta \pi /(2+\delta )$ to conclude that \eqref{eq:KindMom} holds. 
\end{proof}

\section{Case study with animal movement data}\label{sec: PotentialApplications}
To conclude, we illustrate the applicability of the model and algorithms using data due to Loe et al.\ \cite{loe2016behavioral}. 
The data contains tracks of reindeer in the period from 2009 to 2022 in the high-Arctic environment of Nordenski\"old Land, Svalbard.
The study area is dominated by two valleys and we will focus our discussion on the southern valley which is the largest of the two; see the left-hand side of \Cref{fig: AnimalMovements2}. 
To give an indication of scale, the sides of the highlighted square region are approximately 20 kilometers long. 

\newpage
We decided to apply \Cref{alg: KernelDiscontinuity} because this algorithm is the most user-friendly, only requiring a small number of parameters.  
These parameters were here picked on an ad-hoc basis; see \Cref{apx: Case} for the details, and for similar findings in the northern valley.
The dataset comes with 116 distinct animal tracks, of which we removed three in preprocessing due to missing data. 
As each of the tracks is of nontrivial length, most spanning a period of multiple years, it may be reasonable to here consider ourselves to be in a regime with large effective observation period and to hope for complete recovery; recall \Cref{rem: effective_obs}. 
We aggregated the remaining 113 tracks when estimating the empirical transition distributions and further used \Cref{alg: KernelDiscontinuity} unchanged.    
The output is displayed on the right-hand side of \Cref{fig: AnimalMovements2} together with one of the employed tracks.   

We recognize both impermeable and semipermeable barriers in the algorithm's output.
The coastline and the slopes bordering the valley yield impermeable barriers, constraining the animals, while semipermeable barriers arise from rivers that are crossed with nontrivial frequency. 
One of these rivers, the rightmost one, is so wide that the algorithm separately identifies the two sides of the river, leaving the middle open as it lacks recorded animal positions. 
Other rivers are less wide and are recovered in one piece; see \eg the vertical river on the center-left. 
At any rate, we conclude that the algorithms can successfully recover barriers from real-life data.  

\begin{figure}[h]
    \centering      
    \includegraphics[width = 1\textwidth]{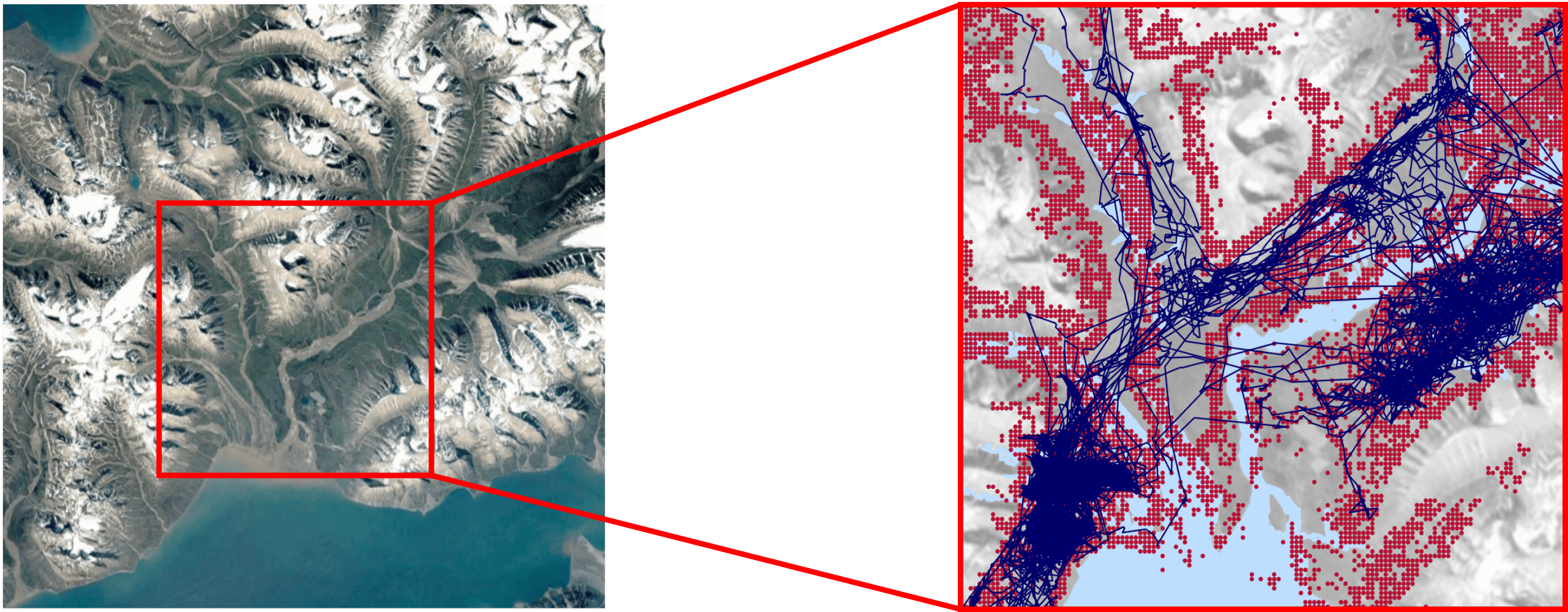}
    \caption{
    On the left: a satellite image of Nordenski\"old Land. 
    On the right: one of the 113 used animal tracks in dark blue, the barriers detected by \Cref{alg: KernelDiscontinuity} as the red points, and a satelite image with manually highlighted rivers in light blue. 
    }
    \label{fig: AnimalMovements2}
\end{figure}

\subsection*{Acknowledgements}
This work is part of the project Clustering and Spectral Concentration in Markov Chains with project number OCENW.KLEIN.324 of the research programme Open Competition Domain
Science -- M, which is partly financed by the Dutch Research Council (NWO).

\subsection*{Code and data availability} Source code for \Cref{alg: KernelDiscontinuity} as well as the simulation scheme used for \Cref{fig: Simulations} have been made available at \url{https://github.com/Alexander-Van-Werde/Brownian-barriers.git}. 
The animal movement data used in \Cref{sec: PotentialApplications} is available at \url{https://www.movebank.org/cms/webapp?gwt_fragment=page=studies,path=study2608802883}. 
\newpage
\bibliographystyle{plainurl}

\newpage
\appendix

\section{Proofs for the preliminaries of \texorpdfstring{\Cref{sec: GeneralPurposePrelim}}{Section}}\label{apx: Proofs_GeneralPurposePrelim}

\subsection{Proof of \texorpdfstring{\Cref{prop: ProcessExistsAndIsUnique}}{Proposition} and \texorpdfstring{\Cref{prop: Fadapted}}{Proposition}}  \label{apx: ProofExistUnique}
We start with the proof of \Cref{prop: ProcessExistsAndIsUnique}.
Recall that this result concerns the existence and uniqueness of processes satisfying \Cref{def: ReflectedBrownianMotion}.  

\begin{figure}[t]
    \includegraphics[width = 0.95\textwidth]{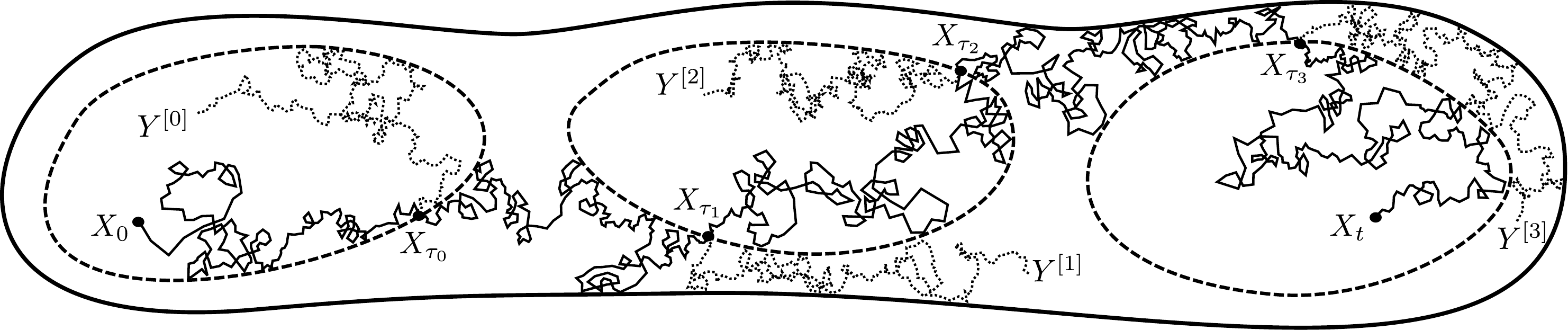}
    \caption{Visualization of the construction which is used in the proof of existence for \Cref{prop: ProcessExistsAndIsUnique}. 
    The process $X$ starts by following a classical reflected Brownian motion with initial condition $X_0$, which we denote by $Y^{\bk{0}}$. 
    When a random variable $\tau_0$ indicates that the local time exceeds a threshold, $X$ switches to a classical reflected Brownian motion $Y^{\bk{1}}$ which lives on the other side of the barrier and has initial condition $Y^{\bk{0}}_{\tau_0}$.
    One can continue similarly, defining $X_t$ in terms of $Y^{\bk{{\nn}}}$ when $t\in (\tau_{{\nn}-1},\tau_{\nn}]$.    
    }
    \label{fig: ConstructionReflectedBM}
\end{figure}
\begin{proof}[Proof of existence] 
    We rely on the classical fact that reflected Brownian motion in a smooth domain (without semipermeable barriers) exists and use a gluing procedure to accomplish the semipermeable barriers. 
    A visualization of this construction may be found in \Cref{fig: ConstructionReflectedBM}.

    Denote $C^{\bk{0}}$ for the closure of the connected component of $D\setminus (\cup_{i=0}^{\nb} B_i)$ indicated by the $s_i(0)$. 
    That is, the set $C^{\bk{0}}$ consists of all points $x\in D$ such that for every $i\leq\nb$ the point $x$ is on the positive (resp.\ negative) side of $B_i$ if $s_i(0) = +1$ (resp.\ $s_i(0) = -1$). 
    Here, recall from \Cref{sec: DomainsAndBarriers} that $x$ is both on the positive and negative sides if $x\in B_i$.
    
    The set $C^{\bk{0}}$ is a smooth domain.  
    Hence, classical reflected Brownian motion in $C^{\bk{0}}$ and its local time exist uniquely; see \cite[Proposition 4]{anderson1976small} or \cite[Theorem 4.3]{lions1984stochastic}.  
    That is, there exist unique continuous processes $Y_t^{\bk{0}}$ and $\cL_t^{\bk{0}}$ which are adapted to the filtration generated by $X_0$, the $s_i(0)$ with $1 \leq i\leq {\nb}$, and $W_s$ with $s\leq t$ and satisfy the following properties:
    \begin{enumerate}[leftmargin = 2em,label = (\arabic*)]
        \item\label{item:JollyUser} The process $Y_t^{\bk{0}}$ takes values in $C^{\bk{0}}$ and satisfies $Y_0^{\bk{0}} = X_0$. The process $\cL_t^{\bk{0}}$ takes values in $\bbR_{\geq 0}$ and satisfies $\cL_0^{\bk{0}} = 0$. 
        \item\label{item:QuickRag} The process $\cL_t^{\bk{0}}$ is nondecreasing and increases at time $t$ if and only if $Y_t^{\bk{0}} \in \partial C^{\bk{0}}$. 
        Equivalently, since $Y_t^{\bk{0}} \in C^{\bk{0}}$ for every $t\geq 0$ and $C^{\bk{0}} \cap \cup_{i=0}^{\nb} B_i = \partial C^{\bk{0}}$, the process $\cL_t^{\bk{0}}$ only increases when $Y_t^{\bk{0}} \in \cup_{i=0}^{\nb} B_i$.  
        \item\label{item:IdleWok} The following stochastic differential equation is satisfied for all $t\geq 0$: 
        \begin{align} 
            \intd Y_t^{\bk{0}} = \intd W_t + \sum_{i=0}^{\nb} s_i(0) \vec{n}_i(Y_t^{\bk{0}}) \bb1\{Y_t^{\bk{0}} \in B_i \}\intd \cL_t^{\bk{0}}. \label{eq:ElegantPig}
        \end{align}
    \end{enumerate}
    We next define $X_t$ and $\Lc{i}_t$ for sufficiently small values of $t$.  

    The local time $\cL_t^{\bk{0}}$ of $Y_t^{\bk{0}}$ increases every time $Y_t^{\bk{0}}$ is in $\cup_{i=0}^{\nb} B_i$. 
    To disambiguate the contributions corresponding to the different curves $B_i$, we may consider the process $\int_0^t \bb1\{Y_r^{\bk{0}} \in B_i \} \intd \cL_r^{\bk{0}}$
    for every $t\geq 0$ and $i\in \{0,1,\ldots, {\nb} \}$.
    Define a $\cG_t$-stopping time\footnote{Recall that definition of the filtration $\cG_t$ was given in \Cref{prop: ProcessExistsAndIsUnique}.} by 
    \begin{align} 
        \tau_0 \de \inf\bigl\{t\geq 0: s_i\bigl({\textstyle\int_0^t} \bb1\{Y_r^{\bk{0}} \in B_i \} \intd \cL_r^{\bk{0}}\bigr) \neq s_i(0) \text{ for some }i \leq {\nb}\bigr\}.\label{eq:AvidDragon}
    \end{align}
    Then, we define 
    \begin{align} 
        X_t \de Y_t^{\bk{0}}\ \text{ and }\ \Lc{i}_t \de {\textstyle\int_0^t} \bb1\{Y_r^{\bk{0}} \in B_i \} \intd \cL_r^{\bk{0}}\ \text{ for }t\leq \tau_0.  \label{eq:RoyalBox}
    \end{align} 
    
    We continue inductively. 
    Consider an integer $n\geq 0$ as well as a $\cG_t$-stopping time $\tau_{{\nn}}$, and suppose that $X_t$ and $\Lc{i}_t$ have been defined for all $t\leq \tau_{{\nn}}$.
    Denote $C^{\bk{{\nn}+1}}$ for the closure of the connected component of $D\setminus (\cup_{i=0}^k B_i)$ indicated by the $s_i(\tau_{\nn})$.
    We define $Y_t^{\bk{{\nn}+1}}$ to be the reflected Brownian motion on $C^{\bk{{\nn}+1}}$ with local time $\cL_t^{\bk{{\nn}+1}}$ and initial condition $Y_0^{\bk{{\nn}+1}} = X_{\tau_{\nn}}$.
    That is, $Y^{\bk{{\nn}+1}}_t$ and $\cL_t^{\bk{{\nn}+1}}$ are the unique $\sigma(s_i(\Lc{i}_{\tau_{\nn}}):i\leq {\nb})\vee \sigma(X_{\tau_{\nn}})\vee \sigma(W_{\tau_{\nn} + r}: r\leq t)$-adapted processes with properties as in items \ref{item:JollyUser}--\ref{item:IdleWok}, \eg   
    \begin{align} 
        \intd Y_t^{\bk{{\nn}+1}} = \intd W_{\tau_{\nn} + t} + \sum_{i=0}^{\nb} s_i(\Lc{i}_{\tau_{\nn}}) \vec{n}_i(Y_t^{\bk{{\nn}+1}}) \bb1\{Y_t^{\bk{{\nn}+1}} \in B_i \}\intd \cL_t^{\bk{{\nn}+1}}.  \label{eq:ShyGoose}
    \end{align}    
    Further, define a $\cG_t$-stopping time by   
    \begin{align} 
        \tau_{{\nn}+1} \de \inf\bigl\{t\geq \tau_{\nn}:s_i\bigl(\Lc{i}_{\tau_{\nn}}+ {\textstyle \int_0^{t-\tau_{\nn} } }\bb1\{Y_r^{\bk{{\nn}+1}} \in B_i \} \intd \cL_r^{\bk{{\nn}+1}}\bigr) \neq s_i\bigl(\Lc{i}_{\tau_{\nn}}\bigr) \text{ for some }i \leq {\nb} \bigr\}.\label{eq:HugeGem} 
    \end{align}
    Then, we define 
    \begin{align} 
        X_t \de Y_{t- \tau_{\nn}}^{\bk{{\nn}+1}}\ \text{ and }\ \Lc{i}_t \de   \Lc{i}_{\tau_{\nn}}+ {\textstyle \int_0^{t-\tau_{\nn} } }\bb1\{Y_r^{\bk{{\nn}+1}} \in B_i \} \intd \cL_r^{\bk{{\nn}+1}} \text{ for }t\in (\tau_{\nn}, \tau_{{\nn}+1}]. \label{eq:OccultFish}
    \end{align}

    Technically, to ensure that the processes are defined for all $t\geq 0$, it also has to be verified that $\lim_{{\nn}\to \infty}\tau_{\nn} = \infty$ almost surely. 
    In this context, let us note that essentially the same arguments as were given for \Cref{cor: TauBound} can be used to show that there exists some $t>0$ such that $\bbP(\tau_{\nn}-\tau_{{\nn}-1} >t \mid \tau_0,\ldots,\tau_{{\nn}-1}) >1/2$ almost surely for every $n$.
    The latter implies that $\tau_n = \tau_0 + \sum_{i=1}^n (\tau_i - \tau_{i-1})$ tends to infinity almost surely, as required.

    That the properties in \Cref{def: ReflectedBrownianMotion} are satisfied is now essentially immediate. 
    \Cref{item: Def_ReflectedBrownianMotion_i} in \Cref{def: ReflectedBrownianMotion}, concerning the stochastic differential equation, follows from \eqref{eq:ElegantPig} and \eqref{eq:ShyGoose}. 
    The claim that $\Lc{i}_t$ only increases when $X_t\in B_i$ in \cref{item: Def_ReflectedBrownianMotion_ii} follows from \cref{item:QuickRag} and the way $\Lc{i}_t$ was defined in \eqref{eq:RoyalBox} and \eqref{eq:OccultFish}. 
    Finally, the claim in \cref{item: Def_ReflectedBrownianMotion_iii} that $X_t$ is on the positive (resp.\ negative) side of $B_i$ if $s_i(\Lc{i}_t) = +1$ (resp.\ $s_i(\Lc{i}_t) = -1$) follows from the definition that $Y^{\bk{{\nn}+1}}$ takes values in $C^{\bk{n+1}}$ and the definition of $\tau_{{\nn}}$. 
\end{proof}
\begin{proof}[Proof of uniqueness]
    This follows from the pathwise uniqueness of reflected Brownian motion in a smooth domain (without semipermeable barriers).
    The proof has a similar structure to the proof that was used in the existence part. 
    We next make this precise. 
    
    Suppose that we are given $\cG_t$-adapted $(X_t,\Lc{0}_t,\ldots,\Lc{m}_t)$ and $(\tiX_t,\tiLc{0}_t,\ldots,\tiLc{m}_t)$ with $X_0 = \tiX_0$ satisfying \Cref{def: ReflectedBrownianMotion} with respect to the same $W_t$ and $s_i$.
    We show that it then holds that $X_t = \tiX_t$ and $\Lc{i}_t = \tiLc{i}_t$ for all $t\geq 0$, almost surely.    
    Define $\cG_t$-stopping times by 
    \begin{align} 
        \tau_0 &\de \inf\bigl\{t\geq 0: s_i(\Lc{i}_t) \neq s_i(0) \text{ for some } i\in \{0,1,\ldots,{\nb} \} \bigr\},\label{eq:SafeHen}\\ 
        \tilde{\tau}_0 &\de \inf\bigl\{t\geq 0: s_i(\tiLc{i}_t) \neq s_i(0) \text{ for some } i\in \{0,1,\ldots,{\nb} \}\bigr\}.\label{eq:ZenKnot} 
    \end{align} 
    Recall the definition of the sets $C^{\bk{n}}$ from the preceding proof of existence and, using \cite[Proposition 4.1]{anderson1976small}, pick a reflected Brownian motion $Z_t^{\bk{0}}$ in $C^{\bk{0}}$ with local time $K_t^{\bk{0}}$ and initial condition $Z_0^{\bk{0}} = X_{\tau_0}$ satisfying 
    \begin{align} 
        \intd Z_t^{\bk{0}} = \intd W_{\tau_0 + t} + \sum_{i=0}^{\nb} s_i(0) \vec{n}_i(Z_t^{\bk{0}})\bb1\{Z_t^{\bk{0}} \in B_i\} \intd K_t^{\bk{0}}.\label{eq:AngryMusic} 
    \end{align} 
    Here, note that while the sum in \eqref{eq:AngryMusic} makes reference to all barriers $B_i$, only the summands with $B_i \cap C^{\bk{0}} \neq \emptyset$ can have nonzero contribution. 
    Then, the processes defined by 
    \begin{align} 
        Y_t^{\bk{0}} \de 
        \begin{cases}
            X_t &\text{ if }t \leq \tau_0, \\ 
            Z_{t-\tau_0}^{\bk{0}}&\text{ if }t>\tau_0 
        \end{cases} 
        \quad \text{ and }\quad 
        \cL_t^{\bk{0}} \de   
        \begin{cases}
            \sum_{i=0}^{\nb} \Lc{i}_t &\text{ if }t \leq \tau_0, \\ 
             \sum_{i=0}^{\nb} \Lc{i}_{\tau_0} + K_{t-\tau_0}^{\bk{0}} &\text{ if }t>\tau_0 
        \end{cases} 
        \label{eq:NormalWand}
    \end{align}
    define a reflected Brownian motion in $C^{\bk{0}}$ with local time $\cL_t^{\bk{0}}$. 
    One can similarly define $\tiY_t^{\bk{0}}$ and $\tilde{\cL}_t^{\bk{0}}$ in terms of $\tiX_t$ and $\tiLc{i}_t$. 

    The pathwise uniqueness of reflected Brownian motion, proofs of which may be found in \cite[Proposition 4]{anderson1976small} or \cite[Theorem 4.3]{lions1984stochastic}, now implies that 
    \begin{align} 
        \bbP(Y_t^{\bk{0}} = \tiY_t^{\bk{0}}, \ \forall t\geq 0 ) = 1 \quad \text{ and }\quad \bbP(\cL_t^{\bk{0}} = \tilde{\cL}_t^{\bk{0}}, \ \forall t\geq 0 )=1. 
    \end{align}
    In particular, $\int_{0}^t \bb1\{Y_r^{\bk{0}} \in B_i \} \intd \cL_r^{\bk{0}} = \int_{0}^t \bb1\{\tiY_r^{\bk{0}} \in B_i \} \intd \tilde{\cL}_r^{\bk{0}}$ for every $i\in \{0,1,\ldots, {\nb} \}$. 
    Note that $\Lc{i}_t =\int_{0}^t \bb1\{Y_r^{\bk{0}} \in B_i \} \intd \cL_r^{\bk{0}}$ for $t\leq \tau_0$ and similarly for $\tiLc{i}_t$.
    Hence, since definitions \eqref{eq:SafeHen} and \eqref{eq:ZenKnot} refer to the same $s_i$, we have $\tau_0 = \tilde{\tau}_0$. 
    It further follows from \eqref{eq:NormalWand} that  
    \begin{align} 
        \bbP(X_t = \tiX_t,\ \forall t\leq \tau_0)=1 \quad \text{ and }\quad \bbP(\Lc{i}_t = \tiLc{i}_t,\ \forall t\leq \tau_0) =1.  
    \end{align} 
    One can now repeat this argument for the processes $t\mapsto X_{t +\tau_0}$ and $t\mapsto \Lc{i}_{t+\tau_0}$ by similarly defining processes $Y^{\bk{1}}_t$ and $\cL_t^{\bk{1}}$ to conclude that equality holds for all $t\leq \tau_1$, and so on.     
\end{proof}
Finally, we consider the adaptedness statement in \Cref{prop: Fadapted}. 
\begin{proof}[Proof of \texorpdfstring{\Cref{prop: Fadapted}}{Proposition}]
    The uniqueness statement in \Cref{prop: ProcessExistsAndIsUnique} yields that the $\cG_t$-adapted processes $X_t$ and $\Lc{i}_t$ are uniquely defined up to null sets.   
    Hence, recalling from \Cref{sec: GeneralPurposePrelim} that the considered $\sigma$-algebras were all completed so that these null sets do not matter, the statement of \Cref{prop: Fadapted} is well-defined. 
    In particular, it suffices to prove that $\{\Lc{i}_t \leq \ell_i,\, \forall i\leq m  \} \in \cF_{t,\ell_0,\ldots,\ell_m}$ and that $X_t, \Lc{i}_t$, and $s_i(\Lc{i}_t)$ are $\cF_t$-adapted for one specific construction of these processes.

    The latter follows by inspecting the proof of existence for \Cref{prop: ProcessExistsAndIsUnique}. 
    The constructions for $X_t$ and $\Lc{i}_t$ given there namely only depend on $(W_r)_{r \geq 0}$ and $(s_i(r))_{r\geq 0}$ through $(W_r)_{r\leq t}$ and $(s_i(\Lc{i}_r))_{r\leq t}$. 
    More precisely, recall from \eqref{eq:RoyalBox} and \eqref{eq:OccultFish} that the definition of $X_t$ and $\Lc{i}_t$ for $t\in (\tau_{\nn}, \tau_{{\nn}+1}]$ was stated in terms of $Y^{\bk{{\nn}+1}}_{t-\tau_{\nn}}$ and $\{\cL^{\bk{{\nn}+1}}_{s-\tau_{\nn}} :s\leq t\}$ and recall from the paragraphs preceding \eqref{eq:ElegantPig} and \eqref{eq:ShyGoose} that the definition of $Y^{\bk{{\nn}+1}}_{t}$ and $\cL^{\bk{{\nn}+1}}_t$ only refers to $W$ and $s_i$ through $(W_{r})_{r\leq \tau_{\nn} + t}$ and $(s_i(\Lc{i}_r))_{r\leq \tau_{\nn}}$. 
    Further, recall the definition of $\tau_{{\nn}}$ from \eqref{eq:AvidDragon} and \eqref{eq:HugeGem} and note that the event $t\in (\tau_{\nn}, \tau_{{\nn}+1}]$ only refers to $(s_i(r))_{r\geq 0}$ through $(s_i(\Lc{i}_r))_{r\leq t}$.  

    Combining the foregoing observations, the event $\{\Lc{i}_t \leq \ell_i,\, \forall i \leq {\nb}  \}$ only depends on $(W_r)_{r\geq 0}$ and $(s_i(r))_{r\geq 0}$ through $(W_r)_{r\leq t}$ and $(s_i(r))_{r\leq \ell_i}$.
    This shows that $\{\Lc{i}_t \leq \ell_i,\, \forall i \leq {\nb}  \}\in \cF_{t,\ell_0,\ldots,\ell_{\nb}}$, as desired. 
    Further, restricted to this event the definitions of $X_t$ and $\Lc{i}_t$ only depend on $W$ and $s_i$ through $(W_r)_{r\leq t}$ and $(s_i(r))_{r\leq \ell_i}$.
    This shows that $X_t$, $\Lc{i}_t$, and $s_i(\Lc{i}_t)$ define $\cF_t$-adapted processes, as desired.    
\end{proof}

\subsection{Proof of \texorpdfstring{\Cref{prop: X_Markov}}{Proposition}}   
The main goal is to establish strong Markovianity for the process $\sX_t$ defined in \Cref{prop: X_Markov}.
The weak Markovianity for $X_t$ then follows with a small additional argument; see \Cref{sec: ProofMarkov}.

We will exploit that the processes $W_t$ and $s_i(t)$ in \Cref{def: ReflectedBrownianMotion} are Markovian.
A possible delicate point is that \Cref{def: ReflectedBrownianMotion} does not refer to $s_i(t)$ directly, but rather to $s_i(\Lc{i}_t)$. 
Some care is hence required to rigorously show that no problems enter through the local times $\Lc{i}_t$. 
Preparatory work related to this is done in \Cref{sec: GeneralizedStrongMarkov}.

\subsubsection{Generalized strong Markovianity}\label{sec: GeneralizedStrongMarkov}   
If $\tau$ is a stopping time for $\cF_t$, then this means by definition of the filtration $\cF_t$ that $\{ \tau\leq t, \Lc{i}_t \leq \ell_i,\, \forall i \leq {\nb}  \}\in \cF_{t,\ell_0,\ldots,\ell_{\nb}}$. 
This suggests that $\tau$ and $\Lc{i}_t$ together again behave like a stopping time in a suitably generalized sense:
\begin{definition}
    Fix an integer $d\geq 1$ and consider a collection of $\sigma$-algebras $\cH_{t_1,t_2,\ldots,t_d}$ indexed by $\bbR_{\geq 0}^{d}$. 
    Then, $(\cH_{t_1,t_2,\ldots,t_d})_{t_1,\ldots,t_d\geq 0}$ is called a \emph{multivariate filtration} if it holds that $\cH_{t_1,t_2,\ldots,t_d} \subseteq \cH_{r_1,r_2,\ldots,r_d}$ whenever $t_i \leq r_i$ for every $i\leq d$.         
\end{definition}
\begin{definition}\label{def: GeneralizedStoppingTime}
    An $\bbR^{d}_{\geq 0}$-valued random variable $\sT \de (\tau_1,\ldots,\tau_d)$ is said to be a \emph{multivariate stopping time} for a multivariate filtration $(\cH_{t_1,t_2,\ldots,t_d})_{t_1,\ldots,t_d\geq 0}$ if $\{\tau_i \leq t_i,\, \forall i\leq d  \} \in \cH_{t_1,t_2,\ldots,t_d}$ for every $t_1,\ldots,t_d \geq 0$. 

    If $\sT$ is a multivariate stopping time, then we denote $\cH_{\sT}$ for the $\sigma$-algebra of all events $E$ with $E\cap \{\tau_i \leq t_i,\, \forall i\leq d  \} \in \cH_{t_1,t_2,\ldots,t_d}$ for every $t_1,\ldots,t_d \geq 0$. 
\end{definition}
\Cref{lem: GeneralizedStrongMarkov} below establishes a multivariate generalization of the classical result that Feller processes are strongly Markovian.
Indeed, the classical result is recovered as the case $d=1$. 
The usefulness for our purposes, however, is exactly in the multivariate nature of the lemma as this is what will allow us to rigorously show that no problems enter through the local times; see the proof of \Cref{lem: StrongMarkov}. 
\begin{definition}\label{def: Feller}
    Let $\sP$ be a Polish space and fix an integer $d\geq 1$.  
    \begin{enumerate}[leftmargin=2.5em,label = (\arabic*)]
        \item\label{item: rightcont} A function $M:\bbR_{\geq 0}^d \to \sP$ is said to be \emph{marginally right-continuous} if for any $\bbR_{\geq 0}^d$-valued sequence $(t^{\bc{1}}_n,\ldots, t^{\bc{d}}_n)_{n=1}^\infty$ with nonincreasing coordinates,  
        \begin{align} 
            \lim_{n\to \infty} M\bigl(t^{\bc{1}}_n,\ldots, t^{\bc{d}}_n\bigr) = M\bigl(\lim_{n\to \infty}t^{\bc{1}}_n,\ldots, \lim_{n\to \infty}t^{\bc{d}}_n \bigr). 
        \end{align} 
        \item\label{item: weak_Markov} Consider a random function $M$ from $\bbR_{\geq 0}^d$ to $\sP$ and let $\cH_{t_1,\ldots,t_d}\de \sigma\{M(r_1,\ldots,r_d): r_i\leq t_i \}$. 
        Then, we call $M$ \emph{weakly multivariate Markovian} if for every fixed $t, \delta \in \bbR_{\geq 0}^d$ and every measurable $E \subseteq \sP$ it holds with $P^{\delta}_y(E) \de  \bbP(M(\delta)\in E \mid M(0) = y)$ that   
        \begin{align} 
            \bbP\bigl(M(t + \delta) \in E \mid \cH_{t_1,\ldots,t_d}\bigr) = P^{\delta}_{M(t)}(E).   
        \end{align}    
        \item\label{item: Feller} Suppose that $M$ is weakly multivariate Markovian with marginally right-continuous sample paths. 
        Then, $M$ is said to have the \emph{Feller property} if for every $p_0 \in \sP$, every sequence $(y_n)_{n=1}^\infty$ with $\lim_{n\to \infty} y_n = p_0$, and every fixed $t \in \bbR_{\geq 0}^d$ it holds that 
        \begin{align} 
            \lim_{n\to \infty}\bbE\bigl[f(M(t)) \mid M(0) = y_n\bigr] = \bbE\bigl[f(M(t)) \mid M(0) = p_0\bigr] 
        \end{align} 
        for every bounded continuous function $f:\sP \to \bbR$.  
    \end{enumerate}
\end{definition}
\begin{lemma}[Strong multivariate Markovianity]\label{lem: GeneralizedStrongMarkov}
    Suppose that $M$ is weakly multivariate Markovian, marginally right-continuous, and has the Feller property.
    Then, for every measurable $E\subseteq \sP$, multivariate stopping time $\sT$, and $t \de (t_1,\ldots,t_d)$, 
    \begin{align} 
        \bbP\bigl(M\bigl(\sT + t\bigr) \in E \mid \cH_\sT\bigr) = \bbP\bigl(M\bigl(\sT + t\bigr) \in E \mid M(\sT)\bigr) \label{eq:IvoryRam}   
    \end{align}
    where $\cH_{\sT}$ is the $\sigma$-algebra resulting from \Cref{def: GeneralizedStoppingTime} applied to the multivariate filtration in \cref{item: weak_Markov} of \Cref{def: Feller}.  
\end{lemma}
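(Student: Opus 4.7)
The plan is to mimic the classical proof of the strong Markov property for Feller processes, adapted to the multivariate setting. The starting point is a dyadic approximation: for each coordinate, I would set $\tau_{i,n} \de \lceil 2^n \tau_i \rceil / 2^n$ and write $\sT_n \de (\tau_{1,n},\ldots,\tau_{d,n})$. Then $\sT_n$ takes values in the countable set $(2^{-n}\bbZ_{\geq 0}\cup \{\infty\})^d$, decreases componentwise to $\sT$, and the identity $\{\sT_n \leq t\} = \{\sT \leq t'\}$ with $t'_i \de \lfloor 2^n t_i \rfloor /2^n$ confirms that $\sT_n$ is again a multivariate stopping time. The same identity also shows $\cH_\sT \subseteq \cH_{\sT_n}$, which will be crucial for passing the desired conditioning through the approximation.

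Next, I would upgrade weak multivariate Markovianity from a fixed time to the discrete stopping time $\sT_n$. Since the possible values $s\in (2^{-n}\bbZ_{\geq 0})^d$ are countable and $\{\sT_n = s\} \in \cH_s$ (by writing $\{\tau_{i,n}=s_i\}$ as the set difference $\{\tau_{i,n}\leq s_i\}\setminus\{\tau_{i,n}\leq s_i - 2^{-n}\}$), I would, for any $E\in \cH_{\sT_n}$, decompose according to the value of $\sT_n$ and apply weak multivariate Markovianity on each piece. This yields
\begin{align}
    \bbE[f(M(\sT_n+t))\bb1_E] = \bbE[(P^t f)(M(\sT_n))\bb1_E] \nonumber
\end{align}
for every bounded measurable $f:\sP\to \bbR$, where $(P^t f)(y)\de \bbE[f(M(t))\mid M(0)=y]$.

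Finally, I would let $n\to \infty$. Since $\sT_n + t \downarrow \sT + t$ componentwise, marginal right-continuity of $M$ gives $M(\sT_n + t)\to M(\sT+t)$ and $M(\sT_n)\to M(\sT)$ almost surely. For bounded continuous $f$, the Feller property ensures $P^t f$ is continuous, so $(P^t f)(M(\sT_n))\to (P^t f)(M(\sT))$ almost surely. Dominated convergence then transfers the displayed identity to the limit, giving $\bbE[f(M(\sT+t))\bb1_E] = \bbE[(P^t f)(M(\sT))\bb1_E]$ for every $E\in \cH_\sT\subseteq \cH_{\sT_n}$. A monotone class argument (using that $\sP$ is Polish so that bounded continuous functions are measure-determining) upgrades this from bounded continuous $f$ to indicators of measurable sets $E'\subseteq \sP$, which yields \eqref{eq:IvoryRam}.

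The main obstacle I anticipate is bookkeeping rather than any deep technical step: I need to verify carefully that $\sT_n$ satisfies \Cref{def: GeneralizedStoppingTime}, that $\cH_\sT \subseteq \cH_{\sT_n}$, and that partitioning according to the countable many values of $\sT_n$ is compatible with the multivariate filtration so that the weak multivariate Markov property applies coordinate-by-coordinate on each piece. Once this measurability scaffolding is in place, the right-continuous/Feller limiting argument is standard.
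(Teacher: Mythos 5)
Your proposal is correct and mirrors the paper's own proof: dyadic discretization of the stopping time from above, application of weak multivariate Markovianity on each piece of the resulting countable partition, passage to the limit via marginal right-continuity, the Feller property, and dominated convergence, and a monotone-class argument to upgrade from bounded continuous test functions to indicators. The only cosmetic difference is that the paper performs the reduction to bounded continuous $f$ at the outset whereas you defer it to the end; the order is immaterial.
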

\begin{proof}
    This is a direct generalization of the classical proof for $d=1$, but we give details for completeness.
    We start with a reduction to a statement about continuous functions; see \eqref{eq:GladTrunk} below.     
    By the characterizing property of conditional probability, \eqref{eq:IvoryRam} is equivalent to showing that for every $A \in \cH_{\sT}$, 
    \begin{align} 
        \bbE\bigl[\bb1_A \bb1\{M( \sT + t )  \in E \} \bigr] = \bbE\bigl[\bb1_A \bbE\bigl[\bb1\{M( \sT + t) \in E \} \mid M(\sT)\bigr] \bigr].\label{eq:GoodLobster}
    \end{align} 
    Further, it suffices to consider the special case where $E\subseteq \sP$ is an open set. 
    Then, there exists a nondecreasing sequence of continuous and bounded functions $f_n:\sP \to E$ converging pointwise to $\bb1_E$. 
    So, applying the monotone convergence theorem twice, 
    \begin{align} 
        \bbE\bigl[\bb1_A \bbE\bigl[\bb1\{M( \sT +{}&{} t) \in E \} \mid M(\sT)\bigr] \bigr] 
        = \lim_{n\to \infty} \bbE\bigl[\bb1_A \bbE\bigl[f_n\bigl(M( \sT + t)\bigr) \mid M(\sT)\bigr] \bigr].\label{eq:DampBat}
    \end{align}
    Similarly, $\bbE[\bb1_A \bb1\{M( \sT + t) \in E \} ] = \lim_{n\to \infty} \bbE[\bb1_A f_n(M( \sT + t))]$. 
    It hence suffices to show that for every bounded and continuous function $f:\sP \to \bbR$ and every event $A \in \cH_{\sT}$,  
    \begin{align} 
        \bbE\bigl[\bb1_{A} f\bigl(M(\sT + t)\bigr) \bigr]
        &=
        \bbE\bigl[\bb1_A \bbE\bigl[f\bigl(M(\sT + t)\bigr)  \mid M(\sT) \bigr]\bigr].\label{eq:GladTrunk}  
    \end{align}

    To prove \eqref{eq:GladTrunk}, we start with the special case where $\sT$ takes values in a countable set $\cI \subseteq \bbR_{\geq 0}^d$.
    For any fixed $(i_1,\ldots,i_d) \in \cI$ one can readily verify that $A \cap \{\sT = (i_1,\ldots,i_d) \} \in \cH_{i_1,\ldots,i_d}$ using that $A\in \cH_{\sT}$ and that $\sT$ is a multivariate stopping time.  
    Then, also using that $A = \cup_{i\in \cI} A \cap \{\sT = i \}$ together with the tower property, 
    \begin{align} 
        \bbE\bigl[\bb1_{A} f\bigl(M(\sT +t)\bigr) \bigr]
        &= \sum_{i\in \cI} \bbE\bigl[\bb1_{A\cap \{\sT = i \}} f\bigl(M(i+t)\bigr) \bigr]\label{eq:AvidFish}\\ 
        &= \sum_{i\in \cI} \bbE\bigl[\bb1_{A\cap \{\sT = i \}} \bbE\bigl[ f\bigl(M(i+t)\bigr)\mid \cH_{i_1,\ldots,i_d} \bigr] \bigr].  \nonumber  
    \end{align}  
    Here, since $M$ was assumed to be weakly multivariate Markovian, 
    \begin{align} 
        \bbE\bigl[ f\bigl(M(i +t)\bigr)\mid \cH_{i_1,\ldots,i_d} \bigr]\label{eq:TrustyDad} 
        & = \bbE\bigl[ f\bigl(M(i+t)\bigr)\mid M(i) \bigr].
    \end{align}
    Combine \eqref{eq:AvidFish} and \eqref{eq:TrustyDad} to conclude that \eqref{eq:GladTrunk} holds when $\sT$ takes values in a countable set. 

    We recover the general case using a discretization argument.
    For any ${\nn}\geq 0$, consider the partition of $\bbR_{\geq 0}$ into intervals $I_{j,{\nn}} \de ((j-1)2^{-{\nn}}, j2^{-{\nn}}]$ for $j\geq 0$. 
    Then, writing $\tau_1,\ldots,\tau_d$ for the components of $\sT$, we define  
    \begin{align} 
        \tau_i^{\bc{{\nn}}} \de \sum_{j=0}^\infty j 2^{-{\nn}} \bb1\{\tau_i \in I_{j,{\nn}}\}\quad \text{ and }\quad \sT^{\bc{n}} \de \bigl(\tau_1^{\bc{{\nn}}},\tau_2^{\bc{{\nn}}},\ldots,\tau_n^{\bc{{\nn}}}  \bigr).\label{eq:SafeHat}    
    \end{align} 
    Note that $\sT^{\bc{n}}$ is then again a multivariate stopping time, but now has values in a countable set. 
    Hence, by the previously established case, 
    \begin{align} 
        \bbE\bigl[\bb1_{A} f\bigl(M(\sT^{\bc{n}} + t)\bigr) \bigr]
        &=
        \bbE\bigl[\bb1_A \bbE\bigl[f\bigl(M(\sT^{\bc{n}} + t)\bigr)  \mid M(\sT^{\bc{n}}) \bigr]\bigr] \label{eq:ValidBoy}
    \end{align} 
    
    Note that $\tau_i^{\bc{{\nn}}}$ is a nonincreasing in ${\nn}$ and that $\lim_{{\nn}\to \infty}\tau_i^{\bc{{\nn}}} = \tau_i$ with probability one. 
    Consequently, using that $M$ is marginally right-continuous, 
    $
        \lim_{n\to \infty} M(\sT^{\bc{n}}) = M(\sT).   
    $
    Hence, by the dominated convergence theorem and the Feller property, which are applicable since the function $f$ was assumed to be bounded and continuous,   
    \begin{align} 
        \lim_{n\to \infty}\bbE\bigl[\bb1_A \bbE\bigl[f\bigl(M(\sT^{\bc{n}} + t)\bigr)  \mid M(\sT^{\bc{n}}) \bigr]\bigr]   = \bbE\bigl[\bb1_A \bbE\bigl[f\bigl(M(\sT + t)\bigr)  \mid M(\sT) \bigr]\bigr].  \label{eq:ZenBat}
    \end{align} 
    Similarly, by the dominated convergence theorem,   
    \begin{align} 
        \lim_{n\to \infty}\bbE\bigl[\bb1_{A} f\bigl(M(\sT^{\bc{n}} + t)\bigr) \bigr] = \bbE\bigl[\bb1_{A} f\bigl(M(\sT + t)\bigr) \bigr]. \label{eq:WittyBird}
    \end{align}
    Combine \eqref{eq:ValidBoy}, \eqref{eq:ZenBat}, and \eqref{eq:WittyBird} to conclude that \eqref{eq:GladTrunk} also holds in the general case. 
    This concludes the proof. 
\end{proof}

\subsubsection{Proof of Markovianity}\label{sec: ProofMarkov}
Recall the definition of the filtrations $\cF_{t, \ell_1,\ldots,\ell_{\nb}}$ and $\cF_{t}$ from \Cref{prop: Fadapted}.  
\begin{lemma}\label{lem: StrongMarkov}
    Define an $\cF_t$-adapted process $\sX_t \de (X_t, (s_i(\Lc{i}_t))_{i=0}^{\nb})$ as in \Cref{prop: X_Markov}. 
    Then, for every measurable $E \subseteq D\times \{+1,-1 \}^{m+1}$, every $\cF_t$-stopping time $\tau$, and every fixed $u\geq 0$, 
    \begin{align} 
        \bbP(\sX_{\tau + u} \in E \mid \cF_\tau) = \bbP(\sX_{\tau + u} \in E \mid \sX_\tau). 
    \end{align} 
\end{lemma}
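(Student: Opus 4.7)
The plan is to combine the pathwise uniqueness of \Cref{prop: ProcessExistsAndIsUnique} with the multivariate strong Markov property in \Cref{lem: GeneralizedStrongMarkov}, applied to the underlying driving data.

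First I would encode the ``future'' of the process through shifted driving processes. Set $\tilde{W}_s \de W_{\tau+s} - W_\tau$ and $\tilde{s}_i(\ell) \de s_i(\Lc{i}_\tau + \ell)$ for $i=0,1,\ldots,{\nb}$. Going term by term through \Cref{def: ReflectedBrownianMotion} and using the $\cF_t$-adaptedness of $\Lc{i}_t$ from \Cref{prop: Fadapted}, the pair $(X_{\tau+s}, \Lc{i}_{\tau+s} - \Lc{i}_\tau)_{s \geq 0}$ again satisfies the defining SDE, now driven by $\tilde{W}$ and the $\tilde{s}_i$, with initial data $X_\tau$ and initial signs $s_i(\Lc{i}_\tau)$. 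Pathwise uniqueness then yields a deterministic measurable functional $\Phi$ with
\[
\sX_{\tau+u} = \Phi\bigl(u, \sX_\tau, \tilde{W}, \tilde{s}_0, \ldots, \tilde{s}_{\nb}\bigr).
\]

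Next I plan to show that, given $\sX_\tau$, the shifted data $(\tilde{W}, \tilde{s}_0, \ldots, \tilde{s}_{\nb})$ is independent of $\cF_\tau$. The idea is to view the driving data as a multivariate-time process $M(r, \ell_0, \ldots, \ell_{\nb}) \de (X_0, W_r, s_0(\ell_0), \ldots, s_{\nb}(\ell_{\nb}))$ with multivariate filtration $\cH_{r, \ell_0, \ldots, \ell_{\nb}} \de \cF_{r, \ell_0, \ldots, \ell_{\nb}}$ of \Cref{prop: Fadapted}. Because $X_0$, $W$, and the individual $s_i$ are mutually independent, and each is individually Feller and \cadlag, the process $M$ is weakly multivariate Markovian, marginally right-continuous, and Feller in the sense of \Cref{def: Feller}. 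Granting that $\sT \de (\tau, \Lc{0}_\tau, \ldots, \Lc{{\nb}}_\tau)$ is a multivariate stopping time for $\cH$ and that $\cF_\tau \subseteq \cH_\sT$, \Cref{lem: GeneralizedStrongMarkov} yields that the conditional law of the increments of $M$ past $\sT$ given $\cH_\sT$ depends only on $M(\sT)$. Spelled out, $\tilde{W}$ is a standard Brownian motion independent of $\cH_\sT$, and conditional on $s_i(\Lc{i}_\tau)$ each $\tilde{s}_i$ is a continuous-time Markov chain with the prescribed rates $\lambda_i^\pm$, independent of $\cH_\sT$. Combining with the representation of $\sX_{\tau+u}$ via $\Phi$ and the $\cF_\tau$-measurability of $\sX_\tau$ then gives $\bbP(\sX_{\tau+u}\in E\mid \cF_\tau) = \bbP(\sX_{\tau+u}\in E\mid \sX_\tau)$, which is the desired identity.

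The hard part will be verifying that $\sT$ is a multivariate stopping time, because the definition of $\cF_t$ in \Cref{prop: Fadapted} couples all local-time components simultaneously through the requirement $E \cap \{\Lc{i}_t \leq \ell_i,\,\forall i\} \in \cF_{t,\ell_0,\ldots,\ell_{\nb}}$. I would handle this by dyadic approximation $\tau_n \downarrow \tau$ with $\tau_n$ taking countably many values $j 2^{-n}$, decomposing the event $\{\tau_n \leq t,\, \Lc{i}_{\tau_n} \leq \ell_i\,\forall i\}$ along those values, invoking \Cref{prop: Fadapted} at each deterministic time $j 2^{-n}$ to place $\{\Lc{i}_{j2^{-n}} \leq \ell_i\,\forall i\}$ inside $\cF_{j2^{-n}, \ell_0, \ldots, \ell_{\nb}} \subseteq \cF_{t, \ell_0, \ldots, \ell_{\nb}}$, and then taking $n\to\infty$ using right-continuity of $r \mapsto \cH_{r,\ell_0,\ldots,\ell_{\nb}}$ and of $t \mapsto \Lc{i}_t$. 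The Feller verification for $M$ and the pathwise-uniqueness reduction are both routine by comparison.
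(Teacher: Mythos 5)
Your proposal follows essentially the same route as the paper: shift the driving data by $\tau$ and $\Lc{i}_\tau$, apply the multivariate strong Markov property (\Cref{lem: GeneralizedStrongMarkov}) to the process built from $W$ and the $s_i$, and then conclude via the pathwise uniqueness of \Cref{prop: ProcessExistsAndIsUnique}. The one step you elaborate beyond the paper---the dyadic approximation showing $\sT$ is a multivariate stopping time---is the same step the paper dismisses with ``expanding the definitions shows,'' and your extra care is well placed; just note that the right-continuity of $r\mapsto \cF_{r,\ell_0,\ldots,\ell_{\nb}}$ you invoke in the passage $n\to\infty$ is not recorded anywhere in the paper and would need its own brief justification from the completion of the $\sigma$-algebras and the continuity of $\Lc{i}_t$.
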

\begin{proof} 
    Let $u \geq 0$ and define $\tiX_u \de X_{\tau + u}$, $\tiLc{i}_u \de \Lc{i}_{\tau+ u} - \Lc{i}_\tau$, $\tiW_u \de W_{\tau+u} - W_\tau$, $\tis_i(u) \de s_i(\Lc{i}_\tau + u)$. 
    Then, direct verification of \Cref{def: ReflectedBrownianMotion} shows that $\tiX_u$ is a reflected Brownian motion with semipermeable barriers and local times $\tiLc{i}_u$ driven by $\tiW_u$ and $\tis_i(u)$. 
    We next show that $\tiW_u$ and $\tis_i(u)$ are again Markovian processes from the same distribution. 
    
    Let $d\de {\nb}+2$ and $\sT \de (\Lc{0}_\tau,\ldots,\Lc{{\nb}}_{\tau}, \tau)$. 
    Then, using that $\tau$ is a $\cF_t$-stopping time together with \Cref{prop: Fadapted}, expanding the definitions shows that $\sT$ is a multivariate stopping time for the multivariate filtration $(\cF_{t,\ell_0,\ldots,\ell_{\nb}})_{t,\ell_0,\ldots,\ell_{\nb} \geq 0}$ in the sense of \Cref{def: GeneralizedStoppingTime}.  
    Further, the $\sigma$-algebra $\cF_{\sT}$ defined using \Cref{def: GeneralizedStoppingTime} is equal to $\cF_{\tau}$.
    Consider the process defined by 
    \begin{align} 
        M(t_1,\ldots,t_d) \de (s_0(t_1),s_1(t_2),\ldots,s_m(t_{d-1}), W_{t_d}).   
    \end{align}
    This process has values in the Polish space $\sP \de \{-1,1 \}^{m+1} \times \bbR$ and it is readily verified that it has the Feller property and marginally right-continuous sample paths.
    Further, weak multivariate Markovianity follows using that the processes $s_i$ and $W$ are independent and individually weakly (even strongly) Markovian.    
    
    Hence, the strong multivariate Markovianity from \Cref{lem: GeneralizedStrongMarkov} is applicable to the process $M$. 
    Using this, it follows that conditional on $\cF_{\tau}$ the processes $\tiW$ and $\tis_i$ are again a Wiener process and $\{-1,+1 \}$-valued Markov chain with the same transition rates as they had initially. 
    In particular, the law of the processes $\tiW$ and $\tis_i$ only depends on the initial conditions $\tis_i(0)$. 

    \Cref{prop: ProcessExistsAndIsUnique} implies pathwise uniqueness for $\tiX_u$. 
    Hence, since pathwise uniqueness implies uniqueness in law, the law of $\tiX_u$ conditional on $\cF_\tau$ is uniquely determined by the initial condition $\tiX_0$ and the processes $\tis_i(u)$ and $\tiW_u$. 
    Recall that the law of the latter processes is determined by the initial condition $\tis_i(0)$. 
    This shows that the law of $\tiX_u$ is uniquely determined in terms of $X_\tau$ and $s_i(\tau)$, as desired.      
\end{proof}

\begin{proof}[Proof of \texorpdfstring{\Cref{prop: X_Markov}}{Proposition}]
    The strong Markovianity of $\sX_t$ is shown in \Cref{lem: StrongMarkov} so it remains to establish weak Markovianity for $X_t$.  

    When $u=0$ there is nothing to prove, so assume that $u>0$.    
    Then, for every $\eta \in (0,1)$ conditioning on $X_{u-\varepsilon}$ and $s_i(\Lc{i}_{u-\varepsilon})$ for some sufficiently small $\varepsilon$ depending on $\eta$ and using the strong Markovianity, \Cref{lem: DistantBoundary} yields that $X_u \not\in \cup_{i=0}^{\nb} B_i$ with probability $\geq 1-\eta$. 
    (This is not circular as the arguments for \Cref{lem: DistantBoundary} do not specifically require the weak Markovianity.)
    Hence, since $\eta$ is arbitrary, we have that $X_u \not\in \cup_{i=0}^{\nb} B_i$ with probability $1$. 

    When $X_u \not\in \cup_{i=0}^{\nb} B_i$, then \cref{item: Def_ReflectedBrownianMotion_iii} in \Cref{def: ReflectedBrownianMotion} implies that one can deduce the value of $s_i(\Lc{i}_u)$ from $X_u$. 
    Hence, the $\sigma$-algebra generated by $X_u$ is equal to the $\sigma$-algebra generated by $\sX_u = (X_t, (s_i(\Lc{i}_t)_{i=0}^{\nb}))$.  
    The weak Markovianity of $X_t$ hence follows from the strong Markovianity of $\sX_t$. 
\end{proof}

\subsection{Expression for the local time}\label{sec: ExpressionLocal}
Recall that \eqref{eq: LocalTime} claimed a more explicit description for the local time. 
It is now a convenient location to give a formal proof. 

Let us remark that this formula was only provided for the definiteness and will not be required for any of the other results of this paper. 
The reader may hence feel free to skip this section.  
\begin{proposition}\label{prop: FormulaLocalTime}
    Fix some $T>0$. 
    Then, almost surely, for every $i\in \{0,1\ldots,{\nb} \}$ the local time at the $i$th barrier satisfies  
    \begin{align} 
        \lim_{\varepsilon \to 0} \sup_{t\in [0,T]} \Bigl\lvert \Lc{i}_t - \frac{1}{2\varepsilon} \int_{0}^t \bb1\{ \exists y \in B_i: \Vert X_s - y \Vert < \varepsilon \}\, \intd s\Bigr\rvert = 0.  \label{eq:FastToy}
    \end{align}
\end{proposition}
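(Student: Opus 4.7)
My plan is to reduce \eqref{eq:FastToy} to the corresponding classical one-dimensional statement for reflected Brownian motion on a half-space, using the localization and coupling machinery developed in \Cref{sec: Local}. For the straight-line model, the local time $K_t^+$ of $Y_t^+$ admits the explicit Tanaka-type formula established in \eqref{eq:OldJar}, and a standard argument (e.g.\ the occupation-time formula for the semimartingale local time at the reflection line) yields
\begin{align}
K_t^+ = \lim_{\varepsilon \to 0} \frac{1}{2\varepsilon}\int_0^t \bb1\bigl\{\langle Y_s^+, s_i(0)\hatn\rangle - (s_i(0)\mathfrak{c} + 4\delta r_\delta) < \varepsilon\bigr\}\, \intd s,\nonumber
\end{align}
uniformly in $t$ on compact intervals, almost surely. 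Combined with \Cref{lem: X_horizontal_Y} and \eqref{eq:SafeMouse} from \Cref{lem: X_UpperLower_Y} (which show that $X_s$ and $Y_s^+$ agree up to $O(\delta r_\delta)$ in both coordinates until $\tau$), this transfers the formula to $X_s$ up to an $O(\delta)$ multiplicative error, provided one identifies $\varepsilon$-thickenings of $B_i$ with the corresponding slabs around $A^+$.

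To handle the full process $X_t$ globally, I would set up an increasing sequence of stopping times $0 = \tau_0 \le \tau_1 \le \tau_2 \le \cdots$ by iterating the construction of \eqref{eq: Def_tau}: given $\tau_{j-1}$, set $x_j \de X_{\tau_{j-1}}$ and let $\tau_j$ be the first time after $\tau_{j-1}$ at which either $X_t$ exits $\sB(x_j, r_\delta/2)$ or some $s_i(\Lc{i}_t)$ changes, so that on each $[\tau_{j-1}, \tau_j]$ the process is inside a ball that intersects at most one barrier $B_{i(j)}$ (by \Cref{lem: BarrierLocallyStraight}) and only $\Lc{i(j)}$ can increase. On each such interval one can apply \Cref{lem: TwoSidedLBound} to control $\Lc{i(j)}_t - \Lc{i(j)}_{\tau_{j-1}}$ by the straight-line local time $K^+$ of a process $Y_t^{+,(j)}$ coupled to $X_t$ in the sense of \Cref{sec: XtYtApprox}, and then apply the one-dimensional formula recalled above. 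A standard Borel--Cantelli argument using \Cref{cor: TauBound} shows that $\tau_j \to \infty$ a.s., and for any fixed $T$ only finitely many intervals meet $[0,T]$ with probability one.

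On balls not meeting any barrier, $X_s$ coincides with $x_j + (W_s - W_{\tau_{j-1}})$ up to $\tau_j$ by \eqref{eq:BlueCity}, and the occupation time of $N_\varepsilon(\cup_j B_j)$ is identically zero for $\varepsilon$ small, matching the vanishing local time increment. Summing the increments across the $[\tau_{j-1}, \tau_j]$ partitioning $[0,T]$ and letting $\delta \to 0$ after $\varepsilon \to 0$ upgrades the multiplicative $O(\delta)$ error to zero, yielding \eqref{eq:FastToy}.

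The main obstacle will be organizing the two limits $\varepsilon \to 0$ and $\delta \to 0$ cleanly: the coupling in \Cref{sec: Local} is only good on scales $\gg \delta r_\delta$, while the occupation-time formula probes scale $\varepsilon$, so one must let $\varepsilon \to 0$ first with $\delta$ fixed, handle the resulting $O(\delta)$ error uniformly in $t\le T$ (which in turn forces uniformity across the finitely many intervals $[\tau_{j-1},\tau_j]$ meeting $[0,T]$ and requires showing that the number of such intervals is tight as $\delta \to 0$ via \Cref{cor: TauBound}), and only then send $\delta\to 0$. Outside of this scheduling, everything else reduces to the one-dimensional Tanaka formula and elementary consequences of \Cref{lem: X_UpperLower_Y,lem: X_horizontal_Y,lem: TwoSidedLBound}.
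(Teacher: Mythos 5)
Your proposal takes a genuinely different and, unfortunately, flawed route. The paper's proof is much more direct: between consecutive times at which some $s_i$ flips sign (the stopping times $\tau_{\nn}$ from the existence construction in \Cref{apx: ProofExistUnique}), the process $X_t$ literally coincides with a classical reflected Brownian motion $Y^{\bk{\nn}}$ on a smooth domain $C^{\bk{\nn}}$. The classical occupation-time formula for reflected Brownian motion on a smooth (possibly curved) domain, cited as \cite[Theorem 2.6]{burdzy2004heat}, then applies piece by piece; summing finitely many pieces on $[0,T]$ gives \eqref{eq:FastToy} without any approximation error. No straight-line approximation, no coupling, and no $\delta \to 0$ limit.

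The gap in your argument is in the transfer step: you want to deduce the occupation-time formula for $X$ near the curved barrier $B_i$ from the corresponding 1D Tanaka formula for $Y_t^+$ near the straight line $A^+$, using that $X_s$ and $Y_s^+$ agree up to $O(\delta r_\delta)$ by \Cref{lem: X_UpperLower_Y,lem: X_horizontal_Y}. But the coupling controls the processes at the fixed macroscopic scale $\delta r_\delta$, while the $\varepsilon\to 0$ limit in \eqref{eq:FastToy} probes arbitrarily fine scales. Concretely, from the coupling one gets inclusions of the form
\begin{align}
\bigl\{d(Y_s^+, A^+) < \varepsilon - C\delta r_\delta\bigr\}
\subseteq \bigl\{d(X_s, B_i) < \varepsilon\bigr\}
\subseteq \bigl\{d(Y_s^+, A^+) < \varepsilon + C\delta r_\delta\bigr\},\nonumber
\end{align}
and after dividing by $2\varepsilon$ the ratio $(\varepsilon + C\delta r_\delta)/\varepsilon$ diverges as $\varepsilon\to 0$ with $\delta$ fixed. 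So your scheduling ``$\varepsilon\to 0$ first, then $\delta\to 0$'' does not yield a bounded (let alone $O(\delta)$) error after the inner limit; the error estimate degenerates before $\delta\to 0$ can help. What your coupling lemmas do give is that $\Lc{i}_t$ and $K_t^+$ agree up to $O(\delta)$ (\Cref{lem: TwoSidedLBound}), but this is a statement about the two local-time processes, not about the two $\varepsilon$-scale occupation integrals for a common $\varepsilon$, and the latter is exactly what is needed. Your instinct to chop $[0,T]$ at stopping times and argue piecewise is the right one; the fix is to invoke the smooth-domain occupation-time formula on each piece directly (as the paper does), which handles the curvature of $B_i$ with no intermediate straight-line model and hence no scale mismatch.
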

\begin{proof}
    That \eqref{eq:FastToy} holds for reflected Brownian motion on a smooth domain without semipermeable barriers is classical; see, \eg \cite[Theorem 2.6]{burdzy2004heat} for a proof.
    Our case with semipermeable barriers can be reduced to this classical setting by a stopping time argument. 
    
    More precisely, recall that the construction from the proof of existence in \Cref{apx: ProofExistUnique} used an increasing sequence of stopping times $(\tau_{\nn})_{{\nn}=0}^\infty$ to define $X_t$ and the local times $\Lc{i}_t$ in terms of a classical reflected Brownian motion $Y^{\bk{{\nn}}}$ and its local time $\cL^{\bk{{\nn}}}$. 
    In particular, by the expression for the local time in \eqref{eq:RoyalBox} and \eqref{eq:OccultFish}, 
    \begin{align} 
        \Lc{i}_t = 
        \begin{cases}
            \int_0^t \bb1\{Y_r^{\bk{0}} \in B_i \} \intd \cL_r^{\bk{0}}& \text{ for }t\leq \tau_0,\\ 
            \Lc{i}_{\tau_{\nn}}+  \int_0^{t-\tau_{\nn} } \bb1\{Y_r^{\bk{{\nn}+1}} \in B_i \} \intd \cL_r^{\bk{{\nn}+1}}& \text{ for }t\in (\tau_{\nn}, \tau_{{\nn}+1}].  
        \end{cases}
        \label{eq:JollyGym}
    \end{align}
    Expanding \eqref{eq:JollyGym} recursively when $t\in (\tau_{\nn}, \tau_{{\nn}+1}]$ yields a sum of ${\nn}+1$ integrals. 
    To match this, one can similarly decompose $\frac{1}{2\varepsilon}\int_0^t  \bb1\{ \exists y \in B_i: \Vert X_s - y \Vert < \varepsilon \}\, \intd s$ into a sum of integrals.  
    Then, using that $X_t = Y_{t- \tau_{{\nn}-1}}^{\bk{{\nn}}}$ for $t\in (\tau_{{\nn}-1}, \tau_{\nn}]$ by \eqref{eq:RoyalBox} and \eqref{eq:OccultFish}, we have the following coarse upper bound: 
    \begin{align} 
        &\sup_{t\in [0,T]} \Bigl\lvert \Lc{i}_t - \frac{1}{2\varepsilon} \int_{0}^t \bb1\{ \exists y \in B_i: \Vert X_s - y \Vert < \varepsilon \}\, \intd s\Bigr\rvert\label{eq:RoyalFan} \\ 
        &\leq \sum_{{\nn}=0}^\infty  \bb1\{T \leq \tau_{{\nn}}\} \sup_{t\in [0,T]} \Bigl\lvert \int_0^t \bb1\{Y_r^{\bk{{\nn}}} \in  B_i \}\,\intd \cL_r^{\bk{{\nn}}} - \frac{1}{2\varepsilon} \int_{0}^t \bb1\{ \exists y \in B_i: \Vert Y_r^{\bk{{\nn}}} - y \Vert < \varepsilon \}\, \intd s\Bigr\rvert.\nonumber
    \end{align}
    Here, $Y_r^{\bk{n}}$ is a classical reflected Brownian motion in a smooth domain.
    By using the aforementioned classical result, it may then be shown that for every ${n} \geq 0$, 
    \begin{align} 
        \lim_{\varepsilon \to 0}\sup_{t\in [0,T]} \Bigl\lvert \int_0^{t} \bb1\{Y_r^{\bk{n}} \in B_i \} \, \intd \cL_r^{\bk{n}} - \frac{1}{2\varepsilon} \int_{0}^t \bb1\{ \exists y \in B_i: \Vert Y_s^{\bk{n}} - y \Vert < \varepsilon \}\, \intd s\Bigr\rvert = 0, \label{eq:ZippyMom}
    \end{align}
    almost surely.
    Now, since there are almost surely finitely many terms on the right-hand side of \eqref{eq:RoyalFan}, the combination of \eqref{eq:RoyalFan} and \eqref{eq:ZippyMom} concludes the proof.  
\end{proof}

\section{Proof of \texorpdfstring{\Cref{lem: BarrierLocallyStraight}}{Lemma}}\label{apx: ProofBarrierLocallyStraight}
\begin{proof}
    Recall from \Cref{sec: DomainsAndBarriers} that it was assumed that the barriers $B_i$ do not intersect. 
    Consequently, since $(\cup_{i=0}^{\nb} B_i)\cap \sB(x_0,r)$ is connected for every $r\leq \rho$ by definition of $\rho$ in \eqref{eq: Def_rho}, there exists at most one $B_i$ with $B_i \cap \sB(x_0,r) \neq \emptyset$. 
    In particular, this holds for $r = r_\delta$ since $\delta<1/2 \leq 1$.  

    Assume that a barrier $B_i$ intersecting $\sB(x_0,r_\delta)$ exists. 
    Pick some $y_0 \in  B_i$ with $\Vert y_0 - x_0 \Vert = \min\{\Vert y - x_0 \Vert: y \in B_i \}$. 
    The minimality of the distance to $x_0$ together with the assumption that $B_i$ intersects $\sB(x_0,r_\delta)$ then yields $y_0 \in \sB(x_0, r_\delta)$.
    Define $ \hatn \de \vec{n}_i(y_0)$ and $\mathfrak{c} \de \langle y_0, \hatn \rangle$. 

    Pick an arc-length parametrization $\phi: \bbR \to B_i$ with $\phi(0) = y_0$. 
    Here, since $B_i$ is a closed curve, it should be understood that $\phi$ is periodic with period given by the arc length of $B_i$. 
    We next construct a small neighborhood of $0$ such that the restriction of $\phi$ to the neighborhood parametrizes $B_i \cap \sB(x_0, r_\delta)$ and subsequently verify items \ref{item:GhostlyQuip}--\ref{item:JollyDragon}.    
    
    The minimality of the distance of $y_0$ to $x_0$ implies that $\langle \dd{t}\phi(0), y_0 - x_0 \rangle = 0$.
    Recall from \eqref{eq: Def_kappa} that $\kappa$ is a bound on the curvature of $B_i$. 
    Hence, the first Frenet--Serret formula \cite[p.70, Eq.\ (7.1)]{dineen2014multivariate} implies that $\Vert \frac{\intd^2 }{\intd t^2}\phi(t) \Vert \leq \kappa$ and consequently $\Vert \dd{t}\phi(s) - \dd{t}\phi(0) \Vert \leq \kappa s$ for every $s>0$. 
    Then, by the fundamental theorem of calculus,  
    \begin{align} 
        \langle \dd{t}\phi(0) , \phi(s) - \phi(0)\rangle  = {\textstyle \int_0^s} \langle \dd{t}\phi(0) , \dd{t}\phi(t)  \rangle \, \intd t \geq s - \frac{\kappa}{2}s^2.\label{eq:MythicalGem}  
    \end{align} 
    It follows that $\langle \dd{t}\phi(0) , \phi(2r_\delta) - \phi(0)\rangle \geq 2 (1-\kappa r_\delta)r_\delta$.
    Here, it holds that $2(1-\kappa r_\delta) >1$ due to the assumption that $\delta < 1/2$ and the fact that $\kappa r_\delta \leq \delta$ by \eqref{eq: Def_R_eps}.      
    Considering that $\langle \dd{t}\phi(0), y_0 - x_0 \rangle = 0$ and $\phi(0) = y_0$, it now follows that $\langle \dd{t}\phi(0), \phi(2r_\delta) - x_0 \rangle > r_\delta$. 
    Consequently, since $\dd{t}\phi(0)$ is a unit vector by $\phi$ being an arc-length parametrization, we have $\Vert \phi(2r_\delta) - x_0  \Vert>r_\delta$. 
 
    Recall that $\phi(0) = y_0$ is an element of $B_i \cap \sB(x_0,r_{\delta})$. 
    Let $s_{+} \de \min\{t >0: \phi(t) \not\in  \sB(x_0,r_{\delta})\}$ and note that the preceding shows that $s_+ < 2r_\delta$. 
    It can similarly be shown that $s_- > -2r_\delta$ for $s_{-} \de \min\{t <0: \phi(t) \not\in \sB(x_0,r_\delta) \}$. 
    Since $\phi$ is surjective and $B_i \cap \sB(x_0,r_\delta)$ is connected, the restriction of $\phi$ to $(s_-,s_+)$ then yields a parametrization for $B_i \cap \sB(x_0,r_\delta)$. 
    
    Let us proceed to the verification of \cref{item:GhostlyQuip}.
    Recall that $\mathfrak{c} = \langle \phi(0), \hatn \rangle$.
    Hence, it suffices to show that $\lvert \langle \phi(s)- \phi(0), \hatn \rangle \rvert < 4\delta r_\delta$ for every $s\in (s_-, s_+)$.  
    For every such $s$, 
    \begin{align} 
        \phi(s) - \phi(0) = {\textstyle \int_0^s} \dd{t}\phi(t)\,  \intd t = s \dd{t}\phi(0) + {\textstyle \int_0^s} (\dd{t}\phi(t) - \dd{t}\phi(0))\, \intd t. 
    \end{align}
    Recall that $ \hatn$ is the normal vector to $B_i$ at $y_0 = \phi(0)$ and hence orthogonal to the tangent vector $\dd{t}\phi(0)$.
    Consequently,  using that  $\Vert \dd{t}\phi(s) - \dd{t}\phi(0) \Vert \leq \kappa \lvert s \rvert$ together with the fact that $\lvert s \rvert< 2r_\delta$ for $s \in (s_-,s_+)$,
    \begin{align} 
        \lvert \langle  \phi(s)- \phi(0), \hatn \rangle \rvert \leq 2r_\delta \negsp \sup_{s\in [s_{-},s_{+}]}  \Vert \dd{t}\phi(s) - \dd{t}\phi(0)   \Vert < 4\kappa r_\delta^2 \leq 4\delta r_\delta.      
    \end{align}
    This concludes the proof of \cref{item:GhostlyQuip}. 

    The second Frenet--Serret formula \cite[p.74, Eq.\ (7.1')]{dineen2014multivariate} yields that $\Vert \dd{t}\vec{n}_i( \phi(t)) \Vert \leq \kappa$. 
    Hence, since $ \hatn = \vec{n}_i(\phi(0))$, it holds that $\Vert \vec{n}_i(\phi(s))-  \hatn \Vert \leq \lvert s \rvert\kappa$ for every $s$. 
    In particular, for every $s \in (s_-, s_+)$ we have $\Vert \vec{n}_i(\phi(s))-  \hatn \Vert < 2  \delta$. 
    This proves \cref{item:IdleBat}.   

    Finally, let us prove \cref{item:JollyDragon}. 
    For brevity, we focus on the first part of the claim. 
    (The second part follows similarly if one replaces the roles of positive and negative sides in the arguments.) 
    We start by considering points of the form $y_0 +  \alpha \hatn$ for some $\alpha \in \bbR$ and subsequently employ a reduction argument.   
    
    As a preparatory result, we claim that the only point in $\sB(x_0,r_\delta)\cap B_i$ of the form $y_0 + \alpha\hatn$ is $y_0$ itself. 
    Recalling that $y_0 = \phi(0)$ and that $\hatn = \vecn_i(y_0)$ is orthogonal to $\dd{t}\phi(0)$, this claim is equivalent to showing that $\langle \dd{t}\phi(0), \phi(0) - \phi(s) \rangle = 0$ for $s\in (s_-, s_+)$ if and only if $s=0$. 
    The latter holds true. 
    Indeed, one can check that the right-hand side of \eqref{eq:MythicalGem} is $>0$ for $s \in (0,s_+)$, and it may similarly be shown that $\langle \dd{t}\phi(0), \phi(0) - \phi(s) \rangle < 0$ for $s \in (s_-,0)$.

    Recall from \Cref{sec: DomainsAndBarriers} that $\vecn_i$ points to the positive side of $B_i$.
    This means that $y_0 + \varepsilon\hat{n}$ (resp.\ $y_0 - \varepsilon\hat{n}$) is strictly on the positive (resp.\ negative) side of $B_i$ for $\varepsilon>0$ sufficiently small.    
    However, the side can only change when crossing $B_i$, so, using the foregoing preparatory result, it follows that a point of the form $y_0 + \alpha\hatn$ in $\sB(x_0, r_\delta)$ lies on the positive side of $B_i$ if and only if $\alpha \geq 0$.

    Now, consider an arbitrary $z\in \sB(x_0, r_\delta)$ with $\langle z,\hatn \rangle \leq \mathfrak{c} - 4\delta r_\delta$.  
    Then, by considering the contrapositive, the desired claim from \cref{item:JollyDragon} follows if we show that such a $z$ is \emph{not} on the positive side.

    We will apply the foregoing result to the point $\hat{z} \de y_0 + \alpha \hatn$ with $\alpha \de \langle z - y_0 , \hatn\rangle$. 
    First, let us check that this point is again in $\sB(x_0, r_\delta)$.   
    Recall from paragraph preceding \eqref{eq:MythicalGem} that $x_0 - y_0$ is orthogonal to $\dd{t}\phi(0)$. 
    Equivalently $x_0 - y_0 = \langle x_0  - y_0, \hatn \rangle\hatn$, and hence  
    \begin{align} 
        \hat{z}= y_0 + \langle z - y_0, \hatn \rangle \hatn = y_0 +  (x_0 - y_0) + \langle z - x_0, \hatn \rangle \hatn = x_0 + \langle z - x_0, \hatn \rangle \hatn.    
    \end{align}
    Consequently, we have $\Vert \hat{z} - x_0 \Vert \leq \Vert z - x_0 \Vert$ which implies that $\hat{z} \in \sB(x_0, r_\delta)$, as desired. 
    Further, since $\mathfrak{c} = \langle y_0, \hatn \rangle$, we have that $\alpha  \leq -4\delta r_\delta$. 
    This value is strictly negative implying that $\hat{z}$ is not on the positive side of $B_i$. 
    
    Finally, to transfer this to $z$, consider the line segment connecting $z$ to $\hat{z}$. 
    All points in this line segment are again elements of $\sB(x_0, r_\delta)$ because balls are convex sets. 
    Now recall from \cref{item:GhostlyQuip} that every $y\in B_i \cap \sB(x_0, r_\delta)$ satisfies $\langle y, \hatn \rangle > \mathfrak{c} - 4\delta r_\delta$.  
    Considering that $\langle z , \hatn \rangle  = \langle \hat{z}, \hatn \rangle \leq \mathfrak{c} - 4\delta r_\delta$ it hence follows that the line segment can not intersect $B_i$. 
    This implies that $z$ is also not on the positive side, concluding the proof.    
\end{proof}

\section{Proof that \texorpdfstring{\Cref{thm: Main_GlobalRecovery}}{Theorem} follows from \texorpdfstring{\Cref{prop: Main_GlobalRecovery_Algorithm,prop: Main_GlobalRecovery_Improvement}}{Propositions}}\label{apx: Proof_Main_GlobalRecovery}
\begin{proof}[Proof of \texorpdfstring{\Cref{thm: Main_GlobalRecovery}}{Theorem}]
    To avoid ambiguity with other parts of the proofs, let us denote $\cC_1,\ldots,\cC_3>0$ for the constants appearing in the statement of \Cref{thm: Main_GlobalRecovery}. 
    Then, the assumption is that 
    \begin{align} 
        t \leq \cC_1 \min\{1/\kappa^2, 1/\lambda_{\max}^2,\rho^2\}, \ \ \varepsilon \leq \cC_2 \kappa t, \ \ T \geq \cC_3 \frac{\tmix}{\pi_{\min}}\sqrt{\frac{\kappa}{\varepsilon^3}}\ln\Bigl(\frac{\Area(D)}{\eta} \sqrt{\frac{\kappa}{\varepsilon^3}} \Bigr).\label{eq:UnripeBoy}   
    \end{align}
    Let $\sE_0 \de \sqrt{2} (\varepsilon^3/\kappa)^{1/4}$. 
    Then, in particular,   
    \begin{align} 
        \sE_0 \leq \sqrt{2}\cC_1^{1/4} \cC_2^{3/4} \sqrt{t}\quad \text{ and }\quad T \geq \cC_3 \frac{ \tmix}{\pi_{\min}}\frac{2}{\sE_0^2} \ln\Bigl(\frac{2}{\eta}\frac{\Area(D)}{\sE_0^2}\Bigr).    
    \end{align}  
    Consequently, taking $\cC_1,\cC_2$ sufficiently small and $\cC_3$ sufficiently large, we may assume that \Cref{prop: Main_GlobalRecovery_Algorithm} is applicable when $\varepsilon$ is replaced by $\sE_0$ and $\eta$ is replaced by $\eta/2$. 
    That is, the parameters in \Cref{alg: KernelDiscontinuity} can be chosen such that the output $\fB_0$ satisfies 
    \begin{align} 
        \bbP\bigl(\dH(\fB_0, \cup_{i=0}^{\nb} B_i ) \leq \sE_0 \bigr) \geq 1 - \eta/2.\label{eq:HauntedRock}  
    \end{align}
    If $\sE_0 < \varepsilon$ then we are done, so let us assume that $\sE_0 \geq \varepsilon$.  
    
    A preliminary estimate is required. 
    Recall that $B_0 = \partial D$ and that the curvature of $B_0$ is bounded by $\kappa$. 
    This implies that $\Area(D) \geq \pi/\kappa^2$; see \eg \cite{pankrashkin2015inequality}.
    In particular, taking $\cC_1, \cC_2 \leq 1$ and using \eqref{eq:UnripeBoy}, we have $\Area(D) \geq \pi \sqrt{\varepsilon^3/\kappa}$, and hence   
    \begin{align} 
        \ln\Bigl(\frac{\Area(D)}{\eta}\sqrt{\frac{\kappa}{\varepsilon^3}} \Bigr) 
        = \ln\Bigl(\frac{2\Area(D)}{\eta}\sqrt{\frac{\kappa}{\varepsilon^3}} \Bigr) - \ln(2) 
        \geq c\ln\Bigl(\frac{2\Area(D)}{\eta}\sqrt{\frac{\kappa}{\varepsilon^3}} \Bigr)  \label{eq:ValidVolt} 
    \end{align}
    with $c>0$ an absolute constant which is sufficiently small such that $(1-c)\ln(2\pi) \geq \ln(2)$. 

    We will apply \Cref{prop: Main_GlobalRecovery_Algorithm} with $\eta$ replaced by $\eta/2$ and next verify that the constraints can be satisfied by choosing $\cC_1,\cC_2$, and $\cC_3$ appropriately. 
    First, using \eqref{eq:ValidVolt}, the constraint on $T$ referring to \eqref{eq: Main_GlobalRecovery_n} may be satisfied by taking $\cC_3$ sufficiently large in \eqref{eq:UnripeBoy}. 
    The conditions on $t$ and $\varepsilon$ can further also be satisfied by taking $\cC_1$ and $\cC_2$ sufficiently small; specifically smaller than the constants $c_1$ and $c_2$ from \Cref{prop: Main_GlobalRecovery_Algorithm}, respectively.
    Finally, the required upper bound on $\sE_0$ may also be assumed to hold by taking $\cC_1$ and $\cC_2$ sufficiently small: recall the definition of $\ell$ in \eqref{eq:UnripeNose} and note that $\sqrt{2}(\varepsilon^3/\kappa)^{1/4} \leq \sqrt{2} \cC_1^{1/4}\cC_2^{1/4} \sqrt{\varepsilon/\kappa}$ due to \eqref{eq:UnripeBoy}.

    It follows that there exists an event $\cE$ with $\bbP(\cE) \geq 1 -\eta/2$ such that the following holds. 
    Define scalars $\sE_1,\sE_2, \ldots$ by $\sE_{j} = \max\{\sE_{j-1}/2,  \varepsilon \}$ for every $j\geq 1$, and recursively define $\fB_1,\fB_2,\ldots\subseteq \bbR^2$ by applying \Cref{alg: Improve} on the previous estimate. 
    Then, for all $j\geq 1$,     
    \begin{align}
        \dH(\fB_j, \cup_{i=0}^{{\nb}}B_i) \leq \max\{\sE_0/2^{j},\varepsilon \}\ \text{ if }\cE \text{ occurs and }\dH(\fB_0, \cup_{i=0}^{\nb} B_i) \leq \sE_0.  
    \end{align}
    In particular, considering \eqref{eq:HauntedRock} and that $\cE$ occurs with probability $\geq 1 - \eta/2$, 
    \begin{align} 
        \bbP\bigl(\dH(\fB_J, \cup_{i=0}^{\nb} B_i) \leq \varepsilon \bigr) \geq 1 - \eta \text{ for }J  \de \lceil \log_2(\sE_0/\varepsilon) \rceil. 
    \end{align}    
    Let $\hat{B} \de \fB_J$ to conclude the proof. 
\end{proof}

\section{Proof of \texorpdfstring{\Cref{lem: DistantBoundary}}{Lemma}}\label{apx: ProofDistantBoundary}

\begin{proof}[Proof of \texorpdfstring{\Cref{lem: DistantBoundary}}{Lemma}]
    We rely on the tools from \Cref{sec: Local}. 
    Let $c_1,c_2$ be as in \Cref{lem: DistantBoundary} and set $
        \delta \de c_3\sqrt{t}/\min\{ 1/\kappa, 1/\lambda_{\max},\rho \}  
    $ with $c_3$ a constant, depending on $\eta$, to be chosen later.

    Let us start with the case where $\sB(x_0, r_\delta) \cap B_i \neq \emptyset$ for some $i\leq {\nb}$. 
    By \cref{item:GhostlyQuip} from \Cref{lem: BarrierLocallyStraight}, every point in $\sB(x_0, r_\delta) \cap B_i$ satisfies $\langle y, s_i(0)\hatn \rangle \leq s_i(0)\mathfrak{c} + 4\delta r_\delta$. 
    Hence, using that the distance to any point outside $\sB(x_0, r_\delta)$ is $\geq r_\delta - \Vert X_t - X_0 \Vert$, it suffices to show that 
    \begin{align} 
        \bbP\Bigl(\langle X_t, s_i(0)\hatn \rangle >s_i(0)\mathfrak{c} + 4\delta r_\delta + c_2 \sqrt{t} \ \text{ and }\ \Vert X_t - X_0  \Vert \leq  r_\delta - c_2\sqrt{t} \Bigr)\geq 1 - \eta.\label{eq:ShyEel}  
    \end{align}
    A visualization of the event in \eqref{eq:ShyEel} may be found in \Cref{fig: Intersect2}. 
        
    \begin{figure}[t]
        \includegraphics[width = 0.8\textwidth]{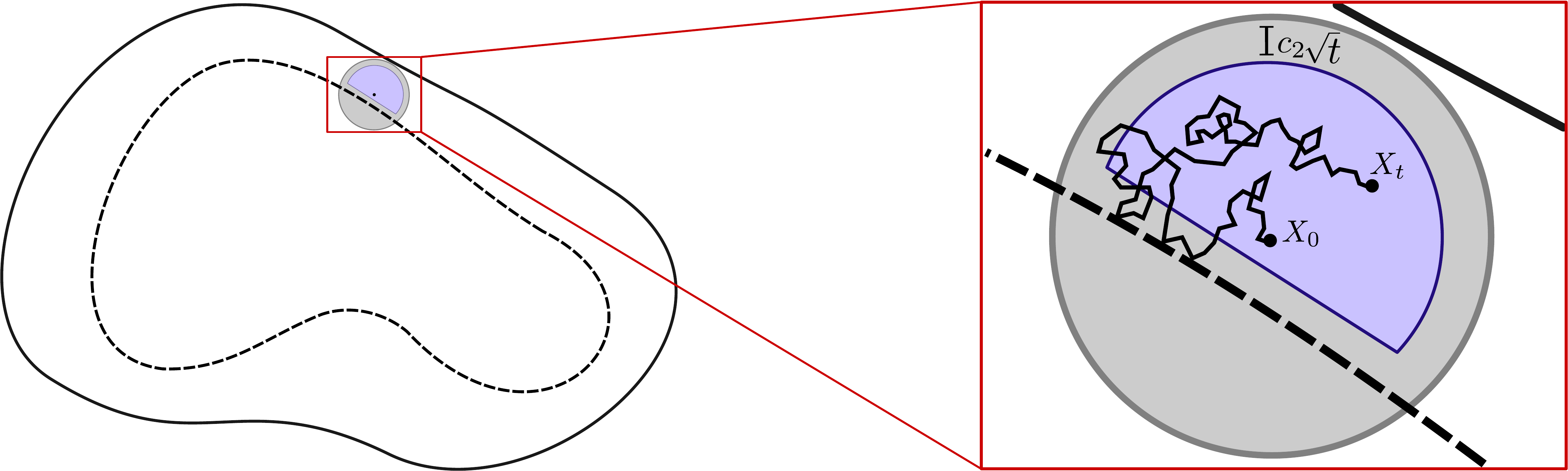}
        \caption{Visualization of the event from \eqref{eq:ShyEel} in the proof of \Cref{lem: DistantBoundary}. 
        Note that every point in the blue spherical cap is at distance $\geq c_2 \sqrt{t}$ from $\cup_{i=0}^{\nb} B_i$.       
        } 
        \label{fig: Intersect2}
    \end{figure}

    Let $\delta_0, C_1,C_2$ be as in \Cref{cor: X_uniform_Y}.  
    By definition of $\delta$, we have $r_\delta = c_3 \sqrt{t}$. 
    Further, by the assumption on $t$, we have $\delta \leq c_3 \sqrt{c_1}$. 
    Hence, using \Cref{cor: X_uniform_Y}, 
    \begin{align} 
        {}&{}\bbP\Bigl(\langle X_t, s_i(0)\hatn \rangle \leq s_i(0)\mathfrak{c} + 4\delta r_\delta + c_2 \sqrt{t}\Bigr)\label{eq:ValidImp} \\ 
        & \leq  \bbP\Bigl(\langle Y_t^+, s_i(0)\hatn \rangle\leq  s_i(0)\mathfrak{c} +  4\delta r_\delta  + (C_1 c_3^2 \sqrt{c_1} + c_2) \sqrt{t}\Bigr) + c_3 \sqrt{c_1} + C_2/c_3^2 \ \text{ if }\ c_3\sqrt{c_1}\leq \delta_0.\nonumber        
    \end{align} 
    Here, note that $\langle Y_t^+ , s_i(0)\hatn \rangle -(s_i(0)\mathfrak{c} +  4\delta r_\delta)$ is a one-dimensional reflected Brownian motion with initial condition $z_0 \de \langle y_0^+ , s_i(0)\hatn \rangle -(s_i(0)\mathfrak{c}+ 4\delta r_\delta)$.
    In particular, for every $x \geq 0$, 
    \begin{align} 
         \bbP\bigl(\langle Y_t^+, s_i(0)\hatn \rangle  \leq s_i(0)\mathfrak{c} + 4\delta r_\delta + x \bigr) &= \bbP\bigl(\lvert \sqrt{t}N +z_0\rvert  \leq x \bigr)\label{eq:YellowWisp} 
    \end{align} 
    with $N$ a standard normal random variable. 
    Here, since the density of the standard normal distribution is bounded, $\bbP(N \in [(z_0-x)/\sqrt{t}, (z_0 + x)/\sqrt{t}]) \leq C_4 x/\sqrt{t}$ for some absolute constant $C_4>0$.
    Combining \eqref{eq:ValidImp} and \eqref{eq:YellowWisp} then yields that 
    \begin{align} 
        \bbP\Bigl(\langle X_t, s_i(0)\hatn \rangle \leq s_i(0)\mathfrak{c} + 4\delta r_\delta {}&{} + c_2 \sqrt{t}\Bigr) \\ 
        &\leq   C_4 \bigl(C_1 c_3^2 \sqrt{c_1} + c_2\bigr) + c_3 \sqrt{c_1} + C_2/c_3^2\  \text{ if }\ c_3\sqrt{c_1} \leq \delta_0.\nonumber  
    \end{align}
    As for the second part of the event in \eqref{eq:ShyEel}, using that $r_\delta = c_3\sqrt{t}$,   
    \begin{align} 
        \bbP\Bigl(\Vert X_t - X_0 \Vert > r_\delta - c_2 \sqrt{t}\Bigr) \leq \bbP\Bigl(\Vert X_t - X_0 \Vert > r_\delta / 2 \Bigr)\quad \text{ if }\ c_2 \leq c_3/2.\label{eq:GreenVolt}   
    \end{align} 
    Recall \eqref{eq: Def_R_eps} and note that $r_\delta / 2 = r_{\delta/2}$. 
    Hence, by \Cref{cor: TauBound}, 
    \begin{align} 
        \bbP\Bigl(\Vert X_t - X_0 \Vert > r_\delta - c_2 \sqrt{t}\Bigr) \leq c_3\sqrt{c_1}/2 + 4C_5/c_3^2 \quad \text{ if } c_2\leq c_3/2 \text{ and }c_3\sqrt{c_1}/2 \leq \delta_0'  \label{eq:DarkWisp}  
    \end{align}
    with $\delta_0',C_5$ the absolute constants from \Cref{cor: TauBound}.

    The combination of \eqref{eq:ValidImp} and \eqref{eq:DarkWisp} yields \eqref{eq:ShyEel} by choosing $c_1,c_2,c_3$ appropriately, similarly to the argument after \eqref{eq:DarkGoose}. 
    That is, one takes $c_3$ sufficiently large to ensure that $C_2/c_3 \leq \eta/3$ and $4C_5/c_3^2 \leq \eta/3$, and subsequently takes $c_1,c_2$ to be sufficiently small to ensure that the contribution of all remaining terms is $\leq \eta/3$ and that the conditions hold.     

    It remains to consider the case where $\sB(x_0, r_\delta)\cap (\cup_{i=0}^{\nb} B_i) = \emptyset$. 
    Then, $X_t$ is of distance $\geq c_2\sqrt{t}$ from all barriers if $\Vert X_t - X_0 \Vert \leq r_\delta - c_2\sqrt{t}$, so one can proceed similarly to \eqref{eq:GreenVolt}. 
    This concludes the proof.
\end{proof}

\section{Proof of \texorpdfstring{\Cref{lem: Concentration_hatPS}}{Lemma}} \label{apx: Concentration}
Fix some measurable subset $S \subseteq D$ with $\Area(S)>0$ and recall that our goal is to show that $\hat{P}_S\approx P_S$ with $\hat{P}_S$ as in \eqref{eq: Def_hatP} and $P_S$ given by  
\begin{align} 
    P_S(E) = \pi(S)^{-1}\int_{S}\bbP(X_t\in E \mid X_0 = x_0)\ \intd \pi(x_0) \ \  \text{ for every measurable }E\subseteq D.  \label{eq: Def_PS}  
\end{align}  
Here, $\pi$ denotes the stationary distribution; recall \Cref{sec: Parameters}.  

Proof-technically, it is convenient to introduce an additional discretization.
Fix a small constant $\mathfrak{d} >0$ and, for any probability measure $Q$ on $\bbR^2$, let $\bin_\mathfrak{d}[Q]$ be the probability measure found by averaging over the bins of the grid $\mathfrak{d}\bbZ^2$. 
That is, the measure whose density is constant on $B_{\mathfrak{d}}(x,y) \de [x, x+  \mathfrak{d} )\times [y, y +\mathfrak{d} )$ for every $x,y\in \mathfrak{d}\bbZ$ and which satisfies $\bin_\mathfrak{d}[Q](B_{\mathfrak{d}}(x,y)) = Q(B_{\mathfrak{d}}(x,y))$.
A visualization may be found in \Cref{fig: Binned}. 
Then, it suffices to show that $\bin_{\mathfrak{d}}[\hat{P}_S] \approx \bin_{\mathfrak{d}}[P_s]$:

\begin{figure}[t]
    \centering
    \begin{subfigure}{.45\textwidth}
      \centering
      \small 
      \textbf{A probability distribution $Q$}
      \par\bigskip
      \includegraphics[width=.55\linewidth]{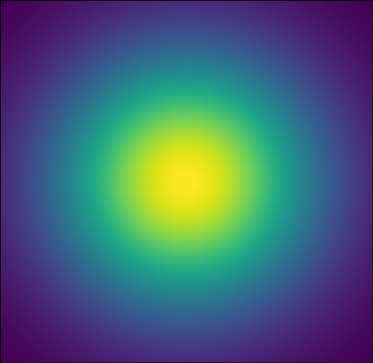}
    \end{subfigure}%
    \begin{subfigure}{.45\textwidth}
      \centering
      \small 
      \textbf{The binned version $\bin_{\mathfrak{d}}[Q]$} 
      \par\bigskip
      \includegraphics[width=.55\linewidth]{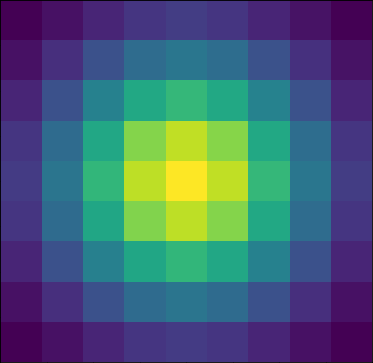}
    \end{subfigure}
    \caption{
        Visualization of the binning operation used in \Cref{apx: Concentration}. 
    }
    \label{fig: Binned}
\end{figure}  
\begin{lemma}\label{lem: WuBin_approxerror}
    Fix constants $\mathfrak{d}, \mathfrak{u}>0$. 
    Then, for every probability measure $Q$ on $\bbR^2$, 
    \begin{align} 
        \Wu \bigl(Q,\bin_{\mathfrak{d}}[Q] \bigr) \leq \min\{\sqrt{2}\mathfrak{d}, \mathfrak{u} \}.
    \end{align}
\end{lemma}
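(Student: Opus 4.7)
The plan is to construct an explicit coupling of $Q$ and $\bin_{\mathfrak{d}}[Q]$ and then apply the coupling definition of truncated Wasserstein distance from \eqref{eq: Def_W1u}. Specifically, let $X \sim Q$, and conditional on $X$ falling in a bin $B_{\mathfrak{d}}(x,y)$ with $Q(B_{\mathfrak{d}}(x,y))>0$, independently sample $Y$ uniformly from the same bin $B_{\mathfrak{d}}(x,y)$. (If $Q(B_{\mathfrak{d}}(x,y))=0$ the case is irrelevant since $X$ does not land there almost surely.)

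First, I would verify that $Y \sim \bin_{\mathfrak{d}}[Q]$. This follows by the law of total probability: for any bin $B_{\mathfrak{d}}(x,y)$, we have $\mathbb{P}(Y \in B_{\mathfrak{d}}(x,y)) = \mathbb{P}(X \in B_{\mathfrak{d}}(x,y)) = Q(B_{\mathfrak{d}}(x,y))$ by construction, and conditional on landing in a bin the distribution of $Y$ is uniform on that bin, matching the definition of $\bin_{\mathfrak{d}}[Q]$.

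Second, since $X$ and $Y$ lie in the same bin almost surely, and each bin $[x,x+\mathfrak{d})\times[y,y+\mathfrak{d})$ has diameter $\sqrt{2}\mathfrak{d}$, we have $\Vert X - Y \Vert \leq \sqrt{2}\mathfrak{d}$ almost surely. Consequently $d_{\mathfrak{u}}(X,Y) = \min\{\Vert X - Y \Vert, \mathfrak{u}\} \leq \min\{\sqrt{2}\mathfrak{d}, \mathfrak{u}\}$ almost surely, so $\mathbb{E}[d_{\mathfrak{u}}(X,Y)] \leq \min\{\sqrt{2}\mathfrak{d}, \mathfrak{u}\}$. Taking the infimum over couplings in \eqref{eq: Def_W1u} concludes the proof.

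There is no real obstacle here; the argument is essentially a one-line coupling computation. The only minor care point is to ensure the coupling is well-defined as a probability kernel on $\mathbb{R}^2$, which is routine since the bins partition $\mathbb{R}^2$ and the uniform distribution on a bin is a valid probability measure.
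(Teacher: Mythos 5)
Your proposal is correct and coincides with the paper's argument: both construct the coupling where $Y$ is sampled uniformly from the bin containing $X$, observe $Y \sim \bin_{\mathfrak{d}}[Q]$, and bound $d_{\mathfrak{u}}(X,Y)$ by the bin diameter $\sqrt{2}\mathfrak{d}$ truncated at $\mathfrak{u}$. No meaningful differences.
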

\begin{proof}
    Consider a $Q$-distributed random variable $X$ and let $Y$ be a $\Unif(B_{\mathfrak{d}}(x,y))$-distributed random variable conditional on the event $X\in B_{\mathfrak{d}}(x,y)$. 
    Then, recalling the definition of truncated Wasserstein from \eqref{eq: Def_W1u} and using that $Y$ is $\bin_{\mathfrak{d}}[Q]$-distributed, 
    \begin{align} 
        \Wu(Q,\bin_{\mathfrak{d}}[Q]) \leq \bbE\bigl[d_{\mathfrak{u}}(X,Y)\bigr] = \bbE\bigl[\min\{\Vert X - Y \Vert, \mathfrak{u} \} \bigr] \leq \min\{\sqrt{2}\mathfrak{d}, \mathfrak{u}\}
    \end{align}
    where the final inequality used that $\Vert a - b \Vert \leq \sqrt{2}\mathfrak{d}$ for every $a,b \in B_{\mathfrak{d}}(x,y)$. 
\end{proof}

Given two finite measures $\mu, \nu$ defined on the same measurable space $(\Omega, \cF)$, the \emph{total variation distance} is defined by 
\begin{align} 
    \Vert \mu - \nu \Vert_{\TV} \de \sup_{E \in \cF} \lvert \mu(E) - \nu(E) \rvert. \label{eq: Def_TV_Distance} 
\end{align}
The following result, whose proof is given in \Cref{sec: ProofConcentrationBinned}, establishes concentration of $\bin_{\mathfrak{d}}[\hat{P}_S]$ for the total variation distance and, as a consequence, also for truncated Wasserstein. 
\begin{lemma}\label{lem: ConcentrationBinned}
    For every $\eta \in (0,1)$ and $\gamma >0$ there exist constants $c_1, c_2, c_3>0$ depending only on $\eta$ and $\gamma$ such that the following holds when $\mathfrak{d} \de \gamma \sqrt{t}$. 
    
    Assume that $t\leq c_1 \min\{1/\kappa^2, 1/\lambda_{\max}^2,\rho^2 \}$ and assume that $\Vert a-b \Vert \leq \sqrt{t}$ for every $a,b \in S$. 
    Further, suppose that $X_0 \sim \pi$ starts from the stationary distribution. 
    Then,  
    \begin{align} 
        \bbP\bigl(\Vert \bin_{\mathfrak{d}}[\hat{P}_S] - \bin_{\mathfrak{d}}[P_S]  \Vert_{\TV} > \eta\bigr) \leq   c_2\exp\bigl(-c_3 \pi(S) T/ \tmix \bigr). \label{eq:IvoryQuote}
    \end{align}
    In particular, for every truncation level $\mathfrak{u}>0$, 
    \begin{align} 
        \bbP\bigl(\Wu(\bin_{\mathfrak{d}}[\hat{P}_S], \bin_{\mathfrak{d}}[P_S]) > \eta \mathfrak{u} \bigr) \leq   c_2\exp\bigl(-c_3 \pi(S) T/ \tmix \bigr). \label{eq:SlowEel}
    \end{align} 
\end{lemma}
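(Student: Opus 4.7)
The plan is to reduce the truncated Wasserstein estimate to a total variation estimate on a binned version of the measures, reduce that further to a sum over $O(1)$ many bins, and apply a Markovian Bernstein inequality bin-by-bin. Since both $\bin_{\mathfrak{d}}[\hat{P}_S]$ and $\bin_{\mathfrak{d}}[P_S]$ are piecewise uniform on the grid, one has
\begin{align}
    \Vert \bin_{\mathfrak{d}}[\hat{P}_S] - \bin_{\mathfrak{d}}[P_S] \Vert_{\TV} = \tfrac{1}{2}\sum_{(x,y) \in \mathfrak{d}\bbZ^2} \bigl\lvert \hat{P}_S(B_{\mathfrak{d}}(x,y)) - P_S(B_{\mathfrak{d}}(x,y))\bigr\rvert.
\end{align}
First I would fix some $x_0\in S$ and choose $R=R(\eta)$ large enough so that $\Cref{cor: X_uniform_Y}$ combined with Gaussian tail estimates on $\sup_{s\leq t}\Vert W_s\Vert$ give $\bbP(\Vert X_t - X_0\Vert > R\sqrt{t}\mid X_0\in S)\leq \eta/32$; since $\Diam(S)\leq \sqrt{t}$, this shows $P_S(\bbR^2\setminus \sB(x_0, (R+1)\sqrt{t}))\leq \eta/32$. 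Using $\mathfrak{d}=\gamma\sqrt{t}$, the ball $\sB(x_0,(R+1)\sqrt{t})$ is covered by at most $C(\eta,\gamma)$ bins, so the contribution from all other bins to the displayed sum is bounded by $\eta/8$ once one also shows $\hat{P}_S$ puts mass at most $\eta/16$ outside $\sB(x_0,(R+1)\sqrt{t})$ with high probability (which itself will follow from the same Bernstein-type bound applied to a single indicator event below).

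The second step is to apply the Markovian Bernstein inequality of \cite{paulin2015concentration} bin-by-bin. Writing $N\de \lfloor T/t\rfloor$,
\begin{align}
    \hat{P}_S(B) - P_S(B) = \frac{\sum_{i=0}^{N-1}\bb1\{X_{it}\in S\}\bigl(\bb1\{X_{(i+1)t}\in B\} - P_S(B)\bigr)}{\sum_{i=0}^{N-1}\bb1\{X_{it}\in S\}},
\end{align}
so I would bound the numerator and denominator separately. The lifted process $(X_{it},X_{(i+1)t})_{i\geq 0}$ is Markovian and, under the hypothesis $t\leq c_1\min\{1/\kappa^2,1/\lambda_{\max}^2,\rho^2\}$, inherits a mixing time of order $\tmix/t$ by a standard comparison between the continuous-time mixing time \eqref{eq: Def_tmix} and the discretization at gap $t$. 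The test function $f(x,y)\de \bb1\{x\in S\}(\bb1\{y\in B\} - P_S(B))$ has $\bbE_\pi[f]=0$ (in stationarity) and variance of order $\pi(S)$, so Paulin's variance-sensitive Bernstein estimate yields that the numerator differs from $0$ by at most $\eta'\pi(S)N$ with probability $\geq 1-c\exp(-c'\pi(S)T/\tmix)$ for some small $\eta'=\eta'(\eta,\gamma)>0$. An analogous application to $g(x)\de \bb1\{x\in S\}-\pi(S)$ shows the denominator is within $\eta'\pi(S)N$ of $\pi(S)N$. A union bound over the $C(\eta,\gamma)$ relevant bins then delivers \eqref{eq:IvoryQuote}. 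The inequality \eqref{eq:SlowEel} follows immediately from $\Wu(\mu,\nu)\leq \mathfrak{u}\Vert \mu-\nu\Vert_{\TV}$, which is obtained by taking a maximal coupling for total variation and using that $d_\mathfrak{u}\leq \mathfrak{u}$ pointwise.

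The main obstacle I expect is handling the ratio structure of $\hat{P}_S(B)$: concentration of a ratio requires simultaneous control of the numerator and a lower bound on the denominator, and the factor $\pi(S)$ in the exponent of \eqref{eq:IvoryQuote} is precisely what arises because the effective sample size is only $\pi(S)N$, so the variance-sensitive form of Paulin's inequality (as opposed to a pure Hoeffding bound) is essential to obtain the claimed rate. A secondary technicality is the comparison between the continuous-time mixing time and that of the sampled chain, which should follow routinely from the exponential contraction of the transition kernel in total variation combined with the smallness assumption on $t$.
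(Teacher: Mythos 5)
Your proposal follows essentially the same route as the paper: reduce TV to a sum over $O(1)$ bins of size $\gamma\sqrt{t}$ using locality of the transition kernel at scale $\sqrt{t}$ (the paper packages this as \Cref{lem: grid} built on \Cref{cor: TauBound}; you reach the same conclusion via \Cref{cor: X_uniform_Y} plus Gaussian tails), then handle the ratio defining $\hat{P}_S$ by controlling numerator and denominator separately through Paulin's variance-sensitive Markovian Bernstein inequality with the pseudo-spectral gap bounded via $\imix\leq\lceil\tmix/t\rceil$ (the paper's \Cref{lem: Paulin} and \Cref{cor: PSE_concentration}), and finally deduce \eqref{eq:SlowEel} from $\Wu\leq\mathfrak{u}\,\Vert\cdot\Vert_{\TV}$. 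The decomposition, the role of the variance term in producing the factor $\pi(S)$, and the union bound over the finite bin family all match the paper's argument, so this is the same proof modulo minor bookkeeping.
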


The technical advantage of the $\bin_{\mathfrak{d}}[\cdot]$-operation is now also clear: concentration with respect to total variation would be blatantly false without this operation. 
Indeed, it holds that $\Vert \hat{P}_S -P_S  \Vert_{\TV} = 1$ with probability one since $\hat{P}_S$ is finitely supported and $P_S$ assigns no mass to finite sets. 

Combining \Cref{lem: WuBin_approxerror} and \Cref{lem: ConcentrationBinned} with the triangle inequality now yields \Cref{lem: Concentration_hatPS}:
\begin{proof}[Proof of \texorpdfstring{\Cref{lem: Concentration_hatPS}}{Lemma}]
    Using that $\Wu$ is a metric, 
    \begin{align} 
        \Wu(\hat{P}_S, P_s) \leq \Wu(\hat{P}_S, \bin_{\mathfrak{d}}[\hat{P}_S]) +\Wu(\bin_{\mathfrak{d}}[\hat{P}_S], \bin_{\mathfrak{d}}[P_S]) +\Wu(\bin_{\mathfrak{d}}[P_S] , P_S).\label{eq:GladRug}   
    \end{align}
    The first and final term can here be controlled by choosing $\mathfrak{d}$ appropriately in \Cref{lem: WuBin_approxerror}:   
    \begin{align} 
        \Wu(\hat{P}_S, \bin_{\mathfrak{d}}[\hat{P}_S]) +\Wu(\bin_{\mathfrak{d}}[P_S] , P_S)\leq \beta \sqrt{t}/2\  \ \text{ if }\ \ \sqrt{2}\mathfrak{d} = \beta\sqrt{t}/4. 
    \end{align}
    Further, recall the assumption that $\mathfrak{u}\leq \alpha \sqrt{t}$ in \Cref{lem: Concentration_hatPS}. 
    Hence, using \eqref{eq:SlowEel} with $\gamma = \beta/(4\sqrt{t})$ and $\eta = \beta/(2\alpha)$, it holds for $t\leq c_1 \min\{ 1/\kappa^2, 1/\lambda_{\max}^2 ,\rho^2 \}$ that 
    \begin{align} 
        \bbP\bigl(\Wu(\bin_{\mathfrak{d}}[\hat{P}_S], \bin_{\mathfrak{d}}[P_S]) > \beta \sqrt{t}/2 \bigr) \leq c_2 \exp\bigl(-c_3 \pi(S)T/\tmix \bigr)\label{eq:PoliteXray}
    \end{align}
    where $c_1,c_2$, and $c_3$ only depend on $\alpha$ and $\beta$. 
    Combine \eqref{eq:GladRug}--\eqref{eq:PoliteXray} and note that $\pi(S) \geq \pi_{\min} \Area(S)$ by \eqref{eq: Def_pimin} to conclude the proof. 
\end{proof}

\subsection{Proof of \texorpdfstring{\Cref{lem: ConcentrationBinned}}{Lemma}}\label{sec: ProofConcentrationBinned}
Throughout this section, we assume that $X_0$ starts from stationarity and let $S\subseteq D$ be a fixed subset with nonzero volume. 
\begin{lemma}\label{lem: Paulin}
    Consider an integer $n\geq 1$ with $n t \geq \tmix$ and a measurable set $E\subseteq \bbR^2$. 
    Denote $N(E) \de  \#\{0 \leq i \leq n-1: X_{it}\in S, X_{(i+1)t}\in E \}$.
    Then, for every $\zeta \leq 1$,  
    \begin{align} 
        \bbP\bigl(\lvert N(E) - n\pi(S)P_S(E) \rvert  > n\zeta \pi(S)\bigr)\leq 2 \exp\bigl(-c_S n\zeta^2\bigr) 
    \end{align} 
    where $c_S \de \pi(S)/(90 \lceil\tmix/t\rceil)$. 
\end{lemma}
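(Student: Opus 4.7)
The plan is to identify $N(E)$ as a sum over the stationary discrete-time Markov chain $(X_{it})_{i\geq 0}$ and invoke a Bernstein-type concentration inequality from \cite{paulin2015concentration}. Writing
\begin{equation*}
N(E) = \sum_{i=0}^{n-1} f(X_{it}, X_{(i+1)t}), \qquad f(x,y) \de \bb1\{x\in S\}\,\bb1\{y\in E\} \in \{0,1\},
\end{equation*}
the hypothesis $X_0 \sim \pi$ together with the weak Markov property from \Cref{prop: X_Markov} makes $(X_{it})_{i\geq 0}$ stationary, so $\bbE[f(X_{it}, X_{(i+1)t})] = \pi(S)P_S(E)$ by \eqref{eq: Def_PS} and hence $\bbE[N(E)] = n\pi(S)P_S(E)$. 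Because $f$ is an indicator, $\bbE[f^2] = \bbE[f] \leq \pi(S)$, and this variance bound is what will ultimately produce the factor $\pi(S)$ inside the exponent rather than the weaker $\pi(S)^2$ that a Hoeffding-type estimate would give.

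To handle the fact that $f$ depends on a consecutive \emph{pair} of states, I will apply Paulin's inequality to the pair chain $Y_i \de (X_{it}, X_{(i+1)t})$ on $D\times D$. This process is itself Markovian (inherited from the Markovianity of $(X_{it})_{i\geq 0}$), its stationary law is $\pi(\intd x)\,\bbP(X_t\in \intd y\mid X_0=x)$, and its discrete-time mixing time is bounded by $\lceil \tmix/t\rceil + 1$, since after $\lceil \tmix/t\rceil$ discrete steps the underlying continuous-time process has evolved for at least $\tmix$. Writing $S_n \de \sum_{i=0}^{n-1} f(Y_i)$, the Bernstein-type inequality of \cite{paulin2015concentration}, combined with the standard bound on the pseudo-spectral gap by the reciprocal of the mixing time, yields a tail of the schematic shape
\begin{equation*}
\bbP\bigl(\lvert S_n - n\pi(S)P_S(E)\rvert > r\bigr) \leq 2\exp\Bigl(-\frac{r^2}{C\lceil \tmix/t\rceil\,(n\sigma^2 + r)}\Bigr)
\end{equation*}
for some absolute constant $C>0$, where $\sigma^2 \leq \pi(S)$ bounds the stationary variance of the summands.

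Setting $r = n\zeta\pi(S)$ and using $\zeta \leq 1$ together with $\sigma^2 \leq \pi(S)$, the denominator above is at most $2C\lceil \tmix/t\rceil \, n\pi(S)$, so the bound reduces to $2\exp\bigl(-n\zeta^2\pi(S)/(2C\lceil \tmix/t\rceil)\bigr)$. This matches the claimed form with $c_S = \pi(S)/(90\lceil\tmix/t\rceil)$ provided $2C \leq 90$. The principal obstacle that I anticipate is purely one of bookkeeping: arriving at the explicit prefactor $1/90$ requires carefully tracing the universal constants in \cite{paulin2015concentration}, converting between their pseudo-spectral gap formulation and the mixing time, and absorbing the small additional factor incurred by passing from the original chain to the pair chain $(Y_i)$. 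The assumption $nt\geq \tmix$ is used only to ensure that $n \geq \lceil\tmix/t\rceil$ so that the Bernstein regime is genuinely in effect.
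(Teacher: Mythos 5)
Your proposal is correct and takes essentially the same approach as the paper: decompose $N(E)$ as a sum of pair-indicators, apply the Markov-chain Bernstein inequality from \cite{paulin2015concentration} with the pseudo-spectral gap bounded by the discrete mixing time $\lceil \tmix/t\rceil$, use the variance bound $\sigma^2\leq \pi(S)$, and substitute $r=n\zeta\pi(S)$. The only appreciable difference is that you explicitly pass to the pair chain $Y_i=(X_{it},X_{(i+1)t})$ to accommodate the two-state dependence of the summands (and note that this costs one extra step in the mixing time), whereas the paper applies \cite[Theorem 3.4]{paulin2015concentration} directly to the pair-indicator sum without comment; your version is the more careful reading, but both resolve into the same bound after tracking constants.
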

\begin{proof}
    The quantity of interest can be represented as 
    $
        N(E)= \sum_{i=0}^{n-1} \bb1\{X_{it}\in S, X_{(i+1)t}E \}. 
    $ 
    This brings us to a setting where general-purpose concentration inequalities such as \cite[Theorem 3.4]{paulin2015concentration}\footnote{For readers consulting the arXiv version (\ie arXiv:1212.2015v5), let us note that the numbering is there different from the published version. 
    The cited result is there Theorem 3.11.} are applicable.  
    That result uses some parameters which we next estimate.

    The mixing properties of the chain are quantified in \cite{paulin2015concentration} by the \emph{pseudo-spectral gap} $\gamma_{\operatorname{ps}}$. 
    For our purposes, a bound on this quantity will be sufficient.
    Denote $\imix$ for the mixing time of the discrete-time Markov chain $(X_{it})_{i=0}^\infty$:
    \begin{align} 
        \imix \de \min\{i\geq 1:  \lvert \bbP(X_{it} \in A \mid X_0 = x_0) - \pi(E) \rvert \leq 1/4,\  \forall x_0 \in D,\, A\subseteq D \} \label{eq:LazyFan}
    \end{align}
    where it is to be understood that $A$ runs only over measurable subsets of $D$. 
    Then, by \cite[Proposition 3.4]{paulin2015concentration}, we have $1/\gamma_{\operatorname{ps}} \leq 2 \imix$.

    Since the chain is assumed to start from its stationary distribution, the expectation and variance of the terms is fairly explicit: 
    \begin{align} 
        \bbE\bigl[\bb1\{ X_{it}\in S, X_{(i+1)t} \in E \}\bigr] &= \bbP\bigl(\bb1\{ X_{it}\in S, X_{(i+1)t} \in E \}\bigl) =  \pi(S)P_S(E),\\  
        \Var\bigl[\bb1\{ X_{it}\in S, X_{(i+1)t} \in E \}\bigr]&=\bigl(1 - \pi(S)P_S(E)\bigr) \pi(S)P_S(E) \leq \pi(S)P_S(E). 
    \end{align} 
    Further, since the terms are $\{0,1 \}$-valued, 
    \begin{align} 
        \bigl\lvert \bb1\{ X_{it}\in S, X_{(i+1)t} \in E \} - \bbE[\bb1\{ X_{it}\in S, X_{(i+1)t} \in E \}] \bigr\rvert\leq 1. 
    \end{align}
    Now, by \cite[Theorem 3.4]{paulin2015concentration}, 
    for every $x>0$,  
    \begin{align} 
        \bbP\bigl( \lvert N(E) - n\pi(S)P_S(E) \rvert > x\bigr)  
        &\leq 2\exp\Bigl(-\frac{x^2/(2\imix) }{8(n + 2\imix)\pi(S)P_S(E) + 20 x } \Bigr).\label{eq:AlertMan}
    \end{align}
    Here, comparing \eqref{eq:LazyFan} with \eqref{eq: Def_tmix}, we have $\imix \leq \lceil \tmix/t\rceil$.
    Consequently, by the assumption that $nt \geq \tmix$, we have $8(n + 2\imix) \leq  24(n-1)$. 
    Now, taking $x = \zeta n\pi(S)$,  
    \begin{align} 
        \bbP\bigl(\lvert N(E) - n\pi(S)&P_S(E) \rvert > \zeta n\pi(S) \bigr) \label{eq:TediousNet} 
        \leq 2 \exp\Bigl(-\frac{\zeta^2n^2\pi(S)^2/(2\lceil\tmix/t\rceil)}{24n\pi(S)P_S(E) + 20 \zeta n \pi(S)} \Bigr).
    \end{align}
    Simplify \eqref{eq:TediousNet} using that $24P_S(E) + 20\zeta \leq 45$ to conclude the proof.   
\end{proof}

Recall the estimator $\hat{P}_S$ from \eqref{eq: Def_hatP}.  
Then, we have the following concentration inequality: 
\begin{corollary}\label{cor: PSE_concentration}
    There exist absolute constants $c_1,c_2 >0$ such that the following holds for every $t\leq c_1\min\{1/\kappa^2, 1/\lambda_{\max}^2,\rho^2\}$, $T>0$, measurable $E\subseteq \bbR^2$, and $\zeta \leq 1$, 
    \begin{align} 
        \bbP\bigl(\lvert \hat{P}_S(E) - P_S(E) \rvert  > \zeta\bigr) \leq 4\exp\bigl(-c_2\pi(S)T \zeta^2/\tmix \bigr) \label{eq:IvoryGym}
    \end{align}
\end{corollary}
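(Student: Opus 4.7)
The plan is to deduce the claim from \Cref{lem: Paulin} by writing $\hat{P}_S(E)$ as a ratio of two counts and controlling each one separately. Set $n \de \lfloor T/t \rfloor$ and, following the notation of \Cref{lem: Paulin}, let $N(E) \de \#\{0 \leq i \leq n-1: X_{it}\in S,\ X_{(i+1)t}\in E\}$ and $N_S \de \#\cI_S = N(D)$. Then $\hat{P}_S(E) = N(E)/N_S$ on $\{N_S>0\}$, and one has the algebraic identity
\begin{align*}
N(E) - N_S P_S(E) = \bigl[N(E) - n\pi(S)P_S(E)\bigr] - P_S(E)\bigl[N_S - n\pi(S)\bigr].
\end{align*}

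First I would dispense with the trivial regime $T < \tmix$: the assumption $\zeta \leq 1$ then gives $\pi(S)T\zeta^2/\tmix \leq 1$, so choosing $c_2 \leq \ln 4$ makes the right-hand side of \eqref{eq:IvoryGym} at least $1$ and there is nothing to show. In the remaining regime $T \geq \tmix$ one has $nt \geq \tmix$, so \Cref{lem: Paulin} applies. Apply it twice with threshold $\zeta/4 \leq 1$: once to $E$ (controlling $N(E)$ around $n\pi(S)P_S(E)$) and once to $D$, for which $P_S(D)=1$ (controlling $N_S$ around $n\pi(S)$). Each failure event has probability at most $2\exp(-c_S n\zeta^2/16)$, with $c_S = \pi(S)/(90\lceil \tmix/t\rceil)$.

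On the intersection of the two success events, the displayed identity together with $P_S(E) \leq 1$ yields $|N(E) - N_S P_S(E)| \leq (\zeta/2)\, n\pi(S)$, while $N_S \geq (3/4)\,n\pi(S) > 0$; hence $|\hat{P}_S(E) - P_S(E)| \leq (2/3)\zeta \leq \zeta$. A union bound then produces the prefactor $4$. It remains to check that $c_S n\zeta^2/16 \geq c_2 \pi(S)T\zeta^2/\tmix$ for some absolute $c_2>0$. Taking $c_1$ small enough secures $t \leq \tmix$ (since, for reflected Brownian motion in a smooth bounded domain, $\min\{1/\kappa^2,1/\lambda_{\max}^2,\rho^2\}$ is upper-bounded by a constant multiple of $\tmix$ via the comparable diameter scale); then $\lceil \tmix/t\rceil \leq 2\tmix/t$ and, since $T \geq \tmix \geq t$, also $n \geq T/(2t)$, giving $c_S n \geq \pi(S)T/(360\,\tmix)$, which suffices.

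The argument is essentially bookkeeping, with no real conceptual obstacle. The only mild subtlety is the ratio step: one needs both numerator $N(E)$ and denominator $N_S$ to concentrate at the \emph{same} scale $n\pi(S)$, so that the ratio concentrates at scale $1$ around $P_S(E)$ uniformly in the choice of measurable set $E$.
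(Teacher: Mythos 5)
Your proof takes essentially the same approach as the paper: dispose of the regime where $T$ is comparable to $\tmix$ as trivial (by making $c_2$ small), then apply \Cref{lem: Paulin} twice with $n=\lfloor T/t\rfloor$ — once to $E$ and once to $D$ (using $P_S(D)=1$) — and combine via the ratio structure of $\hat{P}_S$. One small slip: to guarantee $nt = \lfloor T/t\rfloor\, t \geq \tmix$ as required by \Cref{lem: Paulin}, ruling out $T < \tmix$ is not quite enough (e.g.\ $T=\tmix$, $t$ slightly less than $\tmix$ gives $n=1$, $nt < \tmix$); you should dispose of $T < 2\tmix$ as the trivial regime, as the paper does, which is the same one-line argument with a different constant.
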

\begin{proof}
    Note that $\pi(S)\zeta^2 \leq 1$. 
    Hence, taking $c_2$ sufficiently small to ensure that the bound is trivial otherwise, we may assume that $T\geq 2\tmix$.
    Further, it is not difficult to deduce from \Cref{lem: BarrierLocallyStraight} that $\tmix \geq c\min\{ 1/\kappa^2, 1/\lambda_{\max}^2 ,\rho^2\}$ for some $c>0$. 
    Hence, taking $c_1$ sufficiently small, we may assume that $t \leq \tmix$.     
    
    The preceding discussion implies that $\lfloor T/t \rfloor \geq T/2t \geq 1$ and $\lfloor T/t \rfloor t \geq \tmix$.   
    Hence, by \Cref{lem: Paulin} with $E = D$ and $n = \lfloor T/t \rfloor$, also using that $P_S(D) = 1$ since $X_t$ is $D$-valued, 
    \begin{align} 
         \bbP\bigl(\lvert \#\{0 \leq i\leq \lfloor T/t \rfloor -1: X_{it}\in S \} - \lfloor T/t \rfloor \pi(S) \rvert > \lfloor {}&{}  T/t \rfloor \pi(S) \zeta/2 \bigr)\label{eq:SilentRock}\\ 
         &\leq 2\exp\bigl(- c' \pi(S) \lfloor T/t \rfloor \zeta^2/ \lceil \tmix / t \rceil  \bigr) \nonumber 
    \end{align}
    for some absolute constant $c'>0$. 
    Hence, using that $\lceil \tmix /t \rceil \leq 2\tmix /t$ since $t \leq \tmix$ and recalling that $\lfloor T/t \rfloor \geq T/2t$, also rewriting the event on the left-hand side of \eqref{eq:SilentRock},     
    \begin{align} 
        \bbP\Bigl(\Bigl\lvert \frac{\#\{ i\leq \lfloor T/t \rfloor -1: X_{it}\in S \} }{ \lfloor T/t \rfloor \pi(S)} - 1 \Bigr\rvert > \frac{\zeta}{2}\Bigr) \leq 2\exp\Bigl(-c'' \pi(S) T \zeta^2 /\tmix \Bigr)\label{eq:ZenNut} 
    \end{align}
    with $c'' \de c'/4$. 
    Similarly, by another application of \Cref{lem: Paulin}, using the definition of $\hat{P}_S$, 
    \begin{align} 
        \bbP\Bigl(\Bigl\lvert  \frac{\#\{  i\leq \lfloor T/t \rfloor -1: X_{it}\in S \} }{ \lfloor T/t \rfloor \pi(S)} \hat{P}_S(E) - P_S(E) \Bigr\rvert > \frac{\zeta}{2}\Bigr) \leq 2\exp\Bigl(-c'' \pi(S) T \zeta^2 /\tmix \Bigr).\label{eq:BlueQuip} 
    \end{align}
    Combine \eqref{eq:ZenNut} and \eqref{eq:BlueQuip} using that $\hat{P}_S(E) \leq 1$ to conclude that \eqref{eq:IvoryGym} holds. 
\end{proof}

Recall from the discussion preceding \Cref{lem: WuBin_approxerror} that $\bin_{\mathfrak{d}}[\hat{P}_S]$ was defined by averaging over the bins $B_{\mathfrak{d}}(x,y)$ of the grid $\mathfrak{d}\bbZ^2$. 
The idea is now to apply \Cref{cor: PSE_concentration} with $E = B_{\mathfrak{d}}(x,y)$.
However, it is too inefficient use a union bound over all (infinitely many) such sets. 
To reduce the combinatorial costs, the following result will be used:
\begin{lemma}\label{lem: grid}
    For every $\eta \in (0,1)$ and $\zeta >0$ there exist constants $c_1, c_2 >0$ depending only on $\eta$ and $\gamma$ such that the following holds for every $t\leq c_1 \min\{ 1/\kappa^2, 1/\lambda_{\max}^2,\rho^2\}$ if $\mathfrak{d}\de \gamma \sqrt{t}$. 
    
    Assume that $\Vert a-b \Vert \leq \sqrt{t}$ for every $a,b\in S$.
    Then, there exists a set of gridpoints $\cG \subseteq \mathfrak{d} \bbZ^2$ with cardinality $\#\cG \leq c_2$ such that 
    \begin{align} 
        P_S\Bigl(\cup_{(x,y)\in \cG} B_{\mathfrak{d}}(x,y) \Bigr) \geq 1 - \eta. \label{eq:TinyPen}
    \end{align} 
\end{lemma}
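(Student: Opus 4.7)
The plan is to show that the transition distribution $P_S$ is essentially supported inside a ball of radius $O(\sqrt{t})$ around any fixed reference point of $S$, after which covering this ball by grid cells of size $\mathfrak{d}=\gamma\sqrt{t}$ yields the set $\cG$ of bounded cardinality.

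First, I would establish a displacement bound: there exists $R = R(\eta)>0$ such that for every $x_0\in D$,
\begin{align}
    \bbP\bigl(\Vert X_t - x_0 \Vert > R\sqrt{t} \mid X_0 = x_0\bigr) \leq \eta/2.\nonumber
\end{align}
This is the main technical step, but essentially follows from the preliminaries of \Cref{sec: LtApprox}. Taking $\delta \de C_\eta \sqrt{c_1}$ in the definition \eqref{eq: Def_R_eps} of $r_\delta$ with $C_\eta$ a sufficiently large constant depending on $\eta$, and assuming $c_1$ is small enough so that $\delta \leq \delta_0$ (with $\delta_0$ the constant from \Cref{cor: TauBound}), the assumption $t\leq c_1\min\{1/\kappa^2, 1/\lambda_{\max}^2,\rho^2\}$ gives $r_\delta \geq C_\eta \sqrt{t}$ so that $t/r_\delta^2 \leq 1/C_\eta^2$. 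Combining \eqref{eq:ZombieBat} from \Cref{lem: LtauUpper} with \Cref{cor: TauBound} yields
\begin{align}
    \bbP\bigl(\Vert X_t - X_0 \Vert > 3\sup_{s\leq t}\Vert W_s \Vert + 16 \delta r_\delta\bigr) \leq \delta + C/C_\eta^2,\nonumber
\end{align}
and since $\sup_{s\leq t}\Vert W_s \Vert = O(\sqrt{t})$ with arbitrarily high probability while $\delta r_\delta \leq C_\eta^2 \sqrt{c_1} \sqrt{t}$ is likewise $O(\sqrt{t})$, the displacement bound follows by taking $C_\eta$ large enough and $c_1$ small enough.

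Next, I would transfer this bound to $P_S$. Fix any $p_0\in S$ (e.g.\ a measurable selection). Since $S$ has diameter at most $\sqrt{t}$, the triangle inequality together with the displacement bound and the definition \eqref{eq: Def_PS} of $P_S$ give
\begin{align}
    P_S\bigl(\Vert X_t - p_0 \Vert \leq (R+1)\sqrt{t}\bigr) \geq \pi(S)^{-1}\int_S \bbP\bigl(\Vert X_t - x_0 \Vert \leq R\sqrt{t} \mid X_0 = x_0\bigr)\, \intd\pi(x_0) \geq 1 - \eta/2.\nonumber
\end{align}

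Finally, let $\cG$ be the set of all $(x,y)\in \mathfrak{d}\bbZ^2$ for which the bin $B_{\mathfrak{d}}(x,y)$ intersects the closed ball of radius $(R+1)\sqrt{t}$ around $p_0$. Since the bins are disjoint squares of side $\mathfrak{d}= \gamma\sqrt{t}$, a standard volume comparison bounds $\#\cG \leq c_2$ for some $c_2$ depending only on $R$ and $\gamma$, hence only on $\eta$ and $\gamma$. By construction $\cup_{(x,y)\in \cG}B_{\mathfrak{d}}(x,y)$ contains this ball, so \eqref{eq:TinyPen} follows from the preceding display. The main obstacle is Step~1, but once one identifies the correct scaling $\delta \sim \sqrt{c_1}$ this is a straightforward consequence of the coupling machinery already developed.
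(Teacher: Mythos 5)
Your overall structure matches the paper's proof exactly: a displacement bound $\bbP(\Vert X_t - x_0\Vert \leq C\sqrt{t}\mid X_0 = x_0)\geq 1-\eta$, transfer to $P_S$ via the law of total probability and the diameter bound on $S$, and a volume/counting argument for $\#\cG$. Steps 2 and 3 are correct and essentially identical to the paper's.

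Step 1, however, contains a genuine error in the choice of $\delta$. You set $\delta \de C_\eta\sqrt{c_1}$ (a fixed constant independent of $t$), so $r_\delta = \delta\min\{1/\kappa,1/\lambda_{\max},\rho\}$ and hence $\delta r_\delta = C_\eta^2 c_1 \min\{1/\kappa, 1/\lambda_{\max},\rho\}$. The constraint $t\leq c_1\min\{\cdot\}^2$ gives $\min\{\cdot\}\geq\sqrt{t/c_1}$, so you obtain $\delta r_\delta \geq C_\eta^2\sqrt{c_1}\sqrt{t}$ — a \emph{lower} bound, the opposite inequality from what you wrote, and there is no useful upper bound on $\delta r_\delta$ in terms of $\sqrt{t}$ when $t$ is much smaller than $c_1\min\{\cdot\}^2$. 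Thus the bound from \eqref{eq:ZombieBat} does not yield a displacement of order $\sqrt{t}$.

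The fix is simple and removes the need for \eqref{eq:ZombieBat} altogether: take $\delta$ to scale with $t$, e.g.\ $\delta \de C_\eta \sqrt{t}/\min\{1/\kappa, 1/\lambda_{\max},\rho\}$, which forces $r_\delta = C_\eta\sqrt{t}$ exactly and gives $\delta\leq C_\eta\sqrt{c_1}\leq\delta_0$ once $c_1$ is small. Then $\bbP(\tau\leq t)\leq \delta + C t/r_\delta^2 \leq C_\eta\sqrt{c_1}+C/C_\eta^2\leq\eta$ after tuning $C_\eta$ then $c_1$, and on the event $\tau>t$ the very definition \eqref{eq: Def_tau} of $\tau$ already gives $\Vert X_t - x_0\Vert < r_\delta = C_\eta\sqrt{t}$. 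This is the paper's route: the displacement bound comes directly from \Cref{cor: TauBound} together with the definition of $\tau$, with no need for the additional machinery of \Cref{lem: LtauUpper}.
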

\begin{proof}
    It follows from \Cref{cor: TauBound} that there exist constants $c_1,C_1>0$ depending only on $\eta$ such that for every $t\leq c_1\min\{1/\kappa^2, 1/\lambda_{\max}^2, \rho^2\}$ and every $x_0\in D$  
    \begin{align} 
        \bbP\bigl(\Vert X_t  - x_0 \Vert\leq C_1 \sqrt{t}\mid X_0 = x_0\bigr) \geq 1 -\eta.\label{eq:CrimsonCrow} 
    \end{align}  
    In view of this, let us define 
    \begin{align} 
        \cG \de \bigl\{(x,y)\in \mathfrak{d}\bbZ^2: B_{\mathfrak{d}}(x,y)\cap \sB(x_0, C_1\sqrt{t}) \neq \emptyset \text{ for some }x_0 \in S\bigr\}. \label{eq:GladQuip}
    \end{align}
    Then, recalling the definition of $P_S$ from \eqref{eq: Def_PS} and using the law of total probability, 
    \begin{align} 
        P_S\Bigl(\cup_{(x,y)\in \cG} B_{\mathfrak{d}}(x,y) \Bigr) &\geq \inf_{x_0 \in S} \bbP\Bigl( X_t \in \cup_{(x,y)\in \cG} B_{\mathfrak{d}}(x,y)\mid X_0 = x_0\Bigr)\label{eq:EmptyWok}\\ 
        & 
        \geq \bbP\bigl( \Vert X_t  - x_0 \Vert\leq  C_1\sqrt{t} \mid X_0 = x_0\bigr). \nonumber   
    \end{align}
    Combine \eqref{eq:CrimsonCrow} and \eqref{eq:EmptyWok} to find \eqref{eq:TinyPen}. 

    It remains to show that $\#\cG \leq c_2$ with $c_2$ only depending on $\eta$ and $\gamma$. 
    Fix some arbitrary $a_0 \in S$. 
    Then, by the assumption that $\Vert x_0 - a_0 \Vert \leq \sqrt{t}$ for every $x_0 \in S$ and $\mathfrak{d} = \gamma \sqrt{t}$, any point $(x,y)\in \bbR^2$ for which $B_{\mathfrak{d}}(x,y)$ intersects $\sB(x_0, C_1 \sqrt{t})$ for some $x_0 \in S$ must satisfy that $\Vert (x,y) - a_0 \Vert \leq (C_1 + \sqrt{2}\gamma + 1)\sqrt{t}$.
    Rescaling by $\sqrt{t}$, it follows that $\#\cG$ is bounded by the number of points in $\gamma \bbZ^2$ contained in a ball of radius $C_1 + \sqrt{2}\gamma + 1$. 
    Since $C_1$ only depends on $\eta$,  the resulting upper bound only depends on $\gamma$ and $\eta$.    
\end{proof}

\begin{proof}[Proof of \texorpdfstring{\Cref{lem: ConcentrationBinned}}{Lemma}]
    Using that total variation distance is half of the $L^1$ distance \cite[Proposition 4.2]{levin2017markov} together with the definition of the $\bin_{\mathfrak{d}}[\cdot]$-operation,    
    \begin{align} 
        \bigl\Vert \bin_{\mathfrak{d}}[\hat{P}_S] - \bin_{\mathfrak{d}}[P_S]  \bigr\Vert_{\TV} = \frac{1}{2}\sum_{x,y \in  \mathfrak{d}\bbZ}\bigl\lvert \hat{P}_S\bigl(B_{\mathfrak{d}}(x,y)\bigr) - P_S\bigl(B_{\mathfrak{d}}(x,y)\bigr) \bigr\rvert.\label{eq:RestlessDad}  
    \end{align}
    We next rewrite the right-hand side of \eqref{eq:RestlessDad} to prepare for an application of \Cref{cor: PSE_concentration}.

    \Cref{lem: grid} provides constants $c_1>0$ and $c_2^{\bc{1}}>0$ depending only on $\eta$ and $\gamma$ such that there exists a set of gridpoints $\cG \subseteq \mathfrak{d}\bbZ^2$ with $\#\cG \leq c_2^{\bc{1}}$ and 
    \begin{align} 
        P_S\bigl(E_{\cG} \bigr) <  \eta/2 \quad \text{ where }\quad E_{\cG} \de \bbR^2\setminus \cup_{(x,y)\in \cG} B_{\mathfrak{d}}(x,y)\label{eq:KindUrn}
    \end{align}
    whenever $t\leq c_1\min\{ 1/\kappa^2, 1/\lambda_{\max}^2,\rho^2  \}$. 
    Now, by \eqref{eq:RestlessDad} together with \eqref{eq:KindUrn} and two application of the triangle inequality, 
    \begin{align} 
        \bigl\Vert \bin_{\mathfrak{d}}[{}&{}\hat{P}_S] - \bin_{\mathfrak{d}}[P_S]  \bigr\Vert_{\TV} \leq \frac{1}{2}\sum_{(x,y) \in \cG}\bigl\lvert \hat{P}_S\bigl(B_{\mathfrak{d}}(x,y)\bigr) - P_S\bigl(B_{\mathfrak{d}}(x,y)\bigr) \bigr\rvert + \frac{1}{2}\hat{P}_S(E_{\cG}) + \frac{1}{2}P_S(E_{\cG})\nonumber\\ 
        &\
        \leq  \frac{1}{2}\sum_{(x,y) \in \cG}\bigl\lvert \hat{P}_S\bigl(B_{\mathfrak{d}}(x,y)\bigr) - P_S\bigl(B_{\mathfrak{d}}(x,y)\bigr) \bigr\rvert +\frac{1}{2}\lvert \hat{P}_S(E_\cG)  - P_S(E_\cG) \rvert+ \frac{\eta}{2}.\label{eq:SlowRoad}   
    \end{align}
    
    Let $E\in \{B_{\mathfrak{d}}(x,y):(x,y) \in \cG \}\cup \{E_\cG \}$ be one of the measurable sets which occurs on the right-hand side of \eqref{eq:SlowRoad}. 
    Then, by \Cref{cor: PSE_concentration} there exists some $c_3>0$ depending only on $\eta$ and $\gamma$ such that,    
    \begin{align}
        \bbP\Bigl(\bigl\lvert \hat{P}_S\bigl(E\bigr) - P_S\bigl(E\bigr) \bigr\rvert > \frac{\eta}{c_2^{\bc{1}} + 1}  \Bigr) \leq 4\exp\Bigl(-c_3 \pi(S) T/ \tmix \Bigr).  
    \end{align}
    Let $c_2 = 4(c_2^{\bc{1}}+1)$ and apply the union bound to conclude that \eqref{eq:IvoryQuote} holds. 

    Finally, \eqref{eq:SlowEel} follows from \eqref{eq:IvoryQuote}. 
    Indeed, note that $d_{\mathfrak{u}}(x,y) \leq \mathfrak{u}$ for every $x,y \in \bbR^2$. 
    This implies that $\Wu(\mu, \nu) \leq \mathfrak{u}\Vert \mu -\nu \Vert_{\TV}$ for all probability measures $\mu,\nu$; see \cite[p.103]{villani2009optimal}\footnote{Note that the definition of total variation distance in \cite{villani2009optimal} differs by a factor two from \eqref{eq: Def_TV_Distance}.}.
\end{proof}

\section{Proof of \texorpdfstring{\Cref{prop: Main_GlobalRecovery_Algorithm}}{Proposition}}\label{apx: ProofGlobalRecovery}
The following preparatory lemma follows by approximating the barrier with straight lines using \Cref{lem: BarrierLocallyStraight}; see \Cref{sec: Proof_BoxPosBoxNeg} for the details.   
\begin{lemma}\label{lem: BoxPosBoxNeg}
    There exists an absolute constant $c>0$ such that the following holds whenever the discretization scale in \Cref{alg: KernelDiscontinuity} satisfies $\epsilon \leq c \min\{ 1/\kappa, 1/\lambda_{\max},\rho \}$.
    
    Fix some $i\leq {\nb}$ and integers $j,k$ with $\cS(j,k) \cap B_i \neq \emptyset$. 
    Then, there exist $-2\leq h_j^+, h_k^+\leq 2$ such that every point in $\cS(j + h_j^+, k + h_k^+)$ is on the positive side of $B_i$. 
    Moreover, one can ensure that $\cS(j + h_j^+, k + h_k^+)$ does not intersect $\cup_{l=0}^{\nb} B_l$.  
 
    Similarly, there exist $-2 \leq h_j^- , h_k^- \leq 2$ such that every point in $\cS(j + h_j^-, k + h_k^-)$ is on the negative side of $B_i$, and such that $\cS(j + h_j^-, k+ h_k^-)$ does not intersect $\cup_{l=0}^{\nb} B_l$.  
\end{lemma}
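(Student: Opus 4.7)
The plan is to use \Cref{lem: BarrierLocallyStraight} to linearize $B_i$ near $\cS(j,k)$ and then argue that, within the $5\times 5$ grid of boxes indexed by $|h_j|,|h_k|\leq 2$, one can always reach a box that is strictly separated from the resulting linear strip on either side. By construction, the $5\times 5$ grid fits inside a small ball in which $B_i$ is the only barrier, so strict separation from $B_i$ yields non-intersection with $\cup_{l=0}^{\nb} B_l$.

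More concretely, fix any $x_0 \in \cS(j,k) \cap B_i$. The $5\times 5$ grid is contained in the closed square of side $5\epsilon$ centered at $\cS(j,k)$, so every point of this grid lies within distance $3\sqrt{2}\,\epsilon$ of $x_0$. I will choose the parameter $\delta$ in \Cref{lem: BarrierLocallyStraight} as $\delta \de 3\sqrt{2}\,\epsilon / \min\{1/\kappa,1/\lambda_{\max},\rho\}$, which (after taking $c$ small) satisfies $\delta<1/2$, and then $r_\delta = 3\sqrt{2}\,\epsilon$ so the full $5\times 5$ grid lies in $\sB(x_0,r_\delta)$. The lemma then supplies a unit vector $\hatn=(n_1,n_2)$ and a scalar $\mathfrak{c}\in\bbR$ such that $|\langle y,\hatn\rangle - \mathfrak{c}| < 4\delta r_\delta$ for all $y \in B_i\cap \sB(x_0,r_\delta)$, with $B_i$ being the unique barrier meeting this ball.

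To pick $(h_j^+,h_k^+)$, set $h_j^+ \de 2\,\mathrm{sgn}(n_1)$ and $h_k^+\de 2\,\mathrm{sgn}(n_2)$ (with $\mathrm{sgn}(0)\de +1$), and let $p_0$ denote the center of $\cS(j,k)$. For any $z\in \cS(j+h_j^+,k+h_k^+)$, writing $z = p_0 + (h_j^+,h_k^+)\epsilon + u$ with $\|u\|_\infty \leq \epsilon/2$,
\begin{align}
\langle z,\hatn\rangle
&= \langle p_0,\hatn\rangle + 2(|n_1|+|n_2|)\epsilon + \langle u,\hatn\rangle \nonumber\\
&\geq \langle x_0,\hatn\rangle - \tfrac{\epsilon}{2}(|n_1|+|n_2|) + 2(|n_1|+|n_2|)\epsilon - \tfrac{\epsilon}{2}(|n_1|+|n_2|) \nonumber\\
&\geq \mathfrak{c} - 4\delta r_\delta + (|n_1|+|n_2|)\epsilon \geq \mathfrak{c} - 4\delta r_\delta + \epsilon,\nonumber
\end{align}
where I used $|\langle x_0,\hatn\rangle - \mathfrak{c}|\leq 4\delta r_\delta$ from item \cref{item:GhostlyQuip} and $|n_1|+|n_2|\geq 1$. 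Thus the strict inequality $\langle z,\hatn\rangle > \mathfrak{c} + 4\delta r_\delta$ holds as soon as $\epsilon > 8\delta r_\delta$, i.e.\ $\epsilon > 144\,\epsilon^2/\min\{1/\kappa,1/\lambda_{\max},\rho\}$, which reduces to $\epsilon < \min\{1/\kappa,1/\lambda_{\max},\rho\}/144$. By item \cref{item:JollyDragon} applied contrapositively, this shows every such $z$ lies strictly on the positive side of $B_i$ and in particular off of $B_i$; since $\cS(j+h_j^+,k+h_k^+)\subseteq \sB(x_0,r_\delta)$ and only $B_i$ meets this ball, the box meets no $B_l$ at all. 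The negative-side choice $(h_j^-,h_k^-) = (-h_j^+,-h_k^+)$ is handled identically by swapping signs and using the negative-side half of item \cref{item:JollyDragon}.

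The only genuine subtlety is the balancing act between the two competing constraints on $\delta$: one needs $\delta$ large enough that $r_\delta$ covers the full $5\times 5$ grid, yet small enough that the strip half-width $4\delta r_\delta$ is dominated by the per-box separation $\epsilon$. Both can be met simultaneously precisely when $\epsilon$ is a sufficiently small multiple of $\min\{1/\kappa,1/\lambda_{\max},\rho\}$, which is where the absolute constant $c$ in the statement is extracted. All other steps are bookkeeping.
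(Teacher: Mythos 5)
Your argument is essentially the same as the paper's: both rest on \Cref{lem: BarrierLocallyStraight} to linearize $B_i$ near a point $x_0\in \cS(j,k)\cap B_i$ and then step a fixed number of boxes in the direction of $\hatn$ (resp.\ $-\hatn$) to get past the strip $|\langle\cdot,\hatn\rangle - \mathfrak{c}|\leq 4\delta r_\delta$. The paper factors the geometric step through an intermediate result (its Lemma~\ref*{lem:UsefulForce}, which talks about balls $\sB(\xi^\pm,\sqrt{2}\epsilon)$ around $\xi^\pm = x_0\pm 2\epsilon\hatn$ and then observes that the box containing $\xi^\pm$ is one of the 25 candidates); you instead compute directly on the corner box, using $|n_1|+|n_2|\geq 1$ to lower-bound the gain from the step of $2\epsilon$ in each coordinate. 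Your bookkeeping is correct and the resulting constraint ($\epsilon$ a small absolute multiple of $\min\{1/\kappa,1/\lambda_{\max},\rho\}$) matches the paper's in order of magnitude.

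One minor slip to fix: with $\delta = 3\sqrt{2}\epsilon/\min\{1/\kappa,1/\lambda_{\max},\rho\}$ you get $r_\delta = 3\sqrt{2}\epsilon$, and the supremal distance from $x_0\in\cS(j,k)$ to a point of the $5\times 5$ grid is exactly $3\sqrt{2}\epsilon$ (attained when $x_0$ sits at one corner of $\cS(j,k)$ and the grid point is the diagonally opposite corner of the grid). Since $\sB(x_0,r_\delta)$ is the \emph{open} ball, the corner box you choose may then contain a point not covered by \cref{item:GhostlyQuip,item:JollyDragon} of \Cref{lem: BarrierLocallyStraight}. Enlarging $\delta$ slightly, e.g.\ taking $\delta := 5\epsilon/\min\{1/\kappa,1/\lambda_{\max},\rho\}$ so $r_\delta = 5\epsilon > 3\sqrt{2}\epsilon$, removes the issue and only changes the explicit constant in the final inequality $\epsilon > 8\delta r_\delta$; the structure of your argument, including the final $(h_j^-,h_k^-)=-(h_j^+,h_k^+)$ step, is otherwise sound.
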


Further, the following combines \Cref{lem: Concentration_hatPS} with the union bound; see \Cref{apx: ExpLog}.    
\begin{lemma}\label{lem: ConcentrationUnionBoxes}
    For every $\alpha, \beta >0$ there exist constants $c_1,c_2,c_3 >0$ depending only on $\alpha$ and $\beta$ such that the following holds for every $t\leq c_1 \min\{ 1/\kappa^2, 1/\lambda_{\max}^2,\rho^2\}$, $\varepsilon \leq c_2 \sqrt{t}$, $\mathfrak{u}\leq \alpha \sqrt{t}$, and $\eta \in (0,1)$.

    Assume that $X_0 \sim \pi$ starts in stationarity and that the observation time $T$ satisfies \eqref{eq:HighUser} with respect to $c_3$. 
    Then, if the discretization scale in \Cref{alg: KernelDiscontinuity} satisfies $\epsilon = \varepsilon/(3\sqrt{2})$, 
    \begin{align} 
        \bbP\bigl(\Wu(\hat{P}_{\cS(j,k)}, P_{\cS(j,k)}) \leq \beta \sqrt{t}  \text{ for all integers } j,k \text{ with }\cS(j,k)\subseteq D \bigr) \geq 1 - \eta.\label{eq:CalmWok} 
    \end{align}    
\end{lemma}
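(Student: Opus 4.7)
The plan is a direct union bound argument using \Cref{lem: Concentration_hatPS} applied to each box $\cS(j,k) \subseteq D$. First I will verify the hypotheses of \Cref{lem: Concentration_hatPS} for an individual box. Each $\cS(j,k)$ is a square of side length $\epsilon = \varepsilon/(3\sqrt{2})$, so $\Area(\cS(j,k)) = \epsilon^2 = \varepsilon^2/18 > 0$ and its diameter is $\sqrt{2}\epsilon = \varepsilon/3$. The condition $\varepsilon \leq c_2 \sqrt{t}$ with $c_2 \leq 3$ then ensures the diameter is at most $\sqrt{t}$, as required. The constraint on $t$ in the current lemma matches the one in \Cref{lem: Concentration_hatPS} after choosing $c_1$ at least as small as the corresponding constant there, and the assumption $\mathfrak{u} \leq \alpha \sqrt{t}$ is passed through verbatim.

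Second, I will apply \Cref{lem: Concentration_hatPS} with the prescribed $\alpha,\beta$ to a single box $\cS(j,k) \subseteq D$, obtaining constants $c_2',c_3'>0$ depending only on $\alpha,\beta$ such that
\begin{align}
    \bbP\bigl(\Wu(\hat{P}_{\cS(j,k)}, P_{\cS(j,k)}) > \beta \sqrt{t}\bigr) \leq c_2' \exp\bigl(-c_3' \pi_{\min} \epsilon^2 T/\tmix\bigr).\nonumber
\end{align}
Third, I will count the number $N$ of such boxes. Since the boxes are disjoint, $N \cdot \epsilon^2 \leq \Area(D')$ where $D'$ is any bounded enlargement of $D$ by a width $\epsilon$, so $N \leq C \Area(D)/\epsilon^2 = C' \Area(D)/\varepsilon^2$ for absolute constants $C,C'>0$ (using that $\Area(D) \geq \pi/\kappa^2 \gtrsim \epsilon^2$ to absorb the enlargement into $C'$, or just treating the bound trivially when $\Area(D)$ is small). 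A union bound then yields
\begin{align}
    \bbP\bigl(\exists\, \cS(j,k)\subseteq D:\Wu(\hat{P}_{\cS(j,k)}, P_{\cS(j,k)}) > \beta \sqrt{t}\bigr) \leq c_2' \frac{C'\Area(D)}{\varepsilon^2} \exp\bigl(-c_3' \pi_{\min} \epsilon^2 T/\tmix\bigr).\nonumber
\end{align}

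Finally, I will verify that \eqref{eq:HighUser} forces this probability to be at most $\eta$. Taking logarithms, it suffices that
\begin{align}
    c_3' \pi_{\min} \epsilon^2 T/\tmix \geq \ln(c_2' C' \Area(D) /(\eta \varepsilon^2)).\nonumber
\end{align}
Since $\epsilon^2 = \varepsilon^2/18$, this rearranges to a lower bound on $T$ of exactly the form appearing in \eqref{eq:HighUser}, and is therefore satisfied provided $c_3$ is chosen sufficiently large in terms of $c_2',c_3',C'$, and $\alpha,\beta$. There is no real obstacle here beyond bookkeeping: the only mildly delicate point is verifying that $N \lesssim \Area(D)/\varepsilon^2$ without picking up an extra $+1$ term that would ruin the log, which is handled by noting that the bound is trivially true (with $c_3$ large) when $\Area(D) \lesssim \varepsilon^2$ since there are then only $O(1)$ boxes to consider.
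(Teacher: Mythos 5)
Your proposal is correct and follows essentially the same route as the paper's own proof in Appendix~\ref{apx: ExpLog}: apply \Cref{lem: Concentration_hatPS} to each box, bound the number of boxes contained in $D$ by $\Area(D)/\epsilon^2$, take a union bound, and use \eqref{eq:HighUser} together with $\Area(D)\geq \pi/\kappa^2$ to absorb the constants into the logarithm. The details you flag as the only delicate points (the diameter check and the logarithmic bookkeeping) are handled in the paper in the same way.
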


\Cref{prop: Main_GlobalRecovery_Algorithm} now follows by a direct computation with the results from \Cref{sec: PrelimGlobal}:
\begin{proof}[Proof of \texorpdfstring{\Cref{prop: Main_GlobalRecovery_Algorithm}}{Proposition}]
    To avoid notational ambiguity with other parts of the proof, let us denote $\cC_1,\ldots, \cC_5>0$ for the constants in \Cref{prop: Main_GlobalRecovery_Algorithm}. 
    Then, 
    \begin{align} 
        t &\leq \cC_1\min\{ 1/\kappa^2, 1/\lambda_{\max}^2, \rho^2\},\qquad \varepsilon \leq \cC_2 \sqrt{t},\  \qquad T \geq \cC_3\frac{\tmix }{ \pi_{\min} \varepsilon^2}\ln\Big(\frac{1}{\eta}\frac{\Area(D)}{\varepsilon^2} \Big),\nonumber\\ 
        \mathfrak{s} &= \cC_4 \sqrt{t}, \hphantom{\qquad \min\{ 1/\kappa^2, 1/\lambda^2,\rho^2 \}}  \epsilon = \varepsilon/(3\sqrt{2}), \, \quad \mathfrak{u} = \cC_5 \sqrt{t}. \label{eq:CozyQuote}   
    \end{align} 
    The values of $\cC_4$ and $\cC_5$ will soon be fixed. 
    Subsequently, we show that the result follows if $\cC_1$ and $\cC_2$ are taken sufficiently small, and $\cC_3$ sufficiently large.

    Recall the definition of $P_{\cS(j , k )}$ from \Cref{lem: Concentration_hatPS} and let it be understood that $P_{\cS(j, k)}$ is the zero measure if $\cS(j, k) \cap D = \emptyset$. 
    Then, if $\cC_1$ is sufficiently small and $\cC_5$ is sufficiently large, \Cref{cor: Discontinuity} provides an absolute constant $c>0$ such that 
    \begin{align} 
        \Wu(P_{\cS(j + h_j^+,k + h_k^+)} , P_{\cS(j + h_j^-, k + h_k^-)}) \geq c\sqrt{t} \label{eq:PaleOak}
    \end{align}
    when all points in $\cS(j + h_j^+,k + h_k^+)$ are on the positive side of some barrier $B_i$ and all points of $\cS(j + h_j^-, k + h_k^-)$ are on the negative side of $B_i$.   
    From here on, we fix $\cC_5$ and let $\cC_4 \de c/3$.

    For every $j,k$ with $\cS(j,k)\cap (\cup_{i=0}^{\nb} B_i) = \emptyset$ we either have $\cS(j,k) \subseteq D$ or $\cS(j,k) \subseteq \bbR^2 \setminus D$.
    In the second case, $\hat{P}_{\cS(j, k)}$ and $P_{\cS(j, k)}$ are both equal to the zero measure.  
    In the first case, \Cref{lem: ConcentrationUnionBoxes} is applicable.
    Hence, it holds with probability $\geq 1 - \eta$ that   
    \begin{align} 
        \Wu(\hat{P}_{\cS(j,k)}, P_{\cS(j,k)})  \leq  \frac{c}{100} \sqrt{t} \ \text{ for all integers }\ j,k\ \text{ with }\ \cS(j,k)\cap (\cup_{i=0}^{\nb} B_i) = \emptyset \label{eq:OddJet} 
    \end{align}
    whenever $\cC_1$ and $\cC_2$ are sufficiently small and $\cC_3$ is sufficiently large.  
    From here on, we assume that the event in \eqref{eq:OddJet} occurs. 
    We will prove that the event implied in \eqref{eq: Main_GlobalRecovery_dH} follows.   

    Consider some $j,k$ such that there exists some point in $\cS(j,k)$ at distance $> \varepsilon$ from every point in $\cup_{i=0}^{\nb} B_i$. 
    Pythagoras' theorem implies that $\Vert x_0 - \tix_0 \Vert \leq 3\sqrt{2}\epsilon = \varepsilon$ for every $x_0\in \cS(j,k)$ and $\tix_0 \in \cS(j + h_j, k + h_k)$ with $\lvert h_j \rvert,\lvert h_k \rvert\leq 2$. 
    Hence, the square region $\cup_{-2 \leq h_j, h_k\leq 2}\cS(j+h_j, k + h_k)$ does not intersect $\cup_{i=0}^{\nb} B_i$, implying that all points in this region are on the same side of every barrier. 
    Hence, by \Cref{prop: Continuity} with $\zeta = c/(100(1+\cC_5))$, also using \eqref{eq:OddJet} and the triangle inequality,    
    \begin{align} 
        \max\bigl\{\Wu\bigl(\hat{P}_{\cS(j,k)},\hat{P}_{\cS(j+h_j, k+h_k)}\bigr): \lvert h_j \rvert, \lvert h_k \rvert \leq 2 \bigr\}  \leq 2\frac{c}{100} + \zeta(\sqrt{t} + \mathfrak{u}) \leq \frac{3c}{100}\sqrt{t}\label{eq:SilentFan}      
    \end{align}
    if $\cC_1$ and $\cC_2$ are sufficiently small such that \Cref{prop: Continuity} is applicable. 

    Recalling that we took $\cC_4 = c/3$ and considering \eqref{eq:CozyQuote}, the right-hand side of \eqref{eq:SilentFan} does not exceed the sensitivity threshold $\mathfrak{s}$ in \Cref{alg: KernelDiscontinuity}.
    This shows that no region $\cS(j,k)$ which contains a point at distance $>\varepsilon$ from $\cup_{i=0}^{\nb} B_i$ is added to $\hat{B}$ by the algorithm. 
    Hence, 
    \begin{align} 
        \sup_{a \in \hat{B}}\inf_{b \in \cup_{i=0}^{\nb} B_i}\Vert a - b \Vert \leq \varepsilon. \label{eq:YellowBox}
    \end{align}

    In the other direction, consider $j,k$ with $\cS(j,k)\cap B_i \neq \emptyset$.  
    Then, if $\cC_1$ and $\cC_2$ are sufficiently small, \Cref{lem: BoxPosBoxNeg} allows us to find $\lvert h_j^+  \rvert,\lvert h_k^+ \rvert \leq 2$ with $\cS(j + h_j^+, k+h_k^+ )$ on the positive side of $B_i$ not intersecting $\cup_{l = 0}^{\nb} B_l$. 
    Similarly, we can find $\lvert h_j^-  \rvert,\lvert h_k^- \rvert \leq 2$ with $\cS(j + h_j^-, k+h_k^- )$ on the negative side of $B_i$ not intersecting $\cup_{l = 0}^{\nb} B_l$.
    Then, by \eqref{eq:PaleOak} and \eqref{eq:OddJet},
    \begin{align} 
        \Wu(\hat{P}_{\cS(j+h_j^+,k + h_k^+)},\hat{P}_{\cS(j+h_j^-, k+h_k^-)}) \geq \Bigl( c - \frac{2c}{100} \Bigr) \sqrt{t}.\label{eq:HighBall}        
    \end{align}
    In particular, since the triangle inequality would be violated otherwise,  
    \begin{align} 
        \max\bigl\{\Wu\bigl(\hat{P}_{\cS(j,k)},\hat{P}_{\cS(j+h_j, k+h_k)}\bigr):  \lvert h_j \rvert, \lvert h_k \rvert \leq 2 \Bigr\} \geq \Bigl(\frac{c}{2} - \frac{c}{100}\Bigr)\sqrt{t}
    \end{align}
    This exceeds the sensitivity threshold $\mathfrak{s}$ in \Cref{alg: KernelDiscontinuity}, and hence every $\cS(j,k)$ intersecting $\cup_{i=0}^{\nb} B_i$ is added to $\hat{B}$ by the algorithm. 
    Then, $\cup_{i=0}^{\nb} B_i \subseteq \hat{B}$ and hence 
    \begin{align} 
        \sup_{b \in \cup_{i=0}^{\nb} B_i} \inf_{a \in \hat{B}} \Vert a - b \Vert = 0.\label{eq:ZombieError} 
    \end{align}  
    Combine \eqref{eq:YellowBox} with \eqref{eq:ZombieError} and recall the definition of the Hausdorff metric from \eqref{eq: Def_dH} to conclude the proof. 
\end{proof}

\subsection{Proof of \texorpdfstring{\Cref{lem: BoxPosBoxNeg}}{Lemma}}\label{sec: Proof_BoxPosBoxNeg}
We start with a preparatory result: 
\begin{lemma}\label{lem:UsefulForce}
    There exists an absolute constant $c>0$ such that the following holds.
    Consider a point $x\in B_i$ for some $i\leq {\nb}$ and some $\epsilon \leq c\min\{ 1/\kappa, 1/\lambda_{\max} ,\rho\}$. 

    Then, there exists $\xi^+ \in \bbR^2$ with $\Vert x - \xi^+ \Vert \leq 2\epsilon$ such that every point $z$ with $\Vert z -\xi^+ \Vert \leq \sqrt{2}\epsilon$ is on the positive side of $B_i$.
    Moreover, it may be assumed that any such $z$ is not in $\cup_{l=0}^{\nb} B_l$.  

    Similarly, there exists $\xi^- \in \bbR^2$ with $\Vert x - \xi^- \Vert \leq 2\epsilon$ such that every point $z$ with $\Vert z - \xi^- \Vert\leq \sqrt{2}\epsilon$ is on the negative side of $B_i$, and it may be assumed that any such $z$ is not in $\cup_{l=0}^{\nb} B_l$. 
\end{lemma}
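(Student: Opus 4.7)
The plan is to apply \Cref{lem: BarrierLocallyStraight} at $x_0 := x$ with $\delta$ chosen as a function of $\epsilon$, namely $\delta := 4\epsilon/\min\{1/\kappa, 1/\lambda_{\max},\rho \}$. Provided $c$ is small enough (for instance $c\leq 1/16$), this gives $\delta < 1/2$ as required by the lemma, and by construction $r_\delta = 4\epsilon$. Since $x\in B_i \cap \sB(x,r_\delta)$, the lemma produces a unit vector $\hatn$ and a scalar $\mathfrak{c}$ such that the barrier is locally trapped in the strip $\{y: \lvert \langle y,\hatn\rangle -\mathfrak{c}\rvert < 4\delta r_\delta\}$, and in particular $\lvert\langle x,\hatn\rangle - \mathfrak{c}\rvert < 4\delta r_\delta$. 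Moreover, $B_i$ is the only barrier meeting $\sB(x,r_\delta)$.

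I then define $\xi^+ := x + 2\epsilon\, \hatn$ and $\xi^- := x - 2\epsilon\, \hatn$, so $\Vert x - \xi^\pm\Vert = 2\epsilon$ as required. For any $z$ with $\Vert z - \xi^+\Vert \leq \sqrt{2}\epsilon$, the triangle inequality gives $\Vert z - x\Vert \leq (2+\sqrt{2})\epsilon < 4\epsilon = r_\delta$, so $z \in \sB(x,r_\delta)$. Taking the inner product against $\hatn$ and using Cauchy–Schwarz,
\begin{align*}
\langle z,\hatn\rangle \geq \langle x,\hatn\rangle + 2\epsilon - \sqrt{2}\epsilon \geq \mathfrak{c} - 4\delta r_\delta + (2-\sqrt{2})\epsilon.
\end{align*}
With $r_\delta = 4\epsilon$ and $\delta = 4c$, the quantity $8\delta r_\delta = 128c\epsilon$, so taking $c$ small enough (any $c < (2-\sqrt{2})/128$ will do) ensures $(2-\sqrt{2})\epsilon > 8\delta r_\delta$, and hence $\langle z,\hatn\rangle > \mathfrak{c} + 4\delta r_\delta$. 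Item \cref{item:JollyDragon} of \Cref{lem: BarrierLocallyStraight} (contrapositive of its second half) then forces $z$ to lie on the positive side of $B_i$, and item \cref{item:GhostlyQuip} forces $z\notin B_i$. Since $B_i$ is the unique barrier in $\sB(x,r_\delta)$, we conclude $z\notin \cup_l B_l$.

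The construction and verification for $\xi^-$ is entirely symmetric, replacing $\hatn$ by $-\hatn$ and using the first half of item \cref{item:JollyDragon}. The only delicate point in this proof is the balancing act in choosing $\delta$: the lemma's approximation error $4\delta r_\delta$ scales linearly with $\delta$ times $\min\{1/\kappa,1/\lambda_{\max},\rho\}$, while the available displacement from $x$ is only of order $\epsilon$. Rescaling $\delta$ proportionally to $\epsilon$ trades these two effects against each other and is what makes the constraint $(2-\sqrt{2})\epsilon > 8\delta r_\delta$ solvable by a single small absolute constant $c$.
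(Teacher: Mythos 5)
Your proof is correct and follows essentially the same route as the paper: the same choice of $\delta$, the same candidate points $\xi^\pm = x \pm 2\epsilon\,\hatn$, and the same combination of items \cref{item:GhostlyQuip} and \cref{item:JollyDragon} from \Cref{lem: BarrierLocallyStraight}, reduced to the inequality $(2-\sqrt{2})\epsilon > 8\delta r_\delta$. The only cosmetic slip is writing ``$\delta = 4c$'' where you mean the bound $\delta \leq 4c$; the inequality is what the argument actually uses, so nothing is affected.
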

\begin{proof}
    Let $\delta \de 4\epsilon /\min\{ 1/\kappa, 1/\lambda_{\max},\rho \}$ and let $\hatn$ be the vector which arises from \Cref{lem: BarrierLocallyStraight} applied to $\sB(x, r_\delta)$. 
    We claim that the following points have the desired properties:  
    \begin{align} 
        \xi^+ \de x + 2\epsilon \hatn\qquad \text{ and }\qquad \xi^- \de x - 2\epsilon \hatn.  
    \end{align}   
    Let us verify this for $\xi^+$. 
    The proof for $\xi^-$ proceeds similarly. 

    That $\Vert \xi^+ - x \Vert \leq 2\epsilon$ is immediate from the fact that $\hatn$ is a unit vector, so it remains to show that every $z$ with $\Vert z - \xi^+ \Vert \leq \sqrt{2}\epsilon$ is on the positive side of $B_i$ and not in $\cup_{l=0}^{\nb} B_l$. 
    By the triangle inequality, every such $z$ satisfies 
    \begin{align} 
        \Vert z - x \Vert \leq (2 + \sqrt{2})\epsilon < 4\epsilon  = r_\delta 
    \end{align} 
    which is to say that $z\in \sB(x, r_\delta)$.
    
    Let $\mathfrak{c}$ be as in \Cref{lem: BarrierLocallyStraight}.
    We claim that it suffices to show that $\langle z , \hatn \rangle \geq \mathfrak{c} + 4\delta r_\delta$. 
    Indeed, \cref{item:GhostlyQuip} from \Cref{lem: BarrierLocallyStraight} then implies that $z\not\in B_i \cap \sB(x, r_\delta)$ which implies that $z\not\in \cup_{l=0}^{\nb} B_l$ since $z\in \sB(x, r_\delta)$. 
    Further, \cref{item:JollyDragon} from \Cref{lem: BarrierLocallyStraight} then implies that $z$ is not on the negative side of $B_i$, which implies that $z$ is on the positive side.

    Using bilinearity of the inner product as well as \cref{item:GhostlyQuip} from \Cref{lem: BarrierLocallyStraight}, 
    \begin{align} 
        \langle z, \hatn \rangle \geq \langle \xi^+, \hatn \rangle - \sqrt{2}\epsilon = \langle x, \hatn \rangle + (2-\sqrt{2})\epsilon \geq \mathfrak{c} +(2 - \sqrt{2})\epsilon  - 4\delta r_\delta . 
    \end{align}
    The assumed upper bound on $\epsilon$ and the definition of $\delta$ imply that $\delta \leq 4c$ and $r_\delta = 4\epsilon$. 
    Hence, it can be ensured that  
    $(2 - \sqrt{2})\epsilon  - 4\delta r_\delta \geq 4\delta r_\delta$ by taking $c$ to be sufficiently small. 
    This yields the desired sufficient condition and consequently concludes the proof.  
\end{proof}

\begin{proof}[Proof of \texorpdfstring{\Cref{lem: BoxPosBoxNeg}}{Lemma}]
    Let $c$ be as in \Cref{lem:UsefulForce}.   
    We prove the existence of $-2\leq h_j^+, h_k^+\leq 2$ with $\cS(j+h_j^+, k + h_k^+)$ on the positive side of $B_i$; the existence of a box on the negative side proceeds similarly.  
    
    By assumption, there exists some $x\in \cS(j, k)$ with $x \in B_i$. 
    Let $\xi^+$ be as in \Cref{lem:UsefulForce}. 
    Then, in particular, $\Vert x - \xi^+ \Vert \leq 2\epsilon$. 
    This implies that there exist $-2 \leq h_j^+, h_k^+ \leq 2$, not necessarily unique if $\xi^+$ is on the boundary between two squares, with $\xi^+ \in \cS(j+ h_j^+, k+h_k^+)$.  

    Recall that $\cS(j+h_j^+, k+h_k^+)$ is an $\epsilon\times \epsilon$ box. 
    Hence, every $z\in \cS(j+h_j^+, k+h_k^+)$ satisfies $\Vert \xi^+ - z \Vert \leq \sqrt{2}\epsilon$. 
    The desired properties that every $z\in \cS(j+h_j^+, k+h_k^+)$ is on the positive side and not on any barrier now follow from the conclusion of \Cref{lem:UsefulForce}.  
\end{proof}

\subsection{Proof of \texorpdfstring{\Cref{lem: ConcentrationUnionBoxes}}{Lemma}}\label{apx: ExpLog}

\begin{proof}[Proof of \texorpdfstring{\Cref{lem: ConcentrationUnionBoxes}}{Lemma}]
    Each of the regions $\cS(j,k)$ is an $\epsilon \times \epsilon$ square and consequently satisfies $\Area(\cS(j,k)) = \epsilon^2$. 
    It follows that the number of integers $j,k$ with $\cS(j,k)\subseteq D$ is at most $\Area(D)/\epsilon^2$. 
    Hence, combining \Cref{lem: Concentration_hatPS} with the union bound,    
    \begin{align} 
        \bbP\bigl(\Wu(\hat{P}_{\cS(j,k)}, P_{\cS(j,k)}) \leq \beta \sqrt{t} {}&{} \text{ for all integers } j,k \text{ with }\cS(j,k)\subseteq D \bigr)\label{eq:CalmTea} \\ 
        &\quad \geq 1 - C \bigl(\Area(D)/\epsilon^2\bigr)\exp\bigl(- c \pi_{\min} \epsilon^2 T/ \tmix \bigr)\nonumber 
    \end{align}
    with $c,C>0$ constants depending only on $\alpha$ and $\beta$. 
    We next use the assumed lower bound on $T$ from \eqref{eq:HighUser} to estimate the right-hand side of \eqref{eq:CalmTea}. 
    
    Recall from the discussion preceding \eqref{eq:ValidVolt} that $\Area(D) \geq \pi/\kappa^2$ due to the assumption that the curvature of $B_0$ is bounded by $\kappa$.    
    In particular, taking $c_1$ and $c_2$ sufficiently small, we may assume that $\Area(D) \geq 2\varepsilon^2 \geq 2 \varepsilon^2 \eta$. 
    Then, using that $\epsilon = \varepsilon / (3\sqrt{2})$, 
    \begin{align} 
        \ln\Bigl(\frac{C\Area(D)}{\epsilon^2 \eta} \Bigr) = \ln\bigl(C (3\sqrt{2})^2 \bigr) +  \ln\Bigl(\frac{\Area(D)}{\varepsilon^2 \eta} \Bigr) \leq C' \ln\Bigl(\frac{\Area(D)}{\varepsilon^2 \eta} \Bigr)
    \end{align}  
    for some constant $C' >0$ which should be sufficiently large to satisfy $\ln(C(3\sqrt{2})^2)\leq (C'-1) \ln(2)$. 
    Hence, for $c_3$ sufficiently large, 
    \begin{align} 
        c_3\frac{1}{ \pi_{\min} \varepsilon^2 }\ln\Bigl(\frac{\Area(D)}{\varepsilon^2 \eta } \Bigr) \geq  \frac{1}{ c\pi_{\min} \epsilon^2}\ln\Bigl(C\frac{\Area(D)}{\epsilon^2 \eta } \Bigr). \label{eq:BraveKnot}
    \end{align}
    Combine \eqref{eq:CalmTea} and \eqref{eq:BraveKnot} with the assumption \eqref{eq:HighUser} to find \eqref{eq:CalmWok}.
\end{proof}

\section{Proof of \texorpdfstring{\Cref{prop: Main_GlobalRecovery_Improvement}}{Proposition}}\label{apx: Proof_Main_GlobalRecovery_Improvement}
The following two preparatory lemmas state the direction of the barriers is estimated sufficiently well by \Cref{alg: Improve} to ensure that the rectangular regions $\{\cR(j,k,{\nn},h): \lvert h \rvert \leq 2 \}$ are all on a fixed side of the barriers if and only if $p_{j,k}$ is distant from all barriers: 
\begin{lemma}\label{lem: Rectangle_Close}
    There exists an absolute constant $c_1>0$ such that the following holds. 
    Assume that the parameters in \Cref{alg: Improve} satisfy the following for some $\sE \geq 0$:
    \begin{align} 
        \ell \leq c_1 \min\{ 1/\kappa, 1/\lambda_{\max}, \rho\}, \  \epsilon = \kappa\ell^2, \  \text{ and }\ \dH(\fB, \cup_{i=0}^{\nb} B_i)\leq \sE. \label{eq:GladEgg}
    \end{align} 

    Then, for every $y \in   B_i$ there exists some $0 \leq {\nn} \leq \lfloor 2\pi \ell / \epsilon\rfloor$ as well as integers $j,k$ such that the following properties are satisfied: 
    \begin{enumerate}[leftmargin=2.5em]
        \item The point $p_{j,k}$ has distance $\leq \epsilon/\sqrt{2}$ from $y$. 
        Further, the points $p_{j,k} + \ell \vec{w}_{\nn}$ and $p_{j,k} - \ell \vec{w}_{\nn}$ both have distance $\leq \sE + 2\epsilon$ from $\fB$. 
        \item There exists $-2 \leq h_+ \leq 2$ such that every point in $\cR(j,k,{\nn},h_+)$ is on the positive side of $B_i$, and such that $\cR(j,k,{\nn},h_+)$ does not intersect $\cup_{l=0}^{\nb} B_l$. 
        \item Similarly, there exists $-2 \leq h_-\leq 2$ such that every point in $\cR(j,k,{\nn},h_-)$ is on the negative side of $B_i$, and such that $\cR(j,k,{\nn},h_-)$ does not intersect $\cup_{l=0}^{\nb} B_l$.    
    \end{enumerate} 
\end{lemma}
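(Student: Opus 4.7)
The plan is to choose $p_{j,k}$ on the grid $\epsilon\bbZ^2$ nearest to $y$, choose $\vec{w}_{\nn}$ from the $\epsilon/\ell$-net of directions so that its perpendicular approximates the unit normal $\vec{n}_i(y)$, then invoke the variant of \Cref{lem: BarrierLocallyStraight} described in \Cref{rem: LocalStraight} to linearize $B_i$ at scale $\ell$ and read off the signed sides. Concretely, pick $p_{j,k}$ with $\Vert p_{j,k}-y\Vert\le \epsilon/\sqrt{2}$, let $\vec{t}$ be a unit tangent to $B_i$ at $y$, and choose $\nn$ so that $\vec{w}_{\nn} = s\vec{t}+O(\epsilon/\ell)$ for some sign $s\in\{+,-\}$; this forces $\vec{w}_{\nn}^\perp = s'\vec{n}_i(y) + O(\epsilon/\ell)$ for the corresponding sign $s'$. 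Apply \Cref{lem: BarrierLocallyStraight} (via \Cref{rem: LocalStraight}) at $x_0=y$ with $\delta\asymp \kappa\ell$ so that $r_\delta\asymp \ell$; this yields a unit vector $\hatn$ with $\Vert \hatn-\vec{n}_i(y)\Vert\le 2\delta$ and a scalar $\mathfrak{c}$ such that $B_i\cap \sB(y,r_\delta)$ lives in the strip $\{\lvert \langle z,\hatn\rangle-\mathfrak{c}\rvert<4\delta r_\delta\}$, where $4\delta r_\delta\asymp \kappa\ell^2 = \epsilon$.

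For the first bullet, observe that the arc-length-$\ell$ point of $B_i$ starting at $y$ in direction $s\vec{t}$ differs from $y+\ell\vec{w}_{\nn}$ by the concatenation of (i) the tangent-vs-curve error $\le C_1\kappa\ell^2=C_1\epsilon$ coming from the first Frenet--Serret formula as in the proof of \Cref{lem: BarrierLocallyStraight}, and (ii) the direction error $\ell\Vert \vec{w}_{\nn}-s\vec{t}\Vert\le \epsilon$. Combining with $\Vert p_{j,k}-y\Vert\le \epsilon/\sqrt2$ and $\dH(\fB,\cup B_l)\le \sE$ bounds the distance from $p_{j,k}+\ell\vec{w}_{\nn}$ (resp.\ $p_{j,k}-\ell\vec{w}_{\nn}$) to $\fB$ by $\sE+2\epsilon$ once $c_1$ is chosen small enough to absorb the constants.

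For the second and third bullets, take $h_+\de 2s'$ and $h_-\de -2s'$. For any $x\in \cR(j,k,\nn,h_+)$, the defining inequality $\lvert \langle x-p_{j,k},\vec{w}_{\nn}^\perp\rangle-h_+\epsilon\rvert\le \epsilon/2$ together with the approximations $\vec{w}_{\nn}^\perp\approx s'\hatn$ and $p_{j,k}\approx y$ (with $\langle y,\hatn\rangle=\mathfrak{c}+O(\delta r_\delta)$) gives $s'\langle x,\hatn\rangle\ge s'\mathfrak{c}+3\epsilon/2-(\text{lower order})$. For $c_1$ small enough that every error term is at most, say, $\epsilon/10$, this yields $s'\langle x,\hatn\rangle>s'\mathfrak{c}+4\delta r_\delta$, which by \cref{item:JollyDragon} of \Cref{lem: BarrierLocallyStraight} places $x$ strictly on the positive side of $B_i$, while \cref{item:GhostlyQuip} of the same lemma shows $x\notin B_i$; the analogous computation with $h_-$ puts the rectangle on the negative side. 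The bound on $\Vert x-p_{j,k}\Vert$ implies $x\in \sB(y,\ell/5 +3\epsilon)\subseteq \sB(y,\rho)$, so by the very definition of $\rho$ the rectangles cannot meet any other barrier $B_l$ either.

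The main obstacle is constant tracking: since $\epsilon=\kappa\ell^2$ and the strip from \Cref{lem: BarrierLocallyStraight} has width $4\delta r_\delta\asymp \epsilon$, every quantity to be separated lives at the same scale $\epsilon$. The good term $h_\pm\epsilon=\pm 2\epsilon$ must dominate the sum of the grid error $\epsilon/\sqrt2$, the strip thickness $\asymp \epsilon$, the direction error $(\ell/10)\cdot\Vert \vec{w}_{\nn}^\perp-s'\hatn\Vert\asymp \epsilon$, and the half-width $\epsilon/2$ of the rectangle; once all four contributions are absorbed into a single small absolute $c_1$, the algebraic verification is routine.
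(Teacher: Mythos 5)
Your construction is exactly the paper's: take $p_{j,k}\in\epsilon\bbZ^2$ nearest to $y$, take $\vec{w}_{\nn}$ nearest to a unit tangent of $B_i$ at $y$, apply the variant of \Cref{lem: BarrierLocallyStraight} from \Cref{rem: LocalStraight} at a scale of order $\ell$, bound the distance from $p_{j,k}\pm\ell\vec{w}_{\nn}$ to $\phi(\pm\ell)\in B_i$ using Frenet--Serret, and read off the two sides via \cref{item:GhostlyQuip} and \cref{item:JollyDragon} with $h_\pm = \pm 2s'$. The paper packages this in an intermediate lemma, but the decomposition of errors and the role of each tool is the same.

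There is, however, a genuine flaw in how you close the argument. You repeatedly write that the error terms can be controlled ``once $c_1$ is chosen small enough'' (``absorb the constants,'' ``every error term is at most, say, $\epsilon/10$''). This does not work: all of the competing quantities --- the grid offset $\epsilon/\sqrt{2}$, the rectangle half-width $\epsilon/2$, the Frenet--Serret correction $\kappa\ell^2/2 = \epsilon/2$, the direction-cone error $r_{\delta}'\cdot\Vert\vecw_\nn^\perp-s'\hatn\Vert$, and the strip thickness $4\delta r_{\delta}'$ --- scale \emph{linearly} in $\epsilon$, so their ratios to $h_\pm\epsilon = \pm2\epsilon$ are fixed and completely insensitive to $c_1$. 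In particular, the grid offset is $\approx 0.707\epsilon$ and the rectangle half-width is $\epsilon/2$, neither of which can ever be pushed below $\epsilon/10$. The role of $c_1$ is only to enforce $\delta<1/2$, $r_{\delta}'=\delta/\kappa$ (rather than $\rho$), $\fR(h)\subseteq\sB(y,r_{\delta}')$, and to kill genuinely higher-order terms like $\kappa\ell\cdot\epsilon$. What actually makes the sign computation close is the specific numerical choice $\delta=\kappa\ell/9$ (so $r_{\delta}'=\ell/9$, strip thickness $(4/81)\epsilon$) paired with rectangle half-length $\ell/10$ and offsets $h_\pm=\pm2$, giving a margin $\bigl(2-\tfrac{1}{\sqrt{2}}-\tfrac12-\tfrac{1}{18}-\tfrac{2}{81}\bigr)\epsilon>\tfrac{4}{81}\epsilon$, and similarly a margin $\bigl(\tfrac{1}{\sqrt{2}}+\tfrac12+\tfrac12\bigr)\epsilon<2\epsilon$ for the first bullet. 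You do flag in your last paragraph that ``every quantity lives at scale $\epsilon$,'' which is the right observation, but you then draw the wrong conclusion about how that tension is resolved. To repair the proposal, replace the appeals to $c_1$ with an explicit choice of the $\delta$ used when invoking \Cref{rem: LocalStraight} and a concrete arithmetic verification that the resulting constants dominate $4\delta r_{\delta}'$.

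One minor point: ruling out the other barriers $B_l$, $l\ne i$, from the rectangles is not ``by the very definition of $\rho$'' alone; it is the combination of the disjointness assumption $B_i\cap B_j=\emptyset$ with the connectedness guaranteed by $\rho$ (this is exactly what the first sentence of \Cref{lem: BarrierLocallyStraight} / \Cref{lem: VariantLocalStraight} encapsulates), so you should cite that lemma rather than $\rho$ directly.
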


\begin{lemma}\label{lem: Rectangle_Distant}
    There exist absolute constants $c_1,c_2,c_3>0$ such that the following holds. 
    Assume that the parameters in \Cref{alg: Improve} satisfy \eqref{eq:GladEgg} with respect to $c_1$ and assume that $c_2\epsilon \leq \sE \leq c_3\ell$. 
    Further, consider integers $j,k$ with $\inf\{\Vert p_{j,k} - y \Vert: y \in \cup_{i=0}^{\nb} B_i \} >  \sE/2$. 
    
    Then, for every $0 \leq {\nn} \leq \lfloor 2\pi \ell / \epsilon\rfloor$ with $p_{j,k} + \ell \vec{w}_{\nn}$ and $p_{j,k} - \ell \vec{w}_{\nn}$ both at distance $\leq \sE + 2\epsilon$ from $\fB$ it holds that the rectangular region $\cup_{h=-2}^2\cR(j,k,{\nn},h)$  does not intersect $\cup_{i=0}^{\nb} B_i$. 
\end{lemma}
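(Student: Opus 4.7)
The plan is to argue by contradiction: suppose some barrier $B_i$ contains a point $z\in \cup_{|h|\leq 2}\cR(j,k,\nn,h)$, and derive the contradictory estimate $\inf\{\Vert p_{j,k}-y\Vert: y\in B_i\}\leq \sE/2$.

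The preparatory work is to extract a nearly-straight local model for $B_i$ near $p_{j,k}$ in which $\vec{w}_{\nn}$ is tangent to the barrier. First, the Hausdorff hypothesis $\dH(\fB,\cup_l B_l)\leq \sE$ combined with the assumed closeness of $p_{j,k}\pm \ell\vec{w}_{\nn}$ to $\fB$ yields barrier points $y_\pm\in \cup_l B_l$ with $\Vert y_\pm - (p_{j,k}\pm \ell\vec{w}_{\nn})\Vert\leq 2\sE+2\epsilon$. I then apply \Cref{lem: BarrierLocallyStraight} (in the form of \Cref{rem: LocalStraight}) to the ball $\sB(p_{j,k},r_\delta)$ with $\delta$ chosen so that $r_\delta = 2\ell$; the assumption $\ell\leq c_1\min\{1/\kappa,\rho\}$ ensures $\delta\leq 1/2$ for $c_1$ small, and the identity $\epsilon=\kappa \ell^2$ yields $\delta r_\delta = 4\epsilon$. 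Since $z\in B_i$ lies in this ball, $B_i$ is the unique barrier intersecting it, so $y_-, y_+\in B_i$ as well, and the lemma furnishes a unit vector $\hatn$ and scalar $\mathfrak{c}\in \bbR$ with $|\langle y,\hatn\rangle-\mathfrak{c}|\leq 16\epsilon$ for all $y\in B_i\cap \sB(p_{j,k},r_\delta)$. Writing $y_+-y_- = 2\ell \vec{w}_{\nn} + r$ with $\Vert r\Vert\leq 4\sE+4\epsilon$ and invoking the strip bound $|\langle y_+-y_-,\hatn\rangle|\leq 32\epsilon$ gives $|\langle \vec{w}_{\nn},\hatn\rangle|\leq (32\epsilon+4\sE+4\epsilon)/(2\ell)$, so $\vec{w}_{\nn}$ is nearly tangent to $B_i$.

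With this local model, I decompose $z-p_{j,k}$ in the frame $\{\vec{w}_{\nn},\vec{w}_{\nn}^{\perp}\}$ and use the rectangle constraints $|\langle z-p_{j,k},\vec{w}_{\nn}\rangle|\leq \ell/10$, $|\langle z-p_{j,k},\vec{w}_{\nn}^{\perp}\rangle|\leq 5\epsilon/2$, together with the tangency estimate and $|\langle z,\hatn\rangle-\mathfrak{c}|\leq 16\epsilon$, to bound the perpendicular distance from $p_{j,k}$ to the line $L\de \{y:\langle y,\hatn\rangle=\mathfrak{c}\}$ by a quantity of the form $\sE/5 + O(\epsilon)$. To upgrade this to a bound on $\inf\{\Vert p_{j,k}-y\Vert: y\in B_i\}$, I will exploit that the arc of $B_i$ joining $y_-$ to $y_+$ inside $\sB(p_{j,k},r_\delta)$ is connected and its $\hatn^{\perp}$-coordinate sweeps an interval of length $\approx 2\ell$ containing the $\hatn^{\perp}$-coordinate of the perpendicular projection of $p_{j,k}$ onto $L$; by the intermediate value theorem there is a barrier point sharing this tangent coordinate, lying within $4\delta r_\delta=16\epsilon$ of $L$. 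This yields $\inf\{\Vert p_{j,k}-y\Vert:y\in B_i\}\leq \sE/5 + O(\epsilon)$, which is strictly less than $\sE/2$ once $c_1$ and $c_3$ are sufficiently small and $c_2$ sufficiently large. The main obstacle is precisely this final transfer: the approximate-straightness from \Cref{lem: BarrierLocallyStraight} alone only controls perpendicular displacements from $L$, so I must combine it with the connectedness/intermediate-value argument and bookkeep the numerical constants carefully, so that the resulting inequality comes out in favor of $\sE/2$ rather than some larger multiple of $\sE$.
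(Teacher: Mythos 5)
Your proof is correct in substance, but it takes a more circuitous route than the paper's argument, and the extra detour adds a step that the paper deliberately avoids.

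The paper's proof of this lemma is \emph{direct}: it first uses the hypothesis that $p_{j,k}$ is at distance $>\sE/2$ from all barriers \emph{together with item \ref{item:r'3} of \Cref{lem: VariantLocalStraight} applied to the shifted point $p_{j,k}\mp(\sE/2)\hatn$} to get a lower bound $\lvert\langle p_{j,k},\hatn\rangle-\mathfrak{c}\rvert\geq \sE/2-4\delta r_\delta'$; it then bounds the perturbation from $p_{j,k}$ to any $z$ in the rectangle union (using the near-tangency of $\vec{w}_{\nn}$ which you derive the same way) to show $\lvert\langle z,\hatn\rangle-\mathfrak{c}\rvert>4\delta r_\delta'$, and concludes by item \ref{item:r'1} that $z\notin B_i$. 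Your argument runs in reverse by contradiction: you assume $z\in B_i$, deduce that $\lvert\langle p_{j,k},\hatn\rangle-\mathfrak{c}\rvert$ is small (of order $\sE/5+O(\epsilon)$), and then must convert proximity to the abstract straight line $L$ into proximity to the actual curve $B_i$. That last conversion requires your intermediate-value argument on the arc of $B_i$ joining $y_-$ to $y_+$: find a barrier point $y^*$ with the same $\hatn^{\perp}$-coordinate as $p_{j,k}$ (which lies within $4\delta r_\delta'$ of $L$), so that $\Vert p_{j,k}-y^*\Vert$ reduces to a one-dimensional distance. This step is sound — $B_i\cap\sB(p_{j,k},r_\delta')$ is connected by the definition of $\rho$, and the endpoint $\hatn^{\perp}$-coordinates straddle $\langle p_{j,k},\hatn^{\perp}\rangle$ once $c_1,c_3$ are small — but it is exactly the work that the forward argument makes unnecessary. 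In short, both approaches use the same straightening lemma and the same tangency estimate, but the paper converts ``far from $B_i$'' to ``$\hatn$-coordinate far from $\mathfrak{c}$'' via item \ref{item:r'3} and never needs the converse, while you establish the converse by hand via the IVT. Your bookkeeping is consistent (one needs roughly $c_2\gtrsim 120$ and $c_1,c_3$ small to close the numeric chain), so the proof does go through; it is simply longer.
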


Proofs are given in \Cref{apx: ProofRectangleClose,apx: ProofRectangleDistant}. 
The following estimate will play a similar role in the proof of \Cref{prop: Main_GlobalRecovery_Improvement} as \Cref{lem: ConcentrationUnionBoxes} did in the proof of \Cref{prop: Main_GlobalRecovery_Algorithm}: 
\begin{lemma}\label{lem: Rectangle_Concentration}
    For every $\alpha, \beta,\gamma >0$ there exist constants $c_1,c_2, c_3 >0$ depending only on $\alpha, \beta$ and $\gamma$ such that the following holds for every $t\leq c_1 \min\{ 1/\kappa^2, 1/\lambda_{\max}^2, \rho^2 \}$, $\varepsilon \leq c_2 \kappa t$, $\mathfrak{u} \leq \alpha\sqrt{t}$, and $\eta \in (0,1)$. 

    Assume that $X_0 \sim \pi$ and that \eqref{eq: Main_GlobalRecovery_n} holds with respect to $c_3$. 
    Then, if the parameters in \Cref{alg: Improve} satisfy $\ell = \gamma \sqrt{\varepsilon/ \kappa}$ and $\epsilon = \kappa \ell^2$, 
    \begin{align} 
        \bbP\bigl(\Wu(\hat{P}_{\cR(j,k,{\nn},h)}, P_{\cR(j,k,{\nn},h)}) \leq \beta \sqrt{t},\ \forall  j,k,{\nn},h \text{ with }\cR(j,k,{\nn},h) \subseteq D  \bigr) \geq 1 - \eta. 
    \end{align}  
\end{lemma}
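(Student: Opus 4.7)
The plan is to follow the same template as the proof of \Cref{lem: ConcentrationUnionBoxes}: apply \Cref{lem: Concentration_hatPS} individually to each rectangular region $\cR(j,k,{\nn},h) \subseteq D$ and combine with a union bound, using the assumed lower bound on $T$ from \eqref{eq: Main_GlobalRecovery_n} to make the bound $\leq \eta$.

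First, I would verify the hypotheses of \Cref{lem: Concentration_hatPS}. With the prescribed $\ell = \gamma\sqrt{\varepsilon/\kappa}$ and $\epsilon = \kappa \ell^2 = \gamma^2\varepsilon$, each $\cR(j,k,{\nn},h)$ is a rectangle of dimensions $\epsilon \times (\ell/5)$, whose diameter is at most $\sqrt{\epsilon^2 + \ell^2/25} \leq \ell$. The assumption $\varepsilon \leq c_2 \kappa t$ yields $\ell = \gamma\sqrt{\varepsilon/\kappa} \leq \gamma\sqrt{c_2 t}$, so taking $c_2 \leq 1/\gamma^2$ ensures $\ell \leq \sqrt{t}$ as required. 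Also, $\Area(\cR(j,k,{\nn},h)) = \epsilon \ell / 5 = \gamma^3\varepsilon^{3/2}/(5\sqrt{\kappa})$.

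Next, the combinatorial count. The grid points $p_{j,k}$ lie on a lattice with spacing $\epsilon$, so the number for which $\cR(j,k,{\nn},h) \subseteq D$ is at most of order $\Area(D)/\epsilon^2$. The orientation index ${\nn}$ ranges over at most $\lfloor 2\pi \ell/\epsilon \rfloor + 1 \lesssim \ell/\epsilon$ values, and $h \in \{-2,-1,0,1,2\}$. The total number of rectangles is therefore bounded by $C \Area(D)\ell/\epsilon^3 = C'\Area(D)/(\sqrt{\kappa}\varepsilon^{5/2})$ for absolute constants $C, C'$ depending on $\gamma$. Applying \Cref{lem: Concentration_hatPS} with parameters $\alpha,\beta$ and combining with the union bound gives
\begin{align}
    \bbP(\text{bad event}) \leq \frac{C_1 \Area(D)}{\sqrt{\kappa}\varepsilon^{5/2}}\exp\Bigl(-c_1\frac{\pi_{\min}\gamma^3 \varepsilon^{3/2}}{5\sqrt{\kappa}}\cdot\frac{T}{\tmix}\Bigr)\nonumber
\end{align}
for constants $c_1,C_1>0$ depending only on $\alpha, \beta, \gamma$. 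For this to be $\leq \eta$, it suffices that $T \geq \cC\frac{\tmix}{\pi_{\min}}\sqrt{\kappa/\varepsilon^3}\ln\bigl(C_1\Area(D)/(\sqrt{\kappa}\varepsilon^{5/2}\eta)\bigr)$ for a sufficiently large constant $\cC$.

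The main obstacle is reconciling the logarithmic factor $\ln(\Area(D)/(\sqrt{\kappa}\varepsilon^{5/2}\eta))$ arising from the union bound with the logarithmic factor $\ln(\Area(D)\sqrt{\kappa/\varepsilon^3}/\eta)$ appearing in the assumption \eqref{eq: Main_GlobalRecovery_n}. These differ by an extra multiplicative factor of $1/(\kappa\varepsilon)$ inside the logarithm. To handle this, I would use the fact that $\Area(D) \geq \pi/\kappa^2$ (a consequence of the curvature bound on $B_0$, noted in the proof of \Cref{thm: Main_GlobalRecovery} after \eqref{eq:UnripeBoy}), which gives $\Area(D)\sqrt{\kappa/\varepsilon^3}/\eta \geq \pi/(\kappa\varepsilon)^{3/2}$ and hence $\ln(1/(\kappa\varepsilon)) \leq \tfrac{2}{3}\ln(\Area(D)\sqrt{\kappa/\varepsilon^3}/\eta) + O(1)$. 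This shows that $\ln(\Area(D)/(\sqrt{\kappa}\varepsilon^{5/2}\eta))$ is bounded by an absolute constant times $\ln(\Area(D)\sqrt{\kappa/\varepsilon^3}/\eta)$, so the extra factor can be absorbed by enlarging $c_3$ in \eqref{eq: Main_GlobalRecovery_n}. The assumption $\varepsilon \leq c_2\kappa t \leq c_1c_2/\kappa$ (combining the upper bounds on $t$ and $\varepsilon$) also ensures $\kappa\varepsilon$ is bounded above by an absolute constant, so these logarithms are well-behaved. Once the constants are matched, the conclusion follows.
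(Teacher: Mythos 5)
Your proposal is correct and follows essentially the same approach as the paper's proof: a union bound of \Cref{lem: Concentration_hatPS} over all $O(\Area(D)\ell/\epsilon^3)$ rectangles, then reconciling the resulting logarithmic factor with the one in \eqref{eq: Main_GlobalRecovery_n} via the estimate $\Area(D)\geq \pi/\kappa^2$. The only cosmetic difference is in the combinatorial count (the paper packs disjoint balls $\sB(p_{j,k}+h\epsilon\vec{w}_n^\perp,\epsilon/2)$ inside $D$, whereas you invoke lattice density directly), which only affects absolute constants.
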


The proof is given in \Cref{apx: ProofRectangleConcentration}. 
\Cref{prop: Main_GlobalRecovery_Improvement} now follows by combining these preliminaries with the results from \Cref{sec: PrelimGlobal}.
This proceeds almost exactly like the proof of \Cref{prop: Main_GlobalRecovery_Algorithm}, but let us give the details for completeness: 
\begin{proof}[Proof of \texorpdfstring{\Cref{prop: Main_GlobalRecovery_Improvement}}{Proposition}]
    To avoid ambiguity with other results, let us denote $\cC_1,\ldots,\cC_7 >0$ for the constants in the statement of \Cref{prop: Main_GlobalRecovery_Improvement}. 
    Then, the assumption is that 
    \begin{align} 
        t \leq \cC_1 {}&{}\min\{ 1/\kappa^2, 1/\lambda_{\max}^2, \rho^2\},\quad  \varepsilon \leq \cC_2 \kappa t, \quad T \geq \cC_3 \frac{\tmix}{\pi_{\min}}\sqrt{\frac{\kappa}{\varepsilon^3}}\ln\Bigl(\frac{\Area(D)}{\eta} \sqrt{\frac{\kappa}{\varepsilon^3}}\Bigr), \nonumber \\ 
        &\mathfrak{s} = \cC_4 \sqrt{t},\quad  \ell = \cC_5 \sqrt{\varepsilon/\kappa},\quad  \epsilon = \kappa \ell^2, \quad \mathfrak{u} = \cC_6 \sqrt{t}, \ \text{ and }\  \varepsilon \leq \sE  \leq \cC_7 \ell.   \label{eq:PalePig} 
    \end{align}
    The values of $\cC_4, \cC_5,$ and $\cC_6$ will soon be fixed. 
    Subsequently, we show that the result follows if $\cC_1,\cC_2$, and $\cC_7$ are taken sufficiently small, and $\cC_3$ is taken sufficiently large. 
    
    Recall from \Cref{lem: Concentration_hatPS} that $P_S = \bbP(X_t\in \cdot \mid X_0 \in S)$ for $S \subseteq D$ and $X_0 \sim \pi$. 
    \Cref{cor: Discontinuity} hence ensures that there is an absolute constant $c>0$ such that, when $\cC_1$ is sufficiently small and $\cC_6$ is sufficiently large, 
    \begin{align} 
        \Wu(P_{\cR(j,k,{\nn},h^+)}, P_{\cR(j,k,{\nn},h^-)}) \geq c \sqrt{t}\label{eq:OilySquid}  
    \end{align}
    when all points of $\cR(j,k,{\nn},h^+)$ are on the positive side of some barrier $B_i$, and all points of $\cR(j,k,{\nn},h^-)$ are on the negative side of $B_i$. 
    From here on, fix $\cC_6$ and let $\cC_4 \de c/3$. 
    
    Let us take $\cC_1$ and $\cC_2$ sufficiently small so that they are bounded by an absolute constant, say $\cC_1, \cC_2 \leq 1$. 
    Then, combining the inequalities in \eqref{eq:PalePig} we have that $\ell \leq \cC_5 \min\{ 1/\kappa, 1/\lambda_{\max},\rho \}$. 
    Further, note that $\sE \geq \varepsilon = \epsilon/\cC_5^2$.   
    Let us fix $\cC_5$ at a value which is $\leq 1/\sqrt{2}$ and sufficiently small to ensure that the upper bounds on $\ell$ in \Cref{lem: Rectangle_Close} and \Cref{lem: Rectangle_Distant} are satisfied, and that the lower bound on $\sE$ in \Cref{lem: Rectangle_Distant} is satisfied.    

    Let $\cE$ be the event where concentration occurs: 
    \begin{align} 
        \cE \de \{\omega: \Wu(\hat{P}_{\cR(j,k,{\nn},h)}, P_{\cR(j,k,{\nn},h)}) \leq \frac{c}{100} \sqrt{t},\,  \forall j,k,{\nn},h \text{ with }\cR(j,k,{\nn},h) \subseteq D\}. \label{eq:HauntedSquid}
    \end{align}
    Then, considering that $\hat{P}_S$ and $P_S$ are both equal to the zero measure when $S \subseteq \bbR^2\setminus D$, 
    \begin{align} 
        \Wu(\hat{P}_{\cR(j,k,{\nn},h)}, P_{\cR(j,k,{\nn},h)}) \leq \frac{c}{100} \sqrt{t},\,  \forall j,k,{\nn},h \text{ with }\cR(j,k,{\nn},h)\cap (\cup_{i=0}^{\nb} B_i) = \emptyset\label{eq:MagicKite}  
    \end{align}
    whenever $\cE$ occurs.   

    \Cref{lem: Rectangle_Concentration} implies that $\bbP(\cE)\geq 1 - \eta$ if the constants $\cC_1$ and $\cC_2$ are taken sufficiently small, and the constant $\cC_3$ is taken sufficiently large. 
    From here on out, we work on the event where $\cE$ occurs. 
    It remains to show that the output of \Cref{alg: Improve} then satisfies $\dH(\cB', \cup_{i=0}^{\nb} B_i) \leq \sE/2$ for every $\sE>0$ and $\fB \subseteq \bbR^2$ with $\sE$ subject to the constraints in \eqref{eq:PalePig} and $\fB$ satisfying $\dH(\fB, \cup_{i=0}^{\nb} B_i) \leq \sE$.

    Consider some $p_{j,k}$ at distance $> \sE/2$ from $\cup_{i=0}^{\nb} B_i$. 
    Then, taking $\cC_7$ sufficiently small and recalling the previous discussion regarding the choice of $\cC_5$ ensures that the conditions of \Cref{lem: Rectangle_Distant} are satisfied. 
    Hence, $\cup_{h=-2}^2 \cR(j,k,{\nn},h)$ does not intersect $\cup_{i=0}^{\nb} B_i$ for every $0\leq {\nn} \leq \lfloor 2\pi \ell /\epsilon \rfloor$ with $p_{j,k} + \ell\vec{w}_{\nn}$ and $p_{j,k} - \ell\vec{w}_{\nn}$ sufficiently close to $\fB$ to satisfy the constraint in \Cref{alg: Improve}. 
    In particular, the rectangular sets $\cR(j,k,{\nn},h)$ for varying $\lvert h \rvert \leq 2$ are then all on the same side of every barrier. 
    Also note that the diameter of $\cup_{h=-2}^2\cR(j,k,{\nn},h)$ can be made an arbitrarily small multiple of $\sqrt{t}$ by taking $\cC_1$ and $\cC_2$ sufficiently small in \eqref{eq:PalePig}.       
    
    The preceding implies that the conditions of \Cref{prop: Continuity} with $\zeta = (c/100)/(\cC_6 + 1)$ can be satisfied by taking $\cC_1,\cC_2$ and $\cC_7$ sufficiently small.
    Combining the conclusion \eqref{eq:LoudYarn} with \eqref{eq:MagicKite} and the triangle inequality, 
    \begin{align} 
        \max\bigl\{\Wu(\hat{P}_{\cR(j,k,{\nn},0)}), \Wu(\hat{P}_{\cR(j,k,{\nn},h)}: \lvert h \rvert \leq 2  \bigr\} \leq \frac{3c}{100}\sqrt{t}. \label{eq:TallCat}
    \end{align}  
    Considering the definition of the sensitivity threshold $\mathfrak{s}$ in \eqref{eq:PalePig} and recalling that we took $\cC_4 = c/3$, the right-hand side of \eqref{eq:TallCat} does not exceed the sensitivity threshold in \Cref{alg: Improve}. 
    This shows that no point $p_{j,k}$ at distance $>\sE/2$ from $\cup_{i=0}^{\nb} B_i$ is added to $\fB'$ by the algorithm.
    Hence, 
    \begin{align} 
        \sup_{p_{j,k} \in \fB'} \inf_{y \in \cup_{i=0}^{\nb} B_i} \Vert p_{j,k} - y \Vert \leq \sE /2. \label{eq:RedOwl}
    \end{align} 
    
    In the other direction, consider some arbitrary point $y\in B_i$. 
    Then, recalling that $\cC_5$ was chosen such that $\ell$ satisfies the constraint in \eqref{eq:GladEgg}, \Cref{lem: Rectangle_Close} ensures that there will be some $p_{j,k} \in \fB'$ with $\Vert y - p_{j,k} \Vert \leq \epsilon/\sqrt{2}$ and some $0 \leq {\nn}\leq \lfloor 2\pi \ell/\epsilon \rfloor$ such that $p_{j,k} + \ell \vec{w}_{\nn}$ and $p_{j,k} - \ell\vec{w}_{\nn}$ are sufficiently close to $\fB$ to satisfy the constraint in \Cref{alg: Improve}, and such that there exist $\lvert h_+ \rvert, \lvert h_- \rvert \leq 2$ with $\cR(j,k,{\nn},h_+)$ on the positive side of some barrier $B_i$, and $\cR(j,k,{\nn},h_-)$ on the negative side. 
    Then, combining \eqref{eq:OilySquid} and \eqref{eq:MagicKite}, 
    \begin{align} 
        \Wu(\hat{P}_{\cR(j,k,{\nn},h_+)}, \hat{P}_{\cR(j,k,{\nn},h_-)}) \geq \Bigl(c - \frac{2c}{100} \Bigr)\sqrt{t}. 
    \end{align}  
    In particular, since the triangle inequality would be violated otherwise, 
    \begin{align} 
        \max\bigl\{\Wu(\hat{P}_{\cR(j,k,{\nn},0)},\hat{P}_{\cR(j,k,{\nn},h)}): \lvert h \rvert \leq 2  \bigr\} \geq \Bigl(\frac{c}{2} - \frac{c}{100} \Bigr)\sqrt{t}. 
    \end{align}
    This exceeds the sensitivity threshold implying that $p_{j,k}$ will be added to $\mathfrak{B}'$ by \Cref{alg: Improve}. 
    
    Recalling that we took $\cC_5 \leq 1/\sqrt{2}$, it follows from \eqref{eq:PalePig} that $\epsilon \leq \varepsilon /2$. 
    In particular, since $\Vert p_{j,k} - y \Vert\leq \epsilon/\sqrt{2} < \epsilon$, this implies that $\Vert p_{j,k} - y \Vert \leq \varepsilon /2$.
    Recall that $\varepsilon \leq \sE$ from \eqref{eq:PalePig} to conclude that 
    \begin{align} 
        \sup_{y \in \cup_{i=0}^{\nb} B_i} \inf_{p_{j,k} \in \fB'} \Vert y - p_{j,k} \Vert \leq \sE/2.  \label{eq:ShyZero}
    \end{align}
    Combine \eqref{eq:RedOwl} with \eqref{eq:ShyZero} and recall the definition of the Hausdorff metric from \eqref{eq: Def_dH} to conclude the proof. 
\end{proof}

\subsection{Preliminary: a variant of \texorpdfstring{\Cref{lem: BarrierLocallyStraight}}{Lemma}}\label{apx: Variant}
Recall from \Cref{rem: LocalStraight} that \Cref{lem: BarrierLocallyStraight} admits a stronger variant which was inconvenient in the main text for notational reasons. 
The variant will be convenient in the proofs of \Cref{lem: Rectangle_Close,lem: Rectangle_Distant}.
Let us give a precise statement for easy reference. 
Denote $r_{\delta}' \de \min\{ \delta /\kappa, \rho \}.$ 
\begin{lemma}\label{lem: VariantLocalStraight}
    For every $x_0 \in \bbR^2$ and $\delta >0$, there is at most one barrier $B_i$ which intersects $\sB(x_0, r_{\delta}')$.
    Moreover, if such a $B_i$ exists and $\delta <1/2$, then there exists a unit vector $\hat{n} \in \bbR^2$ depending on $x_0$ and $B_i$ such that the following properties hold: 
    \begin{enumerate}[leftmargin=2.5em,label = (\alph*)]
        \item\label{item:r'1} There exists some $\mathfrak{c}\in \bbR$ such that $\lvert \langle  y, \hatn\rangle - \mathfrak{c} \rvert <  4\delta r_\delta'$ for every $y\in B_i\cap \sB(x_0, r_\delta')$.    
        \item\label{item:r'2} One has $\Vert \vec{n}_i(y)-  \hatn \Vert < 2\delta$ for every $y\in B_i \cap \sB(x_0, r_\delta')$.
        \item\label{item:r'3} Every point $z \in \sB(x_0, r_\delta')$ on the positive side of $B_i$ satisfies $ \langle  z, \hatn\rangle > \mathfrak{c} -  4\delta r_\delta'$. 
        Similarly, every $z\in \sB(x_0, r_\delta)$ the negative side of $B_i$ satisfies $ \langle  z, \hatn\rangle < \mathfrak{c}  +  4\delta r_\delta'$.    
    \end{enumerate}    
\end{lemma}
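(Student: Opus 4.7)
The approach is to revisit the proof of \Cref{lem: BarrierLocallyStraight} given in \Cref{apx: ProofBarrierLocallyStraight} and observe that it carries through essentially verbatim once one replaces $r_\delta$ with $r_\delta'$ throughout. The key observation, already anticipated by \Cref{rem: LocalStraight}, is that the geometric proof invoked $r_\delta$ only through two quantitative bounds: $r_\delta \leq \rho$ (which ensures that the uniqueness of the intersecting barrier follows from the definition of $\rho$ in \eqref{eq: Def_rho}) and $\kappa r_\delta \leq \delta$ (which, combined with $\delta < 1/2$, supplies $2(1 - \kappa r_\delta) > 1$ and drives all the Frenet--Serret estimates). The factor $1/\lambda_{\max}$ appearing in the original $r_\delta$ was never used in the geometric argument; it was present purely for later notational convenience. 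Both required bounds hold for $r_\delta' = \min\{\delta/\kappa, \rho\}$ by construction, so the first thing to verify is simply that no intermediate inequality in \Cref{apx: ProofBarrierLocallyStraight} tacitly depended on the $\delta\rho$ or $1/\lambda_{\max}$ terms that have been dropped.

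With that verification in place, the structure of the proof is unchanged. First, the uniqueness of the intersecting barrier follows from $r_\delta' \leq \rho$ and the definition of $\rho$. Assuming a barrier $B_i$ meets $\sB(x_0, r_\delta')$, pick $y_0 \in B_i$ minimizing the distance to $x_0$, and set $\hatn \de \vec{n}_i(y_0)$ and $\mathfrak{c} \de \langle y_0, \hatn \rangle$. Taking an arc-length parametrization $\phi$ of $B_i$ with $\phi(0) = y_0$, the first Frenet--Serret formula yields $\Vert \phi'(s) - \phi'(0) \Vert \leq \kappa \lvert s \rvert$. Combined with $\kappa r_\delta' \leq \delta < 1/2$ and the orthogonality $\langle \phi'(0), y_0 - x_0 \rangle = 0$, this forces $\Vert \phi(\pm 2r_\delta') - x_0 \Vert > r_\delta'$, so the intersection $B_i \cap \sB(x_0, r_\delta')$ is parametrized by some interval $(s_-, s_+) \subset (-2r_\delta', 2r_\delta')$.

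The items \cref{item:r'1}, \cref{item:r'2}, and \cref{item:r'3} then follow by exactly the same computations as in \Cref{apx: ProofBarrierLocallyStraight}, with $r_\delta$ uniformly replaced by $r_\delta'$. For \cref{item:r'1}, orthogonality together with $\Vert \phi'(s) - \phi'(0) \Vert \leq \kappa \lvert s \rvert$ yields $\lvert \langle \phi(s) - \phi(0), \hatn \rangle \rvert \leq 4\kappa (r_\delta')^2 \leq 4\delta r_\delta'$ for $s \in (s_-, s_+)$. For \cref{item:r'2}, the second Frenet--Serret formula gives $\Vert \vec{n}_i(\phi(s)) - \hatn \Vert \leq \kappa \lvert s \rvert < 2\delta$. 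For \cref{item:r'3}, one observes that $\langle \phi'(0), \phi(0) - \phi(s) \rangle$ has a definite sign on each side of $s=0$, so $y_0 + \alpha \hatn$ with $\alpha \neq 0$ does not lie on $B_i \cap \sB(x_0, r_\delta')$; a line-segment argument inside the convex ball $\sB(x_0, r_\delta')$ then transfers the claim from points of the form $y_0 + \alpha \hatn$ to arbitrary points, completing the proof. There is no substantive obstacle beyond the bookkeeping verification described above.
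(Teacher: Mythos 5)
Your proposal is correct and matches the paper exactly: the paper itself proves this lemma with the single sentence ``The proof is identical to that of \Cref{lem: BarrierLocallyStraight}; see \Cref{apx: ProofBarrierLocallyStraight},'' relying on precisely the observation you make (and which \Cref{rem: LocalStraight} already flags) that the argument uses $r_\delta$ only through $r_\delta \leq \rho$ and $\kappa r_\delta \leq \delta$, both of which hold for $r_\delta' = \min\{\delta/\kappa,\rho\}$. Your spelled-out verification that no step tacitly used the $1/\lambda_{\max}$ term is sound and is simply a more explicit version of what the paper leaves implicit.
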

The proof is identical to that of \Cref{lem: BarrierLocallyStraight}; see \Cref{apx: ProofBarrierLocallyStraight}.  

\subsection{Proof of \texorpdfstring{\Cref{lem: Rectangle_Close}}{Lemma}}\label{apx: ProofRectangleClose}
A direct computation shows that the properties claimed in \Cref{lem: Rectangle_Close} hold if $p_{j,k}$ and $\vec{w}_{\nn}$ are chosen appropriately:
\begin{lemma}\label{lem: w_tangent_items}
    There exists an absolute constant $c_1>0$ such that the following is satisfied. 
    Assume that \eqref{eq:GladEgg} holds with respect to $c_1$. 
    Then, for every $y\in B_i$ there exists a unit vector $\vec{W}\in \bbR^2$ such that the following properties are satisfied for every unit vector $\vec{w} \in \bbR^2$ with $\Vert \vec{w} - \vec{W} \Vert \leq \epsilon/2 \ell$ and every point $p\in \bbR^2$ with $\Vert p - y \Vert \leq \epsilon/\sqrt{2}$: 
    \begin{enumerate}[leftmargin=2.5em,label = (\arabic*)]
        \item\label{item: w1} The points $p + \ell \vec{w}$ and $p - \ell \vec{w}$ have distance $\leq \sE + 2\epsilon$ from $\fB$. 
        \item\label{item: w2} Let $\vec{w}^{\perp}$ be a unit vector orthogonal to $\vec{w}$, and for every $-2 \leq h\leq 2$ define  
        \begin{align} 
            \fR(h) \de \{x \in \bbR^2: \lvert \langle x - p, \vec{w}^{\perp} \rangle - h\epsilon \rvert \leq \epsilon/2, \lvert \langle x - p, \vec{w} \rangle \rvert \leq \ell/10   \}.   
        \end{align} 
        Then, there is some $-2 \leq h_+ \leq 2$ such that every point in $\fR(h_+)$ is on the positive side of $B_i$, and such that $\fR(h_+)$ does not intersect $\cup_{l=0}^{\nb} B_l$. 
        \item\label{item: w3} Similarly, there is some $-2 \leq h_- \leq 2$ such that every point in $\fR(h)$ is on the negative side of $B_i$, and such that $\fR(h_-)$ does not intersect $\cup_{l=0}^{\nb} B_l$.  
    \end{enumerate}       
\end{lemma}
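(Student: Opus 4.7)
The plan is to take $\vec{W}$ to be a unit tangent vector to $B_i$ at $y$ and verify items \ref{item: w1}--\ref{item: w3} using the local-straightness result of \Cref{lem: VariantLocalStraight} applied at $y$ with $\delta \de c_0 \kappa \ell$ for a small absolute constant $c_0\in (0,1/4)$ fixed later. For $c_1$ sufficiently small this ensures $\delta < 1/2$ and $r'_\delta = \delta/\kappa = c_0\ell$ (because $\ell\leq c_1\rho$ forces $\rho \geq \ell/c_1 > c_0 \ell$). Let $\hatn$ and $\mathfrak{c}$ be the resulting normal vector and constant; by choice of $\vec{W}$ we have $\hatn\perp \vec{W}$, and by convention $\hatn$ points to the positive side of $B_i$.

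For \ref{item: w1}, parametrize $B_i$ by arc length $\phi:\bbR\to B_i$ with $\phi(0)=y$ and $\dot\phi(0)=\vec{W}$. Since $\Vert \ddot\phi\Vert\leq \kappa$, Taylor's theorem gives $\Vert \phi(\ell)-(y+\ell\vec{W})\Vert\leq \kappa\ell^2/2=\epsilon/2$. For any admissible $p,\vec{w}$, the triangle inequality then yields a point $y'\de \phi(\ell)\in B_i$ with
\begin{align}
\Vert y'-(p+\ell\vec{w})\Vert \leq \tfrac{\epsilon}{2}+\Vert p-y\Vert+\ell\Vert \vec{w}-\vec{W}\Vert \leq \tfrac{\epsilon}{2}+\tfrac{\epsilon}{\sqrt 2}+\tfrac{\epsilon}{2} \leq 2\epsilon.\nonumber
\end{align}
Since $\dH(\fB,\cup_l B_l)\leq \sE$, there is $z\in\fB$ within $\sE$ of $y'$, and $\Vert z-(p+\ell\vec{w})\Vert\leq \sE+2\epsilon$. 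The case $y-\ell\vec{W}$ is identical using $\phi(-\ell)$.

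For \ref{item: w2}--\ref{item: w3}, choose $\vec{w}^\perp$ to be the unit orthogonal to $\vec{w}$ with $\langle \vec{w}^\perp,\hatn\rangle\geq 0$. A direct trigonometric computation, exploiting $\vec{W}\perp\hatn$ and $\Vert \vec{w}-\vec{W}\Vert\leq \epsilon/(2\ell)$, gives $\Vert \vec{w}^\perp-\hatn\Vert\leq \sqrt{2}\,\epsilon/(2\ell)$. Set $h_+\de 2$ and $h_-\de -2$. I will check three things for $h_+$ (the case $h_-$ is symmetric):
\begin{enumerate}[leftmargin=2em,label=(\alph*)]
    \item $\fR(h_+)\subseteq \sB(y,r'_\delta)$: any $x\in \fR(h_+)$ satisfies $\Vert x-p\Vert\leq \ell/10+5\epsilon/2$, whence $\Vert x-y\Vert\leq \ell/10+5\epsilon/2+\epsilon/\sqrt{2}$, which is $<c_0\ell=r'_\delta$ provided $c_0\geq 1/8$ and $c_1$ is small enough (using $\epsilon/\ell=\kappa\ell\leq c_1$).
    \item Every $x\in \fR(h_+)$ is on the positive side of $B_i$. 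Indeed, using the decomposition $\langle x-y,\hatn\rangle=\langle x-p,\vec{w}^\perp\rangle+\langle x-p,\hatn-\vec{w}^\perp\rangle+\langle p-y,\hatn\rangle$, the first term is $\geq 3\epsilon/2$, the second is bounded in magnitude by $(\ell/10+5\epsilon/2)\cdot \sqrt{2}\epsilon/(2\ell)\leq \epsilon/10$ (for $c_1$ small), and the third by $\epsilon/\sqrt{2}$, so $\langle x-y,\hatn\rangle\geq \epsilon/2$. Since $|\langle y,\hatn\rangle-\mathfrak{c}|\leq 4\delta r'_\delta=4c_0^2\epsilon$, picking $c_0\leq 1/8$ makes $8c_0^2\epsilon<\epsilon/2$, yielding $\langle x,\hatn\rangle>\mathfrak{c}+4\delta r'_\delta$, and item \cref{item:r'3} of \Cref{lem: VariantLocalStraight} concludes.
    \item $\fR(h_+)\cap(\cup_l B_l)=\emptyset$: by \cref{item:r'1} of \Cref{lem: VariantLocalStraight} every $w\in B_i\cap \sB(y,r'_\delta)$ has $\langle w,\hatn\rangle< \mathfrak{c}+4\delta r'_\delta$, and \Cref{lem: VariantLocalStraight} further ensures that $B_i$ is the only barrier intersecting $\sB(y,r'_\delta)$.
\end{enumerate}

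Taking $c_0=1/8$ satisfies both constraints uncovered above, and all remaining residuals are of order $\epsilon/\ell=\kappa\ell\leq c_1$, which can be absorbed by choosing $c_1$ small. The main obstacle is precisely this tension in the constants: $c_0$ must be large enough that $\sB(y,r'_\delta)$ contains the full rectangles $\fR(h)$ for $|h|\leq 2$, while being small enough that the strip of width $4\delta r'_\delta=4c_0^2\epsilon$ confining $B_i$ is strictly separated from $\fR(\pm 2)$. The analysis above shows both constraints are compatible with a single numeric choice of $c_0$, completing the proof.
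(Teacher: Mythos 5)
Your proof is correct and follows essentially the same route as the paper's: take $\vec{W}$ tangent to $B_i$ at $y$, bound $\Vert\phi(\pm\ell) - (y \pm \ell\vec{W})\Vert \leq \kappa\ell^2/2 = \epsilon/2$ via Frenet--Serret for item (1), and invoke \Cref{lem: VariantLocalStraight} in a ball of radius $r'_\delta$ comparable to $\ell$ to place $\fR(\pm 2)$ strictly on one side of the barrier for items (2)--(3). The only differences are cosmetic — you take $r'_\delta = \ell/8$ where the paper takes $\ell/9$, you decompose $\langle x - y, \hatn\rangle$ through $p$ while the paper decomposes $\langle z, \hatn\rangle$ through $y$, and you fix the sign of $\vec{w}^\perp$ rather than treating both orientations symmetrically — and your bookkeeping of the competing constraints $c_0 > 1/10$ (to contain $\fR(h)$) and $c_0 < 1/4$ (to separate from the $4\delta r'_\delta$-strip) is sound.
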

\begin{proof} 
    Recall the assumption that $\dH(\fB, \cup_{i=0}^{\nb} B_i)\leq \sE$ from \eqref{eq:GladEgg}. 
    \Cref{item: w1} hence follows if we prove that $p + \ell \vec{w}$ and $p - \ell \vec{w}$ have distance $\leq 2\epsilon$ from $B_i$. 
    Let us show this for $p+\ell \vec{w}$, the proof for $p-\ell \vec{w}$ proceeds similarly.

    We proceed as in the proof of \Cref{lem: BarrierLocallyStraight}.
    Pick an arc-length parametrization $\phi: \bbR \to B_i$ with $\phi(0) = y$ and let $\vec{W}\de \dd{t}\phi(0)$.
    In particular, it then holds that $\phi(\ell) \in B_i$, so it suffices to show that $\Vert (p+\ell \vec{w}) - \phi(\ell) \Vert\leq 2\epsilon$. 
    By the triangle inequality and the assumed upper bounds on $\Vert p - y \Vert$ and $\Vert \vec{w} - \vec{W}  \Vert$, 
    \begin{align} 
        \Vert (p+\ell \vecw) - \phi(\ell) \Vert &\leq \Vert p - \phi(0) \Vert +   \Vert \ell\vec{w} - \ell \vec{W} \Vert + \Vert \phi(\ell) - \phi(0) - \ell\vec{W} \Vert\label{eq:SafeCell}\\ 
        &\leq (1/\sqrt{2} + 1/2)\epsilon + \Vert \phi(\ell) - \phi(0) - \ell\vec{W} \Vert.\nonumber     
    \end{align}  
    Recall from the discussion preceding \eqref{eq:MythicalGem} that $\Vert \dd{t}\phi(t) - \dd{t}\phi(0) \Vert\leq \kappa t$ by the first Frenet--Serret formula.   
    Hence, also using that $\vec{W} = \dd{t}\phi(0)$ and that $\epsilon = \kappa\ell^2$ by \eqref{eq:GladEgg}, 
    \begin{align} 
        \Vert \phi(\ell) - \phi(0) - \ell\vec{W} \Vert \leq \int_0^\ell \Vert \dd{t}\phi(t) - \dd{t}\phi(0)  \Vert\,  \intd t \leq \kappa \ell^2/2 = \epsilon/2.\label{eq:UsefulRoad} 
    \end{align}
    Use that $1/\sqrt{2} + 1/2 + 1/2 < 2$ to conclude that \cref{item: w1} holds true.

    We next prove \cref{item: w2}. 
    The idea is to rely on \Cref{lem: VariantLocalStraight}  applied to a ball with a radius slightly greater than $\ell/10$, say a ball of $\ell/9$. 
    Let us start with a preparatory estimate to ensure that $\fR(h) \subseteq \sB(y,\ell/9)$.  
    Fix some arbitrary $z \in \fR(h)$ and $\lvert h \rvert \leq 2$. 
    Then, using the definition of $\fR(h)$ with the assumed upper bound on $\Vert p-y \Vert$ and the triangle inequality,   
    \begin{align} 
        \Vert z - y \Vert \leq \Vert p - y  \Vert + \Vert z - p \Vert \leq \epsilon/\sqrt{2} + (\lvert h \rvert +1/2)\epsilon + \ell/10 \leq   C\epsilon + \ell/10\label{eq:LoudTea}
    \end{align}
    with $C>0$ an absolute constant.
    Recall that $\epsilon = \kappa\ell^2$. 
    Hence, taking $c_1$ in \eqref{eq:GladEgg} to be sufficiently small so that $C \kappa\ell < 1/9 - 1/10$, \eqref{eq:LoudTea} yields $\fR(h) \subseteq \sB(y, \ell/9)$, as desired.

    Set $\delta \de \kappa\ell/9$. 
    Then, taking $c_1$ in \eqref{eq:GladEgg} sufficiently small ensures that $r_{\delta}' = \min\{ \delta/\kappa, \rho \} = \ell/9$ and $\delta < 1/2$.     
    Combining $\fR(h)\subseteq \sB(y, \ell/9)$ with \Cref{lem: VariantLocalStraight}, we certainly have $\fR(h) \cap B_j = \emptyset$ for every $j\neq i$. 
    \Cref{item: w2} hence follows if we ensure that one can pick $h_+$ such that $\fR(h_+)\cap B_i = \emptyset$ and that every $z\in\fR(h_+)$ is on the positive side.
    To show this, we exploit \cref{item:r'3} from \Cref{lem: VariantLocalStraight}.

    Assume that $\langle \vec{w}^{\perp}, \vec{n}_i(y) \rangle\geq 0$; the other case follows similarly except that $h_+$ should be replaced by $-h_+$.
    We claim that $h_+ = 2$ then does the job.   
    Pick some arbitrary $z\in \fR(2)$ and let $\hat{n}$ and $\mathfrak{c}$ be as in \Cref{lem: VariantLocalStraight} applied to $\sB(y, r_{\delta}')$. 
    Then, by bilinearity,    
    \begin{align} 
        \langle z, \hatn  \rangle &= \langle z -y, \vec{w}^{\perp}  \rangle + \langle z -y, \hatn - \vec{w}^{\perp}  \rangle  + \langle  y, \hatn  \rangle.   \label{eq:IcyLid} 
    \end{align}       
    Here, by the assumption that $\Vert p - y \Vert \leq \epsilon/\sqrt{2}$ and the definition of $\mathfrak{R}(2)$,
    \begin{align} 
        \langle z -y, \vec{w}^{\perp}  \rangle \geq \langle z -p, \vec{w}^{\perp}  \rangle - \epsilon/\sqrt{2} \geq (2 - 1/\sqrt{2} - 1/2)\epsilon.   
    \end{align}   
    Note that $\vec{W} = \dd{t}\phi(0)$ is the tangent vector to $B_i$ at $y$ and recall that $\langle \vec{w}^{\perp}, \vec{n}_i(y) \rangle\geq 0$.   
    The assumption that $\Vert \vec{w} - \vec{W} \Vert \leq \epsilon/(2\ell)$ then implies that also the normal vectors satisfy $\Vert \vec{w}^{\perp} -  \vec{n}_i(y) \Vert \leq \epsilon/(2\ell)$. 
    Hence, since $\Vert \vec{n}_i(y) - \hat{n} \Vert < 2 \delta$ by \cref{item:r'2} from \Cref{lem: VariantLocalStraight}, we have $\Vert \vec{w}^{\perp} - \hat{n}  \Vert \leq \epsilon/(2 \ell) + 2\delta$.  
    Further, note that $\fR(h) \subseteq \sB(y,r_{\delta}')$ implies $\Vert z-y \Vert \leq r_{\delta}' = \ell/9$. 
    Combining these estimates with the Cauchy--Schwarz inequality,   
    \begin{align} 
        \lvert \langle z -y, \hatn - \vec{w}^{\perp}  \rangle \rvert \leq (\epsilon/(2 \ell) + 2\delta) r_{\delta}' = (1/18 + 2/81)\epsilon  
    \end{align}
    where the equality used that $r_{\delta}' = \ell/9$, $\delta = \kappa \ell/9$, and $\kappa \ell^2 = \epsilon$. 
    As for the final term in \eqref{eq:IcyLid}, using \cref{item:r'1} from \Cref{lem: VariantLocalStraight} on $y\in \sB(y,r_{\delta}')\cap B_i$,
    \begin{align} 
        \langle y, \hat{n} \rangle \geq \mathfrak{c} - 4\delta r_{\delta}' = \mathfrak{c} - (4/81)\epsilon.\label{eq:AlertFox}  
    \end{align}   
    Combine \eqref{eq:IcyLid}--\eqref{eq:AlertFox} to find that 
    \begin{align} 
        \langle z, \hat{n} \rangle \geq \mathfrak{c} + (2 - 1/\sqrt{2} - 1/2 - 1/18 - 2/81 - 4/81)\epsilon > \mathfrak{c} + \epsilon/2 > \mathfrak{c} + 4\delta r_{\delta}'.   
    \end{align} 
    Hence, due to \cref{item:r'3} from \Cref{lem: VariantLocalStraight}, we have that $z$ is not on the negative side of $B_i$. 
    Equivalently, we have $z \not\in B_i$ and $z$ is on the positive side of $B_i$. 
    Considering that $z$ was an arbitrary element of $\fR(h_+)$, this proves \cref{item: w2}. 
    The same arguments can be used to establish \cref{item: w3}.  
\end{proof}
\begin{proof}[Proof of \texorpdfstring{\Cref{lem: Rectangle_Close}}{Lemma}]
    This follows from \Cref{lem: w_tangent_items}.
    Indeed, note that vectors of the form $\vec{w}_{\nn} = (\cos(\epsilon {\nn}/\ell), \sin(\epsilon {\nn}/\ell))$ with $0 \leq {\nn} \leq \lfloor 2\pi \ell/\epsilon \rfloor $ divide the unit circle in arcs of length $\leq \epsilon/\ell$. 
    In particular, for every unit vector $\vec{W}\in \bbR^2$, there exist some ${\nn}$ with $\Vert \vec{W}- \vec{w}_{\nn} \Vert \leq \epsilon/2\ell$.
    Further, since every $y \in \bbR^2$ is at distance $\leq \epsilon /\sqrt{2}$ from some point in the lattice $\epsilon \bbZ^2$, there always exists some $j,k$ with $\Vert p_{j,k} - y \Vert \leq \epsilon /\sqrt{2}$.  
\end{proof}

\subsection{Proof of \texorpdfstring{\Cref{lem: Rectangle_Distant}}{Lemma}}\label{apx: ProofRectangleDistant}
Again, the proof amounts to an application of \Cref{lem: VariantLocalStraight}. 
More specifically, we apply that lemma to a ball of radius somewhat greater than $\ell$: 
\begin{proof}[Proof of \texorpdfstring{\Cref{lem: Rectangle_Distant}}{Lemma}]
    Let $\delta \de 2\kappa\ell$ and recall the definition of $r_\delta'$ from \Cref{apx: Variant}. 
    Taking $c_1$ sufficiently small in the assumption \eqref{eq:GladEgg} then ensures that $r_{\delta}' = 2\ell$ and $\delta < 1/2$.
    Further, recall that $\sE \leq c_3\ell$ and note that $\epsilon = \kappa\ell^2 \leq c_1\ell$ by \eqref{eq:GladEgg}. 
    Hence, taking $c_1$ and $c_3$ sufficiently small, it can additionally be ensured that 
    \begin{align} 
        r_{\delta}' > \ell/10 + (2 + 1/2)\epsilon \quad \text{ and }\quad r_{\delta}' > \ell + 2\sE + 2 \epsilon. \label{eq:GladYoga} 
    \end{align}
    In particular, recalling the definition of $\cR(j,k,{\nn},h)$ from \Cref{alg: Improve}, the first inequality implies that $\cR(j,k,{\nn},h) \subseteq \sB(p_{j,k}, r_{\delta}')$ for every $\lvert h \rvert \leq 2$.
    For future use, also note that 
    \begin{align} 
        \delta r_\delta' = 4\kappa \ell^2= 4\epsilon. \label{eq:QuickRug}
    \end{align}   
     
    The assumption that $\dH(\fB, \cup_{i=0}^{\nb} B_i)\leq \sE$ in \cref{eq:GladEgg} together with the assumption that $p_{j,k} \pm \ell \vec{w}_{\nn}$ have distance $\leq \sE + 2\epsilon$ to $\fB$ then implies that there exist $y_+, y_- \in \cup_{i=0}^{\nb} B_i$ with 
    \begin{align} 
        \bigl\Vert (p_{j,k} + \ell \vec{w}_{\nn}) -y_+\bigr\Vert\leq 2\sE +2\epsilon \quad \text{ and }\quad \bigl\Vert (p_{j,k} - \ell \vec{w}_{\nn}) -y_-\bigr\Vert\leq 2 \sE + 2\epsilon.\label{eq:IvoryMom}
    \end{align}
    The triangle inequality then implies that the points $y_+,y_-$ are at distance $\leq \ell + 2\sE + 2 \epsilon < r_{\delta}'$ from $p_{j,k}$.
    Thus, at least one barrier $B_i$ intersects $\sB(p_{j,k},r_{\delta}')$ and \Cref{lem: VariantLocalStraight} implies that this barrier is unique.  

    Let $\hatn$ and $\mathfrak{c}$ be as in \Cref{lem: VariantLocalStraight} applied to $\sB(p_{j,k},r_{\delta}')$. 
    Then, using \eqref{eq:IvoryMom} together with the fact that \cref{item:r'1} yields $\lvert \langle y_+,\hat{n} \rangle -\mathfrak{c} \rvert \leq 4\delta r_{\delta}'$ and $\lvert \langle y_-,\hat{n} \rangle -\mathfrak{c} \rvert \leq 4\delta r_{\delta}'$,
    \begin{align} 
        2\ell \lvert \langle \vec{w}_{\nn}, \hatn \rangle \rvert = \lvert \langle p_{j,k} +\ell \vec{w}_{\nn},\hatn   \rangle - \langle p_{j,k} -\ell \vec{w}_{\nn},\hatn \rangle \rvert 
        \leq 8 \delta r_{\delta}'  + 4\sE + 4\epsilon. \label{eq:SilentTown}    
    \end{align}
    The assumption that $p_{j,k}$ is at distance $> \sE/2$ from all barriers implies that all points in a ball of radius $\sE/2$ lie on the same side of $B_i$. 
    In particular, if $p_{j,k}$ lies on the positive side then so should $p_{j,k} - (\sE/2)\hat{n}$, and hence \cref{item:r'3} in \Cref{lem: VariantLocalStraight} yields $\langle p_{j,k}, \hat{n} \rangle - \sE/2 > \mathfrak{c} - 4\delta r_{\delta}'$. 
    Similarly, if $p_{j,k}$ lies on the negative side then $\langle p_{j,k}, \hat{n} \rangle + \sE/2 < \mathfrak{c} + 4\delta r_{\delta}'$
    . 
    Taking $c_2$ sufficiently large in the assumption that $\sE \geq c_2 \epsilon$ and using \eqref{eq:QuickRug}, it may here be assumed that $\sE/2 - 4\delta r_\delta' >0$.
    Hence, regardless of what side $p_{j,k}$ lies on, 
    \begin{align} 
        \lvert \langle p_{j,k}, \hatn \rangle - \mathfrak{c} \rvert \geq \sE/2 - 4\delta r_{\delta}'.  \label{eq:HauntedDragon}
    \end{align}
    Now note that every point $z\in \cup_{h=-2}^2\cR(j,k,{\nn},h)$ can be represented as $z = p_{j,k} + a\vec{w}_{\nn} + b\vec{w}_{\nn}^{\perp}$ for $a \in [-\ell/10, \ell/10]$ and $b\in [-(2+1/2)\epsilon, (2+1/2)\epsilon]$.
    Consequently, for every such $z$, using  \eqref{eq:HauntedDragon}, the bound on $\lvert \langle \vec{w}_{\nn}, \hatn \rangle \rvert$ resulting from \eqref{eq:SilentTown}, and the fact that $\lvert \langle \vec{w}_{\nn}^{\perp}, \hatn \rangle \rvert\leq 1$ by $\vec{w}_{\nn}^{\perp}$ being a unit vector,
    \begin{align} 
        \lvert \langle z, \hat{n} \rangle -\mathfrak{c}\rvert &\geq \lvert \langle p_{j,k} , \hat{n} \rangle - \mathfrak{c} \rvert - (\ell/10)\lvert \langle \vec{w}_{\nn}, \hatn \rangle \rvert - (2+1/2)\epsilon \lvert \langle \vec{w}_{\nn}^{\perp}, \hatn \rangle \rvert \label{eq:CrimsonRaven}\\ 
        & \geq (1/2 - 4/20)\sE - (4 + 8/20)\delta r_{\delta}' - (4/20 + 2 + 1/2)\epsilon. \nonumber      
    \end{align}  
    In particular, also using \eqref{eq:QuickRug} to rewrite $\delta r_\delta'$ in terms of $\epsilon$, there exist absolute constants $c,C>0$ such that 
    \begin{align} 
        \lvert \langle z, \hat{n} \rangle  -\mathfrak{c}\rvert = 4\delta r_{\delta}' + (
        \lvert \langle z, \hat{n} \rangle  -\mathfrak{c}\rvert  - 4\delta r_{\delta}') \geq  4\delta r_{\delta}' + (c\sE - C \epsilon).\label{eq:ZenFan}      
    \end{align}
    Here, taking $c_2$ sufficiently large, it can be ensured that $c\sE  - C \epsilon>0$. 
    Then, \cref{item:r'1} from \Cref{lem: VariantLocalStraight} implies that $z\not\in B_i$. 
    The latter lemma is applicable since $\cup_{h=-2}^2 \cR(j,k,n,h) \subseteq \sB(p_{j,k}, r_{\delta}')$ by the discussion after \eqref{eq:GladYoga}. 

    Considering that $z$ was an arbitrary element, this shows that $\cup_{h=-2}^2\cR(j,k,{\nn},h)$ does not intersect the unique barrier $B_i$ with $B_i \cap \sB(p_{j,k}, r_{\delta}') \neq \emptyset$. 
    Equivalently, since $\cR(j,k,{\nn},h) \subseteq \sB(p_{j,k}, r_{\delta}')$, we have that $\cup_{h=-2}^2 \cR(j,k,{\nn},h)$ does not intersect any barrier. 
    This concludes the proof.   
\end{proof}

\subsection{Proof of \texorpdfstring{\Cref{lem: Rectangle_Concentration}}{Lemma}}\label{apx: ProofRectangleConcentration}
This proceeds similarly to the proof of \Cref{lem: ConcentrationUnionBoxes}:    
\begin{proof}
    Temporarily fix some $0 \leq {\nn}\leq \lfloor 2\pi \ell / \epsilon \rfloor$ and $\lvert h \rvert\leq 2$. 
    Taking $c_1,c_2$ sufficiently small in the assumption, we may assume that $\ell >\epsilon$. 
    Then, $\cR(j,k,{\nn},h) \subseteq D$ necessitates that $\sB(p_{j,k} + h\epsilon \vec{w}_{\nn}^{\perp}, \epsilon/2) \subseteq D$. 
    Considering that the $p_{j,k}$ associated with different $j,k$ are at distance $\geq \epsilon$ from each other, the balls $\sB(p_{j,k} + h\epsilon\vec{w}_{\nn}^{\perp}, \epsilon /2)$ are all disjoint. 
    Hence, there can be at most $\Area(D) / ( \pi\epsilon^2/4)$ integers $j,k$ with $\cR(j,k,{\nn},h) \subseteq D$.

    There are additionally five possible choices for $h$, and at most $2\pi \ell/\epsilon + 1$ possible choices for ${\nn}$.
    Recall that we may assume that $\ell >\epsilon$. 
    In particular, this yields that $2\pi \ell/\epsilon + 1 \leq 3\pi\ell/\epsilon$.  
    The number of rectangular regions with  $\cR(j,k,{\nn},h) \subseteq D$ is hence at most $60\Area(D)\ell/\epsilon^3 = 60 \Area(D)/(\kappa^3\ell^5)$. 

    Note that $\Area(\cR(j,k,{\nn},h)) =\epsilon\ell = \kappa\ell^3$.  
    Further, taking $c_2$ sufficiently small, we may assume that the diameter $\sup_{a,b\in \cR(j,k,{\nn},h)}\Vert a-b  \Vert$ is at most $\sqrt{t}$.  
    Hence, by \Cref{lem: Concentration_hatPS} and the union bound, 
    \begin{align} 
        \bbP\bigl(\Wu(\hat{P}_{\cR(j,k,{\nn},h)}, P_{\cR(j,k,{\nn},h)}) \leq \beta \sqrt{t},\ &\forall  j,k,{\nn},h \text{ with }\cR(j,k,{\nn},h) \subseteq D \bigr)\label{eq:GhostlyAnt}\\ 
        & \geq 1 - C\frac{\Area(D)}{\kappa^3\ell^5} \exp\Bigl(- c\frac{T \pi_{\min} \kappa \ell^3 }{\tmix } \Bigr)\nonumber   
    \end{align}
    for certain constants $C,c>0$ depending only on $\alpha, \beta$ and $\gamma$.  
    We will rely on the assumption \eqref{eq: Main_GlobalRecovery_n} regarding $T$ to estimate the right-hand side of \eqref{eq:GhostlyAnt}.
    First, however, we consider some preliminary estimates to rewrite the logarithmic factor in \eqref{eq: Main_GlobalRecovery_n}.

    As in the discussion preceding \eqref{eq:ValidVolt}, the assumption that the curvature of $B_0$ is bounded by $\kappa$ implies that $\Area(D) \geq \pi/\kappa^2$.
    Consequently, also using that $\eta <1$, 
    \begin{align} 
        \ln\Bigl(\pi \sqrt{\frac{1}{\kappa^3 \varepsilon^3}} \Bigr) \leq \ln\Bigl(\frac{\Area(D)}{\eta} \sqrt{\frac{\kappa}{\varepsilon^3}} \Bigr).  
    \end{align}
    Hence, first using that $\ell = \gamma \sqrt{\varepsilon/\kappa}$ and some direct calculations using the properties of the logarithm,   
    \begin{align} 
        \ln\Bigl(\frac{C\Area(D)}{\eta} \frac{1}{\kappa^3\ell^5}\Bigr) &=  \ln\Bigl(\frac{\Area(D)}{\eta} \sqrt{\frac{\kappa}{\varepsilon^3}} \Bigr) 
        + \frac{3}{2} \ln\Bigl(\pi \sqrt{\frac{1}{\kappa^3 \varepsilon^3} }\Bigr) + \ln\Bigl(\pi^{-2/3}\frac{C}{\gamma^5}\Bigr)
        \label{eq:CalmCobra}\\ 
        & \leq    
        \frac{5}{2}\ln\Bigl(\frac{\Area(D)}{\eta} \sqrt{\frac{\kappa}{\varepsilon^3}} \Bigr) + \ln\Bigl(\pi^{-2/3} \frac{C}{ \gamma^5}\Bigr). \nonumber 
    \end{align}
    Taking $c_1$ and $c_2$ sufficiently small, it can be ensured that $\sqrt{\varepsilon^3/\kappa} \leq \pi/(2\kappa^2) \leq  \Area(D)/(2\eta)$ so that the first term on the right-hand side of \eqref{eq:CalmCobra} is $\geq (5/2)\ln(2)\geq 1$.
    Hence, picking some large constant $C'>0$ depending on $\gamma$ to also cover the second term in \eqref{eq:CalmCobra},   
    \begin{align} 
        \ln\Bigl(\frac{\Area(D)}{\eta} \frac{1}{\kappa^3\ell^5}\Bigr) \leq C'\ln\Bigl(\frac{\Area(D)}{\eta} \sqrt{\frac{\kappa}{\varepsilon^3}} \Bigr).\label{eq:GhostlyBox} 
    \end{align}
    
    Now, taking $c_3 \geq C'/c$ sufficiently large in the assumption \eqref{eq: Main_GlobalRecovery_n} and using \eqref{eq:GhostlyBox} as well as the assumption that $\ell = \gamma \sqrt{\varepsilon/\kappa}$, it can be ensured that    
    \begin{align} 
        T \geq \frac{1}{c}\frac{\tmix}{\pi_{\min}} \frac{1}{\kappa \ell^3} \ln\Bigl(\frac{C\Area(D)}{\eta}\frac{1}{\kappa^3 \ell^5}\Bigr). \label{eq:NewRock}
    \end{align}    
    Combine \eqref{eq:GhostlyAnt} and \eqref{eq:NewRock} to conclude the proof. 
\end{proof}

\section{Proof of \texorpdfstring{\Cref{lem: DriftExponential}}{Lemma}}\label{apx: ProofDriftExponential}
\begin{proof}
    Consider the sequence of independent and identically distributed $\{0,1 \}$-valued random variables $Z_1, Z_2, \ldots $ whose joint law with the $Y_n$ is defined by  
    \begin{align} 
        \bbP(Z_n = 1 \mid Y_1,\ldots,Y_n, Z_1,\ldots,Z_{n-1}) \de 
        \begin{cases}
            0 & \text{ if }Y_n =0,\\ 
            q/\bbP(Y_n =1 \mid Y_1,\ldots, Y_{n-1}) & \text{ if } Y_n = 1.  
        \end{cases}
    \end{align} 
    Then, we have $Z_n \leq Y_n$ with probability one, so it suffices to prove \eqref{eq:BlueOtter} for $\sum_{i=1}^n Z_i$. 
    Note that $\bbP(Z_i = 1) = q$. 
    Hence, by the union bound and Hoeffding's inequality,  
    \begin{align} 
        \bbP\Bigl(\sum_{i=1}^{n}Z_i \leq n (q-\eta) \text{ for some }n\geq n_0 \Bigr) \leq \sum_{n=n_0}^\infty \bbP\Bigl(\sum_{i=1}^{n}Z_i \leq n (q-\eta) \Bigr) \leq  \sum_{n=n_0}^\infty\exp(-2\eta^2 n).\label{eq:KindTin}
    \end{align}
    \Cref{lem: DriftExponential} now follows by summing the series and using that $\exp(2\eta^2)/(\exp(2\eta^2)-1) \leq 2\eta^{-2}$ for all $\eta \leq 1$. 
\end{proof} 

\section{Proof of \texorpdfstring{\Cref{cor: NumTransitions}}{Corollary}}\label{apx: ProofNumTransition}
The following lemma makes it rigorous that transitions from $R_+(p,\vecv)$ to $H_{-}(p,\vecv)$ are unlikely if and only if there is a barrier in the way.    
\begin{lemma}\label{lem: NumberOfTransitions}
    There exist absolute constants $q,c_1,\ldots,c_4>0$ such that for every $\sqrt{t} \leq \ell \leq c_1(t \min\{ 1/\kappa^2, 1/\lambda_{\max}^2, \rho^2\})^{1/4}$ and unit vector $\vecv \in \bbR^2$, the following holds. 
    
    For every point $p\in D$ satisfying $\Vert p -y \Vert \geq c_2\ell $ for every $y\in \cup_{i=0}^{\nb} B_i$,  
    \begin{align} 
        \inf\{\bbP(X_t \in H _{-}(p,\vecv) \mid X_0 = x_0) : x_0 \in R_+(p,\vecv) \} \geq q.\label{eq:HappyRam} 
    \end{align}    
    On the other hand, if $\Vert p-y \Vert\leq c_3 \sqrt{t}$ and $\min_{\pm \in \{+,- \}}\Vert \vec{v} \pm \vec{n}_i(y) \Vert \leq c_4 \sqrt{t}/\ell$ for some $y\in B_i$, 
    \begin{align} 
        \sup\{\bbP(X_t \in H_{-}(p,\vecv) \mid X_0 = x_0) : x_0 \in R_+(p,\vecv) \} \leq q/2. \label{eq:JollyHat}
    \end{align}
\end{lemma}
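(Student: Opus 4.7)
The plan is to reduce both statements to elementary Gaussian calculations by exploiting the coupling machinery of Section~\ref{sec: Local}. Set the approximation scale $\delta \asymp \sqrt{t}/\ell$. Under the standing assumption $\sqrt{t}\le \ell\le c_1(t\min\{1/\kappa^2,1/\lambda_{\max}^2,\rho^2\})^{1/4}$ one checks that $\delta < 1/2$, $r_\delta\ge \ell/c_1^2$, and $\delta r_\delta \lesssim \sqrt{t}$; these bounds are what justify applying \Cref{lem: BarrierLocallyStraight}, \Cref{lem: X_UpperLower_Y}, and \Cref{cor: X_uniform_Y} at the relevant spatial scale, with all approximation errors dominated by $\sqrt{t}$.

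For the lower bound \eqref{eq:HappyRam} in the ``no barrier nearby'' case, note that any $x_0\in R_+(p,\vecv)$ satisfies $\Vert x_0 - p\Vert \le \sqrt{(2\sqrt{t})^2+\ell^2}\lesssim \ell$, so taking $c_2$ large enough ensures $\sB(x_0,r_\delta)\cap (\cup_{i=0}^{\nb} B_i)=\emptyset$. Then \eqref{eq:BlueCity} gives $X_t = x_0 + W_t$ with probability at least $1-C_2(t/r_\delta^2)\ge 1-C c_1^4$. On this event, $X_t\in H_-(p,\vecv)$ is equivalent to $\langle W_t,\vecv\rangle < -\sqrt{t}-\langle x_0-p,\vecv\rangle$; since $\langle x_0-p,\vecv\rangle\le 2\sqrt{t}$, this is implied by $\langle W_t,\vecv\rangle/\sqrt{t}<-3$, a standard Gaussian event with a positive absolute lower bound. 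Choosing $c_1$ small yields \eqref{eq:HappyRam} for an explicit absolute constant $q>0$.

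For the upper bound \eqref{eq:JollyHat} in the ``barrier nearby'' case, apply \Cref{lem: BarrierLocallyStraight} at the point $y\in B_i$ close to $p$. Because $\Vert \vecv\pm\vec{n}_i(y)\Vert\lesssim \sqrt{t}/\ell$ and $\Vert p-y\Vert\lesssim\sqrt{t}$, the line $A^+$ from \Cref{sec: XtYtApprox} is, up to an $O(\sqrt{t})$ offset, the hyperplane $\{q:\langle q-p,\vecv\rangle=\text{const}\}$; in particular, for appropriately oriented $\vecv$ relative to $s_i(0)\vec{n}_i(y)$, any $x_0\in R_+(p,\vecv)$ lies on the positive side of $A^+$ at signed $\vecv$-distance at least $\sqrt{t}-O(\sqrt{t}\cdot c_3+\sqrt{t}\cdot c_4+\delta r_\delta)$, which can be made $\ge \sqrt{t}/2$ by taking $c_3,c_4,c_1$ small. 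Coupling $X$ with the half-plane reflected process $Y^+_t$ via \Cref{lem: X_UpperLower_Y} and using that $Y^+$ stays on the positive side of $A^+$, one controls $\langle X_{\min\{t,\tau\}}-p,\vecv\rangle$ from below. Since reaching $H_-(p,\vecv)$ requires this inner product to drop below $-\sqrt{t}$ and \Cref{cor: TauBound} gives $\bbP(\tau\le t)\le \delta + Ct/r_\delta^2=O(\sqrt{t}/\ell + c_1^4)$, the transition can only happen when either $\tau\le t$ or the coupling error $O(\delta r_\delta)$ is atypically large---both events of probability going to $0$ as the constants shrink.

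The main obstacle is bookkeeping the two independent smallness requirements in the second part: the coupling error $\delta r_\delta$ has to be small compared to $\sqrt{t}$ \emph{and} the probability $\bbP(\tau\le t)$ has to be small compared to the absolute constant $q$ obtained in the first part, so that $q/2$ genuinely dominates. Both reduce to choosing the absolute constants $c_1,c_3,c_4$ sufficiently small in terms of $q$; the key inequality $t/r_\delta^2 \lesssim c_1^4$ and $\delta r_\delta/\sqrt{t}\lesssim 1$ follow directly from the upper bound on $\ell$ in the hypothesis. With those constants fixed, a union bound over the two failure modes yields \eqref{eq:JollyHat}, and \Cref{cor: NumTransitions} then follows from \Cref{lem: DriftExponential} applied to the Bernoulli sequence indicating whether the $n$th visit to $R_+(p,\vecv)$ is followed by a transition to $H_-(p,\vecv)$, using the strong Markov property of $\sX_t$ from \Cref{prop: X_Markov}.
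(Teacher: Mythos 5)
Your overall strategy (reduce to straight-line reflection via the coupling machinery of \Cref{sec: Local}, then elementary Gaussian estimates) is sound, and the second half—coupling $X$ with $Y^+$ via \Cref{lem: X_UpperLower_Y} to keep the process away from $H_-(p,\vecv)$—is a genuinely different route from the paper, which instead argues directly that $R_+(p,\vecv)$ and $H_-(p,\vecv)\cap \sB(x_0,r_\delta)$ sit on strictly opposite sides of $B_i$ (via \Cref{lem: BarrierLocallyStraight}) and then invokes property \ref{item: Def_ReflectedBrownianMotion_iii} of \Cref{def: ReflectedBrownianMotion} together with \Cref{cor: TauBound}, never touching $Y^+$. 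Both routes can be made to work.

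However, there is a genuine gap in the choice of $\delta$. You set $\delta\asymp\sqrt{t}/\ell$, so that $r_\delta=\delta\min\{1/\kappa,1/\lambda_{\max},\rho\}=(\sqrt{t}/\ell)\min\{1/\kappa,1/\lambda_{\max},\rho\}$. The standing constraint $\ell\le c_1(t\min\{1/\kappa^2,1/\lambda_{\max}^2,\rho^2\})^{1/4}$ gives only a \emph{lower} bound $\min\{1/\kappa,1/\lambda_{\max},\rho\}\ge \ell^2/(c_1^2\sqrt{t})$; it gives no upper bound, so $r_\delta$ and $\delta r_\delta=(t/\ell^2)\min\{1/\kappa,1/\lambda_{\max},\rho\}$ are not controlled from above (e.g., when the barriers are nearly flat and nearly impermeable both can be arbitrarily large). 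This breaks both halves of your argument: in the first half, $\sB(x_0,r_\delta)\cap(\cup_i B_i)=\emptyset$ needs $r_\delta$ no larger than the $\gtrsim c_2\ell$ separation of $x_0$ from the barriers, and no choice of $c_2$ rescues this once $r_\delta\gg\ell$; in the second half, the claim $\delta r_\delta\lesssim\sqrt{t}$ is false, so the $O(\delta r_\delta)$ coupling error can swallow the $\sqrt{t}$ gap between $R_+$ and $H_-$. The correct scaling—used in the paper—is $\delta= C\ell/\min\{1/\kappa,1/\lambda_{\max},\rho\}$, which makes $r_\delta=C\ell$ exactly and yields $\delta r_\delta=C^2\ell^2/\min\{1/\kappa,1/\lambda_{\max},\rho\}\le C^2c_1^2\sqrt{t}$ from the hypothesis on $\ell$. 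With this $\delta$ substituted, your remaining steps go through, though the case bookkeeping for the orientation of $\vecv$ versus $s_i(0)\vec{n}_i(y)$ (which sign of $\pm$ is active, and how $s_i(0)$ is pinned down by $x_0\in R_+(p,\vecv)$) needs to be spelled out rather than left at ``appropriately oriented.''
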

\begin{proof}
    Let us start with the proof of \eqref{eq:HappyRam}. 
    Recall \eqref{eq: Def_Splus} and \eqref{eq: Def_Smin} and note that there exists an absolute constant $c >0$ such that
    $
        \Area\bigl(\sB(x_0, 4\sqrt{t}) \cap  H_{-}(p,\vecv)\bigr) \geq c t 
    $
    for every $x_0 \in R_+(p,\vecv)$. 
    Consequently, since $x_0 + W_t$ has density of order $\geq 1/t$ on $\sB(x_0, 4\sqrt{t})$, there exists an absolute constant $c' >0$ such that 
    \begin{align} 
        \bbP\bigl(x_0 + W_t \in H_{-}(p,\vecv)\bigr) \geq \bbP\bigl(x_0 + W_t \in \sB(x_0, 4\sqrt{t}) \cap  H_{-}(p,\vecv)\bigr) \geq c'. \label{eq:WeepyVine}
    \end{align}

    Recall the assumption that $p$ is at distance $c_2\ell$ from all barriers and note that $x_0 \in R_+(p,\vecv)$ implies that $\Vert x_0 - p \Vert \leq \ell + 2\sqrt{2}$.
    It hence follows that $x_0$ is at distance $\geq (c_2-1)\ell - 2\sqrt{t} \geq (c_2 - 3)\sqrt{t}$ from all barriers due to the triangle inequality.  
    Then, by \eqref{eq:BlueCity} from \Cref{cor: X_uniform_Y}, if $c_1$ is sufficiently small and $c_2$ is sufficiently large,  
    \begin{align} 
        \bbP(X_t = x_0 + W_t ) \geq 1 - c'/2.\label{eq:DangerousGoose}
    \end{align}     
    In other words, we have $X_t = x_0 + W_t$ with high probability. 
    The combination of \eqref{eq:WeepyVine} and \eqref{eq:DangerousGoose} yields \eqref{eq:HappyRam} with $q \de c'/2$.
    
    We next prove \eqref{eq:JollyHat}. 
    Let us consider the case where $\Vert \vecv - \vec{n}_i(y) \Vert \leq c_4 \sqrt{t}/\ell$; the case with $\Vert \vecv + \vec{n}_i(y) \Vert \leq c_4 \sqrt{t}/\ell$ proceeds similarly. 
    Denote $\delta \de C\ell/\min\{1/\kappa,1/\lambda_{\max},\rho\}$ for some constant $C \geq 3$ which will soon be fixed.

    The assumed inequalities on $\ell$ and $t$ imply, in particular, that $t\leq c_1^4 \min\{1/\kappa^2,1/\lambda_{\max}^2,\rho^2 \}$.
    Hence, by a second application of the assumed bounds on $\ell$, we have $\delta \leq c_1^2 C$ and $r_{\delta} = C\ell \geq C\sqrt{t}$. 
    Now, first fixing $C$ at a sufficiently large value and subsequently taking $c_1$ sufficiently small, it can be ensured using \Cref{cor: TauBound} that for every $x_0 \in R_+(p,\vecv)$,  
    \begin{align} 
        \bbP\bigl(X_t \not\in \sB(x_0,r_\delta)\ \text{ or }\ s_i(\Lc{i}_t) \neq s_i(0)\mid X_0 = x_0 \bigr)\leq q/2.\label{eq:CozyDog} 
    \end{align} 
    
    Recall \eqref{eq: Def_Splus} and note that the distance from points in $R_+(p,\vecv)$ to $p$ is at most $\ell + 2\sqrt{t} \leq 3\ell$. 
    In particular, recalling that $r_{\delta} = C\ell \geq 3\ell$ since $C\geq 3$, we may assume that $\sB(x_0, r_\delta) \subseteq \sB(p, 2r_\delta)$ for every $x_0 \in R_+(p,\vecv)$. 
    Hence, for every $x_0\in R_+(p,\vecv)$,    
    \begin{align} 
        x_0  \in {}&{}\{z_+\in \sB(p, 2r_\delta): \langle z_+ - p, \vecv \rangle \geq \sqrt{t} \},\label{eq:LuckyIce} \\ 
        H_-(p,\vecv) \cap \sB(x_0, r_\delta) \subseteq   {}&{}\{z_-\in \sB(p, 2r_\delta): \langle z_- - p, \vecv \rangle \leq - \sqrt{t} \}.\label{eq:JollyLobster} 
    \end{align} 
    Hence, also using \eqref{eq:CozyDog}, the desired result in \eqref{eq:JollyHat} follows if we show that every $z_+$ as in \eqref{eq:LuckyIce} is strictly on the positive side of $B_i$ and that every $z_-$ as in \eqref{eq:JollyLobster} is strictly on the negative side. 
    Let us prove this for $z_+$.
    The proof for $z_-$ proceeds similarly. 

    Recall that $\delta \leq c_1^2C$. 
    In particular, taking $c_1$ sufficiently small, we may assume that $\delta <1/2$ so that \Cref{lem: BarrierLocallyStraight} is applicable.
    Then, \cref{item:JollyDragon} in that lemma implies that it suffices to show that $z_+$ satisfies $\langle z_+, \hatn  \rangle \geq \mathfrak{c} + 4\delta r_{\delta}$.
    By the bilinearity of the inner product, 
    \begin{align} 
        \langle z_+  ,\hatn \rangle = \langle y,\hatn  \rangle  +  \langle z_+ - p, \hatn \rangle + \langle p  - y,\hatn\rangle.    \label{eq:YellowTrade}
    \end{align}
    Recall that $\Vert \hatn - \vec{v} \Vert \leq c_4 \sqrt{t}/\ell$ and $r_\delta =C\ell$. 
    Consequently, using \eqref{eq:LuckyIce}, the Cauchy--Schwarz inequality, and taking $c_4$ sufficiently small, 
    \begin{align} 
        \langle z_+ - p, \hatn \rangle  = \langle z_+ - p,\vecv \rangle +  \langle z_+ - p, \hatn - \vecv \rangle  \geq (1 -  2c_4 C)\sqrt{t}  \geq \sqrt{t}/2.       
    \end{align}
    Further recall that $\Vert p - y \Vert \leq c_3\sqrt{t}$. 
    Hence, taking $c_3$ sufficiently small and using that $\hat{n}$ is a unit vector, 
    \begin{align} 
        \langle p - y , \hatn\rangle \geq -c_3\sqrt{t} \geq -\sqrt{t}/4.\label{eq:QuickHermit}
    \end{align} 
    Now combining \eqref{eq:YellowTrade}--\eqref{eq:QuickHermit} and using that $ \langle y , \hatn \rangle  \geq \mathfrak{c} - 4\delta r_\delta$ by \cref{item:GhostlyQuip} in \Cref{lem: BarrierLocallyStraight},
    \begin{align} 
        \langle z_+ , \hatn \rangle \geq \langle y , \hatn \rangle + \sqrt{t}/4 \geq \mathfrak{c} - 4\delta r_{\delta} + \sqrt{t}/4.  \label{eq:AlertBird} 
    \end{align}
    Here, note that $\delta r_{\delta} = C^2 \ell^2/\min\{ 1/\kappa, 1/\lambda_{\max},\rho\} \leq c_1^2  C^2 \sqrt{t}$ by definition of $\delta$ and the assumed bound on $\ell$.
    Hence, taking $c_1$ sufficiently small ensures that the right-hand side of \eqref{eq:AlertBird} is $\geq \mathfrak{c} +  4\delta r_{\delta}$. 
    This concludes the proof.     
\end{proof}

\begin{proof}[Proof of \texorpdfstring{\Cref{cor: NumTransitions}}{Corollary}]
    We start with \eqref{eq:TallSax}.
    Denote the $n$th index with $X_{it} \in R_+(p,\vecv)$ by 
    \begin{align} 
        I_{n} \de \inf\{i\geq 0: \#\{0 \leq j \leq i: X_{jt} \in R_+(p,\vecv) \} \geq n \}. \label{eq:EasyGym}
    \end{align}
    Then, a random variable which indicates whether the $n$th visit to $R_+(p,\vecv)$ was followed by a transition to $H_{-}(p,\vecv)$ may be defined by 
    $
        Y_n \de \bb1\{X_{(I_n + 1) t} \in H_{-}(p,\vecv) \}.  
    $
    Recall \eqref{eq:VividImp} and \eqref{eq:ValidOrb} and observe that $M(p,\vecv) = \sum_{i=1}^{N(p,\vecv)} Y_i$. 
    Consequently,   
    \begin{align} 
        \bbP\bigl(M(p,\vecv) / N(p,\vecv) < c_3 \text{ and }  N(p,\vecv) \geq n_0  \bigr) 
        &=  \bbP\Bigl(\sum_{i=1}^{N(p,\vecv)} Y_i <  c_3N(p,\vecv) \text{ and }  N(p,\vecv) \geq n_0  \Bigr)\nonumber\\ 
        &\leq \bbP\Bigl(\sum_{i= 1}^{n} Y_i < c_3 n \text{ for some }n \geq n_0   \Bigr).\label{eq:HighSong} 
    \end{align}  
    Here, applying the strong Markovianity of \Cref{prop: X_Markov} with the stopping time $I_n t$ and using \Cref{lem: NumberOfTransitions}, there exists an absolute constant $q>0$ such that 
    \begin{align} 
        \bbP(Y_n=1 \mid Y_1,\ldots,Y_{n-1}) \geq \inf\{\bbP(X_t \in H_{-}(p,\vecv) \mid X_0 = x_0') : x_0' \in R_+(p,\vecv) \}\geq q \label{eq:TrustyCrow}
    \end{align}
    provided that $c_1$ is taken sufficiently small and $c_2$ sufficiently large.  
    Now, by \Cref{lem: DriftExponential} with $\eta = q/4$, it holds with $c_3 \de (3/4)q$ that   
    \begin{align} 
        \bbP\bigl(M(p,\vecv) / N(p,\vecv) < c_3 \text{ and }  N(p,\vecv) \geq n_0  \bigr) \leq 32q^{-2} \exp(-q^2n_0/8). 
    \end{align} 
    This proves \eqref{eq:TallSax}. 

    As for \eqref{eq:ZippyCity}, let $I_n$ be as in \eqref{eq:EasyGym} and define 
    $
        Z_n \de \bb1\{X_{(I_n+1)t} \not\in H_{-}(p,\vecv)\}.  
    $ 
    Then, similarly to \eqref{eq:HighSong}, we have that 
    \begin{align} 
        \bbP\bigl(M(p,\vecv) / N(p,\vecv) \geq  {}&{}c_3 \text{ and }  N(p,\vecv) \geq n_0  \bigr)\label{eq:FairVine}\\
        &= 
        \bbP\bigl((N(p,\vecv)  - M(p,\vecv))/ N(p,\vecv) \leq 1-c_3 \text{ and }  N(p,\vecv) \geq n_0  \bigr) \nonumber\\ 
        &= \bbP\Bigl(\sum_{i=1}^{N(p,\vecv)} Z_i \leq  (1-c_3) N(p,\vecv) \text{ and }  N(p,\vecv) \geq n_0  \Bigr)\nonumber\\ 
        &\leq \bbP\Bigl(\sum_{i=1}^{n} Z_i \leq  (1-c_3) n \text{ for some }  n \geq n_0  \Bigr).\nonumber   
    \end{align}
    Further, similarly to \eqref{eq:TrustyCrow}, now using \eqref{eq:JollyHat} from \Cref{lem: NumberOfTransitions}, 
    \begin{align} 
        \bbP(Z_n=1 \mid Z_1,\ldots,Z_{n-1}) &\geq 1  - \sup\{\bbP(X_t \in H_{-}(p,\vecv) \mid X_0 = x_0') : x_0' \in R_+(p,\vecv) \} \label{eq:JustCamel} \\ 
        &\geq 1 - q/2.\nonumber  
    \end{align}
    Recall that $c_3 = (3/4)q$ and fix some $\eta < q/4$. 
    Then, we have that $1 - q/2 - \eta > 1 - c_3$. 
    Hence, the combination of \Cref{lem: DriftExponential} with \eqref{eq:FairVine} and \eqref{eq:JustCamel} yields \eqref{eq:ZippyCity}.  
\end{proof}

\section{Proof of \texorpdfstring{\Cref{lem: NewManyHits}}{Lemma}}\label{apx: ReductionToBrownianCount}
We rely on the tools from \Cref{sec: Local} with 
\begin{align} 
    \delta \de \ell/ (2\min\{ 1/\kappa, 1/\lambda_{\max},\rho \}). \label{eq: Def_delta}
\end{align}  
Then, with $r_{\delta}$ as in \eqref{eq: Def_R_eps}, it follows from \eqref{eq:NewMoistKnot} that   
\begin{align} 
    r_\delta = \ell/2, \qquad \qquad \delta \leq c_2/2, \qquad\qquad \delta r_{\delta}  \leq  (c_1^2/4) \sqrt{t}.     \label{eq: rdel_and_delrdel}  
\end{align}
As a preliminary reduction, we replace $R_+(p,\vecv)$ by another set which is more convenient for our purposes; see \Cref{fig: Splus_Shat} for a visualization. 
\begin{lemma}\label{lem: Splus_Shat}
    For every $\zeta < 1/2$ there exist $c_1,\ldots, c_4>0$ such that the following holds for every $t,\ell >0$, $x_0\in B_i$, $p\in \bbR^2$, and unit vector $\vecv \in \bbR^2$ satisfying \eqref{eq:NewMoistKnot} for these constants.

    Let $R_+(p,\vecv)$ be as in \eqref{eq: Def_Splus} and $\hatn$ as in \Cref{lem: BarrierLocallyStraight}. 
    Then, one has $\widehat{R}_+ \subseteq R_+(p,\vecv)$ for
    \begin{align} 
        \widehat{R}_+ \de\Bigl\{q \in \sB(x_0, r_\delta): (1+ \zeta)\sqrt{t} \leq \langle q - x_0, s_i(0)\hatn \rangle   \leq (2-\zeta)\sqrt{t} \Bigr\}.  \label{eq:DarkBee}
    \end{align}
\end{lemma}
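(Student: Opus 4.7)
The plan is to verify directly that any $q \in \widehat{R}_+$ satisfies both defining inequalities of $R_+(p,\vecv)$ in \eqref{eq: Def_Splus}, using the closeness between $\vecv$, $s_i(0)\vec{n}_i(x_0)$, and $s_i(0)\hatn$, together with the smallness of $\Vert x_0 - p\Vert$ and $\Vert q - x_0\Vert$. The main quantitative input is the combination of the assumptions in \eqref{eq:NewMoistKnot} with the bookkeeping already carried out in \eqref{eq: rdel_and_delrdel}.

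First I would bound the deviation between $\vecv$ and $s_i(0)\hatn$. By the triangle inequality,
\begin{align}
    \Vert \vecv - s_i(0)\hatn \Vert \leq \Vert \vecv - s_i(0)\vec{n}_i(x_0)\Vert + \Vert \vec{n}_i(x_0) - \hatn\Vert \leq c_4 \sqrt{t}/\ell + 2\delta, \nonumber
\end{align}
where the first term uses the hypothesis in \eqref{eq:NewMoistKnot} and the second uses \cref{item:IdleBat} of \Cref{lem: BarrierLocallyStraight}; here one needs $\delta < 1/2$, which follows from $\delta \leq c_2/2$ in \eqref{eq: rdel_and_delrdel} provided $c_2$ is small.

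Next, for a point $q \in \widehat{R}_+$, decompose
\begin{align}
    \langle q - p, \vecv\rangle = \langle q - x_0, s_i(0)\hatn\rangle + \langle q - x_0, \vecv - s_i(0)\hatn\rangle + \langle x_0 - p, \vecv\rangle. \nonumber
\end{align}
The first summand lies in $[(1+\zeta)\sqrt{t},(2-\zeta)\sqrt{t}]$ by the definition of $\widehat{R}_+$. The second is bounded by Cauchy--Schwarz via $\Vert q - x_0\Vert \leq r_\delta = \ell/2$ and the preceding estimate, giving a contribution of order $c_4\sqrt{t} + \delta\ell = c_4\sqrt{t} + 2\delta r_\delta \leq c_4\sqrt{t} + (c_1^2/2)\sqrt{t}$ by \eqref{eq: rdel_and_delrdel}. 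The third is bounded by $\Vert x_0 - p\Vert \leq c_3\sqrt{t}$. Choosing $c_1, c_3, c_4$ sufficiently small (depending only on $\zeta$) makes the combined error strictly less than $\zeta\sqrt{t}$, so that $\langle q - p, \vecv\rangle \in [\sqrt{t}, 2\sqrt{t}]$, as required.

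For the tangential constraint $|\langle q - p, \vecv^{\perp}\rangle| \leq \ell$ I would simply use $|\langle q - p, \vecv^{\perp}\rangle| \leq \Vert q - p\Vert \leq \Vert q - x_0\Vert + \Vert x_0 - p\Vert \leq \ell/2 + c_3\sqrt{t} \leq \ell$, where the last step uses $\sqrt{t}\leq \ell$ from \eqref{eq:NewMoistKnot} together with the constraint $c_3 \leq 1/2$. There is no substantive obstacle here; the only mildly delicate point is making sure that all four smallness conditions on $c_1,\ldots,c_4$ required in the two verifications are mutually compatible and depend only on $\zeta$, which they are by inspection.
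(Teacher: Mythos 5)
Your proof is correct and follows essentially the same route as the paper: the same decomposition of $\langle q-p,\vecv\rangle$ into three terms, the same Cauchy--Schwarz estimate on the middle term using \cref{item:IdleBat} of \Cref{lem: BarrierLocallyStraight} and \eqref{eq: rdel_and_delrdel}, and the same triangle-inequality bound for the tangential constraint. The only cosmetic difference is that you pre-assemble the bound on $\Vert \vecv - s_i(0)\hatn\Vert$ before applying Cauchy--Schwarz, whereas the paper keeps the two contributions split inside the estimate, but this does not change the argument in any substantive way.
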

\begin{figure}[t]
    \includegraphics[width = 0.75\textwidth]{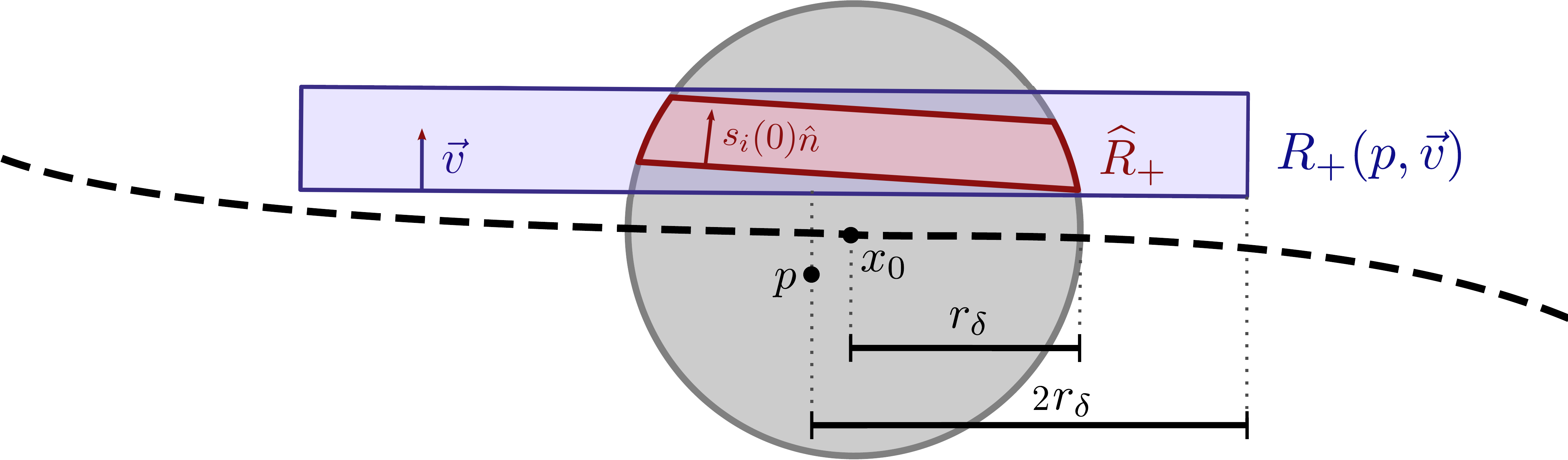}
    \caption{
    Visualization of the sets $R_+(p,\vecv)$ and $\widehat{R}_+$ considered in \Cref{lem: BarrierLocallyStraight}. 
    }
    \label{fig: Splus_Shat}
\end{figure}
\begin{proof}
    Pick some $q\in \widehat{R}_+$ and let us show that $q \in R_+(p,\vecv)$. 
    By bilinearity,
    \begin{align} 
        \langle q - p, \vecv \rangle  
        &= \langle q - x_0, s_i(0)\hatn \rangle + \langle q - x_0, \vecv- s_i(0)\hatn  \rangle + \langle  x_0 - p, \vecv \rangle.\label{eq:GreenImp}   
    \end{align}
    Regarding the first term, the assumption that $q\in \widehat{R}_+$ yields that 
    \begin{align} 
        (1 + \zeta)\sqrt{t} \leq \langle q - x_0, s_i(0)\hatn \rangle   \leq (2-\zeta)\sqrt{t}.\label{eq:AlertAir}   
    \end{align}  
    We next estimate the remaining two terms in \eqref{eq:GreenImp}. 

    By the Cauchy--Schwarz inequality and the fact that $q\in \sB(x_0,r_\delta)$,    
    \begin{align} 
        \lvert \langle q - x_0, \vecv- s_i(0)\hatn  \rangle  \rvert \leq r_\delta (\Vert \vecv - s_i(0)\vec{n}_i(x_0) \Vert +\Vert s_i(0)\hatn - s_i(0)\vec{n}_i(x_0) \Vert). \label{eq:AncientSun}  
    \end{align} 
    Taking $c_2$ sufficiently small in \eqref{eq: rdel_and_delrdel}, it can be ensured that $\delta < 1/2$.  
    Then, \cref{item:IdleBat} in \Cref{lem: BarrierLocallyStraight} is applicable and yields $\Vert s_i(0)\hatn - s_i(0)\vec{n}_i(x_0) \Vert\leq 2\delta$.
    Further, recall from \eqref{eq:NewMoistKnot} that $\Vert \vecv - s_i(0)\vec{n}_i(x_0) \Vert \leq c_4 \sqrt{t} / \ell$. 
    Hence, using \eqref{eq: rdel_and_delrdel} and taking $c_1,c_4$ sufficiently small depending on $\zeta$, we may bound the right-hand side of \eqref{eq:AncientSun} as
    \begin{align} 
        \lvert \langle q - x_0, \vecv- s_i(0)\hatn  \rangle \rvert \leq r_\delta (c_4\sqrt{t}/\ell + 2\delta ) \leq (c_4/2+ c_1^2/2 )\sqrt{t} \leq (\zeta/2) \sqrt{t}. \label{eq:CrimsonJinx}  
    \end{align}    

    Regarding the final term in \eqref{eq:GreenImp}, recall that $\vecv$ is a unit vector and that $\Vert x_0 - p \Vert \leq c_3\sqrt{t}$ by \eqref{eq:NewMoistKnot}. 
    Hence, for $c_3$ sufficiently small depending on $\zeta$,   
    \begin{align} 
        \lvert \langle x_0 - p , \vecv \rangle \rvert \leq \Vert x_0 - p \Vert \leq c_3\sqrt{t} \leq (\zeta/2)\sqrt{t}. \label{eq:PoliteHotel}
    \end{align} 
    Combine \eqref{eq:CrimsonJinx} and \eqref{eq:PoliteHotel} to see that the total contribution of the final two terms in \eqref{eq:GreenImp} is at most $\zeta\sqrt{t}$.
    Hence, by \eqref{eq:AlertAir}, we have $\langle q - p, \vecv \rangle \in [\sqrt{t}, 2\sqrt{t}]$.

    Recalling the definition of $R_+(p,\vecv)$ from \eqref{eq: Def_Splus}, it remains to show that $\lvert \langle q-p, \vec{v}^{\perp} \rangle \rvert\leq \ell$.     
    Recall that $\vec{v}^\perp$ is a unit vector and that $q \in \sB(x_0, r_\delta)$. 
    Hence, using \eqref{eq:NewMoistKnot} and \eqref{eq: rdel_and_delrdel} and choosing $c_3$ sufficiently small,
    \begin{align} 
        \lvert \langle q-p, \vec{v}^{\perp} \rangle \rvert \leq \Vert q - x_0 \Vert + \Vert x_0 - p \Vert \leq r_\delta + c_3\sqrt{t} \leq (1/2 + c_3)\ell \leq \ell. \label{eq:MadPizza} 
    \end{align}
    This concludes the proof.  
\end{proof}

Let $Y_t^+$ and $y_0^+$ be as in \Cref{sec: XtYtApprox}.
Then, using the preliminary reduction of \Cref{lem: Splus_Shat} together with \Cref{cor: X_uniform_Y} reduces us to studying the number of times that $Y_t^+$ visits a horizontal strip:  
\begin{lemma}\label{lem: Nplus}
    For every $\xi < 1/2$ and $\eta \in (0,1)$ there exist $c_1,\ldots,c_5>0$ such that the following holds for every $\ell, t, x_0, p, \vecv$ which satisfy \eqref{eq:NewMoistKnot} with respect to $c_1,\ldots,c_4$. 
    For every $1 \leq J  \leq c_5 \ell^2/t$, every $n_0 \geq 0$, and every $0 \leq \gamma < 1$,  
    \begin{align} 
        \bbP\bigl(\#\{0 \leq {}&{} j \leq J: X_{(\gamma +j)t} \in R_+(p,\vecv)   \} \geq n_0 \mid X_0 = x_0 \bigr)\label{eq:PinkNut}\\ 
        &\geq \bbP\bigl(\#\{0 \leq j \leq J: \langle Y_{(\gamma+j)t}^+ - y_0^+, s_i(0)\hatn \rangle \in [(1+\xi)\sqrt{t}, (2-\xi)\sqrt{t} ]   \} \geq n_0 \bigr) - \eta. \nonumber  
    \end{align}
\end{lemma}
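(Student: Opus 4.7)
The plan is to reduce the count of $X$-visits to $R_+(p,\vecv)$ to the count of $Y^+$-visits in the horizontal strip by combining the inclusion $\widehat{R}_+ \subseteq R_+(p,\vecv)$ from \Cref{lem: Splus_Shat} with the coupling between $X$ and $Y^+$ developed in \Cref{sec: XtYtApprox}. I would apply \Cref{lem: Splus_Shat} with parameter $\zeta \de \xi/2$ to produce the reference set $\widehat{R}_+$. The constants $c_1,\ldots,c_4$ in the statement will be taken small enough to satisfy the hypotheses of \Cref{lem: Splus_Shat} with this choice of $\zeta$, together with the additional smallness requirements identified below.

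Two high-probability events will be needed. First, let $\Omega_1$ denote the event underlying \Cref{cor: X_uniform_Y} applied at time $Jt$; on $\Omega_1$ one has $\tau > Jt$ (so that the one-sided bound $\langle X_s - Y^+_s, s_i(0)\hatn\rangle \in [-8\delta r_\delta, 0]$ from \Cref{lem: X_UpperLower_Y} is valid for every $s\leq Jt$) together with $\sup_{s\leq Jt}\Vert X_s - Y^+_s\Vert \leq C_1\delta r_\delta$. Using $r_\delta = \ell/2$, $\delta \leq c_2/2$, and $J\leq c_5\ell^2/t$, the failure probability is at most $c_2/2 + 4C_2 c_5$, which becomes $\leq \eta/2$ for $c_2$ and $c_5$ small. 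Second, let $\Omega_2 \de \{\sup_{s\leq Jt}\Vert Y^+_s - y_0^+\Vert \leq \ell/4\}$. Since \eqref{eq:MadRug} and \eqref{eq:ZombieBall} yield $\Vert Y^+_s - y_0^+\Vert\leq 2\sup_{r\leq s}\Vert W_r\Vert$, Doob's maximal inequality gives $\bbP(\Omega_2^c) \leq C' Jt/\ell^2 \leq C' c_5$, which is $\leq \eta/2$ upon further reducing $c_5$ in a manner depending on $\eta$.

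Working on $\Omega_1\cap\Omega_2$, the next step is to verify that for every $j\leq J$ with $\langle Y^+_{(\gamma+j)t} - y_0^+, s_i(0)\hatn\rangle \in [(1+\xi)\sqrt{t}, (2-\xi)\sqrt{t}]$, the point $X_{(\gamma+j)t}$ lies in $\widehat{R}_+$. The spatial inclusion $X_{(\gamma+j)t}\in \sB(x_0, r_\delta)$ will follow from the triangle inequality using $\Vert y_0^+ - x_0\Vert \leq 8\delta r_\delta \leq r_\delta/4$ (from \eqref{eq:RoyalLobster} combined with $\delta$ small), $\Vert Y^+_{(\gamma+j)t} - y_0^+\Vert \leq r_\delta/2$ on $\Omega_2$, and $\Vert X - Y^+\Vert \leq C_1\delta r_\delta \leq r_\delta/8$ on $\Omega_1$. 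For the projection condition, I would decompose
\begin{align*}
\langle X_{(\gamma+j)t} - x_0, s_i(0)\hatn\rangle = \langle X-Y^+, s_i(0)\hatn\rangle + \langle Y^+ - y_0^+, s_i(0)\hatn\rangle + \langle y_0^+ - x_0, s_i(0)\hatn\rangle,
\end{align*}
where the three terms are contained respectively in $[-8\delta r_\delta, 0]$ (one-sided coupling of \Cref{lem: X_UpperLower_Y}), $[(1+\xi)\sqrt{t}, (2-\xi)\sqrt{t}]$ (by hypothesis), and $[0, 8\delta r_\delta]$ (by combining \cref{item:GhostlyQuip} of \Cref{lem: BarrierLocallyStraight} with \eqref{eq:RoyalLobster}). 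The sum hence lies in $[(1+\xi)\sqrt{t} - 8\delta r_\delta, (2-\xi)\sqrt{t} + 8\delta r_\delta]$, which is contained in $[(1+\zeta)\sqrt{t}, (2-\zeta)\sqrt{t}]$ provided $8\delta r_\delta \leq (\xi/2)\sqrt{t}$. The inequality $\delta r_\delta \leq (c_1^2/4)\sqrt{t}$ from \eqref{eq: rdel_and_delrdel} makes this achievable by choosing $c_1$ small in $\xi$. Hence $X_{(\gamma+j)t}\in \widehat{R}_+ \subseteq R_+(p,\vecv)$, giving the pointwise count inequality on $\Omega_1\cap\Omega_2$; the displayed bound \eqref{eq:PinkNut} then follows since $\bbP((\Omega_1\cap\Omega_2)^c)\leq \eta$.

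The main obstacle is essentially bookkeeping: the constants $c_1,\ldots,c_5$ must simultaneously meet the hypotheses of \Cref{lem: Splus_Shat} with $\zeta=\xi/2$, the smallness required by \Cref{cor: X_uniform_Y}, the condition $8\delta r_\delta \leq (\xi/2)\sqrt{t}$ on the error terms in the projection bound, and the Doob estimate for $\Omega_2$ (which is the only place where $c_5$ picks up its $\eta$-dependence). No new probabilistic ingredient is needed beyond the tools of \Cref{sec: Local} and the inclusion already established in \Cref{lem: Splus_Shat}.
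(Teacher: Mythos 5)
Your proof is correct and follows essentially the same route as the paper's: apply \Cref{lem: Splus_Shat} with $\zeta = \xi/2$, use \Cref{cor: TauBound}/\Cref{cor: X_uniform_Y} to keep the process near $x_0$ and couple $X$ to $Y^+$ on $[0,(\gamma+J)t]$ with failure probability at most $\eta$, then push the interval membership through the triangle inequality. Two small points: you should evaluate the coupling up to time $(\gamma+J)t\leq 2Jt$ rather than $Jt$ (the paper uses $\gamma+J\leq 2J$), and the event $\Omega_2$ is redundant since $\tau > (\gamma+J)t$ in \eqref{eq: Def_tau} already forces $X_s\in\sB(x_0,r_\delta)$ for all $s\leq(\gamma+J)t$, which is exactly how the paper establishes the spatial containment.
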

\begin{proof} 
    Let $\zeta \de \xi/2$. 
    Then, taking $c_1,\ldots,c_4$ such that \Cref{lem: Splus_Shat} is applicable and defining $\widehat{R}_+$ as in \eqref{eq:DarkBee}, 
    \begin{align}
        \bbP\bigl(\#\{j \leq J: X_{(\gamma +j)t} \in {}&{} R_+(p,\vecv)   \} \geq n_0  \bigr)\label{eq:KindDew} \geq  
        \bbP\bigl(\#\{j\leq J: X_{(\gamma +j)t} \in \widehat{R}_+  \} \geq n_0  \bigr).
    \end{align}
    Recall the stopping time $\tau$ from \eqref{eq: Def_tau}. 
    Then, by \Cref{cor: TauBound}, also using that $\gamma+J \leq 2J$ by the assumption that $J\geq 1$ and $\gamma < 1$, there is an absolute constant $C>0$ such that  
    \begin{align} 
        \bbP(X_{(\gamma +j)t} \in \sB(x_0,r_\delta),\, \forall j\leq J) \geq 1 - \bbP(\tau \leq (\gamma +J)t)\geq 1 - \delta  - C(Jt/r_\delta^2). \label{eq:OddRam}   
    \end{align}
    Recall that $r_\delta = \ell/2$ and $J \leq c_5 \ell^2/t$. 
    Further, we have $\delta \leq c_2/2$ by \eqref{eq: rdel_and_delrdel}.
    Hence, taking $c_2,c_5$ to be sufficiently small,
    \begin{align} 
        \bbP(X_{(\gamma +j)t} \in \sB(x_0,r_\delta),\, \forall j\leq J) \geq 1 - \eta/2. \label{eq:BoldJet}
    \end{align} 

    Similarly, using \Cref{cor: X_uniform_Y} and taking $c_2,c_5$ to be sufficiently small, there exists an absolute constant $C'>0$ such that 
    \begin{align} 
        \bbP(\Vert X_{(\gamma +j)t} - Y_{(\gamma +j)t}^+ \Vert  \leq C'\delta r_\delta,\, \forall j\leq J) \geq 1 - \eta/2. \label{eq:KnownVine} 
    \end{align}
    By \eqref{eq: rdel_and_delrdel}, we can ensure that $C' \delta r_\delta \leq \zeta \sqrt{t}/2$ by taking $c_1$ sufficiently small. 
    Further, recalling \eqref{eq:RoyalLobster}, we may assume that $\Vert x_0 - y_0^+ \Vert \leq 8\delta r_\delta \leq \zeta \sqrt{t}/2$.    
    Hence, by \eqref{eq:KnownVine} and the triangle inequality, 
    \begin{align} 
       \bbP( \lvert \langle X_{(\gamma + j)t} - x_0, s_i(0)\hatn \rangle - \langle Y_{(\gamma + j)t}^+ - y_0^+, s_i(0)\hatn \rangle \rvert \leq \zeta\sqrt{t},\ \forall j\leq J) \geq 1-\eta/2.\label{eq:TallJinx}  
    \end{align}
    
    Considering that we defined $\zeta = \xi/2$ and recalling the definition of $\widehat{R}_+$ from \eqref{eq:DarkBee}, combining \eqref{eq:BoldJet} and \eqref{eq:TallJinx} in \eqref{eq:KindDew} now yields the desired result \eqref{eq:PinkNut}.  
\end{proof} 

\begin{proof}[Proof of \texorpdfstring{\Cref{lem: NewManyHits}}{Lemma}]
    The assumption that $x_0\in B_i$ implies that $y_0^+$ lies on the reflection barrier $A^+$ for $Y^+$; recall \eqref{eq:SmartOrb} and \eqref{eq:RoyalLobster}.  
    Consequently, the process $\langle Y_r^+  - y_0^+, s_i(0)\hatn \rangle$ is a one-dimensional reflected Brownian motion starting from zero, meaning that it has the same distribution as $\lvert \cB_r \rvert$ for $\cB_r$ a one-dimensional Brownian motion. 
    Hence, \eqref{eq:NewQuickArm} follows from \Cref{lem: Nplus} and the scaling property.
\end{proof}

\section{Proof of \texorpdfstring{\Cref{prop: Main_PathDependentRecovery_Algorithm}}{Proposition}}\label{apx: Proof_Main_PathDependentRecovery_Algorithm}
Recall that an outline was given \Cref{sec: Main_PathDependentRecovery_Algorithm}.
Details for the proofs of the performance guarantee in \Cref{thm: Main_PathDependentRecovery} are given in \Cref{apx: ProofEqFalseRandom}. 
Subsequently, we prove \Cref{cor: Xuniform} in \Cref{apx: ProofUniform}.

\subsection{Proof of \texorpdfstring{\Cref{thm: Main_PathDependentRecovery}}{Proposition}} \label{apx: ProofEqFalseRandom}
Following the pattern from \Cref{apx: ProofGlobalRecovery,apx: Proof_Main_GlobalRecovery_Improvement}, we start by stating some preparatory lemmas. 

By combining \Cref{cor: NumTransitions} with a union bound, the following result shows that the expected behavior of the number of transitions holds for all $p_{j,k}$ and $\vec{v}_n$ in \Cref{alg: HighFrequency}:
\begin{lemma}\label{lem: UnionNumTransitions}
    There exist $c_1,\ldots,c_6>0$ such that the following holds for every $\sqrt{t} \leq \ell \leq c_1 (t \min\{ 1/\kappa^2, 1/\lambda_{\max}^2, \rho^2 \})^{1/4}$, $\epsilon \leq \sqrt{t}/2$, $\mathfrak{n}_0 \geq 1$, and initial condition $x_0 \in D$.

    For every $p\in \bbR^2$ and unit vector $\vec{v}\in \bbR^2$, define a $\{0,1 \}$-valued random variable $\cE_{\mathrm{good}}(p,\vecv)$ as follows. 
    If $\Vert p -y\Vert \geq c_2 \ell$ for every $y \in \cup_{i=0}^m B_i$, then with $M(p,\vecv)$ and $N(p,\vecv)$ as in \Cref{cor: NumTransitions} we define 
    \begin{align} 
        \cE_{\mathrm{good}}(p,\vecv) \de 
        \bb1\bigr\{M(p,\vecv)/N(p,\vecv)\geq c_3 \text{ or }N(p,\vecv) < \mathfrak{n}_0 \bigl\}.\label{eq:ZenMan} 
    \end{align}
    On the other hand, if  $\Vert p-y \Vert \leq c_4 \sqrt{t}$  and $\min_{\pm\in \{+,- \}}\Vert \vecv \pm \vec{n}_i(y) \Vert \leq c_5 \sqrt{t}/\ell$ for some $y\in B_i$,  
    \begin{align} 
        \cE_{\mathrm{good}}(p,\vecv)\de
            \bb1\bigr\{M(p,\vecv)/N(p,\vecv) < c_3 \text{ or }N(p,\vecv) < \mathfrak{n}_0\bigl\}.\label{eq:YoungBall}
    \end{align}
    And finally, define $\cE_{\mathrm{good}}(p,\vecv) \de 1$ if $p$ and $\vecv$ do not satisfy either of the preceding conditions.

    Consider some arbitrary $\eta \in (0,1)$ and assume that    
    $
        \mathfrak{n}_0 \geq c_6\ln(\ell T / \eta \epsilon t) 
    $.  
    Then, it holds with $p_{j,k}$ and $\vecv_n$ as in \Cref{alg: HighFrequency} that 
    \begin{align} 
        \bbP\bigl(\cE_{\mathrm{good}}(p_{j,k}, \vec{v}_n) = 1 \text{ for all }j,k \in \bbZ \text{ and }0 \leq n \leq \lfloor 2\pi \ell / \epsilon \rfloor\bigr) \geq 1 - \eta. 
    \end{align}
\end{lemma}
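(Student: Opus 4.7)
The plan is to apply Corollary~\ref{cor: NumTransitions} pointwise to each pair $(p_{j,k}, \vec{v}_n)$ and then take a union bound. The crucial observation for controlling combinatorial cost is that whenever $R_+(p_{j,k}, \vec{v}_n)$ contains none of the discrete samples $\{X_{it}\}$, one has $N(p_{j,k}, \vec{v}_n) = 0 < \mathfrak{n}_0$, so the defining disjunctions in \eqref{eq:ZenMan} and \eqref{eq:YoungBall} force $\cE_{\mathrm{good}} = 1$ automatically. Thus the union bound need only range over the (random) set of visited pairs, avoiding any dependence on $\Area(D)$ --- which is essential because the partial-recovery regime imposes no smallness condition on the domain.

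Concretely, fix $c_1,\ldots, c_5$ in the present lemma to coincide with (or be slightly stricter than) the corresponding absolute constants in Corollary~\ref{cor: NumTransitions}. Then for every pair falling under case (a) or case (b), estimate \eqref{eq:TallSax} or \eqref{eq:ZippyCity} applied with $n_0 := \mathfrak{n}_0$ yields $\bbP(\cE_{\mathrm{good}}(p_{j,k}, \vec{v}_n) = 0) \leq C_1 \exp(-C_2 \mathfrak{n}_0)$ for absolute constants $C_1, C_2 > 0$; pairs outside both cases satisfy $\cE_{\mathrm{good}} \equiv 1$ by definition. A deterministic count of visited pairs then goes as follows: for each of the $\leq T/t$ time steps, the collection of $(j,k,n)$ with $X_{it} \in R_+(p_{j,k}, \vec{v}_n)$ comprises $O(\ell/\epsilon)$ directions, and for each fixed direction the locus of $p$ with $X_{it} \in R_+(p, \vec{v}_n)$ is a rectangle of dimensions $\sqrt{t}\times 2\ell$ containing at most $O(\ell\sqrt{t}/\epsilon^2)$ lattice points $p_{j,k}$ (the boundary corrections being absorbed by $\sqrt{t}\leq \ell$ and $\epsilon \leq \sqrt{t}/2$). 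Summing yields at most $K := C_3 T\ell^2/(\sqrt{t}\,\epsilon^3)$ visited pairs, so the union bound produces $\bbP(\exists (j,k,n): \cE_{\mathrm{good}}(p_{j,k}, \vec{v}_n) = 0) \leq K C_1 \exp(-C_2 \mathfrak{n}_0)$, which is $\leq \eta$ whenever $\mathfrak{n}_0 \geq C_2^{-1}\ln(K C_1/\eta)$.

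The final step is to absorb this condition into the stated hypothesis $\mathfrak{n}_0 \geq c_6 \ln(\ell T/(\eta\epsilon t))$ for a sufficiently large absolute constant $c_6$. This is elementary: the excess $\ln K - \ln(\ell T/(\epsilon t))$ equals $\ln(\ell\sqrt{t}/\epsilon^2) + O(1)$, and under $\sqrt{t} \leq \ell$ and $\epsilon \leq \sqrt{t}/2$ this is bounded by a constant multiple of $\ln(\ell/\epsilon) + O(1)$, hence by a constant multiple of $\ln(\ell T/(\epsilon t)) + O(1)$ under the assumed regime, so enlarging $c_6$ soaks up all slack. The only genuine obstacle is the restriction to visited pairs: without it a naive union bound over all $(j,k,n)$ with $R_+(p_{j,k}, \vec{v}_n) \subseteq D$ would scale with $\Area(D)/\epsilon^3$ and produce a logarithmic overhead incompatible with the partial-recovery setting. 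Everything else is elementary counting combined with Corollary~\ref{cor: NumTransitions}.
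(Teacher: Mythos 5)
Your overall plan is the right one and is exactly the paper's: restrict attention to visited pairs (since $\cE_{\mathrm{good}}(p,\vecv)\neq 1$ forces $N(p,\vecv)\geq \mathfrak{n}_0\geq 1$), bound the number of visited pairs by a deterministic quantity $K$ of order $T\ell^2/(\sqrt{t}\,\epsilon^3)$ (your count is in fact a bit tighter than the paper's $(T/t)(\ell/\epsilon)^3$ since you exploit the rectangle geometry of $R_+$), and then absorb $\ln K$ into the $\mathfrak{n}_0 \geq c_6\ln(\ell T/\eta\epsilon t)$ hypothesis. That last absorption step is handled correctly.

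The genuine gap is the assertion ``so the union bound produces $\bbP(\exists(j,k,n):\cE_{\mathrm{good}}=0)\leq K C_1\exp(-C_2\mathfrak{n}_0)$.'' A union bound over a \emph{random} index set is not valid as such: knowing $\#V\leq K$ a.s.\ and $\bbP(\cE_{\mathrm{good}}(p,\vecv)=0)\leq C_1\exp(-C_2\mathfrak{n}_0)$ for each \emph{fixed} pair does \emph{not} give $\bbP(\bigcup_{(j,k,n)}\{\cE_{\mathrm{good}}=0\})\leq K\cdot C_1\exp(-C_2\mathfrak{n}_0)$, because the set of visited pairs is correlated with the failure events (a standard toy counterexample: a single uniformly random point always hits itself). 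Writing $\bbP(\exists\cdots)\leq\sum_{(j,k,n)}\bbP(\cE_{\mathrm{good}}(p_{j,k},\vec{v}_n)=0)$ and then inserting the unconditional pointwise bound $\leq C_1\exp(-C_2\mathfrak{n}_0)$ term by term sums to $\Omega(\Area(D)/\epsilon^3)\cdot\exp(-C_2\mathfrak{n}_0)$, which is exactly the $\Area(D)$-dependent overhead you are trying to avoid. What is missing is a refined bound of the form $\bbP(\cE_{\mathrm{good}}(p_{j,k},\vec{v}_n)=0)\leq C_1\exp(-C_2\mathfrak{n}_0)\cdot\bbP\bigl(R_+(p_{j,k},\vec{v}_n)\text{ is visited}\bigr)$, after which summing over $(j,k,n)$ yields $\bbE[\#V]\leq K$ and the claimed estimate. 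The paper obtains precisely this refinement by decomposing on the first discrete visit time, invoking the strong Markov property of $\sX_t$ (\Cref{prop: X_Markov}) to restart the process there, and then applying \Cref{cor: NumTransitions} to the restarted process. Without this intermediate decomposition the argument does not close.
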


It could be difficult to recover hits of the barrier which only occurred at the very end of the observation period as we may not get to observe much associated data. 
\Cref{lem: HittingEnd} shows that this difficulty is irrelevant as it does not occur with high probability:  
\begin{lemma}\label{lem: HittingEnd}
    For every $\eta \in (0,1)$ there exists $c>0$ such that the following holds for every $T>0$ and initial condition $x_0 \in D$. 
    Consider some $\Delta \leq c\min\{T,1/\kappa^2,1/\lambda_{\max}^2, \rho^2\}$. 
    Then,  
    \begin{align} 
        \bbP(X_s \not\in \cup_{i=0}^m B_i, \ \forall s \in [ T- \Delta, T]) \geq 1- \eta.\label{eq:DangerousHermit} 
    \end{align}     
\end{lemma}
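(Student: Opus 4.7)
The plan is a two-stage argument: first show that $X_{T-\Delta}$ lies at macroscopic distance from every barrier with high probability, and then argue that during $[T-\Delta,T]$ the process cannot traverse this distance.

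For the first stage, I would apply the Markov property at time $T - \Delta - t_0$, where $t_0 \de \min\{T-\Delta,\ c_1'\min\{1/\kappa^2, 1/\lambda_{\max}^2, \rho^2\}\}$ and $c_1'$ is the constant from \Cref{lem: DistantBoundary} associated with $\eta/2$. Choosing the constant $c$ in the hypothesis $\Delta \leq c\min\{T, 1/\kappa^2, 1/\lambda_{\max}^2, \rho^2\}$ to satisfy $c \leq 1/2$ ensures $T-\Delta \geq T/2 > 0$, hence $t_0 > 0$. Conditioning on $X_{T-\Delta-t_0}$ and invoking \Cref{lem: DistantBoundary} with time-parameter $t_0$ then gives, with probability at least $1-\eta/2$, that $X_{T-\Delta}$ lies at distance at least $r_0 \de c_2'\sqrt{t_0}$ from $\cup_{i=0}^m B_i$, where $c_2'$ is the companion constant of the lemma.

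For the second stage, I would condition on $X_{T-\Delta}$, restrict to the event $\{\inf_{y\in \cup_i B_i}\Vert X_{T-\Delta}-y\Vert \geq r_0\}$, and apply \eqref{eq:BlueCity} in \Cref{cor: X_uniform_Y} with $r_\delta = r_0$ to the process restarted from time $T-\Delta$. Since the ball $\sB(X_{T-\Delta}, r_0)$ is then barrier-free, this yields that $X_s - X_{T-\Delta}$ coincides with the corresponding Brownian increment for every $s\in[T-\Delta,T]$ with probability at least $1 - C_2 \Delta/r_0^2$. Combined with Doob's inequality $\bbP(\sup_{s\leq\Delta}\Vert W_s\Vert \geq r_0) \leq 2\Delta/r_0^2$, this forces $X_s$ to remain inside the barrier-free ball throughout $[T-\Delta,T]$; in particular $X_s \notin \cup_{i=0}^m B_i$ for every such $s$.

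It then remains to verify that $\Delta/r_0^2$ can be made arbitrarily small by choosing $c$ sufficiently small. In the regime $T - \Delta \leq c_1'\min\{1/\kappa^2,1/\lambda_{\max}^2,\rho^2\}$ one has $r_0^2 = (c_2')^2(T-\Delta) \geq (c_2')^2(1-c)T$, so $\Delta/r_0^2 \lesssim c$; in the complementary regime $r_0^2 = (c_2')^2 c_1'\min\{1/\kappa^2,1/\lambda_{\max}^2,\rho^2\}$ is of order $\min\{1/\kappa^2,1/\lambda_{\max}^2,\rho^2\}$, while $\Delta \leq c\min\{1/\kappa^2,1/\lambda_{\max}^2,\rho^2\}$ yields the same order of smallness. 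The main subtlety I anticipate is this two-regime bookkeeping, since \Cref{lem: DistantBoundary} is valid only on the scale $\min\{1/\kappa^2,1/\lambda_{\max}^2,\rho^2\}$ and must be re-scaled through Markovianity when $T-\Delta$ exceeds that scale; the rest of the argument reduces to the standard Brownian hitting estimate inside a barrier-free ball.
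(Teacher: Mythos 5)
Your proposal is correct and follows essentially the same two-stage strategy as the paper's proof: conditioning at a time shortly before $T-\Delta$ and invoking \Cref{lem: DistantBoundary} to put $X_{T-\Delta}$ at macroscopic distance from the barriers, then containing the remaining increment inside a barrier-free ball via a Brownian estimate. The only difference is cosmetic: where you invoke \eqref{eq:BlueCity} from \Cref{cor: X_uniform_Y} (plus a redundant Doob bound, since BlueCity's proof already contains the exit estimate), the paper argues directly from the SDE in \Cref{def: ReflectedBrownianMotion} that the increments of $X$ coincide with those of $W$ until the first barrier hit; and where you split into two regimes via $t_0 = \min\{T-\Delta,\,c_1'\min\{1/\kappa^2,1/\lambda_{\max}^2,\rho^2\}\}$, the paper works with the single scale $T' = \min\{T,1/\kappa^2,1/\lambda_{\max}^2,\rho^2\}$ together with the constraint $\Delta + T'/2 < T$, which collapses the same two-regime bookkeeping into one line.
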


The proofs of \Cref{lem: UnionNumTransitions,lem: HittingEnd} are given in \Cref{sec: UnionNumTransitions,sec: HittingEnd}, respectively.
It now remains to combine the preceding ingredients with those in \Cref{sec: ProofPathDependent}. 
This amounts to a straightforward but notationally cumbersome computation:   
\begin{proof}[Proof of \texorpdfstring{\Cref{thm: Main_PathDependentRecovery}}{Theorem}]
    To avoid ambiguity with other parts, let us denote $\cC_1,\cC_2,\cC_3$ for the constants in \Cref{thm: Main_PathDependentRecovery}, and denote $\cC_4,\ldots, \cC_7$ for the constants in \Cref{prop: Main_PathDependentRecovery_Algorithm}. 

    The assumption preceding \eqref{eq: HighFrequency_FalsePositives} and \eqref{eq: HighFrequency_RandomPoint} is that 
    \begin{align} 
        t \leq \cC_1 T, \ \text{ and }\ \ln(T/t)^4 t \leq \cC_2\min\{T, 1/\kappa^2, 1/\lambda_{\max}^2, \rho^2 \}. \label{eq:CozyOwl}   
    \end{align}
    Further, \Cref{prop: Main_PathDependentRecovery_Algorithm} posits that the parameters in \Cref{alg: HighFrequency} satisfy
    \begin{align} 
        \mathfrak{s} = \cC_4,\ \ \epsilon = \cC_5 \sqrt{t}, \ \ \ell = \cC_6\ln(T/t)\sqrt{t}, \ \text{ and }\  \mathfrak{n}_0 = \cC_{7}\ln(T/t).\label{eq:UnripeRug} 
    \end{align}
    It now has to be shown that these constants can be chosen such that the output of \Cref{alg: HighFrequency} satisfies the performance guarantees in \eqref{eq: HighFrequency_FalsePositives} and \eqref{eq: HighFrequency_RandomPoint} with respect to $\cC_3$.

    Let us start by determining appropriate values for $\cC_4,\ldots,\cC_7$. 
    Let $c_1,\ldots,c_6$ be as in \Cref{lem: UnionNumTransitions}, let $c_1',\ldots,c_5'$ be the constants arising in \Cref{lem: NewManyHits} with some fixed $\zeta < 1/2$ and with $\eta$ replaced by $\eta/4$, and let $\tic_1,\ldots,\tic_3$ be the constants arising in \Cref{lem: NplusLarge} with the same $\zeta$. 
    Then, appropriate values for $\mathfrak{s}$ and $\epsilon$ may be found by taking   
    \begin{align} 
        \cC_4 \de c_3\quad \text{ and }\quad \cC_5 \de \min\{c_4,c_5, c_3', c_4', 1/2\}.\label{eq:OddHome} 
    \end{align} 
    The choice of $\cC_6$ and $\cC_7$ is more tedious.
    More specifically, having an application of \Cref{lem: NewManyHits,lem: NplusLarge,lem: UnionNumTransitions} in mind, the following inequalities will be required:  
    \begin{align} 
        \frac{\tic_3 \eta}{4}  \sqrt{\Bigl\lfloor  \frac{c_5'\ell^2}{t} \Bigr\rfloor } \geq  \mathfrak{n}_0 \geq c_6 \ln\Bigl( \frac{4}{\eta}\frac{T}{t}\frac{\ell}{\epsilon} \Bigr).\label{eq:DampXray} 
    \end{align} 
    Equivalently, by \eqref{eq:UnripeRug} $\cC_6$ and $\cC_7$ will have to satisfy the following constraints:  
    \begin{align} 
         \frac{\tic_3 \eta}{4}  \sqrt{\bigl\lfloor c_5'  \cC_6^2 \ln(T/t)^2 \bigr\rfloor} \geq  \cC_{7}\ln(T/t)  \geq c_6 \ln\Bigl(\frac{4}{\eta}\frac{T}{t}\frac{\cC_6}{\cC_5}   \ln(T/t) \Bigr) .\label{eq:DangerousGum} 
    \end{align}
    An inconvenience is that $\cC_6$ occurs on both the left and right of \eqref{eq:DangerousGum}. 
    To deal with this, one can exploit that the dependence of the left on $\cC_6$ is essentially linear while the dependence on the right is logarithmic, suggesting that the inequalities can be satisfied if $\cC_6$ is taken large enough. 
    More precisely, taking $\cC_1$ sufficiently small in \eqref{eq:CozyOwl}, we may assume that $4\ln(T/t)/(\eta \cC_5) \leq T/t$. 
    Then, 
    \begin{align} 
        c_6 \ln\Bigl(\frac{4}{\eta}\frac{T}{t}\frac{\cC_6}{\cC_5}   \ln(T/t) \Bigr) \leq c_6 \ln(\cC_6) + 2c_6\ln(T/t). \label{eq:CozyZero}
    \end{align}
    Further, taking $\cC_6 \geq 1$ and taking $\cC_1$ sufficiently small, we may assume that $\lfloor c_5'  \cC_6^2 \ln(T/t)^2\rfloor \geq c_5' \cC_6^2 \ln(T/t)^2/2$. 
    Then, with $\sC \de   \tic_3 \eta \sqrt{c_5'}  / (4 \sqrt{2})$,  
    \begin{align} 
        \frac{\tic_3 \eta}{4}  \sqrt{\bigl\lfloor c_5'  \cC_6^2 \ln(T/t)^2 \bigr\rfloor} \geq \sC \cC_6 \ln(T/t). 
    \end{align}
    Further, taking $\cC_1$ sufficiently small and taking $\cC_6$ sufficiently large, it can be ensured that
    \begin{align} 
        \sC \cC_6 \ln(T/t) \geq c_6 \ln(\cC_6) + 2c_6\ln(T/t). \label{eq:SlowCat}
    \end{align}
    Now, fix $\cC_6$ and take $\cC_7$ such that $\cC_{7}\ln(T/t) =\sC \cC_6 \ln(T/t)$. 
    Then, combining \eqref{eq:CozyZero}--\eqref{eq:SlowCat} yields \eqref{eq:DangerousGum}. 
    Equivalently, \eqref{eq:DampXray} holds. 

    From here on, $\cC_4,\ldots,\cC_7$ will remain fixed, and the goal is to show that \eqref{eq: HighFrequency_FalsePositives} and \eqref{eq: HighFrequency_RandomPoint} are satisfied if $\cC_1$ and $\cC_2$ are taken sufficiently small, and $\cC_3$ sufficiently large.  

    We start with the nonoccurrence of false positives from \eqref{eq: HighFrequency_FalsePositives}. 
    A direct calculation using the previously fixed values of $\cC_5$, $\cC_6$, and $\cC_7$ shows that the assumptions in the first paragraph of \Cref{lem: UnionNumTransitions} are satisfied if $\cC_1$ and $\cC_2$ are taken sufficiently small.\footnote{For instance, the assumption $\ell \leq c_1 (t \min\{ 1/\kappa^2, 1/\lambda_{\max}^2, \rho^2 \})^{1/4}$ can be equivalently be phrased as $\cC_6^4 \ln(T/t)^4 t \leq c_1^4 \min\{ 1/\kappa^2, 1/\lambda_{\max}^2, \rho^2 \}$ and hence holds if $\cC_2$ is taken sufficiently small in \eqref{eq:CozyOwl}.} 
    Further, recall from \eqref{eq:DampXray} that $\mathfrak{n}_0 \geq c_6 \ln(4T\ell / \eta \epsilon t )$. 
    Hence, by \Cref{lem: UnionNumTransitions},  
    \begin{align} 
        \bbP\bigl(\cE_{\mathrm{good}}(p_{j,k}, \vec{v}_n) = 1 \text{ for all }j,k \in \bbZ \text{ and }0 \leq n \leq \lfloor 2\pi \ell / \epsilon \rfloor\bigr) \geq 1 - \eta/4.\label{eq:HighLeaf} 
    \end{align}
    In particular, the probability in \eqref{eq:HighLeaf} is $\geq 1 -\eta$. 
    Recalling that $\mathfrak{s} = \cC_4 = c_3$ and recalling \eqref{eq:ZenMan}, it holds on the event in the left-hand side of \eqref{eq:HighLeaf} that for every $j,k,n$ such that $\Vert p_{j,k} - y \Vert \geq c_2 \ell$ for all $y\in \cup_{i=0}^m B_i$,  
    \begin{align} 
        M(p_{j,k}, \vec{v}_n)/N(p_{j,k}, \vec{v}_n) \geq \mathfrak{s} \text{ or }N(p_{j,k}, \vec{v}_n) < \mathfrak{n}_0.     
    \end{align}  
    Recall the definition of $M(p_{j,k},\vec{v})$ and $N(p_{j,k},\vec{v}_n)$ from \Cref{cor: NumTransitions}.
    It follows that \Cref{alg: HighFrequency} will not add any point $p_{j,k}$ which has distance $\geq c_2\ell$ from the barriers to $\hat{\cX}$. 
    Equivalently, every point in $\hat{\cX}$ has distance $< c_2 \ell$ from some barrier.   
    This proves \eqref{eq: HighFrequency_FalsePositives} if we take $\cC_3$ sufficiently large to satisfy $\cC_3 \geq c_2 \cC_6$.  
    
    Let us proceed to the proof of \eqref{eq: HighFrequency_RandomPoint} regarding the recovery of typical points. 
    Consider a stopping time $\tau$ with $X_{\tau} \in \cup_{i=0}^m B_i$ almost surely.
    We start with some preliminary reductions. 
    First, write $\sE_{\text{good}}$ for the event in \eqref{eq:HighLeaf} where $\cE_{\text{good}}(p_{j,k}, \vec{v}_n) = 1$ for every $j,k,n$, and recall that $\bbP(\sE_{\text{good}}) \geq 1 -\eta/4$. 
    Further, write 
    $ 
        \Delta \de  (\lfloor c_5' \ell^2/t \rfloor +  2)t 
    $
    and note that \Cref{lem: HittingEnd}  ensures that $\tau \not\in [T-\Delta,T]$ with probability $\geq 1 - \eta/4$ if $\cC_1$ and $\cC_2$ are taken sufficiently small.
    Hence, 
    \begin{align} 
        \bbP{}&{} \bigl(\inf\{\Vert X_\tau - p \Vert: p\in \hat{\cX} \} >\cC_3 \ln(T/t)\sqrt{t}\text{ and }\tau \leq T \bigr)\label{eq:OldWisp}\\  
        &\leq \bbP\bigl(\inf\{\Vert X_\tau - p \Vert: p\in \hat{\cX} \} >\cC_3 \ln(T/t)\sqrt{t},\  \tau \leq T - \Delta,\ \sE_{\mathrm{\text{good}}}  \bigr) + \eta/2. \nonumber   
    \end{align}
    Here, by the law of total probability,   
    \begin{align} 
        &\bbP\bigl(\inf\{\Vert X_\tau - p \Vert: p\in \hat{\cX} \} >\cC_3 \ln(T/t)\sqrt{t},\  \tau \leq T - \Delta,\ \sE_{\mathrm{\text{good}}}  \bigr) \leq \label{eq:KindLog}\\ 
        &\sup_{\substack{i \leq m\\ y \in B_i}} \sup_{\substack{\cS_i = -1,+1\\\cT \in [0, T-\Delta]}  } \bbP\bigl(\inf\{\Vert y - p \Vert: p\in \hat{\cX} \} >\cC_3 \ln(T/t)\sqrt{t}, \, \sE_{\mathrm{\text{good}}} \mid X_{\tau}  = y, s_i(\Lc{i}_\tau) = \cS_i, \tau = \cT  \bigr). \nonumber
    \end{align}
    From here on, let us fix some $y\in B_i$, $\cS_i \in \{-1,+1 \}$, and $\cT \in [0,T-\Delta]$ and work on the event described in the conditioning of \eqref{eq:KindLog}.

    The choice of $\cC_5$ in \eqref{eq:OddHome} ensures that $\epsilon$ is sufficiently small to guarantee that there exist $j,k,n$ with $\Vert y - p_{j,k}  \Vert \leq \cC_5\sqrt{t} \leq \min\{c_4 , c_3' \} \sqrt{t}$ and $\Vert \vec{v}_n - \cS_i \vec{n}_i(y) \Vert \leq \min\{c_5,c_4' \} \sqrt{t}/\ell$.
    Then, recalling \eqref{eq:YoungBall}, \Cref{alg: HighFrequency} will add $p_{j,k}$ to $\hat{\cX}$ if  $\cE_{\text{good}}(p_{j,k}, \vec{v}_n) = 1$ and $N(p_{j,k}, \vec{v}_n) \geq \mathfrak{n}_0$. 
    Hence, since $\sE_{\text{good}}$ implies that $\cE_{\text{good}}(p_{j,k}, \vec{v}_n) = 1$,    
    \begin{align} 
        \bbP\bigl(\inf\{\Vert y - p \Vert: p\in \hat{\cX} \} >\cC_3 {}&{} \ln(T/t)\sqrt{t}, \, \sE_{\mathrm{\text{good}}}   \mid X_{\tau}  = y, s_i(\Lc{i}_\tau) = \cS_i, \tau = \cT  \bigr)\label{eq:MoistFan}\\ 
        &\leq \bbP(N(p_{j,k}, \vec{v}_{n})  < \mathfrak{n}_0 \mid X_{\tau }= y,  s_i(\Lc{i}_{\tau}) = \cS_i, \tau = \cT) \nonumber 
    \end{align}
    provided that $\cC_3\ln(T/t)\sqrt{t} \geq\Vert y-p_{j,k} \Vert$ which can be ensured by taking $\cC_3$ sufficiently large so that $\cC_3\ln(T/t)\sqrt{t} \geq \cC_5 \sqrt{t}$. 

    Let $\gamma \in [0,1)$ be the least value such that $\cT/t + \gamma$ is an integer.  
    (That is, $\gamma \de \lceil \cT /t \rceil - \cT/t$.) 
    Then, recalling the definition of $N(p_{j,k}, \vec{v}_n)$ from \eqref{eq:VividImp},   
    \begin{align} 
        N(p_{j,k}, \vec{v}_n) \geq \#\{j\geq 0: X_{\cT +(\gamma +j) t } \in R_+(p_{j,k}, \vec{v}_n) \text{ and }  \cT/t + \gamma +j  \leq \lfloor T/t \rfloor -1 \}. \label{eq:VividBox} 
    \end{align}      
    Let $J \de \lfloor c_5' \ell^2/t \rfloor$ and note that we then have $\Delta = (J + 2)t$. 
    Hence, the assumption that $\cT \leq T-\Delta$ ensures that $\cT/t + \gamma + J \leq T/t - 2 + \gamma $. 
    Consequently, since $\cT/t + \gamma + J$ is an integer and $- 2 + \gamma \leq -1$, we have $\cT/t + \gamma + j \leq \lfloor T/t \rfloor -1$ for every $j\leq J$, and hence  
    \begin{align} 
        N(p_{j,k}, \vec{v}_n) \geq \#\{0 \leq j \leq J: X_{\cT +(\gamma +j) t } \in R_+(p_{j,k}, \vec{v}_n)\}.\label{eq:LazyCup} 
    \end{align}
    Now, first combining \eqref{eq:MoistFan}--\eqref{eq:LazyCup} and subsequently using the strong Markovianity from \Cref{prop: X_Markov}, 
    \begin{align} 
        {}&{}\bbP\bigl(\inf\{\Vert y - p \Vert: p\in \hat{\cX} \} >\cC_3 \ln(T/t)\sqrt{t}, \, \sE_{\mathrm{\text{good}}}   \mid X_{\tau}  = y, s_i(\Lc{i}_\tau) = \cS_i, \tau = \cT  \bigr)\label{eq:CalmPizza}\\ 
        &\leq \bbP\bigl(\#\{0 \leq j \leq J: X_{\cT +(\gamma +j) t } \in R_+(p_{j,k}, \vec{v}_n)\} < \mathfrak{n}_0\mid  X_{\tau}  = y, s_i(\Lc{i}_\tau) = \cS_i, \tau = \cT \bigr)\nonumber\\ 
        &= \bbP\bigl(\#\{0 \leq j \leq J: X_{(\gamma +j) t } \in R_+(p_{j,k}, \vec{v}_n)\} < \mathfrak{n}_0\mid  X_{0}  = y, s_i(0) = \cS_i \bigr).\nonumber 
    \end{align}  
    The right-hand side of \eqref{eq:CalmPizza} can be controlled using \Cref{lem: NewManyHits}. 
    The assumptions in \eqref{eq:NewMoistKnot} of that lemma are satisfied due to the choice of $\cC_5,\cC_6$, and $\cC_7$ together with the choice of $p_{j,k},\vec{v}_n$ and $J$, at least if $\cC_1$ and $\cC_2$ are taken sufficiently small.
    Hence, with $N_+$ as in \eqref{eq:NewNormalLid},
    \begin{align} 
        \bbP\bigl(\#\{0 \leq j \leq J: X_{(\gamma +j) t } \in R_+(p_{j,k}, \vec{v}_n)\} <{}&{} \mathfrak{n}_0\mid  X_{0}  = y, s_i(0) = \cS_i \bigr)\label{eq:KindHermit}\\ 
        & \leq  \bbP(N_+ < \mathfrak{n}_0) + \eta/4. \nonumber  
    \end{align} 
    We next use \Cref{lem: NplusLarge} whose assumptions can be satisfied by taking $\cC_1$ and $\cC_2$ sufficiently small.\footnote{In particular, the assumption that $J \geq \tic_1 \ln(\tic_2/\eta)^2 /\eta^2$ can be equivalently phrased as $\lfloor c_5' \cC_6^2 \ln(T/t)^2 \rfloor \geq \tic_1 \ln(\tic_2/\eta)^2 /\eta^2$ and will hence be satisfied if $\cC_1$ is sufficiently small in \eqref{eq:CozyOwl}.} 
    Hence, recalling from \eqref{eq:DampXray} that $\mathfrak{n}_0 \leq (\tic_3\eta/4) \sqrt{J}$,  
    \begin{align} 
        \bbP\Bigl(N_+ < \mathfrak{n}_0\Bigr) \leq \bbP\Bigl(N_+ < \frac{\tic_3\eta}{4}\sqrt{J}\Bigr) \leq \eta/4. \label{eq:MadFace} 
    \end{align}
    
    Combine \eqref{eq:OldWisp}, \eqref{eq:KindLog}, \eqref{eq:CalmPizza}, \eqref{eq:KindHermit}, and \eqref{eq:MadFace} to conclude that 
    \begin{align} 
        \bbP\bigl(\inf\{\Vert X_\tau - p \Vert: p\in \hat{\cX} \} >\cC_3 \ln(T/t)\sqrt{t}\text{ and }\tau \leq T \bigr) \leq \eta. 
    \end{align}
    This is equivalent to the desired performance guarantee \eqref{eq: HighFrequency_RandomPoint} by passing to the complementary event, concluding the proof. 
\end{proof}

\subsubsection{Proof of \texorpdfstring{\Cref{lem: UnionNumTransitions}}{Lemma}}\label{sec: UnionNumTransitions}
As in the proofs of \Cref{lem: ConcentrationUnionBoxes,lem: Rectangle_Concentration} in \Cref{apx: ExpLog,apx: ProofRectangleConcentration}, the argument amounts to an application of the union bound. 
Different from the setting of those proofs, however, we are now concerned with a regime where $T$ may be small. 
This will be exploited to get an estimate which does not degrade when $\Area(D)$ is large.
\begin{proof}[Proof of \texorpdfstring{\Cref{lem: UnionNumTransitions}}{Lemma}]
    By the union bound, 
    \begin{align} 
        \bbP\bigl(\exists j,k, n: \cE_{\mathrm{good}}(p_{j,k}, \vec{v}_n) \neq 1 \bigr) \leq \sum_{j,k \in \bbZ} \sum_{n=0}^{\lfloor 2\pi \ell / \epsilon \rfloor} \bbP\bigl(\cE_{\mathrm{good}}(p_{j,k}, \vec{v}_n) \neq 1  \bigr).\label{eq:KindCat} 
    \end{align}
    Fix some $j,k,n$ and note from \eqref{eq:ZenMan} and \eqref{eq:YoungBall} that it is only possible to have $\cE_{\text{good}}(p,\vec{v}) \neq 1$ if $N(p,\vecv) \geq \mathfrak{n}_0 \geq 1$.
    Recall from \eqref{eq:VividImp} that this means that $X_{it}\in R_+(p_{j,k},\vec{v}_n)$ for some $i\in \{0,1,\ldots,\lfloor T/t \rfloor -1\}$.
    Hence, conditioning on the first visit to $R_+(p_{j,k},\vec{v}_n)$, 
    \begin{align} 
        \bbP\bigl(\cE_{\mathrm{good}}(p_{j,k}, \vec{v}_n) \neq 1  \bigr) = {}&{}\sum_{i=0}^{\lfloor T/t \rfloor -1} \bbP\bigl(\inf\{u\in\bbZ_{\geq 0}: X_{ut} \in R_+(p_{j,k}, \vec{v}_n) \} = i\bigr)\\ 
        {}&{}\ \times \bbP\bigl(\cE_{\mathrm{good}}(p_{j,k}, \vec{v}_n) \neq 1  \mid   \inf\{u\in\bbZ_{\geq 0}: X_{ut} \in R_+(p_{j,k}, \vec{v}_n) \} = i \bigr). \nonumber 
    \end{align}
    
    Note that $\tau = t\inf\{u\in\bbZ_{\geq 0} : X_{ut} \in R_+(p_{j,k}, \vec{v}_n) \}$ is a stopping time. 
    Hence, the strong Markovianity of \Cref{prop: X_Markov} implies that $(X_{s + \tau})_{s\geq 0}$ is again a reflected Brownian motion with semipermeable barriers. 
    Applying \Cref{cor: NumTransitions} to that process, there exist absolute constants $c,C>0$ such that  
    \begin{align} 
        \bbP\bigl(\cE_{\mathrm{good}}(p_{j,k}, \vec{v}_n) \neq 1  \mid   \inf\{u\in\bbZ_{\geq 0}: X_{ut} \in R_+(p_{j,k}, \vec{v}_n) \} = i \bigr) \leq C \exp(-c \mathfrak{n}_0)\label{eq:RipeDew}  
    \end{align}
    if $c_1,\ldots,c_5$ are chosen appropriately in the statement in \Cref{lem: UnionNumTransitions}. 
    Further, note that $\inf\{u\in\bbZ_{\geq 0}: X_{ut} \in R_+(p_{j,k}, \vec{v}_n) \} = i$ can only occur if $X_{it} \in R_+(p_{j,k},\vecv)$. 
    In particular, noting in \eqref{eq: Def_Splus} that every point in $R_+(p_{j,k}, \vecv)$ is at distance $\leq \ell + 2\sqrt{t} \leq 3\ell$ from $p_{j,k}$,
    \begin{align} 
        \bbP\bigl(\inf\{u\in\bbZ_{\geq 0}: X_{ut} \in R_+(p_{j,k}, \vec{v}_n) \} = i\bigr)\leq \bbP\bigl(\Vert X_{it} - p_{j,k} \Vert \leq 3\ell\bigr).\label{eq:DarkWand}  
    \end{align}
    
    Given $X_{it}$, there can be at most $(6\ell/\epsilon + 1)^2$ points $p_{j,k} = (\epsilon j , \epsilon k)$ with $\Vert X_{it} - p_{j,k} \Vert \leq 3\ell$. 
    Hence, also using that $\epsilon \leq \sqrt{t}/2 \leq \ell /2$, there exists an absolute constant $C'>0$ such that for every fixed $i$, 
    \begin{align} 
        \sum_{n=0}^{\lfloor 2\pi \ell /\epsilon \rfloor }\sum_{j,k\in \bbZ} \bbP\bigl(\Vert X_{it} - p_{j,k} \Vert \leq 3\ell \bigr) &= (\lfloor 2\pi \ell /\epsilon \rfloor  + 1)\bbE\bigl[\# \{j,k \in \bbZ: X_{it} \in \sB(p_{j,k}, 3\ell) \}  \bigr]\nonumber\\ 
        & \leq   C' (\ell/\epsilon)^3. \label{eq:DarkYarn}
    \end{align}
    Now, combining \eqref{eq:KindCat}--\eqref{eq:DarkYarn} there exists an absolute constant $C''>0$ such that 
    \begin{align} 
        \bbP\bigl(\exists j,k, n: \cE_{\mathrm{good}}(p_{j,k}, \vec{v}_n) \neq 1 \bigr) \leq C''  (T/t) (\ell/\epsilon)^3 \exp(-c \mathfrak{n}_0). \label{eq:JumpyRaven} 
    \end{align}  
    Taking $c_6>0$ sufficiently large in the assumed lower bound on $\mathfrak{n}_0$ now ensures that the right-hand side of \eqref{eq:JumpyRaven} is $\leq \eta$, concluding the proof. 
\end{proof}

\subsubsection{Proof of \texorpdfstring{\Cref{lem: HittingEnd}}{Lemma}}\label{sec: HittingEnd}
\begin{proof}[Proof of \texorpdfstring{\Cref{lem: HittingEnd}}{Lemma}]
    Let $T' \de \min\{T,1/\kappa^2, 1/\lambda_{\max}^2,\rho^2 \}$ and take the constant $c$ in the statement of \Cref{lem: HittingEnd} sufficiently small to ensure that $\Delta + T'/2 < T$.   
    Then, first using the weak Markovianity of \Cref{prop: X_Markov} to condition on $X_{T - \Delta - c_1T'}$ for some sufficiently small $c_1>0$ and subsequently applying \Cref{lem: DistantBoundary}, there exists $c_2>0$ such that    
    \begin{align} 
        \bbP\bigl(\forall y\in \cup_{i=0}^m B_i : \Vert X_{T - \Delta} -y\Vert \geq  c_2 \sqrt{T'} \bigr) \geq 1 - \eta/2.\label{eq:MildNose} 
    \end{align}  
    Further, if the constant $c$ in \Cref{lem: HittingEnd} is taken sufficiently small depending on $c_2$ so that $\Delta$ is a sufficiently small multiple of $T'$, 
    \begin{align} 
        \bbP\bigl(\forall s \leq \Delta: \Vert W_{T-\Delta + s} - W_{T-\Delta} \Vert < c_2 \sqrt{T'}  \bigr) \geq 1-\eta/2. \label{eq:JustBeetle}
    \end{align}
    Recalling that the stochastic differential equation in \Cref{def: ReflectedBrownianMotion} implies that the increments of $X_{T-\Delta +s}$ agree with those of $W_{T-\Delta +s}$ whenever the process is not on a barrier, the combination of \eqref{eq:MildNose} and \eqref{eq:JustBeetle} implies \eqref{eq:DangerousHermit}.
    This concludes the proof.  
\end{proof}

\subsection{Proof of \texorpdfstring{\Cref{cor: Xuniform}}{Corollary}}\label{apx: ProofUniform}
As was alluded to in \eqref{eq:LuckyMouse} of \Cref{sec: Main_PathDependentRecovery_Algorithm}, the idea is to cut the trajectory into $M\geq 1$ pieces such that $\sup\{\Vert X_{t} - X_{jT/M} \Vert: t \in [jT/M, (j+1)T/M] \}$ is small for every $j$ with high probability.
That such an $M$ exists is immediate from $X_{t}$ being a continuous process, but it requires a small additional computation to verify that the dependence on the geometry of the barriers is only through the parameters of \Cref{sec: Parameters}:  
\begin{lemma}\label{lem: M}
    For every $\varepsilon >0$, $\eta\in (0,1)$, and $T>0$ there exists an integer $M\geq 1$ depending only on $\varepsilon, \eta, \min\{1/\kappa, 1/\lambda_{\max}, \rho\}$, and $T$ such that 
    \begin{align} 
        \bbP\bigl(\sup\{\Vert X_{t} - X_{jT/M} \Vert: t \in [jT/M, (j+1)T/M] \} \leq \varepsilon, \ \forall j \leq M-1 \bigr) \geq 1 - \eta. \label{eq:JumpyYoga} 
    \end{align}   
\end{lemma}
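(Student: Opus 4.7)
\textit{Proof plan.} The plan is to apply the weak Markov property of \Cref{prop: X_Markov} at the division times $jT/M$, reducing (\ref{eq:JumpyYoga}) via a union bound to the single-interval uniform estimate
\begin{align}
    \sup_{x_0\in D}\bbP\bigl(\sup_{s\in[0,T/M]}\Vert X_s - X_0 \Vert > \varepsilon \mid X_0 = x_0\bigr)\leq \eta/M. \nonumber
\end{align}
To bound this, I would iterate \Cref{cor: TauBound} through strong Markovianity. Fix parameters $\delta_*\in(0,\delta_0\wedge 1/2]$ and $r_*\de \delta_*\min\{1/\kappa,1/\lambda_{\max},\rho\}$ to be selected later, and define stopping times $\sigma_0\de 0$ and
\begin{align}
    \sigma_{k+1}\de\inf\bigl\{s>\sigma_k : X_s\notin\sB(X_{\sigma_k},r_*) \text{ or } s_i(\Lc{i}_s)\neq s_i(\Lc{i}_{\sigma_k}) \text{ for some }i\leq\nb\bigr\}. \nonumber
\end{align}
By the continuity of $X$, $\Vert X_s - X_{\sigma_k}\Vert\leq r_*$ for every $s\in[\sigma_k,\sigma_{k+1}]$, so the telescoping triangle inequality gives $\sup_{s\leq t}\Vert X_s - X_0\Vert\leq (N(t)+1)r_*$ with $N(t)\de\#\{k\geq 1:\sigma_k\leq t\}$. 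It therefore suffices to choose $r_*$ and $K$ such that $(K+1)r_*\leq\varepsilon$ and $\bbP(N(T/M)\geq K)\leq\eta/M$.

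The strong Markov property applied at $\sigma_k$ combined with \Cref{cor: TauBound} for the shifted process yields, for some absolute constant $C>0$,
\begin{align}
    \bbP(\sigma_{k+1}-\sigma_k\leq s \mid \cF_{\sigma_k})\leq\delta_*+Cs/r_*^2. \nonumber
\end{align}
Setting $s_*\de r_*^2/(4C)$ makes the right-hand side at $s=s_*$ at most $3/4$, so the indicator $\xi_k\de\bb1\{\sigma_k-\sigma_{k-1}\geq s_*\}$ satisfies $\bbP(\xi_k=1\mid\cF_{\sigma_{k-1}})\geq 1/4$. Since $\sigma_K\geq s_*\sum_{k=1}^K\xi_k$, applying \Cref{lem: DriftExponential} with $q=1/4$ and $\eta'=1/8$ produces absolute constants $c_1,c_2>0$ such that $\bbP(N(T/M)\geq K)\leq c_1\exp(-c_2 K)$ whenever $K\geq 8(T/M)/s_* = 32CT/(Mr_*^2)$.

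It remains to balance the parameters. Picking $r_*\de\varepsilon/(2K)$ enforces $(K+1)r_*\leq\varepsilon$, and the condition $K\geq 32CT/(Mr_*^2)$ rewrites as $K\leq \varepsilon\sqrt{M/(128CT)}$, an upper bound on $K$ that grows as $\sqrt{M}$. The remaining constraints $c_1\exp(-c_2K)\leq\eta/M$ and $\delta_* = r_*/\min\{1/\kappa,1/\lambda_{\max},\rho\}\leq\delta_0\wedge 1/2$ become lower bounds on $K$ of orders $O(\ln(M/\eta))$ and $O(\varepsilon/\min\{1/\kappa,1/\lambda_{\max},\rho\})$ respectively, both eventually dominated by the $\sqrt{M}$ upper bound; hence all constraints are simultaneously satisfiable for $M$ exceeding a threshold depending only on $\varepsilon$, $\eta$, $\min\{1/\kappa,1/\lambda_{\max},\rho\}$, and $T$. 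The main obstacle which this scheme overcomes is the additive $\delta_*$ floor in \Cref{cor: TauBound}, which does not vanish as $s\to 0$ (a semipermeable barrier can switch essentially instantaneously upon contact); iterating through strong Markov is precisely what converts this floor into a uniform conditional lower bound on a Bernoulli-indicator sum, to which the Chernoff-type \Cref{lem: DriftExponential} can then be applied.
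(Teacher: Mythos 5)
Your approach reaches the same conclusion via the same two key lemmas (Corollary \ref{cor: TauBound} and Lemma \ref{lem: DriftExponential}), but the decomposition is genuinely different from the paper's. The paper works with the whole trajectory on $[0,T]$ and constructs a sequence $\tau_1,\tau_2,\ldots$ of increment durations (defined almost identically to your $\sigma_{k+1}-\sigma_k$, only with threshold $\varepsilon/2$ instead of $r_*$); it then establishes two events simultaneously with high probability --- that $J$ increments suffice to cover $[0,T]$, and that \emph{every} one of those $J$ increments has duration $\geq t'$ --- and concludes geometrically that once $T/M < t'$ each subinterval meets at most two increments. That argument needs a union bound over $J$ events each with failure probability $\eta/(2J)$, which in turn forces the $\delta$ parameter in \Cref{cor: TauBound} and hence $t'$ to scale down with $\eta/J$. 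You instead reduce to a single interval $[0,T/M]$ through Markovianity, bound the \emph{count} of stopping times landing in it, and let the Chernoff-type \Cref{lem: DriftExponential} absorb both the $\delta_*$ floor and the tail, keeping $\delta_*$ fixed at a moderate value throughout. This is somewhat cleaner; the price is one extra Markov reduction step, which --- strictly speaking --- needs the strong Markovianity of $\sX_t$ at the deterministic times $jT/M$ rather than the single-time weak Markov statement in \Cref{prop: X_Markov}, since you take a supremum over a continuum of times inside each block.

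One arithmetic slip in the parameter balancing: with $r_* = \varepsilon/(2K)$ the constraint $K \geq 32CT/(Mr_*^2) = 128\,CTK^2/(M\varepsilon^2)$ rearranges to $K \leq M\varepsilon^2/(128\,CT)$, an upper bound linear in $M$, not the $\varepsilon\sqrt{M/(128CT)}$ you wrote. This does not affect the conclusion, since what matters is only that the upper bound on $K$ tends to infinity with $M$ while the competing lower bounds grow like $\ln(M/\eta)$ and a constant, so all constraints are simultaneously satisfiable for $M$ large. The rest of the argument --- the telescoping bound $\sup_{s\le t}\Vert X_s - X_0\Vert \le (N(t)+1)r_*$, the conditional lower bound $\bbP(\xi_k=1\mid\cF_{\sigma_{k-1}})\geq 1/4$, and the application of \Cref{lem: DriftExponential} to $\sigma_K \geq s_*\sum_k\xi_k$ --- is correct.
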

\begin{proof}
    Define a sequence of nonnegative random variables $\tau_1,\tau_2,\ldots$ by 
    \begin{align} 
        \tau_1 &\de \inf\{t \geq 0 : \Vert X_t - X_0 \Vert > \varepsilon/2 \text{ or }s_i(\Lc{i}_t) \neq s_i(0) \text{ for some }i\leq m \},\\ 
        \tau_{j+1} &\de  \inf\{t \geq 0 : \Vert X_{\tau_j + t} - X_{\tau_j}  \Vert > \varepsilon/2 \text{ or }s_i(\Lc{i}_{\tau_j +t}) \neq s_i(\Lc{i}_{\tau_j }) \text{ for some }i\leq m \}.  
    \end{align} 
    \Cref{cor: TauBound} then yields some $t>0$ depending on $\varepsilon$ and $\min\{ 1/\kappa^2, 1/\lambda_{\max}^2, \rho^2 \}$ such that $\bbP(\tau_{j} > t \mid \tau_1,\ldots, \tau_{j-1}) \geq  1/2$ for every $j$.
    In particular, one can then find some $J\geq 1$ depending on $\varepsilon, \eta, \min\{1/\kappa, 1/\lambda_{\max}, \rho\}$, and $T$ such that 
    \begin{align} 
        \bbP\Bigl(\sum_{j=1}^J \tau_j > T \Bigr) \geq 1 - \eta/2. \label{eq:CozyOrb}
    \end{align}
    (For instance, \eqref{eq:CozyOrb} can be deduced from 
    \Cref{lem: DriftExponential}.)
    Again by \Cref{cor: TauBound}, there exists some $t' >0$ depending on $J, \varepsilon, \eta,$ and $\min\{1/\kappa, 1/\lambda_{\max}, \rho\}$ such that $\bbP(\tau_j \geq t') \leq \eta/(2J)$. 
    Then, by the union bound, 
    \begin{align} 
        \bbP(\tau_j \geq t', \forall j\leq J) \geq 1 - \eta/2. \label{eq:ElegantOtter}
    \end{align}
    The combination of \eqref{eq:CozyOrb} and \eqref{eq:ElegantOtter} yields \eqref{eq:JumpyYoga} if we let $M$ be sufficiently large to ensure that $T/M < t'$. 
    (Indeed, note that every interval of length $T/M$ can then intersect at most two intervals of the form $[\tau_{j},\tau_{j+1}]$.) 
    This concludes the proof.       
\end{proof}

\begin{proof}[Proof of \texorpdfstring{\Cref{cor: Xuniform}}{Corollary}]
    \Cref{lem: M} provides some $M\geq 1$ with 
    \begin{align} 
        \bbP\bigl(\sup\{\Vert X_{t} - X_{jT/M} \Vert: t \in [jT/M, (j+1)T/M] \} \leq \varepsilon/4, \ \forall j \leq M-1 \bigr) \geq 1 - \eta/2. \label{eq:ElegantDog}
    \end{align}
    On the other hand, if we define stopping times $\cT_0,\ldots,\cT_{M-1}$ by 
    $
        \cT_j \de \inf\{t\geq jT/M: X_t \in \cup_{i=0}^k B_i \}
    $, 
    then \eqref{eq: HighFrequency_RandomPoint} from \Cref{thm: Main_PathDependentRecovery} yields $c_1',c_2',c_3'>0$ depending on $\eta$ and $M$ such that for every $j\leq M-1$, 
    \begin{align} 
        \bbP\bigl(\inf\{\Vert X_{\cT_j} - p \Vert: p\in \hat{\cX} \} \leq c_3' \ln(T/t)\sqrt{t}  \text{ or }\cT_j >T \bigr) \geq 1 - \eta/2M \label{eq:RedOtter}
    \end{align} 
    if $t\leq c_1' T$ and $\ln(T/t)^4  t \leq c_2'\min\{T, 1/\kappa^2, 1/\lambda_{\max}^2, \rho^2 \}$. 

    Define the constant $c_1$ in the statement of \Cref{cor: Xuniform} by $c_1 \de c_1' T$ and pick the constant $c_2$ such that $c_3'\sqrt{c_2} \leq \varepsilon/2$ and $c_2 \leq c_2' \min\{T,1/\kappa^2,1/\lambda_{\max}^2, \rho^2 \}$. 
    Then, in particular, $c_3'\ln(T/t)\sqrt{t} \leq \varepsilon/2$.   
    We claim that the desired result now follows by combining \eqref{eq:ElegantDog} with \eqref{eq:RedOtter} and using the definition of $\cup_{i=0}^k \cX_i$ from \eqref{eq:SafeSun} together with the triangle inequality.
    
    To be precise, by the union bound, we can assume with probability $\geq 1-\eta$ that the event in \eqref{eq:ElegantDog} occurs and that the event in \eqref{eq:RedOtter} occurs for every $j\leq M-1$. 
    It remains to show that the event described on the left-hand side of \eqref{eq: HighFrequency_Uniform} then follows. 
    Pick some arbitrary $x\in \cup_{i=0}^m \cX_i$. 
    We have to show that there exists some $p\in \hat{\cX}$ with $\Vert x - p \Vert \leq \varepsilon$. 

    The assumption that $x\in \cup_{i=0}^m \cX_i$ means that there exists some $i\leq m$ and $t_x \in [0,T]$ with $x\in B_i$ and $x = X_{t_x}$. 
    Let $j_x \leq M-1$ be the least integer with $t_x \geq j_xT/M$. 
    Then, the interval $[j_xTM, (j_x+1)TM]$ contains at least one time for which the process $X_t$ is in $\cup_{i=0}^k \cX_i$, implying that $\cT_{j_x} \leq (j_x+1)T/M \leq T$.
    In particular, by \eqref{eq:RedOtter} and the subsequent discussion regarding the choice of $c_2$, there exists some $p_x \in \hat{\cX}$ with 
    \begin{align} 
        \Vert p_x - X_{\cT_{j_x}} \Vert \leq \varepsilon/2.\label{eq:SmartPig} 
    \end{align}  
    On the other hand, since $t_x$ and $\cT_{j_x}$ are both times in the interval $[j_xT/M, (j_x +1)T/M]$, it follows from \eqref{eq:ElegantDog} that 
    \begin{align} 
        \Vert x - X_{\cT_{j_x}} \Vert = \Vert X_{t_x} -X_{\cT_{j_x}}  \Vert \leq \Vert X_{t_x} -X_{jT/M}  \Vert + \Vert X_{\cT_{j_x}}  - X_{jT/M}   \Vert \leq \varepsilon /2. \label{eq:MadFish}
    \end{align}
    Combine \eqref{eq:SmartPig} and \eqref{eq:MadFish} to conclude that $\Vert x-p_x \Vert \leq \varepsilon$, as desired.  
    
\end{proof}

\section{Additional details for \texorpdfstring{\Cref{sec: PotentialApplications}}{Section}}\label{apx: Case}
We finally provide some supplemental details for the case study. 
First, recall that the right-hand side of \Cref{fig: AnimalMovements2} zoomed in on a region of the southern valley to keep the key features discussed in the text legible.
The complete picture is displayed in \Cref{fig: AnimalMovements_Full}. 

Comparing to the satellite image on the left of \Cref{fig: AnimalMovements2}, the main findings in the northern valley are similar to those in the southern valley: impermeable barriers arise from the coastline and the slopes of the valley, while a permeable barrier arises from a river.  
One can further see some scattered loose points in the far north and in the west. 
These stem from rare excursions into low-density areas and do not necessarily correspond to barriers. 

As was mentioned in \Cref{sec: PotentialApplications}, the parameters were chosen on an ad-hoc basis. 
Specifically, the discretization scale $\epsilon$ was approximately 120 meters, and the sensitivity parameter used was $\mathfrak{s} \de 6 \epsilon$.    
This choice was arbitrary and experimentation suggests that the qualitative findings are robust to variations of these parameters. 
The truncation parameter $\mathfrak{u}$ was mainly a technical convenience to simplify the proofs and is not important for this dataset. 
This parameter may be useful if the data suffers from outliers, but otherwise one can simply fix it at a large value or use the non-truncated Wasserstein distance with $\mathfrak{u} = \infty$. 

\begin{figure}[h]
    \includegraphics[width = 0.63\textwidth]{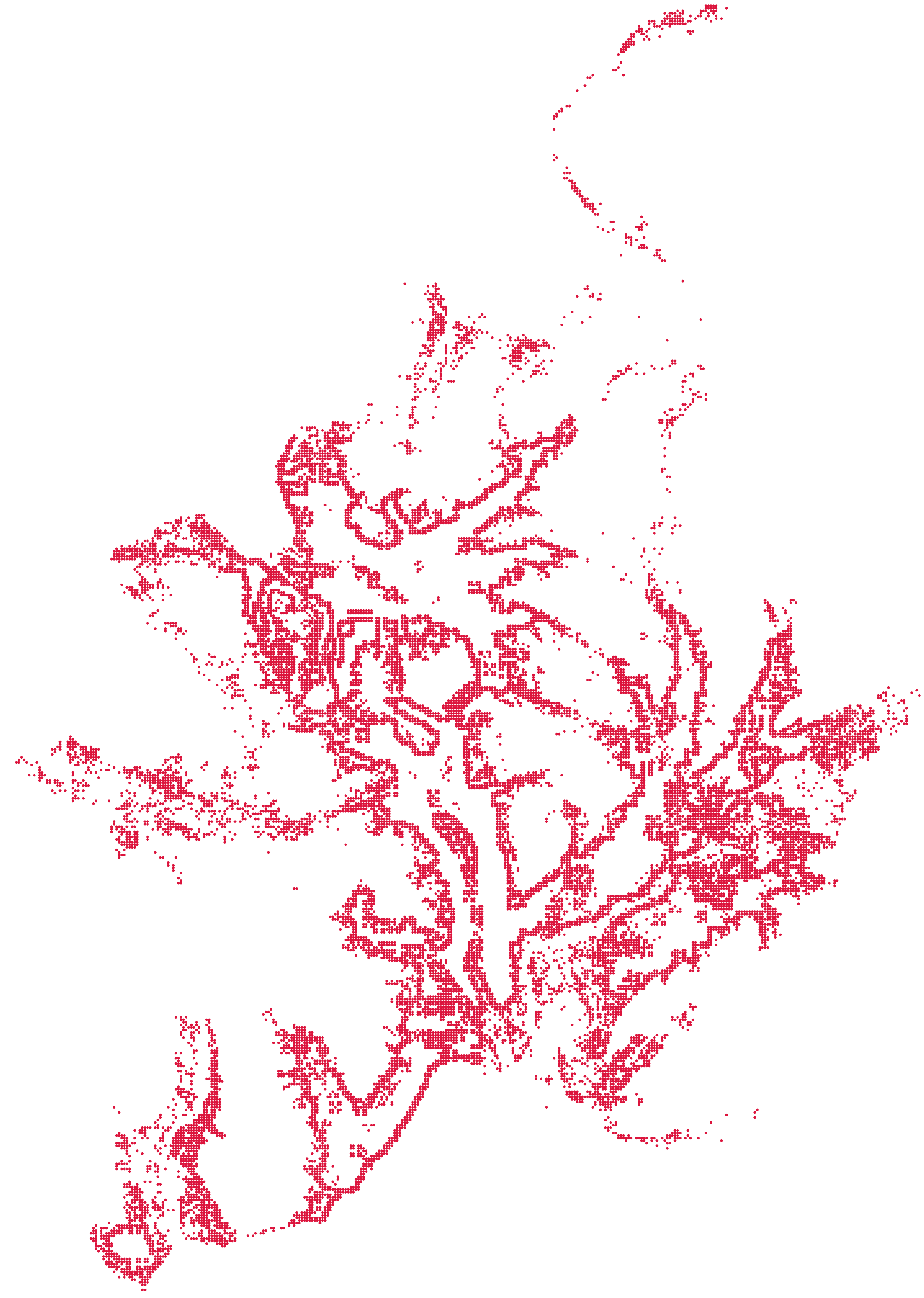}
    \caption{Full image for the barriers uncovered by \Cref{alg: KernelDiscontinuity}.}
    \label{fig: AnimalMovements_Full}
\end{figure}
\vfill 

\end{document}